\numberwithin{equation}{section}
\numberwithin{figure}{section}
\theoremstyle{plain}
\newtheorem{thm}{\protect\theoremname}
  \theoremstyle{definition}
  \newtheorem{defn}[thm]{\protect\definitionname}
  \theoremstyle{plain}
  \newtheorem{lem}[thm]{\protect\lemmaname}
  \theoremstyle{plain}
  \newtheorem{prop}[thm]{\protect\propositionname}
  \providecommand{\definitionname}{Definition}
  \providecommand{\lemmaname}{Lemma}
  \providecommand{\propositionname}{Proposition}
\providecommand{\theoremname}{Theorem}
\begin{document}

\title{A proof of the refined class number formula of Gross}

\author{Minoru Hirose}

\address{Department of Mathematics, Kyoto University, Kitashirakawa Oiwake-cho,
Sakyo-ku, Kyoto, 606-8502, Japan}

\email{hirose@math.kyoto-u.ac.jp}

\date{\today}
\begin{abstract}
In 1988, Gross proposed a conjectural congruence between Stickelberger
elements and algebraic regulators, which is often referred to as the
refined class number formula. In this paper, we prove this congruence.
\end{abstract}
\maketitle
\tableofcontents{}

\section{Introduction\label{sec:intro}}

\subsection{\label{sub:introGC}The conjecture of Gross}

Let $F$ be a totally real field and $K$ a finite abelian extension
of $F$. Let $S$ and $T$ be disjoint finite sets of places of $F$
such that $S$ contains all the infinite places and all places ramifying
in $K/F$. We assume that
\[
\ker(\mu_{K}\to\prod_{\mathfrak{p}\in T_{_{K}}}(\mathcal{O}_{K}/\mathfrak{p})^{\times})=\{1\}
\]
where $\mu_{K}$ is the group of roots of unity in $K$ and $T_{K}$
is the set of places of $K$ lying above places in $T$. Put $G:={\rm Gal}(K/F)$.
Let $G_{v}\subset G$ be the decomposition group at $v\in S$. Let
$\Theta_{S,T,K}\in\mathbb{Z}[G]$ be the Stickelberger element. Define
the $(S,T)$-ideal class group by
\[
{\rm Cl}_{S,T}:={\rm coker}(F_{(T)}^{\times}\xrightarrow{\bigoplus{\rm ord}_{w}}\bigoplus_{w\notin S\cup T}\mathbb{Z})
\]
where $F_{(T)}^{\times}$ is the group of elements of $F^{\times}$
which are congruent to $1$ modulo all places in $T$. Put $h_{S,T}:=\#{\rm Cl}_{S,T}$.
We write $\mathcal{O}_{S,T}^{\times}$ for the group of $S$-units
of $F$ which are congruent to $1$ modulo all places in $T$. Put
$r:=\#S-1$ and $S=\{v_{0},\dots,v_{r}\}$. From the condition, $\mathcal{O}_{S,T}^{\times}$
is a free abelian group of rank $r$. Take a $\mathbb{Z}$-basis $\left\langle u_{1},\dots,u_{r}\right\rangle $
of $\mathcal{O}_{S,T}^{\times}$ such that $(-1)^{\#T}\det(-\log\left|u_{i}\right|_{v_{j}})_{1\leq i,j\leq r}>0$.
Put $I:=\ker(\mathbb{Z}[G]\to\mathbb{Z})$. Define $R_{G,S,T}\in I^{r}/I^{r+1}$
by
\[
R_{G,S,T}:=-h_{S,T}\det({\rm rec}_{v_{i}}(u_{j})-1)_{1\leq i,j\leq r}
\]
where ${\rm rec}_{v}$ is the composite map $F^{\times}\to F_{v}^{\times}\to G_{v}\subset G$.
In this paper, we prove the following congruence.
\begin{thm}
[conjectured by Gross in {\cite[Conjecture 4.1]{MR931448}}]\label{Thm:main1}
\[
\Theta_{S,T,K}\equiv R_{G,S,T}\pmod{I^{r+1}}.
\]

\end{thm}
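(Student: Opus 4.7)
My plan is to prove the congruence by a sequence of formal reductions followed by an explicit computation of the leading term of $\Theta_{S,T,K}$ in $I^{r}/I^{r+1}$ via a Shintani-type cone decomposition.

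First I would perform standard reductions. Since $I^{r+1}$ has finite index in $\mathbb{Z}[G]$, the congruence may be checked one prime $p$ at a time after tensoring with $\mathbb{Z}_{p}$. The dependence on the auxiliary set $T$ is controlled by the known multiplicative relation between $\Theta_{S,T,K}$ and $\Theta_{S,T\cup\{t\},K}$, together with the compatible transformations of $h_{S,T}$ and of the chosen $\mathbb{Z}$-basis of $\mathcal{O}_{S,T}^{\times}$. Functoriality of both sides under inflation from a quotient of $G$ further allows one to replace $K$ by an auxiliary extension with a more convenient Galois structure.

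Next I would use an explicit Shintani-style formula that writes $\Theta_{S,T,K}$ as a signed sum over simplicial cones in a fundamental domain for the action of $\mathcal{O}_{S,T}^{\times}$ on an appropriate region of $F\otimes\mathbb{R}$, following the circle of ideas of Cassou-Nogu\`es, Hayes, and Charollois--Dasgupta. Modulo $I^{r+1}$ each cone contribution reduces to a product $\prod_{i=1}^{r}(\mathrm{rec}_{v_{i}}(x_{i})-1)$ for appropriately chosen vertices $x_{i}$. The theorem then becomes the assertion that this signed cone sum collapses to the single determinant $\det(\mathrm{rec}_{v_{i}}(u_{j})-1)$, up to the factor $-h_{S,T}$ encoding the volume of the fundamental domain relative to the lattice $\langle u_{1},\ldots,u_{r}\rangle$. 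Using the multilinear structure of $I^{r}/I^{r+1}$, such a collapse is forced by the alternating-multilinear nature of the sum once a basis of $\mathcal{O}_{S,T}^{\times}$ is fixed, with the sign pinned down by the positivity condition on $(-1)^{\#T}\det(-\log|u_{i}|_{v_{j}})$.

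The main obstacle is showing that the Shintani cone sum is genuinely independent of the choice of decomposition and equals $R_{G,S,T}$ modulo $I^{r+1}$. Different decompositions a priori yield different signed cone sums, and proving they agree requires establishing a ``higher'' cocycle identity for the symbols $\mathrm{rec}_{v_{i}}(x)-1$ inside $I^{r}/I^{r+1}$---essentially a homological statement about the complex of signed cones. This combinatorial-algebraic computation, together with the final matching of the resulting invariant with the class-number-times-regulator expression $R_{G,S,T}$, is where the bulk of the technical effort of the proof will be concentrated.
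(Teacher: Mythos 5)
Your proposal correctly identifies the Shintani cone decomposition as the engine of the proof, and the independence-of-decomposition problem as the central obstacle, so the overall flavor matches the paper. However, there are two genuine gaps where the proposal asserts that something will work out without supplying a mechanism, and these are precisely the places where the paper expends most of its effort.

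First, integrality. A signed Shintani cone sum built from $L$-function data lands a priori in $\mathbb{Q}[G]$, not $\mathbb{Z}[G]$, and you need it to lie not merely in $\mathbb{Z}[G]$ but in $I^{r}$ (the ``vanishing-order'' half of the conjecture). Your plan does not say how the cone sum becomes integral or why it sits in the correct power of the augmentation ideal. The paper solves this by twisting the test function at the auxiliary prime $\mathfrak{q}$ (the functions $h_{x}$ of Definition \ref{Def:hx}), and then invoking a strong integrality result of Dasgupta--Spiess (Lemma \ref{lem:strongIntegrality}, valid when ${\rm ch}(\mathfrak{q})\geq[F:\mathbb{Q}]+2$) together with a weaker one (Lemma \ref{lem:weakIntegrality}) for general odd $\mathfrak{q}$. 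The conjunction of these two---one giving a Shintani datum of integrality $1$ on $S\setminus\{v_{0}\}$, the other of integrality ${\rm ch}(\mathfrak{q})^{[F:\mathbb{Q}]}$ on all of $S$---is what lets the argument run. Relatedly, your ``tensor with $\mathbb{Z}_{p}$ prime by prime'' is not quite how the paper works: it instead proves $2\Theta_{S,T,K}\in I_{G_{H}}\prod_{v\in V}I_{G_{v}}$ (via the vanishing-order result plus the identity $2([1]-[x])=([1]-[x])^{2}$ for $x\in G_{v'}$, $v'$ infinite) and separately proves the congruence after multiplying by ${\rm ch}(\mathfrak{q})^{[F:\mathbb{Q}]}$ for odd ${\rm ch}(\mathfrak{q})$, then uses coprimality.

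Second, well-definedness. You write that the collapse of the cone sum to the determinant ``is forced by the alternating-multilinear nature of the sum once a basis is fixed,'' but this is an assertion, not a proof; different cone decompositions genuinely produce different signed sums, and you need a concrete mechanism to show they agree modulo $I^{r+1}$. The paper's mechanism is the categorical framework of Shintani data: the class $Q({\rm Sh})$ is defined homologically (Definition \ref{Def:Q}) from a double complex built on the functor $\mathcal{B}$ and the modules $\mathcal{Z}(U,W)$, so that independence of decomposition follows from the existence of morphisms ${\rm Sh}_{1}\to{\rm Sh}_{2}$, and the key cancellation identity behind the cone calculus is the Hill lemma (Lemma \ref{lem:Hill}) packaged into the maps $\psi$, $\varphi^{Q}$, and the group-homology class $\Omega_{E,U}(\eta_{E})\in H_{0}(E,Z(U))$. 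Your proposal would need to reinvent something equivalent to this machinery; without it, the ``higher cocycle identity'' you flag as necessary is left unproved, and the leading-term computation in $I^{r}/I^{r+1}$ remains formal rather than actual. Finally, note that the paper does not prove Theorem \ref{Thm:main1} directly: it proves the sharper Theorem \ref{Thm:main2} modulo $I_{G_{H}}\prod_{v\in S\setminus\{v_{0}\}}I_{G_{v}}$ and deduces Theorem \ref{Thm:main1} since that ideal sits inside $I^{r+1}$; working at the coarser level $I^{r+1}$ from the start, as your proposal does, would discard structure that the inductive determinant reduction (Lemma \ref{lem:detred}) actually uses.
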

This congruence is an analogue of $(S,T)$-version of Dedekind's class
number formula
\[
\lim_{s\to0}s^{-r}\zeta_{F,S,T}(s)=-h_{S,T}\det(-\log\left|u_{i}\right|_{v_{j}})_{1\leq i,j\leq r}.
\]
Let $H$ be the maximal abelian unramified extension of $F$ such
that all the places in $S$ split completely at $H$. Assume that
$H\subset K$. Put $I_{G_{v}}:=\ker(\mathbb{Z}[G]\to\mathbb{Z}[G/G_{v}])$,
$I_{G_{H}}:=\ker(\mathbb{Z}[G]\to\mathbb{Z}[{\rm Gal}(H/F)])$, and
$n_{S,T}:=-\frac{h_{S,T}}{\#{\rm Gal}(H/F)}\in\mathbb{Z}$. We lift
$R_{G,S,T}$ to $(\prod_{v\in S\setminus\{v_{0}\}}I_{G_{v}})/(I_{G_{H}}\prod_{v\in S\setminus\{v_{0}\}}I_{G_{v}})$
by
\[
R_{G,S,T}:=n_{S,T}\sum_{c\in{\rm Gal}(H/F)}[c]\det({\rm rec}_{v_{i}}(u_{j})-1)_{1\leq i,j\leq r}.
\]
 In fact, we prove the following stronger claim.
\begin{thm}
\label{Thm:main2}
\[
\Theta_{S,T,K}\equiv R_{G,S,T}\pmod{I_{G_{H}}\prod_{v\in S\setminus\{v_{0}\}}I_{G_{v}}}.
\]

\end{thm}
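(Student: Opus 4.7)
The plan is to reduce Theorem~\ref{Thm:main2} to a small number of manageable base cases through functorial manipulations of both sides, and then to compare those sides directly using an explicit (Shintani-type) formula for the Stickelberger element.

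\smallskip

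\emph{Reduction steps.} First I would verify that both $\Theta_{S,T,K}$ and $R_{G,S,T}$ are compatible under the natural operations. Under the projection $\mathbb{Z}[G] \to \mathbb{Z}[\mathrm{Gal}(K'/F)]$ for an intermediate extension $H \subset K' \subset K$, both sides should map to their analogues for $K'/F$, which lets me replace $K$ by the smallest field that still contains $H$ and carries the necessary ramification. Enlargement of $T$ should introduce the same Euler-like factor $1 - \mathrm{rec}_{v}(\mathrm{N}v)$ on both sides, reducing to an elementary identity in the ambient group ring. Enlarging $S$ by a completely split auxiliary place $v$ should multiply both sides by $\mathrm{rec}_v(u)-1$ for an appropriate new generator $u$, shifting $r$ to $r+1$ compatibly with the filtration by $I_{G_H}\prod I_{G_v}$. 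Together these compatibilities should whittle the proof down to a short list of explicit cases.

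\smallskip

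\emph{Comparison in the base case.} In the reduced situation I would express $\Theta_{S,T,K}$ by a Shintani (or Cassou-Nogu\`es) decomposition, writing it as a finite group-ring-valued sum indexed by cones in a fundamental domain for the action of $\mathcal{O}_{S,T}^\times$ on the totally positive elements (and its $S$-adic refinement). Each individual contribution is interpreted through Artin reciprocity as a product of local factors $\mathrm{rec}_{v}(u)-1$ modulated by a rational multiple of $h_{S,T}/\#\mathrm{Gal}(H/F)=-n_{S,T}$. On the regulator side, expanding $\det(\mathrm{rec}_{v_i}(u_j)-1)$ gives an alternating sum of exactly the same shape. The matching should amount to a Shintani-cocycle identity relating tuples of cone generators to tuples of $S$-units, combined with the product formula.

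\smallskip

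\emph{Main obstacle.} The delicate point is the precision of the congruence: working modulo $I_{G_H}\prod_{v\in S\setminus\{v_0\}}I_{G_v}$ is strictly stronger than working modulo $I^{r+1}$, so I cannot afford to discard as negligible any term that lies in $I^{r+1}$ but outside $I_{G_H}\prod I_{G_v}$. Controlling these intermediate-precision terms requires a refinement of the Deligne--Ribet integrality, keeping track of the contribution of each decomposition group $G_v$ and of $\mathrm{Gal}(H/F)$ separately rather than just in the coarse $I$-adic filtration. I expect this to be the most technically demanding step, and the success of the whole strategy hinges on proving a cocycle-level refinement of the Shintani decomposition that is sensitive to this finer filtration by inertia subgroups.
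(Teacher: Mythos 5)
Your outline captures the right \emph{shape} of the argument — a Shintani-type decomposition of the Stickelberger element compared term by term with the expansion of the regulator determinant — and the $T$-enlargement reduction you describe really is used (the paper's Lemma~\ref{lem:st_8} reduces general $T$ to a single $\mathfrak{q}$ by factoring out $\delta_{T\setminus\{\mathfrak{q}\}}$). But your other reduction steps are not how the paper proceeds: there is no passage to a smaller intermediate field $K'$, and no auxiliary-place enlargement of $S$. More importantly, you correctly identify the crux — the congruence modulo $I_{G_H}\prod_{v\in S\setminus\{v_0\}} I_{G_v}$ is strictly finer than the Deligne--Ribet $I^{r+1}$-integrality provides — and then you stop, saying that a ``cocycle-level refinement of the Shintani decomposition'' is needed without supplying one. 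That is precisely the hard part, and without a concrete mechanism the proof does not close.

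The paper's actual resolution of that obstacle has two ingredients your proposal misses. First, a category of \emph{Shintani data} with a notion of integrality $m$, an element $Q^{N}(\mathrm{Sh})\in(\prod_{v\in V}I_v)/(I_{F^\times}\prod_{v\in V}I_v)$ attached to each datum, and the functorial property that morphisms $\mathrm{Sh}_1\to\mathrm{Sh}_2$ force $\tfrac{m}{m_1}Q^N(\mathrm{Sh}_1)=\tfrac{m}{m_2}Q^N(\mathrm{Sh}_2)$. Three such data $\mathrm{Sh}^{v_0}$ (integrality $1$, built using a prime $\mathfrak{q}$ with $\mathrm{ch}(\mathfrak{q})\geq[F:\mathbb{Q}]+2$ and the strong Dasgupta--Spie{\ss} integrality), $\mathrm{Sh}^{\diamond}$ and $\mathrm{Sh}^{v_0,\diamond}$ (integrality $\mathrm{ch}(\mathfrak{q})^{[F:\mathbb{Q}]}$) and morphisms between them carry the burden you dismiss as ``a cocycle-level refinement.'' Second, and decisively, a \emph{parity trick} at $2$: since $F\neq\mathbb{Q}$ there is an infinite place $v'\in S\setminus\{v_0\}$ with $2([1]-[x])=([1]-[x])^2$ for the nontrivial $x\in G_{v'}$, hence $2\prod_{v\in V}I_{G_v}\subset I_{G_H}\prod_{v\in V}I_{G_v}$ (Lemma~\ref{lem:2multiply}); combined with the vanishing-order result $\Theta_{S,T,K}\in\prod_{v\in V}I_{G_v}$ this gives $2\Theta_{S,T,K}\equiv 2R_{G,S,T}\equiv 0$, while the Shintani-data comparison gives $\mathrm{ch}(\mathfrak{q})^{[F:\mathbb{Q}]}\Theta_{S,T,K}\equiv\mathrm{ch}(\mathfrak{q})^{[F:\mathbb{Q}]}R_{G,S,T}$ with $\mathrm{ch}(\mathfrak{q})$ odd, and the two coprime scalars force the unscaled congruence. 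Without this two-prong gcd argument your plan cannot recover the full-precision statement; identifying the obstacle does not suffice to bridge it.
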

Throughout this paper, we assume that $F\neq\mathbb{Q}$ since it
is already known that Theorem \ref{Thm:main2} holds for $F=\mathbb{Q}$
\cite{MR1104783}\cite{MR2336038}.

\subsection{\label{sub:introEGC}The enhancement of the conjecture of Gross}

For a place $\mathfrak{p}$ of $F$, we denote by ${\rm ch}(\mathfrak{p})$
the residue characteristic of $\mathfrak{p}$. Let us consider the
case of $T=\{\mathfrak{q}\}$ where $\mathfrak{q}$ is the primed
ideal of $F$ such that ${\rm ch}(\mathfrak{q})\geq[F:\mathbb{Q}]+2$.
For a finite place $v$ not in $S$, we put $J_{v}:=O_{v}^{\times}$
where $O_{v}$ is the maximal compact subring of $F_{v}$. For a finite
place $v$ in $S$, we fix an enough small open subgroup $J_{v}$
of $O_{v}^{\times}$ such that $J_{v}\subset\ker(F_{v}^{\times}\xrightarrow{{\rm rec}_{v}}G_{v})$.
For an infinite place $v$ of $F$, we denote by $J_{v}$ the identity
component of $F_{v}^{\times}\simeq\mathbb{R}^{\times}$. We put $N_{v}:=F_{v}^{\times}/J_{v}$,
$N_{F}:=\mathbb{A}_{F}^{\times}/\prod_{v}J_{v}$, and $N^{S}:=\prod'_{v\notin S}N_{v}$.
For $v\in S$, we put $I_{v}:=\ker(\mathbb{Z}[N_{F}]\to\mathbb{Z}[N_{F}/N_{v}])$.
We write ${\rm rec}$ for any homomorphism induced from the reciprocity
map. We put $I_{H}:=\ker(\mathbb{Z}[N_{F}]\xrightarrow{{\rm rec}}\mathbb{Z}[{\rm Gal}(H/F)])$.
Take a $\mathbb{Z}$-basis $\left\langle u_{1},\dots,u_{r}\right\rangle $
of $\mathcal{O}_{S,\{\mathfrak{q}\}}^{\times}$ such that $-\det(-\log\left|u_{i}\right|_{v_{j}})_{1\leq i,j\leq r}>0$.
Define $\hat{R}_{\mathfrak{q}}\in I_{v_{1}}\cdots I_{v_{r}}/I_{H}I_{v_{1}}\cdots I_{v_{r}}$
by
\[
\hat{R}_{\mathfrak{q}}:=n_{S,\{\mathfrak{q}\}}\sum_{c\in N^{S}/F^{\times}}[c]\det(f_{v_{i}}(u_{j})-1)_{1\leq i,j\leq r}
\]
where $f_{v}$ is the composite map $F^{\times}\to F_{v}^{\times}\to N_{v}\to N_{F}.$
Then we have ${\rm rec}(\hat{R}_{\mathfrak{q}})=R_{G,S,\{\mathfrak{q}\}}$.
We put $I_{F^{\times}}:=\ker(\mathbb{Z}[N_{F}]\to\mathbb{Z}[N_{F}/F^{\times}])$.
In this paper, we construct an element $\hat{\Theta}_{S,\mathfrak{q},S\setminus\{v_{0}\}}$
of $(\prod_{v\in S\setminus v_{0}}I_{v})/(I_{F^{\times}}\prod_{v\in S\setminus v_{0}}I_{v})$
which is mapped to $\Theta_{S,\{\mathfrak{q}\},K}$ by ${\rm rec}$.
The following theorem is an enhancement of the $T=\{\mathfrak{q}\}$
case of Theorem \ref{Thm:main2}.
\begin{thm}
\label{thm:main_hat}We have
\[
\hat{\Theta}_{S,\mathfrak{q},S\setminus\{v_{0})}\equiv\hat{R}_{\mathfrak{q}}\pmod{I_{H}\prod_{v\in S\setminus\{v_{0}\}}I_{v}}.
\]

\end{thm}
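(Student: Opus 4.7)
The plan is to prove the enhanced congruence by working directly in the group ring $\mathbb{Z}[N_F]$, reducing both sides of the claimed identity to an explicit sum indexed by a Shintani-type decomposition of the totally positive cone in $F\otimes\mathbb{R}$ adapted to the chosen basis $\langle u_1,\dots,u_r\rangle$. Working with $\hat{\Theta}$ rather than $\Theta_{S,\{\mathfrak{q}\},K}$ has the advantage that the finer ideal $I_H\prod_{v\in S\setminus\{v_0\}}I_v$ retains information lost after applying $\mathrm{rec}$, so the comparison can be done universally in the idelic group ring and specialized only at the end.

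First, I would obtain an explicit idelic formula for $\hat{\Theta}_{S,\mathfrak{q},S\setminus\{v_0\}}$. Writing the Stickelberger element as the value at $s=0$ of a combination of $(S,\{\mathfrak{q}\})$-partial zeta functions and using a Shintani decomposition of the totally positive quadrant modulo the action of $\mathcal{O}_{S,\{\mathfrak{q}\}}^{\times,+}$, one can express each partial zeta value as a finite signed sum $\sum_\alpha[f(\alpha)]$ over cone representatives $\alpha\in F^\times$. The hypothesis $\mathrm{ch}(\mathfrak{q})\geq[F:\mathbb{Q}]+2$ ensures the integrality of this sum in $\mathbb{Z}[N_F]$ (rather than merely rationally), and the sign convention imposed on $\langle u_1,\dots,u_r\rangle$ fixes a canonical orientation for the decomposition.

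Next, I would expand $\hat{R}_{\mathfrak{q}}$ by multilinearity of the determinant:
\[
\det(f_{v_i}(u_j)-1)_{1\leq i,j\leq r}=\sum_{\sigma\in\mathfrak{S}_r}\mathrm{sgn}(\sigma)\prod_{i=1}^{r}(f_{v_i}(u_{\sigma(i)})-1),
\]
and regroup the resulting sum of idelic monomials. Each summand is a product of elements in the respective $I_{v_i}$, and after weighting by the class sum $n_{S,\{\mathfrak{q}\}}\sum_{c\in N^S/F^\times}[c]$, the combinatorics align with those of the cone decomposition on the Stickelberger side: each simplicial cone contributes one permutation to the determinantal expansion, with the sign matched via the positivity assumption on $-\det(-\log|u_i|_{v_j})$.

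The main obstacle will be to check that both sides are independent of the auxiliary choices, in particular the Shintani decomposition and the lifts used in constructing $\hat{\Theta}$. This reduces to an estimate of the boundary error when two decompositions are compared: the boundary contribution maps into the image of $F^\times$ under the idelic maps, hence lies in $I_{F^\times}\prod_{v\in S\setminus\{v_0\}}I_v$, which is contained in $I_H\prod_{v\in S\setminus\{v_0\}}I_v$ since $F^\times\subset\ker(\mathrm{rec}\colon N_F\to\mathrm{Gal}(H/F))$ by the defining property of $H$. Once this invariance is established, a direct term-by-term identification between the two reformulated expressions yields the congruence; pushing the identity through $\mathrm{rec}$ recovers the $T=\{\mathfrak{q}\}$ case of Theorem \ref{Thm:main2} as a corollary.
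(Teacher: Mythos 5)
Your plan correctly identifies the flavor of the proof: a Shintani cone decomposition, a Solomon--Hu-type regularized sum giving the partial zeta values, and a claim that the boundary ambiguity lies in $I_{F^{\times}}\prod_{v}I_{v}\subset I_{H}\prod_{v}I_{v}$. These ingredients all appear in the paper. However, the "direct term-by-term identification" you propose as the final step hides two genuine obstacles, and without them the argument does not close.

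First, $\hat{\Theta}_{S,\mathfrak{q},S\setminus\{v_{0}\}}$ is built from a Shintani datum on the index set $S\setminus\{v_{0}\}$, whereas the regulator $\hat{R}_{\mathfrak{q}}$ is an $r\times r$ determinant involving \emph{all} the places $v_{1},\dots,v_{r}$ together with an implicit dependence on $v_{0}$ through the leading Taylor coefficient of the zeta function. Extracting the determinant from the cone decomposition is not a matter of regrouping a multilinear expansion: one must pass through a two-step homological construction (the maps $\eta_{1}$ and $\eta_{2}$ of Definitions \ref{Def:eta1}--\ref{Def:eta2}, and Proposition \ref{prop:calc_of_QSH}) that first pushes the cone data to the top degree of a double complex over $S$ and then projects back down to $S\setminus\{v_{0}\}$, picking up exactly a product $\prod_{j}(g_{v_{j}}(x_{i})-1)$. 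The determinant only emerges after a further reduction of nested products to simple differences (Lemma \ref{lem:detred}), which is a non-trivial identity modulo $(\sum_{v}J_{v})\prod_{v}J_{v}$. Your sketch skips all of this; the "combinatorics align" step is precisely where the content is.

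Second, and more seriously, any Shintani datum on the \emph{full} set $S$ that is rich enough to produce the determinant must use the trivial subgroup $M=1$ at $\mathfrak{q}$, which forces integrality ${\rm ch}(\mathfrak{q})^{[F:\mathbb{Q}]}$ rather than $1$ (see the lemma preceding the definitions of ${\rm Sh}^{v_{0}},{\rm Sh}^{\diamond},{\rm Sh}^{v_{0},\diamond}$: one cannot simultaneously have $M=\mathbb{F}_{q}^{\times}$ and a datum on all of $S$, because of the homology-vanishing condition tied to $-1\in F_{R,M}$). Consequently the regulator computation at first only yields ${\rm ch}(\mathfrak{q})^{[F:\mathbb{Q}]}\hat{\Theta}_{S,\mathfrak{q},S\setminus\{v_{0}\}}\equiv{\rm ch}(\mathfrak{q})^{[F:\mathbb{Q}]}\hat{R}_{\mathfrak{q}}$, and one must then \emph{divide} by the odd number ${\rm ch}(\mathfrak{q})^{[F:\mathbb{Q}]}$ inside the quotient group $(\prod_{v}I_{v})/(I_{H}\prod_{v}I_{v})$. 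This is accomplished by the observation (Lemma \ref{lem:2multiply}) that $2\prod_{v\in V}I_{v}\subset(\sum_{v\in V}I_{v})\prod_{v\in V}I_{v}$, valid because $F\neq\mathbb{Q}$ guarantees an infinite place $v'\neq v_{0}$ with $2I_{v'}\subset I_{v'}^{2}$. Your proposal never confronts this factor, and a naive term-by-term match would simply be off by ${\rm ch}(\mathfrak{q})^{[F:\mathbb{Q}]}$. Finally, the integrality statement you invoke (that $q\geq[F:\mathbb{Q}]+2$ makes the signed cone sum land in $\mathbb{Z}$) is itself the content of the Dasgupta--Spie\ss\ lemma cited as Lemma \ref{lem:strongIntegrality}, which rests on a character-twist and ramification estimate at $q$; it cannot be treated as immediate.
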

The most part of the proofs of Theorem \ref{Thm:main2} and Theorem
\ref{thm:main_hat} overlaps. Theorem \ref{thm:main_hat} is also
important as a special case of more generalized conjecture.

\subsection{Shintani data}

In this paper, we introduce the notion of Shintani data, which plays
an important role in the proofs of \ref{Thm:main2} and \ref{thm:main_hat}.
Fix $F,K,S,(J_{v})_{v}$ as in the previous two subsections, and fix
a prime ideal $\mathfrak{q}$ of $F$ such that $q\notin S$ and ${\rm ch}(\mathfrak{q})\neq2$.
Let $V$ be a subset of $S$. In this paper, we define the category
of Shintani data on $V$. A Shintani datum on $V$ is a quadruple
$(\mathcal{B},\mathcal{L},\vartheta,m)$ satisfying certain conditions.
We call $m\in\mathbb{Z}_{>0}$ an integrality of this Shintani datum.
For a subset $V'$ of $V$, we can naturally define a functor $|_{V'}$
from the category of Shintani data on $V$ to that on $V'$. For each
proper subset $V$ of $S$ and a Shintani datum ${\rm Sh}$ on $V$,
we construct a special element $Q^{N}({\rm Sh})\in(\prod_{v\in V}I_{v})/(I_{F^{\times}}\prod_{v\in V}I_{v})$.
Then $Q^{N}({\rm Sh})$ satisfy the following properties.
\begin{enumerate}
\item If the integrality of ${\rm Sh}$ is $m$ then ${\rm rec}(Q^{N}({\rm Sh}))=m\Theta_{S,\{\mathfrak{q}\},K}$.
\item Let ${\rm Sh}_{1}$ and ${\rm Sh}_{2}$ be Shintani data, whose integrality
are $m_{1}$ and $m_{2}$, respectively. If there exists a morphism
from ${\rm Sh}_{1}$ to ${\rm Sh}_{2}$, then
\[
\frac{m}{m_{1}}Q^{N}({\rm Sh}_{1})=\frac{m}{m_{2}}Q^{N}({\rm Sh}_{2})\ \ \ \ \ \ (m:={\rm lcm}(m_{1},m_{2})).
\]

\end{enumerate}
To illustrate the importance of Shintani data, let us sketch the proof
of Theorem \ref{thm:main_hat} for example. Assume that ${\rm ch}(\mathfrak{q})\geq[F:\mathbb{Q}]+2$.
We construct a Shintani datum ${\rm Sh}^{v_{0}}$ on $S\setminus\{v_{0}\}$
whose integrality is $1$, and define $\hat{\Theta}_{S,\mathfrak{q},S\setminus\{v_{0})}$
by $Q^{N}({\rm Sh}^{v_{0}})$. We also construct a Shintani datum
${\rm Sh}^{\diamond}$ on $S$ whose integrality is ${\rm ch}(\mathfrak{q})^{[F:\mathbb{Q}]}$.
In this paper, we introduce a technique to compute
\[
Q^{N}({\rm Sh}|_{S\setminus\{v_{0}\}})\bmod I_{F^{\times}}I_{v_{1}}\cdots I_{v_{r}}+I_{v_{0}}\cdots I_{v_{r}}
\]
for a general Shintani datum ${\rm Sh}$ on $S$. By using this technique,
we prove the following formula
\[
Q^{N}({\rm Sh}^{\diamond}|_{S\setminus\{v_{0}\}})\equiv{\rm ch}(\mathfrak{q})^{[F:\mathbb{Q}]}\hat{R}_{\mathfrak{q}}\pmod{I_{F^{\times}}I_{v_{1}}\cdots I_{v_{r}}+I_{v_{0}}\cdots I_{v_{r}}}.
\]
To relate $Q^{N}({\rm Sh}^{v_{0}})$ and $Q^{N}({\rm Sh}^{\diamond}|{}_{S\setminus\{v_{0}\}})$,
we construct a Shintani datum ${\rm Sh}^{v_{0},\diamond}$ on $S\setminus\{v_{0}\}$
whose integrality is ${\rm ch}(\mathfrak{q})^{[F:\mathbb{Q}]}$, and
morphisms ${\rm Sh}^{v_{0}}\to{\rm Sh}^{v_{0},\diamond}\leftarrow{\rm Sh}^{\diamond}|{}_{S\setminus\{v_{0}\}}$.
Hence we obtain
\[
{\rm ch}(\mathfrak{q})^{[F:\mathbb{Q}]}\hat{\Theta}_{S,\mathfrak{q},S\setminus\{v_{0})}\equiv{\rm ch}(\mathfrak{q})^{[F:\mathbb{Q}]}\hat{R}_{\mathfrak{q}}\pmod{I_{F^{\times}}I_{v_{1}}\cdots I_{v_{r}}+I_{v_{0}}\cdots I_{v_{r}}},
\]
which implies Theorem \ref{thm:main_hat}.

\subsection{The outline of the proof}

Fix $F$, $K$, $S=\{v_{0},\dots,v_{r}\}$ and $(J_{v})_{v}$. Let
$\Theta_{S,K}$ be the $S$-modified Stickelberger element. We put
$\delta_{T}:=\prod_{\mathfrak{p}\in T}(1-N(\mathfrak{p})\sigma_{\mathfrak{p}}^{-1})$.
Then we have $\Theta_{S,T,K}=\delta_{T}\Theta_{S,K}$. If $S\cap T=\emptyset$
and there exists $\mathfrak{q}\in T$ such that ${\rm ch}(\mathfrak{q})\neq2$,
then the regulator 
\[
R_{G,S,T}\in(\prod_{v\in S\setminus\{v_{0}\}}I_{G_{v}})/(I_{G_{H}}\prod_{v\in S\setminus\{v_{0}\}}I_{G_{v}})
\]
 is well-defined.

We prove Theorem \ref{Thm:main2} and Theorem \ref{thm:main_hat}
by the following steps.
\[
\xymatrix{({\rm i})\ar@{=>}[r]\ar@/_{20pt}/@{=>}[dd] & ({\rm ii})\ar@{=>}[r] & ({\rm iii})\ar@{=>}[r] & ({\rm iv})\ar@{=>}[r] & ({\rm ix},\text{Theorem 2})\\
({\rm v})\ar@{=>}[r]\ar@{=>}[d] & ({\rm vi})\ar@{=>}[r]\ar@{=>}[rd] & ({\rm vii})\ar@{=>}[r] & ({\rm viii})\ar@{=>}[ru]\\
({\rm x})\ar@{=>}[r] & ({\rm xi})\ar@{=>}[r] & ({\rm xii},\text{Theorem 3})
}
\]
where 
\begin{enumerate}
\item \label{enu:pd_step1}For a prime ideal $\mathfrak{q}$ of $F$ such
that $\mathfrak{q}\notin S$ and ${\rm ch}(\mathfrak{q})\geq[F:\mathbb{Q}]+2$,
we construct a Shintani datum ${\rm Sh}^{v_{0}}$ on $S\setminus\{v_{0}\}$
whose integrality is $1$.
\item \label{enu:pd_step2}From (i), for all prime ideals $\mathfrak{q}$
of $F$ such that $\mathfrak{q}\notin S$ and ${\rm ch}(\mathfrak{q})\geq[F:\mathbb{Q}]+2$,
we have 
\[
\Theta_{S,\{\mathfrak{q}\},K}\in\prod_{v\in S\setminus\{v_{0}\}}I_{G_{v}}.
\]

\item \label{enu:pd_step3}From (ii), we have $\Theta_{S,T,K}\in\prod_{v\in S\setminus\{v_{0}\}}I_{G_{v}}$
since $\{1-N(\mathfrak{q})\sigma_{\mathfrak{q}}^{-1}\mid{\rm ch}(\mathfrak{q})\geq[F:\mathbb{Q}]+2,\ \mathfrak{q}\notin S\}$
generates the annihilator ideal ${\rm Ann}_{\mathbb{Z}[G]}(\mu_{K})\ni\delta_{T}$.
\item \label{enu:pd_step4}From (iii), we have $2\Theta_{S,T,K}\in2\prod_{v\in S\setminus\{v_{0}\}}I_{G_{v}}\subset I_{G_{H}}\prod_{v\in S\setminus\{v_{0}\}}I_{G_{v}}$.
\item \label{enu:pd_step5}For a prime ideal $\mathfrak{q}$ of $F$ such
that $\mathfrak{q}\notin S$ and ${\rm ch}(\mathfrak{q})\neq2$, we
construct a Shintani datum ${\rm Sh}^{\diamond}$ on $S$ whose integrality
is ${\rm ch}(\mathfrak{q})^{[F:\mathbb{Q}]}$.
\item \label{enu:pd_step6}For a prime ideal $\mathfrak{q}$ of $F$ such
that $\mathfrak{q}\notin S$ and ${\rm ch}(\mathfrak{q})\neq2$, we
prove that 
\[
Q^{N}({\rm Sh}^{\diamond}|_{S\setminus\{v_{0}\}})\equiv{\rm ch}(\mathfrak{q})^{[F:\mathbb{Q}]}\hat{R}_{\mathfrak{q}}\pmod{I_{H}I_{v_{1}}\cdots I_{v_{r}}}.
\]

\item \label{enu:pd_step7}From (vi), for a prime ideal $\mathfrak{q}$
of $F$ such that $\mathfrak{q}\notin S$ and ${\rm ch}(\mathfrak{q})\neq2$,
we have ${\rm ch}(\mathfrak{q})^{[F:\mathbb{Q}]}\Theta_{S,\{\mathfrak{q}\},K}\in\mathbb{Z}[G]$
and 
\[
{\rm ch}(\mathfrak{q})^{[F:\mathbb{Q}]}\Theta_{S,\{\mathfrak{q}\},K}\equiv{\rm ch}(\mathfrak{q})^{[F:\mathbb{Q}]}R_{G,S,\{\mathfrak{q}\}}\pmod{I_{G_{H}}\prod_{v\in S\setminus\{v_{0}\}}I_{G_{v}}}.
\]

\item \label{enu:pd_step8}Let $\mathfrak{q}$ be any element of $T$ such
that ${\rm ch}(\mathfrak{q})\neq2$. From (vii), we have
\[
{\rm ch}(\mathfrak{q})^{[F:\mathbb{Q}]}\Theta_{S,T,K}\equiv{\rm ch}(\mathfrak{q})^{[F:\mathbb{Q}]}R_{G,S,T}\pmod{I_{G_{H}}\prod_{v\in S\setminus\{v_{0}\}}I_{G_{v}}}
\]

\item \label{enu:pd_step9}From (iv) and (viii), we have
\[
\Theta_{S,T,K}\equiv R_{G,S,T}\pmod{I_{G_{H}}\prod_{v\in S\setminus\{v_{0}\}}I_{G_{v}}},
\]
which is a statement of Theorem \ref{Thm:main2}.
\item \label{enu:pd_step10}For a prime ideal $\mathfrak{q}$ of $F$ such
that ${\rm ch}(\mathfrak{q})\geq[F:\mathbb{Q}]+2$, we construct a
Shintani datum ${\rm Sh}^{v_{0},\diamond}$ on $V$, and morphisms
${\rm Sh}^{v_{0}}\to{\rm Sh}^{v_{0},\diamond}\leftarrow{\rm Sh}^{\diamond}|_{S\setminus\{v_{0}\}}$.
The integrality of ${\rm Sh}^{v_{0},\diamond}$ is ${\rm ch}(\mathfrak{q})^{[F:\mathbb{Q}]}$.
\item \label{enu:pd_step11}We put $\hat{\Theta}_{S,\mathfrak{q},S\setminus\{v_{0}\}}=Q^{N}({\rm Sh}^{v_{0}})$.
From (x), we have
\[
{\rm ch}(\mathfrak{q})^{[F:\mathbb{Q}]}\hat{\Theta}_{S,\mathfrak{q},S\setminus\{v_{0}\}}=Q^{N}({\rm Sh}^{\diamond}|{}_{S\setminus\{v_{0}\}}).
\]

\item \label{enu:pd_step12}From (vi) and (xi), we have
\[
\hat{\Theta}_{S,\mathfrak{q},S\setminus\{v_{0}\}}\equiv\hat{R}_{\mathfrak{q}}\pmod{I_{H}I_{v_{1}}\cdots I_{v_{r}}},
\]
which is a statement of Theorem \ref{thm:main_hat}.
\end{enumerate}
The hardest part of this proof is a construction of ${\rm Sh}^{v_{0}}$,
${\rm Sh}^{\diamond}$, and ${\rm Sh}^{v_{0},\diamond}$. Section
\ref{sec:Zmodule}, \ref{sec:ShintaniZeta} and \ref{sec:constSh}
are devoted to the construction of these Shintani data.

\subsection{Setting, notations and remarks}

Throughout this paper, we keep all the notations and settings in the
previous subsections except that we do not fix $T$. In addition,
the following notations are used throughout this paper.

We denote by $F_{v}$ the completion of $F$ at $v$. For $x\in F$
or $\mathbb{A}_{F}$, we denote by $x_{v}$ the image of $x$ at $F_{v}$.
For a prime ideal $\mathfrak{p}$, we write $\mathcal{O}_{\mathfrak{p}}$
for the localization of $\mathcal{O}_{F}$ at $\mathfrak{p}$, $\kappa_{\mathfrak{p}}$
for the residue field $\mathcal{O}_{F}/\mathfrak{p}$, $O_{\mathfrak{p}}$
for the maximal compact subring of $F_{\mathfrak{p}}$, and $\pi_{\mathfrak{p}}$
for a uniformizer of $O_{\mathfrak{p}}$. For a place $\mathfrak{p}\notin S$
of $F$, we denote by $\sigma_{\mathfrak{p}}\in G$ the Frobenius
element. For a set $M$ of places of $F$, we put $\mathbb{A}_{F}^{M}:=\prod'_{v\notin M}F_{v}$,
$N_{M}:=\prod_{v\in M}N_{v}$ and $N^{M}:=\prod'_{v\notin M}N_{v}$.
For a set $M$ of finite places of $F$, we write $\mathcal{O}_{M}$
for the ring of $M$-integers of $F$. We write $S_{\infty}$ for
the set of infinite places of $F$. We put $S_{f}:=S\setminus S_{\infty}$.
For a rational prime $q$, we write $S_{q}$ for the set of places
of $F$ lying above $q$. For a direct product of groups $A=\prod_{i\in\Lambda}A_{i}$
and a subset $\Lambda'\subset\Lambda$, we sometimes regard $\prod_{i\in\Lambda'}A_{i}$
as a subset of $A$, and sometimes a quotient of $A$. For a set $X$,
we write ${\bf Sub}(X)$ for the category of subsets of $X$ whose
morphisms are inclusions. For a ring $R$, we write ${\bf Mod}(R)$
for the category of $R$-modules. For a functor $\mathcal{F}:{\bf Sub}(X)\to\mathcal{C}$
and subsets $U_{1}\subset U_{2}\subset X$, we put ${\rm r}_{U_{2}}^{U_{1}}=\mathcal{F}(i_{U_{2}}^{U_{1}}):\mathcal{F}(U_{1})\to\mathcal{F}(U_{2})$
where $i_{U_{2}}^{U_{1}}$ is the unique element of ${\rm Hom}_{{\bf Sub}(X)}(U_{1},U_{2})$.
If we omit the subscript of $\otimes$, it means that the tensor product
is over $\mathbb{Z}$. For a group $A$ and $\mathbb{Z}[A]$-modules
$M_{1}$ and $M_{2}$, we regard $M_{1}\otimes M_{2}$ as a $\mathbb{Z}[A]$-module
by the standard way. For a group $A$ and an $A$-module $M$, we
put $I_{A}:=\ker(\mathbb{Z}[A]\to\mathbb{Z})$ and $I_{A}M:=\ker(M\to M\otimes_{\mathbb{Z}[A]}\mathbb{Z})$.
For a set $X$, we denote by $\bm{1}_{X}$ the characteristic function
of $X$. We denote by ${\rm lcm}(a,b)$ the least common multiple
of $a$ and $b$. For a complex $A\to B\to C$, we put $H(A\to B\to C):=\ker(B\to C)/{\rm im}(A\to B)$.
We denote by $\mathcal{S}(X)$ the set of Schwartz-Bruhat functions
from $X$ to $\mathbb{Z}$. We denote by $\mathcal{S}(X,A)$ the set
of Schwartz-Bruhat functions from $X$ to $\mathbb{Z}$ invariant
under the action of $A$. We denote by $\mathfrak{S}_{n}$ the symmetric
group of $\{1,\dots,n\}$.

For a group $E$ and an $\mathbb{Z}[E]$-module $M$, we denote by
$H_{i}(E,M)$ the $i$-th group homology. Especially, we have
\[
H_{0}(E,M)=M/I_{E}M.
\]
In this paper, we use Shapiro's Lemma 
\[
H_{i}(E_{2},M\otimes_{\mathbb{Z}[E_{1}]}\mathbb{Z}[E_{2}])=H_{i}(E_{1},M)\ \ \ \ (E_{1}\subset E_{2},\ M\ \text{is a \ensuremath{\mathbb{Z}[E_{1}]}-module})
\]
many times without mention.

\section{Stickelberger functions and elements\label{sec:pre}}

Let $T$ be a finite set of places of $F$ which is disjoint from
$S$. We define the $S$-modified and $(S,T)$-modified Stickelberger
functions to be the meromorphic $\mathbb{C}[G]$-valued functions
\begin{eqnarray*}
\Theta_{S,K}(s) & := & \prod_{\mathfrak{p}\notin S}(1-\sigma_{\mathfrak{p}}^{-1}N(\mathfrak{p})^{-s})^{-1}\\
\Theta_{S,T,K}(s) & := & \prod_{\mathfrak{p}\notin S}(1-\sigma_{\mathfrak{p}}^{-1}N(\mathfrak{p})^{-s})^{-1}\prod_{\mathfrak{p}\in T}(1-\sigma_{\mathfrak{p}}^{-1}N(\mathfrak{p})^{1-s}).
\end{eqnarray*}
We put
\begin{eqnarray*}
\Theta_{S,K} & := & \Theta_{S,K}(0)\in\mathbb{Q}[G]\\
\Theta_{S,T,K} & := & \Theta_{S,T,K}(0)\in\mathbb{Q}[G]\\
\delta_{T} & := & \prod_{\mathfrak{p}\in T}(1-N(\mathfrak{p})\sigma_{\mathfrak{p}}^{-1})\in\mathbb{Z}[G].
\end{eqnarray*}
 Then we have
\[
\Theta_{S,T,K}=\delta_{T}\Theta_{S,K}.
\]

\section{Shintani datum\label{sec:Shintani-datum}}

In this section, we introduce the notion of Shintani data and investigate
their properties. Throughout this section, we fix a quadruple $(\mathcal{R},\Upsilon,\lambda,\theta)$,
where $\mathcal{R}$ is a functor from ${\bf Sub}(S)$ to ${\bf Mod}(\mathbb{Z}[F^{\times}])$
such that the natural chain
\[
\mathcal{R}(\emptyset)\to\prod_{\substack{\#W=1\\
W\subset V
}
}\mathcal{R}(W)\to\prod_{\substack{\#W=2\\
W\subset V
}
}\mathcal{R}(W)\to\cdots\to\mathcal{R}(V)\to0
\]
is exact for all $V\subset S$, $\Upsilon$ is a $\mathbb{Z}$-module,
$\lambda$ is a homomorphism from $H_{0}(F^{\times},\mathcal{R}(\emptyset))$
to $\Upsilon$, and $\theta$ is an element of $\Upsilon$. For the
proof of the main theorems we only use the case where $(\mathcal{R},\Upsilon,\lambda,\theta)$
is as given at the start of Section \ref{sec:constSh}, but, in this
section we consider in a general setting for future research. For
$V\subset S$, we write $\mathcal{R}^{(V)}$ for the restriction of
$\mathcal{R}$ to ${\bf Sub}(V)$. We denote by ${\bf Sub}(V)\setminus\{S\}$
the full subcategory of ${\bf Sub}(V)$ consisting of $\{W\subset V\mid W\neq S\}$.
\begin{defn}
Let $V$ be a subset of $S$, $\mathcal{B}$ a functor from ${\bf Sub}(V)\setminus\{S\}$
to ${\bf Mod}(\mathbb{Z}[F^{\times}])$, $\mathcal{L}$ a natural
transform from $\mathcal{B}$ to $\mathcal{R}|_{{\bf Sub}(V)\setminus\{S\}}$,
$\vartheta$ an element of $\mathcal{B}(\emptyset)/I_{F^{\times}}\mathcal{B}(\emptyset)$,
and $m$ a positive integer. We say that the quadruple $(\mathcal{B},\mathcal{L},\vartheta,m)$
is a Shintani datum for $(\mathcal{R},\Upsilon,\lambda,\theta)$ on
$V$ (or simply a Shintani datum on $V$) if the following conditions
are satisfied.
\begin{itemize}
\item $H_{i}(F^{\times},\mathcal{B}(W))=0$ for all $i>0$ and objects $W$
of ${\bf Sub}(V)\setminus\{S\}$. 
\item $\lambda(\vartheta)=m\theta.$
\item $\bar{{\rm r}}_{\{v\}}^{\emptyset}(\vartheta)=0$ for all $v\in V$
where $\bar{{\rm r}}_{\{v\}}^{\emptyset}$ denotes a natural map from
$\mathcal{B}(\emptyset)/I_{F^{\times}}\mathcal{B}(\emptyset)$ to
$\mathcal{B}(\{v\})/I_{F^{\times}}\mathcal{B}(\{v\})$ induced by
${\rm r}_{\{v\}}^{\emptyset}$.
\end{itemize}
\end{defn}

\begin{defn}
Let $V$ be a subset of $S$. We define the category of Shintani data
on $V$ as follows. The objects are Shintani data on $V$. For Shintani
data ${\rm Sh}_{1}=(\mathcal{B}_{1},\mathcal{L}_{1},\vartheta_{1},m_{1})$
and ${\rm Sh}_{2}=(\mathcal{B}_{2},\mathcal{L}_{2},\vartheta_{2},m_{2})$
on $V$, we define the set of morphism from ${\rm Sh}_{1}$ to ${\rm Sh}_{2}$,
to be the set of natural transformation $\mathcal{J}:\mathcal{B}_{1}\to\mathcal{B}_{2}$
such that $\frac{{\rm lcm}(m_{1},m_{2})}{m_{1}}\mathcal{L}_{1}=\frac{{\rm lcm}(m_{1},m_{2})}{m_{2}}\mathcal{L}_{2}\circ\mathcal{J}$
and $\mathcal{J}(\emptyset)(\vartheta_{1})=\vartheta_{2}$.
\end{defn}

\subsection{Definition of $Q({\rm Sh})$}

Fix the free resolution of $\mathbb{Z}$ 
\[
\cdots\to\mathcal{I}_{2}\xrightarrow{\partial_{v}}\mathcal{I}_{1}\xrightarrow{\partial_{v}}\mathcal{I}_{0}\xrightarrow{\partial_{v}}\mathbb{Z}\to0
\]
in the category of $\mathbb{Z}[F^{\times}]$-modules defined by $\mathcal{I}_{k}:=\mathbb{Z}[(F^{\times})^{k+1}]$
and
\[
\partial_{v}([x_{1},\dots,x_{k}]):=\sum_{j=1}^{k}(-1)^{j-1}[x_{1},\dots,\hat{x}_{j},\dots,x_{k}].
\]
Put $\mathcal{I}_{-1}:=\mathbb{Z}$ and $\mathcal{I}_{j}:=0$ for
$j\leq-2$. Let $(\mathcal{B},\mathcal{L},\vartheta,m)$ be a Shintani
data on $V$. Recall that we put $r=\#S-1$. For $\mathcal{F}\in\{\mathcal{B},\mathcal{R}^{(V)}\}$
and $k\geq0$, we put 
\[
\mathcal{F}(k)=\begin{cases}
\bigoplus_{W\subset V,\#W=k}\mathcal{F}(W) & k\leq r\\
0 & k\geq r+1.
\end{cases}
\]
Note that $\mathcal{F}(r+1)$ is \emph{not} $\mathcal{F}(S)$ even
if $V=S$. For $k\geq1,$ define a homomorphism $\partial_{h}:\mathcal{F}(k-1)\to\mathcal{F}(k)$
by
\[
\partial_{h}((a_{W})_{W\subset V})=(b_{W})_{W\subset V}
\]
where
\[
b_{W}=\sum_{j=1}^{k}(-1)^{j-1}{\rm r}_{W}^{W\setminus\{v_{i_{j}}\}}(a_{W\setminus\{v_{i_{j}}\}})\ \ \ \ \ (W=\{v_{i_{1}},\dots,v_{i_{k}}\},i_{1}<\cdots<i_{k}).
\]
We put $\mathcal{R}^{(V)}(-1):=\ker(\mathcal{R}^{(V)}(0)\xrightarrow{\partial_{h}}\mathcal{R}^{(V)}(1))$
and $\mathcal{R}^{(V)}(j)=0$ for $j\leq-2$. We put $\mathcal{B}(j)=0$
for all $j\leq-1$. Then we get a chain complex $((\mathcal{B}(i))_{i\in\mathbb{Z}},\partial_{h})$
and $((\mathcal{R}^{(V)}(i))_{i\in\mathbb{Z}},\partial_{h})$. We
define two double complex $(\mathcal{B}_{\bullet,\bullet},\partial_{h},\partial_{v})$
and $(\mathcal{R}_{\bullet,\bullet}^{(V)},\partial_{h},\partial_{v})$
by $\mathcal{B}_{i,j}:=\mathcal{B}_{i}\otimes_{\mathbb{Z}[F^{\times}]}\mathcal{I}(j)$
and 
\[
\mathcal{R}_{i,j}^{(V)}:=\begin{cases}
\mathcal{R}^{(V)}(i)\otimes_{\mathbb{Z}[F^{\times}]}\mathcal{I}(j) & j\neq-1\\
0 & j=-1.
\end{cases}
\]
Then $\mathcal{L}$ induces a homomorphism from $\mathcal{B}_{\bullet,\bullet}$
to $\mathcal{R}_{\bullet,\bullet}^{(V)}$. We regard $\vartheta$
as an element of $\ker(H_{0}(F^{\times},\mathcal{B}(0))\to H_{0}(F^{\times},\mathcal{B}(1)))$.
For $\mathcal{F}=\mathcal{B}$ or $\mathcal{F}=\mathcal{R}^{(V)}$,
we denote by $(\mathcal{F}[\bullet],d_{\bullet})$ the total complex
of $\mathcal{F}_{\bullet,\bullet}$, i.e., we put
\[
\mathcal{F}[k]:=\bigoplus_{-i+j=k}\mathcal{F}_{i,j},
\]
\[
d_{k}:=\partial_{h}+(-1)^{k}\partial_{v}:\mathcal{F}[k]\to\mathcal{F}[k-1].
\]
The complex $(\mathcal{B}[\bullet],d_{\bullet})$ is exact since the
vertical chain $(\mathcal{B}_{i,\bullet},\partial_{v})$ is exact
for $i\in\mathbb{Z}$. If $V\neq S$ then $(\mathcal{R}^{(V)}[\bullet],d_{\bullet})$
is exact since the horizontal chain $(\mathcal{R}_{\bullet,j}^{(V)},\partial_{h})$
is exact for $j\in\mathbb{Z}$. Let us consider the following commutative
diagram\[ \xymatrix{ &  & \ker(\mathcal{B}_{0,-1}\to\mathcal{B}_{1,-1})\ar[r]^-{i} & \mathcal{B}_{0,-1}\ar[r]^{\partial_{h}}\ar[d]^{i_{1}} & \mathcal{B}_{1,-1}\ar[d]^{i_{2}}\\  & \mathcal{B}[1]\ar[r]^{d_{1}}\ar[d]^{\mathcal{L}} & \mathcal{B}[0]\ar[r]^{d_{0}}\ar[d]^{\mathcal{L}} & \mathcal{B}[-1]\ar[r]^{d_{-1}}\ar[d]^{\mathcal{L}} & \mathcal{B}[-2]\\ \mathcal{R}^{(V)}[2]\ar[r]^{d_{2}}\ar[d]^{q_{2}} & \mathcal{R}^{(V)}[1]\ar[r]^{d_{1}}\ar[d]^{q_{1}} & \mathcal{R}^{(V)}[0]\ar[r]^{d_{0}} & \mathcal{R}^{(V)}[-1]\\ \mathcal{R}_{-1,1}^{(V)}\ar[r]^{\partial_{v}} & \mathcal{R}_{-1,0}^{(V)}\ar[r]^-{q} & {\rm coker}(\mathcal{R}_{-1,1}^{(V)}\to\mathcal{R}_{-1,0}^{(V)}) } \]where
$i,i_{1},i_{2}$ are natural inclusions and $q,q_{1},q_{2}$ are natural
projections. Since $\ker(\mathcal{B}_{0,-1}\to\mathcal{B}_{1,-1})=\ker(H_{0}(F^{\times},\mathcal{B}(0))\to H_{0}(F^{\times},\mathcal{B}(1)))$,
we can regard $\vartheta$ as an element of $\ker(\mathcal{B}_{0,-1}\to\mathcal{B}_{1,-1})$.
Note that ${\rm coker}(\mathcal{R}_{-1,1}^{(V)}\to\mathcal{R}_{-1,0}^{(V)})=H_{0}(F^{\times},\mathcal{R}^{(V)}(-1))$.
\begin{defn}
\label{Def:Q}Assume that $V\neq S$. We define $Q((\mathcal{B},\mathcal{L},\vartheta,m))\in H_{0}(F^{\times},\mathcal{R}^{(V)}(-1))$
as follows. Since $d_{-1}\circ i_{1}\circ i=i_{2}\circ\partial_{h}\circ i=0$,
there exists $a\in\mathcal{B}[0]$ such that $d_{0}(a)=i_{1}\circ i(\vartheta)$.
Since $d_{0}\circ\mathcal{L}(a)=\mathcal{L}\circ i_{1}\circ i(\vartheta)=0$,
there exists $b\in\mathcal{R}^{(V)}[1]$ such that $d_{1}(b)=\mathcal{L}(a)$.
Then $q\circ q_{1}(b)$ does not depend on the choice of $a$ and
$b$. We put $Q((\mathcal{B},\mathcal{L},\vartheta,m))=q\circ q_{1}(b)$.
\end{defn}
Let ${\rm Sh}=(\mathcal{B},\mathcal{L},\vartheta,m)$ be a Shintani
datum on $V\subset S$, and $V'$ a proper subset of $V$. Then $(\mathcal{B}|_{{\bf Sub}(V')},\mathcal{L}|_{{\bf Sub}(V')},\vartheta,m)$
is a Shintani datum on $V'$. We denote this Shintani datum by ${\rm Sh}|_{V'}$.
The next lemma follows from the definition.
\begin{lem}
The map ${\rm Sh}\mapsto Q({\rm Sh})$ satisfy the following properties.
\begin{enumerate}
\item $\lambda(Q(\mathcal{B},\mathcal{L},\vartheta,m))=m\theta$
\item Let ${\rm Sh}=(\mathcal{B},\mathcal{L},\vartheta,m)$ be a Shintani
datum on $V\subsetneq S$. For all $V'\subset V$, 
\[
Q({\rm Sh}|_{V'})\equiv Q({\rm Sh})\pmod{I_{F^{\times}}\mathcal{R}^{(V')}(-1)}.
\]

\item Let ${\rm Sh}_{1}=(\mathcal{B}_{1},\mathcal{L}_{1},\vartheta_{1},m_{1})$
and ${\rm Sh}_{2}=(\mathcal{B}_{2},\mathcal{L}_{2},\vartheta_{2},m_{2})$
be a Shintani datum on $V\subsetneq S$. If there exists a morphism
from ${\rm Sh}_{1}$ to ${\rm Sh}_{2}$ then 
\[
\frac{{\rm lcm}(m_{1},m_{2})}{m_{1}}Q({\rm Sh}_{1})=\frac{{\rm lcm}(m_{1},m_{2})}{m_{2}}Q({\rm Sh}_{2}).
\]

\end{enumerate}
\end{lem}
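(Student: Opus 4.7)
The plan is to prove all three items by tracking naturality through the diagram chase that defines $Q({\rm Sh})$. As a preliminary I would verify that the class $q\circ q_{1}(b)$ really is independent of the choices of $a$ and $b$: any other lift $a'$ of $i_{1}i(\vartheta)$ differs from $a$ by a $d_{1}$-boundary (by exactness of $\mathcal{B}[\bullet]$), so $\mathcal{L}(a)-\mathcal{L}(a')$ is again a $d_{1}$-boundary in $\mathcal{R}^{(V)}[\bullet]$ (exact because $V\neq S$), and the corresponding ambiguity in $b$ lies in $\ker d_{1}={\rm im}\, d_{2}$, which is killed by $q\circ q_{1}$ since $\partial_{v}:\mathcal{R}_{-1,1}^{(V)}\to\mathcal{R}_{-1,0}^{(V)}$ maps onto $\ker q$.

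For item (i), I would reduce to the case $V=\emptyset$. When $V=\emptyset$ the functor $\mathcal{B}$ is concentrated in horizontal degree $0$, the double complex $\mathcal{B}_{\bullet,\bullet}$ collapses to the single column $\mathcal{B}(\emptyset)\otimes_{\mathbb{Z}[F^{\times}]}\mathcal{I}_{\bullet}$, and similarly for $\mathcal{R}^{(\emptyset)}$. Running the diagram chase in this degenerate case identifies $Q({\rm Sh}|_{\emptyset})$ with the class of $\mathcal{L}(\vartheta)$ in $H_{0}(F^{\times},\mathcal{R}(\emptyset))$, and the Shintani-datum axiom then gives $\lambda(Q({\rm Sh}|_{\emptyset}))=\lambda(\mathcal{L}(\vartheta))=m\theta$. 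For general $V$, the natural inclusion $\mathcal{R}^{(V)}(-1)\hookrightarrow\mathcal{R}^{(\emptyset)}(-1)=\mathcal{R}(\emptyset)$ together with item (ii) applied to $V'=\emptyset$ shows that $Q({\rm Sh})$ and $Q({\rm Sh}|_{\emptyset})$ have the same image in $H_{0}(F^{\times},\mathcal{R}(\emptyset))$; since $\lambda$ factors through this group, $\lambda(Q({\rm Sh}))=m\theta$.

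Items (ii) and (iii) both follow from the same naturality principle: a morphism of input data (the restriction $V\supset V'$ in (ii); a Shintani-datum morphism $\mathcal{J}$ in (iii)) induces a morphism of the double complexes appearing in the definition of $Q$, and the valid choices $(a,b)$ on one side project or transport to valid choices on the other. For (ii), the projection from $\mathcal{B}|_{{\bf Sub}(V)\setminus\{S\}}$ to $\mathcal{B}|_{{\bf Sub}(V')\setminus\{S\}}$ given by dropping the summands indexed by $W\not\subset V'$, together with the analogous projection for $\mathcal{R}$, commutes with $\partial_{h}$, $\partial_{v}$, $\mathcal{L}$, $i$, $i_{1}$, $q_{1}$, $q$ and sends $\vartheta$ to $\vartheta$, so the image of $q\circ q_{1}(b)$ in $H_{0}(F^{\times},\mathcal{R}^{(V')}(-1))$ represents $Q({\rm Sh}|_{V'})$. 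For (iii), the scaled compatibility $\frac{m}{m_{1}}\mathcal{L}_{1}=\frac{m}{m_{2}}\mathcal{L}_{2}\circ\mathcal{J}$ with $m={\rm lcm}(m_{1},m_{2})$ allows one to run the chases for $\frac{m}{m_{1}}Q({\rm Sh}_{1})$ and $\frac{m}{m_{2}}Q({\rm Sh}_{2})$ starting from the same lift of $\vartheta_{1}$ transported via $\mathcal{J}$, forcing the two outputs to coincide in $H_{0}(F^{\times},\mathcal{R}^{(V)}(-1))$. The only real obstacle is pure bookkeeping: confirming that the choice-ambiguities absorb into $I_{F^{\times}}\mathcal{R}^{(V')}(-1)$ in (ii) and that the scaling factors line up correctly in (iii); no new ideas beyond the well-definedness of $Q$ are needed, consistent with the paper's remark that the lemma follows from the definition.
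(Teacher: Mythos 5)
Your overall approach is the natural diagram-chase verification, and the paper itself supplies no proof here (it simply says the lemma ``follows from the definition''), so there is nothing to diverge from; your items (ii), (iii) and the reduction of (i) to $V=\emptyset$ all work. One imprecision is worth correcting in the well-definedness preliminary: if $a'=a+d_{1}(c)$ with $c\in\mathcal{B}[1]$, then for any choices of $b,b'$ with $d_{1}(b)=\mathcal{L}(a)$, $d_{1}(b')=\mathcal{L}(a')$ one gets $b'-b\in\mathcal{L}(c)+\ker d_{1}$, \emph{not} $b'-b\in\ker d_{1}$ as you assert; the extra term $\mathcal{L}(c)$ is not in $\ker d_{1}$ in general. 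What saves the argument is the separate observation that $q_{1}\circ\mathcal{L}$ vanishes on $\mathcal{B}[1]$, because $\mathcal{B}(j)=0$ for $j\le-1$ forces $\mathcal{B}[1]=\mathcal{B}_{0,1}\oplus\mathcal{B}_{1,2}\oplus\cdots$ and $\mathcal{L}$ acts componentwise, so $\mathcal{L}(\mathcal{B}[1])$ has no $\mathcal{R}^{(V)}_{-1,0}$ component. Combined with your correct observation that $q\circ q_{1}\circ d_{2}=q\circ\partial_{v}\circ q_{2}=0$, this closes the argument. A similar precision is useful in (ii): the chain map $\mathcal{R}^{(V)}[\bullet]\to\mathcal{R}^{(V')}[\bullet]$ is the projection on rows $i\ge0$ but the \emph{inclusion} $\mathcal{R}^{(V)}(-1)\hookrightarrow\mathcal{R}^{(V')}(-1)$ on the row $i=-1$; calling it a projection throughout is a slight abuse, though the diagram does commute and the resulting identification of $Q(\mathrm{Sh}|_{V'})$ with the image of $Q(\mathrm{Sh})$ in $H_{0}(F^{\times},\mathcal{R}^{(V')}(-1))$ is exactly what (ii) asserts.
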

From (ii), for $V_{2}\subset V_{1}\subsetneq S$ and a Shintani ${\rm Sh}$
on $V_{1}$, we have
\begin{equation}
Q({\rm Sh}|_{V_{2}})\equiv0\pmod{I_{F^{\times}}\mathcal{R}^{(V_{2})}(-1)+\mathcal{R}^{(V_{1})}(-1)}.\label{eq:v1v2}
\end{equation}
Unfortunately (\ref{eq:v1v2}) does not hold for $(V_{1},V_{2})=(S,S\setminus\{v_{0}\})$.
Instead, we give a way to compute 
\[
Q({\rm Sh}|_{S\setminus\{v_{0}\}})\bmod I_{F^{\times}}\mathcal{R}^{(S\setminus\{v_{0}\})}(-1)+\mathcal{R}^{(S)}(-1)
\]
 in the next section.

\subsection{Computation of $Q({\rm Sh}|_{S\setminus\{v_{0}\}})$\label{sub:CompQfull}}

Let ${\rm Sh}=(\mathcal{B},\mathcal{L},\vartheta,m)$ be a Shintani
data on $S$. Note that $H(\mathcal{R}^{(S)}[1]\to\mathcal{R}^{(S)}[0]\to\mathcal{R}^{(S)}[-1])$
is canonically isomorphic to $H_{r}(F^{\times},\mathcal{R}(S))$.
Put $V=S\setminus\{v_{0}\}$.
\begin{defn}
\label{Def:eta1}We define the map $\eta_{1}:\ker(\mathcal{B}_{0,-1}\to\mathcal{B}_{1,-1})\to H_{r}(F^{\times},\mathcal{R}(S))$
as follows. Let us consider the following commutative diagram.\[ \xymatrix{ & \ker(\mathcal{B}_{0,-1}\to\mathcal{B}_{1,-1})\ar[r]^-{i} & \mathcal{B}_{0,-1}\ar[r]^{\partial_{h}}\ar[d]^{i_{1}} & \mathcal{B}_{1,-1}\ar[d]^{i_{2}}\\ \mathcal{B}[1]\ar[r]^{d_{1}}\ar[d]^{\mathcal{L}} & \mathcal{B}[0]\ar[r]^{d_{0}}\ar[d]^{\mathcal{L}} & \mathcal{B}[-1]\ar[r]^{d_{-1}}\ar[d]^{\mathcal{L}} & \mathcal{B}[-2]\\ \mathcal{R}^{(S)}[1]\ar[r]^{d_{1}'} & \mathcal{R}^{(S)}[0]\ar[r]^{d_{0}'} & \mathcal{R}^{(S)}[-1]. } \]

Let $x\in\ker(\mathcal{B}_{0,-1}\to\mathcal{B}_{1,-1})$. Then there
exists $a\in\mathcal{B}[0]$ such that $d_{0}(a)=i_{1}\circ i(x)$.
Then $\mathcal{L}(a)\in\ker(d_{0}')$, and $(\mathcal{L}(a)\bmod{\rm im}(d_{1}'))$
does not depend on the choice of $a$. We put $\eta_{1}(x)=\mathcal{L}(a)\in\ker(d_{0}')/{\rm im}(d_{1}')\simeq H_{r-1}(F^{\times},\mathcal{R}(S))$.
\end{defn}

\begin{defn}
\label{Def:eta2}We define the map $\eta_{2}:H_{r}(F^{\times},\mathcal{R}(S))\to\mathcal{R}^{(V)}(-1)/(\mathcal{R}^{(S)}(-1)+I_{F^{\times}}\mathcal{R}^{(V)}(-1))$
as follows. Let us consider the following commutative diagram.\[
\xymatrix{ & \mathcal{R}^{(S)}[1]\ar[r]^{d_{1}'}\ar[d]^{f_{1}} & \mathcal{R}^{(S)}[0]\ar[r]^{d_{0}'}\ar[d]^{f_{0}} & \mathcal{R}^{(S)}[-1]\ar[d]^{f_{-1}}\\ \mathcal{R}^{(V)}[2]\ar[r]^{d_{2}}\ar[d]_{q_{2}} & \mathcal{R}^{(V)}[1]\ar[r]^{d_{1}}\ar[d]_{q_{1}} & \mathcal{R}^{(V)}[0]\ar[r]^{d_{0}} & \mathcal{R}^{(V)}[-1]\\ \mathcal{R}_{-1,1}^{(V)}\ar[r]^{\partial_{v}} & \mathcal{R}_{-1,0}^{(V)}\ar[r]^-{q} & \mathcal{R}_{-1,0}^{(V)}/(\mathcal{R}_{-1,0}^{(S)}+{\rm im}(\partial_{v})) }  
\]Let $y$ be an element of $H_{r}(F^{\times},\mathcal{R}(S))\simeq\ker(d_{0}')/{\rm im}(d_{1}')$.
Let $g\in\mathcal{R}^{(S)}[0]$ be a lift of $y$. Since $d_{0}\circ f_{0}(g)=f_{-1}\circ d_{0}'(g)=0$,
there exists $b\in\mathcal{R}^{(V)}[1]$ such that $d_{1}(b)=g$.
Then $q\circ q_{1}(b)$ does not depend on the choice of $g$ and
$b$ because $q\circ q_{1}\circ f_{1}=0$ and $q\circ q_{1}\circ d_{2}=q\circ\partial_{v}\circ q_{2}=0$.
We put $\eta_{2}(y)=q\circ q_{1}(b)$. \end{defn}
\begin{prop}
\label{prop:calc_of_QSH}We have
\[
Q({\rm Sh}|_{S\setminus\{v_{0}\}})\equiv\eta_{2}(\eta_{1}(\vartheta))\pmod{I_{F^{\times}}\mathcal{R}^{(V)}(-1)+\mathcal{R}^{(S)}(-1)}.
\]
\end{prop}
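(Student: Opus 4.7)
The plan is to exhibit the same elements witnessing both $Q({\rm Sh}|_{S\setminus\{v_0\}})$ and $\eta_2(\eta_1(\vartheta))$, using the natural projection that drops the components indexed by subsets containing $v_0$. Concretely, for each $k$ we have projections $\pi\colon\mathcal{B}^{(S)}(k)\to\mathcal{B}^{(V)}(k)$ and $\pi\colon\mathcal{R}^{(S)}(k)\to\mathcal{R}^{(V)}(k)$. Since $V\subset S$, the formula defining $\partial_h$ on $W\subset V$ involves the same restriction maps $r_W^{W'}$ in both the $S$- and the $V$-indexed complex, so $\pi$ commutes with $\partial_h$; commutation with $\partial_v$ is immediate, and commutation with $\mathcal{L}$ is naturality. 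Hence $\pi$ induces chain maps between the total complexes, and in the relevant degree one recognizes $\pi$ as the map $f_0$ appearing in Definition \ref{Def:eta2}.

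Following the constructions, choose $a\in\mathcal{B}^{(S)}[0]$ with $d_0(a)=i_1\circ i(\vartheta)$; such an $a$ exists because $(\mathcal{B}[\bullet],d_\bullet)$ is exact. Since $\vartheta$ lies in $\mathcal{B}(\emptyset)/I_{F^\times}\mathcal{B}(\emptyset)$ and $\emptyset\subset V$, the projected element $\pi(a)\in\mathcal{B}^{(V)}[0]$ satisfies $d_0(\pi(a))=\pi(i_1\circ i(\vartheta))=i_1\circ i(\vartheta)$ in $\mathcal{B}^{(V)}[-1]$, so $\pi(a)$ is a valid lift for applying Definition \ref{Def:Q} to ${\rm Sh}|_V$. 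The next step in computing $Q({\rm Sh}|_V)$ calls for an element $b^V\in\mathcal{R}^{(V)}[1]$ with $d_1(b^V)=\mathcal{L}(\pi(a))=\pi(\mathcal{L}(a))=f_0(\mathcal{L}(a))$, where the middle equality is naturality of $\mathcal{L}$.

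On the other hand, $\eta_1(\vartheta)$ is by construction the class of $\mathcal{L}(a)\in\mathcal{R}^{(S)}[0]$; taking $g:=\mathcal{L}(a)$ in Definition \ref{Def:eta2}, one seeks $b\in\mathcal{R}^{(V)}[1]$ with $d_1(b)=f_0(g)=f_0(\mathcal{L}(a))$. This is the same equation that $b^V$ satisfies, so we may set $b=b^V$. Both quantities are then represented by the single element $q_1(b^V)\in\mathcal{R}^{(V)}(-1)=\mathcal{R}_{-1,0}^{(V)}$. Since the target quotient $\mathcal{R}^{(V)}(-1)/(\mathcal{R}^{(S)}(-1)+I_{F^\times}\mathcal{R}^{(V)}(-1))$ of $\eta_2$ is a further quotient of $\mathcal{R}^{(V)}(-1)/I_{F^\times}\mathcal{R}^{(V)}(-1)$ in which $Q({\rm Sh}|_V)$ lives, the claimed congruence follows.

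The only point demanding attention is the compatibility of $\pi$ with $\partial_h$; this rests on the observation that a subset $W\subset V$ carries the same ordering of its indices whether viewed inside $S$ or inside $V$, so the signs in the Koszul-style formula for $\partial_h$ agree. No deeper obstruction intervenes, and the proof reduces to diagram-chasing once this functoriality is set up.
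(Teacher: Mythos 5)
Your proof is correct and follows essentially the same route as the paper's: the paper also picks $a\in\mathcal{B}^{(S)}[0]$ with $d_0(a)=i_1\circ i(\vartheta)$ and $b\in\mathcal{R}^{(V)}[1]$ with $d_1(b)=f_0\circ\mathcal{L}(a)$, then identifies $q\circ q_1(b)$ with both quantities. You have merely made explicit the truncation map $\pi$ and the observation that $\pi(a)$ is a valid choice in Definition \ref{Def:Q}, which the paper leaves implicit in its one-line diagram chase.
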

\begin{proof}
Let us see the diagrams in Definition \ref{Def:eta1} and \ref{Def:eta2}.
There exists $a\in\mathcal{B}[0]$ such that $d_{0}(a)=i_{1}\circ i(\vartheta)$.
Then there exists $b\in\mathcal{R}^{(V)}[1]$ such that $d_{1}(b)=f_{0}\circ\mathcal{L}(a)$.
From the definition, we have $\eta_{2}(\eta_{1}(\vartheta))=q\circ q_{1}(b)$.
Since $d_{0}(a)=i_{1}'\circ i(\vartheta)$ and $f_{0}\circ\mathcal{L}(a)=d_{1}(b)$,
we have
\[
q\circ q_{1}(b)\equiv Q({\rm Sh}|_{S\setminus\{v_{0}\}})\pmod{I_{F^{\times}}\mathcal{R}^{(V)}(-1)+\mathcal{R}^{(S)}(-1)}.
\]
Thus the proposition is proved.\end{proof}
\begin{lem}
\label{lem:eta1_is_in}Let $A$ be a subgroup of $F^{\times}$. Assume
that $H_{i}(A,\mathcal{B}(W))=0$ for all positive integer $i$ and
proper subset $W$ of $S$, and that there exists a lift $x\in\mathcal{B}(\emptyset)$
of $\vartheta$ such that ${\rm r}_{\{v\}}^{\emptyset}(x)\in I_{A}\mathcal{B}(\{v\})$
for $v\in S$. Then $\eta_{1}(\vartheta)$ is contained in ${\rm im}(H_{r}(A,\mathcal{R}(S))\to H_{r}(F^{\times},\mathcal{R}(S)))$.\end{lem}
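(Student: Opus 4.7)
The strategy is to mimic the construction of $\eta_1$ entirely inside an $A$-analogue of the double complex, and to exploit the specific lift $x$ to land inside it. To this end, I would introduce the bar resolution $\mathcal{I}_k^A := \mathbb{Z}[A^{k+1}]$ of $\mathbb{Z}$ by $\mathbb{Z}[A]$-free modules, and form the double complexes $\mathcal{B}^A_{i,j} := \mathcal{B}(i)\otimes_{\mathbb{Z}[A]}\mathcal{I}_j^A$ and $\mathcal{R}^{(S),A}_{i,j}$ analogously, together with their total complexes $\mathcal{B}^A[\bullet]$ and $\mathcal{R}^{(S),A}[\bullet]$. The inclusion $A\hookrightarrow F^\times$ induces chain maps $\mathcal{I}_\bullet^A\to\mathcal{I}_\bullet$, hence natural maps of double complexes $\mathcal{B}^A_{\bullet,\bullet}\to\mathcal{B}_{\bullet,\bullet}$ and $\mathcal{R}^{(S),A}_{\bullet,\bullet}\to\mathcal{R}^{(S)}_{\bullet,\bullet}$.

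The hypothesis $H_i(A,\mathcal{B}(W))=0$ for $i>0$ and all proper $W\subsetneq S$ will guarantee that each column $\mathcal{B}^A_{i,\bullet}$ with $0\le i\le r$ is an exact augmented resolution of $H_0(A,\mathcal{B}(i))$, and consequently $\mathcal{B}^A[\bullet]$ is exact by the argument already given for $\mathcal{B}[\bullet]$. The horizontal exactness of $\mathcal{R}^{(S),A}_{\bullet,j}$ in the middle degrees is inherited from the exact chain $\mathcal{R}(\emptyset)\to\mathcal{R}^{(S)}(1)\to\cdots\to\mathcal{R}(S)\to 0$, since $\mathcal{I}_j^A$ is $\mathbb{Z}[A]$-free. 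These are the only ingredients in Definition \ref{Def:eta1}, so the same recipe produces
\[
\eta_1^A\colon\ker(\mathcal{B}^A_{0,-1}\to\mathcal{B}^A_{1,-1})\longrightarrow H(\mathcal{R}^{(S),A}[1]\to\mathcal{R}^{(S),A}[0]\to\mathcal{R}^{(S),A}[-1])\cong H_r(A,\mathcal{R}(S)),
\]
the canonical isomorphism on the right being the $A$-version of the one for $F^\times$.

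The lift $x$ defines a class $\bar x^A\in\mathcal{B}(\emptyset)/I_A\mathcal{B}(\emptyset)=\mathcal{B}^A_{0,-1}$, and the condition ${\rm r}_{\{v\}}^\emptyset(x)\in I_A\mathcal{B}(\{v\})$ for $v\in S$ says exactly that $\bar x^A$ lies in $\ker(\mathcal{B}^A_{0,-1}\to\mathcal{B}^A_{1,-1})$ and maps to $\vartheta$ under $\mathcal{B}^A_{0,-1}\to\mathcal{B}_{0,-1}$. By the naturality of the whole construction with respect to the group, the square
\[
\xymatrix{\ker(\mathcal{B}^A_{0,-1}\to\mathcal{B}^A_{1,-1})\ar[r]^-{\eta_1^A}\ar[d] & H_r(A,\mathcal{R}(S))\ar[d]\\ \ker(\mathcal{B}_{0,-1}\to\mathcal{B}_{1,-1})\ar[r]^-{\eta_1} & H_r(F^\times,\mathcal{R}(S))}
\]
commutes, so $\eta_1(\vartheta)$ coincides with the image of $\eta_1^A(\bar x^A)$ and therefore lies in ${\rm im}(H_r(A,\mathcal{R}(S))\to H_r(F^\times,\mathcal{R}(S)))$, as desired.

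The main obstacle will be verifying naturality, in the group $G\subset F^\times$, of the canonical isomorphism $H(\mathcal{R}^{(S),G}[1]\to\mathcal{R}^{(S),G}[0]\to\mathcal{R}^{(S),G}[-1])\cong H_r(G,\mathcal{R}(S))$ used above, so that the right-hand vertical map in the square really is the canonical map $H_r(A,\mathcal{R}(S))\to H_r(F^\times,\mathcal{R}(S))$. I would check this by exhibiting the isomorphism via an explicit chain-level formula — e.g.\ sending a $d_0$-cycle $(a_i)_{0\le i\le r}$ to the class of $\bar\partial_h(a_r)\in\mathcal{R}(S)\otimes_{\mathbb{Z}[G]}\mathcal{I}_r^G$, where $\bar\partial_h\colon\mathcal{R}^{(S)}(r)\twoheadrightarrow\mathcal{R}(S)$ is the final surjection of the horizontal exact sequence — and then observing that this formula uses only the $\mathbb{Z}[G]$-module structure of $\mathcal{R}(S)$ and the horizontal exact sequence, both of which are independent of $G$.
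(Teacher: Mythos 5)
Your proof is correct and takes essentially the same route as the paper: replicate the construction of $\eta_1$ over $\mathbb{Z}[A]$, use the hypothesis on $x$ to produce a cycle in the $A$-analogue of $\ker(\mathcal{B}_{0,-1}\to\mathcal{B}_{1,-1})$, and conclude by naturality under $A\hookrightarrow F^\times$. The only cosmetic difference is that you take the $A$-bar resolution $\mathbb{Z}[A^{\bullet+1}]$, whereas the paper tensors the same modules $\mathcal{I}(j)=\mathbb{Z}[(F^\times)^{j+1}]$ over $\mathbb{Z}[A]$; both being free $\mathbb{Z}[A]$-resolutions, the resulting maps and commuting squares agree, so the naturality you flag as the main obstacle holds just as it does in the paper's version.
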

\begin{proof}
Put $\mathcal{B}_{(A).i,j}:=\mathcal{B}_{i}\otimes_{\mathbb{Z}[A]}\mathcal{I}(j)$
for $(i,j)\in\mathbb{Z}^{2}$, $\mathcal{R}_{(A),i,j}:=\mathcal{R}(i)\otimes_{A}\mathcal{I}(j)$
for $j\neq-1$, and $\mathcal{R}_{(A),i,-1}=0$. For $\mathcal{F}=\mathcal{B}$
or $\mathcal{F}=\mathcal{R}$, put
\[
\mathcal{F}_{(A)}[k]=\bigoplus_{-i+j=k}\mathcal{F}_{(A),i,j},
\]
Consider the following commutative diagram.\[ \xymatrix{ &  &  & \mathcal{B}(\emptyset)/I_{A}\mathcal{B}(\emptyset)\ar[d]^{i'}\\  &  & \mathcal{B}_{(A)}[0]\ar[r]^{d_{0}}\ar[d]^{\mathcal{L}} & \mathcal{B}_{(A)}[-1]\ar[d]^{\mathcal{L}}\\ H_{r}(A,\mathcal{R}(S))\ar[d]^{h_{0}} & \ker(d_{0}'')\ar[l]_-{q}\ar[d]^{h_{1}}\ar[r]^{i} & \mathcal{R}_{(A)}^{(S)}[0]\ar[r]^{d_{0}''}\ar[d]^{h_{2}} & \mathcal{R}_{(A)}^{(S)}[-1]\ar[d]^{h_{3}}\\ H_{r}(F^{\times},\mathcal{R}(S)) & \ker(d_{0}')\ar[l]_-{q}\ar[r]^{i} & \mathcal{R}^{(S)}[0]\ar[r]^{d_{0}'} & \mathcal{R}^{(S)}[-1]. } \]Let
$\bar{x}\in\mathcal{B}(\emptyset)/I_{A}\mathcal{B}(\emptyset)$ be
an image of $x$. From the assumption, there exists $a'\in\mathcal{B}_{(A)}[0]$
such that $d_{0}(a')=i'(\bar{x})$. Since $d_{0}''\circ\mathcal{L}(a')=\mathcal{L}\circ d_{0}(a')=\mathcal{L}\circ i'(\bar{x})=0$,
there exists $y\in\ker(d_{0}'')$ such that $i(y)=\mathcal{L}(a')$.
Put $y':=h_{1}(y)\in\ker(d_{0}')$. Since $i(y')=h_{2}\circ\mathcal{L}(a')$,
we have $\eta_{1}(\vartheta)=q(y')=h_{0}\circ q(y)$. Thus the lemma
is proved.
\end{proof}

\section{The module $\mathcal{Z}(U,W)$\label{sec:Zmodule}}

\subsection{A certain $\mathbb{Z}$-module corresponding to a vector space}

Fix a positive integer $n$ and an $n$-dimensional vector space $V$
over $\mathbb{Q}$. For $x_{1},\dots,x_{k}\in V$, we denote by $C(x_{1},\dots,x_{k})$
the open cone generated by $x_{1},\dots,x_{k}$ in $V$ or $V\otimes\mathbb{R}$.
\begin{defn}
We say that a subset $U$ of $V\setminus\{0\}$ is fat if $U$ cannot
be covered by any finite union of proper subspaces of $V$. 
\end{defn}
Fix a fat subset $U$ of $V\setminus\{0\}$. For $k\geq1$, we denote
by $X_{k}(U)$ the $\mathbb{Z}$-module generated by the formal symbol
$[x_{1},\dots,x_{k}]$ where $x_{1},\dots,x_{k}$ are linearly independent
vectors in $U$. We put $X(U):=\bigoplus_{k=1}^{n}X_{k}(U)$ and $X_{{\rm low}}(U):=\bigoplus_{k=1}^{n-1}X_{k}(U)\subset X(U).$
We define a homomorphism $\mathcal{L}_{\infty}:X(U)\to{\rm Map}(V,\mathbb{Z})$
by
\[
\mathcal{L}_{\infty}([x_{1},\dots,x_{k}]):=\bm{1}_{C(x_{1},\dots,x_{k})}.
\]
We put 
\[
\mathcal{K}'(U):={\rm im}(X(U)\xrightarrow{\mathcal{L}_{\infty}}{\rm Map}(V,\mathbb{Z}))
\]
and
\[
\mathcal{K}(U):=\mathcal{K}'(U)/\left(\mathcal{K}'(U)\cap\bm{1}_{V\setminus\{0\}}\mathbb{Z}\right).
\]
In this section, we define a subset $Y(U)$ of $X(U)$, prove that
$Y(U)\subset\ker(X(U)\xrightarrow{\mathcal{L}_{\infty}}\mathcal{K}(U))$
(Proposition \ref{prop:LYUvanish}), and construct a certain exact
sequence (Proposition \ref{prop:exactXZC}).

For $m\geq0$, we denote by $C_{m}(U)$ the $\mathbb{Z}$-module generated
by the formal symbol
\[
(x_{1},\dots,x_{m})
\]
where $x_{1},\dots,x_{m}\in U$ are in general position. We define
$\partial_{m}:C_{m}(U)\to C_{m-1}(U)$ by
\[
\partial_{m}((x_{1},\dots,x_{m}))=\sum_{j=1}^{m}(-1)^{j-1}(x_{1},\dots,\widehat{x_{j}},\dots,x_{m}).
\]
Since $U$ is fat, the following sequence is exact.
\[
\cdots\to C_{3}(U)\to C_{2}(U)\to C_{1}(U)\to C_{0}(U)\to0.
\]
Let us fix a map $r:V^{n}\to\{0,1,-1\}$ such that
\begin{itemize}
\item $r(x_{1},\dots,x_{n})\neq0$ if and only if $x_{1},\dots,x_{n}$ are
linearly independent,
\item $r(fx_{1},\dots,fx_{n})={\rm sgn}(\det(f))r(x_{1},\dots,x_{n})$ for
all automorphisms $f$ of $V$. 
\end{itemize}
We call such a map an orientation of $V$. We define the homomorphism
$\psi:C_{n+1}(U)\to X(U)$ by
\[
\psi((x_{1},\dots,x_{n+1}))=\sum_{u\in\{\pm1\}}\sum_{k=1}^{n}\sum_{i_{1},\dots,i_{k}}u[x_{i_{1}},\dots,x_{i_{k}}]
\]
where $1\leq i_{1}<\cdots<i_{k}\leq n+1$ runs all tuples such that
\[
(-1)^{j-1}r(x_{1},\dots,\hat{x}_{j},\dots,x_{n+1})=u
\]
for all $j\in\{1,\dots,n+1\}\setminus\{i_{1},\dots,i_{k}\}$. We put
$Y(U)={\rm Image}(\psi)\subset X(U)$ and $Z(U)=X(U)/Y(U)$. Note
that $Y(U)$ does not depend on the choice of $r$. We say that $Q\in V\otimes\mathbb{R}$
is an irrational vector if $Q$ is not contained in any proper subspace
of $V\otimes\mathbb{R}$ spanned by vectors in $V$. For an irrational
vector $Q$, define the homomorphism $\varphi^{Q}:C_{n}(U)\to X(U)$
by
\[
\varphi^{Q}((x_{1},\dots,x_{n}))=r(x_{1},\dots,x_{n})\sum_{k=1}^{n}\sum_{i_{1},\dots,i_{k}}[x_{i_{1}},\dots,x_{i_{k}}]
\]
where $1\leq i_{1}<\cdots<i_{k}\leq n$ runs all tuple such that
\[
\frac{r_{x_{j}\to Q}(x_{1},\dots,x_{n})}{r(x_{1},\dots,x_{n})}>0
\]
for all $j\in\{1,\dots,n\}\setminus\{i_{1},\dots,i_{k}\}$, where
$r_{x_{j}\to Q}(x_{1},\dots,x_{n})$ denotes $r(x_{1},\dots x_{j-1},Q,x_{j+1},\dots,x_{n})$.
\begin{lem}
\label{lem:Hill}Let $x_{1},\dots,x_{n+1},y\in V\otimes\mathbb{R}$
be vectors in general position. Then we have
\begin{multline*}
\sum_{j=1}^{n+1}(-1)^{j-1}r(x_{1},\dots,\widehat{x_{j}},\dots,x_{n+1})\bm{1}_{C(x_{1},\dots,\widehat{x_{j}},\dots,x_{n+1})}(y)\\
=\begin{cases}
1 & (-1)^{j-1}r(x_{1},\dots,\hat{x}_{j},\dots,x_{n+1})=1\ \text{for all}\ j=1,\dots,n+1\\
-1 & (-1)^{j-1}r(x_{1},\dots,\hat{x}_{j},\dots,x_{n+1})=-1\ \text{for all}\ j=1,\dots,n+1\\
0 & {\rm otherwise}.
\end{cases}
\end{multline*}
\end{lem}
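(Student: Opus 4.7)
The plan is to exploit the unique-up-to-scaling linear relation among $x_{1},\ldots,x_{n+1}$. Since these $n+1$ vectors lie in the $n$-dimensional space $V\otimes\mathbb{R}$, there is a canonical linear relation
\[
\sum_{j=1}^{n+1} a_{j}\, x_{j}=0,\qquad a_{j}=(-1)^{j-1}\det(x_{1},\ldots,\widehat{x_{j}},\ldots,x_{n+1}),
\]
obtained by expanding the determinant of the $(n+1)\times(n+1)$ matrix with a repeated row along its first row. In particular, $\mathrm{sgn}(a_{j})=(-1)^{j-1}r(x_{1},\ldots,\widehat{x_{j}},\ldots,x_{n+1})$, so the three cases of the lemma correspond respectively to the $a_{j}$'s being all positive, all negative, or of mixed signs, and the left-hand side rewrites as $\sum_{j}\mathrm{sgn}(a_{j})\,\mathbf{1}_{C_{j}}(y)$, where $C_{j}:=C(x_{1},\ldots,\widehat{x_{j}},\ldots,x_{n+1})$.

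Next I would set up a parametrization that turns membership in $C_{j}$ into a combinatorial condition. The set of coefficient tuples $(c_{1},\ldots,c_{n+1})$ with $\sum c_{j} x_{j}=y$ is a $1$-dimensional affine line in $\mathbb{R}^{n+1}$ with direction $(a_{1},\ldots,a_{n+1})$, so each coordinate $c_{j}$ restricted to this line is an affine function vanishing at a unique parameter $t_{j}$, and general position of $y$ relative to the $x_{i}$'s forces the $t_{j}$'s to be pairwise distinct. Writing $c_{i}(t)=a_{i}(t-t_{i})$, one obtains the clean criterion
\[
y\in C_{j}\ \Longleftrightarrow\ \mathrm{sgn}(t_{j}-t_{i})=\mathrm{sgn}(a_{i})\ \text{for all}\ i\neq j,
\]
which converts the geometric question into a purely combinatorial one about the interleaving of the $t_{i}$'s and the signs of the $a_{i}$'s.

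A short case analysis then concludes. If all $a_{j}>0$, the criterion forces $t_{j}$ to be the maximum of the $t_{i}$'s, which pins down a unique $j$, so $y$ lies in exactly one cone and the sum is $+1$; all $a_{j}<0$ is symmetric and gives $-1$. In the mixed case, order the indices so that $t_{\pi(1)}<\cdots<t_{\pi(n+1)}$ and read off the sequence $\mathrm{sgn}(a_{\pi(i)})$: the criterion at $j=\pi(m)$ demands that this sequence be $+$ for $i<m$ and $-$ for $i>m$. If the sorted sign sequence is not of the form $(+,\ldots,+,-,\ldots,-)$, no index satisfies the criterion and the sum is $0$; if it is, split between positions $p$ and $p+1$, then exactly $j=\pi(p)$ and $j=\pi(p+1)$ work, contributing $+1$ and $-1$, which cancel.

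I do not anticipate any deep obstacle; the main care needed is tracking signs consistently through the cofactor expansion, the orientation $r$, and the signs of the differences $t_{j}-t_{i}$. The general-position hypothesis is used precisely to guarantee the distinctness of the $t_{j}$'s and the nondegeneracy of all the determinants that appear, so that the combinatorial criterion really governs cone membership.
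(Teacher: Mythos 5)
Your proof is correct, and it is a genuinely different route from the paper: the paper offers no argument at all for this lemma, simply citing Proposition~2 of Hill (reference \cite{MR2392823}), whereas you give a self-contained elementary proof. Your argument is the standard one: use the kernel relation $\sum_j a_j x_j = 0$ with $a_j = (-1)^{j-1}\det(x_1,\ldots,\widehat{x_j},\ldots,x_{n+1})$, parametrize the fiber $\{(c_1,\ldots,c_{n+1}) : \sum_j c_j x_j = y\}$ as a line $c_i(t) = a_i(t - t_i)$, and convert $y \in C_j$ into the combinatorial criterion $\mathrm{sgn}(t_j - t_i) = \mathrm{sgn}(a_i)$ for all $i \neq j$. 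The case analysis on the sorted sign pattern of the $a_j$ (all $+$, all $-$, or exactly one descent $+\cdots+-\cdots-$, versus not of that form) then yields $+1$, $-1$, $1-1 = 0$, and $0$, respectively, exactly matching the lemma. You use general position correctly in two places: it makes each $a_j \neq 0$ so that the $t_j$ exist, and it makes the $t_j$ pairwise distinct (otherwise $y$ would lie in the span of $n-1$ of the $x_i$). One small point worth making explicit is that $\mathrm{sgn}(a_j) = (-1)^{j-1} r(x_1,\ldots,\widehat{x_j},\ldots,x_{n+1})$ holds only up to a global sign depending on how the coordinate determinant is normalized against the abstract orientation $r$; but since the statement of the lemma is symmetric under $r \mapsto -r$ (it just swaps the $\pm 1$ cases), this is harmless. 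Compared to leaning on the external reference, your approach has the advantage of keeping the paper self-contained; the cost is a page or so of elementary but sign-sensitive bookkeeping.
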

\begin{proof}
It was proved in \cite[Proposition 2]{MR2392823}. \end{proof}
\begin{lem}
\label{lemPsiPhi}Let $Q\in V\otimes\mathbb{R}$ be an irrational
vector. We have $\psi(a)=\varphi^{Q}(\partial a)$ for all $a\in C_{n+1}(U)$.\end{lem}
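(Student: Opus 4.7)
The plan is to verify $\psi(a) = \varphi^Q(\partial a)$ for $a = (x_1, \ldots, x_{n+1})$ by comparing coefficients of each basis element $[x_{i_1}, \ldots, x_{i_k}]$ of $X(U)$, as $I := \{i_1 < \cdots < i_k\}$ ranges over subsets of $\{1, \ldots, n+1\}$ with $1 \leq k \leq n$. Set $J := \{1, \ldots, n+1\} \setminus I$ and $\epsilon_j := (-1)^{j-1} r(x_1, \ldots, \widehat{x_j}, \ldots, x_{n+1})$ for $j \in J$. From the definition of $\psi$, the coefficient of $[x_{i_1}, \ldots, x_{i_k}]$ in $\psi(a)$ is $+1$, $-1$, or $0$ according as all $\epsilon_j$ ($j \in J$) are $+1$, all are $-1$, or the family is mixed. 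Expanding $\partial a$ and $\varphi^Q$, the coefficient on the RHS becomes
\[
\sum_{j \in J} \epsilon_j \cdot \bm{1}\bigl[\forall l \in J \setminus \{j\}:\ r_{x_l \to Q}(x_1, \ldots, \widehat{x_j}, \ldots, x_{n+1})/r(x_1, \ldots, \widehat{x_j}, \ldots, x_{n+1}) > 0\bigr].
\]

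The case $|J| = 1$ is immediate, so assume $|J| \geq 2$ and pass to the quotient $\bar{V} := (V \otimes \mathbb{R})/H_I$, where $H_I$ is the span of $\{x_i : i \in I\}$. Since $x_1, \ldots, x_{n+1}$ are in general position, the projections $\bar{x}_j$ ($j \in J$) form $|J|$ vectors in general position in the $(|J|-1)$-dimensional $\bar{V}$, and irrationality of $Q$ implies $\bar{Q}$ avoids every hyperplane of $\bar{V}$ spanned by $|J|-2$ of the $\bar{x}_j$'s. Writing $Q$ in the basis $(x_m)_{m \neq j}$ and projecting to $\bar{V}$ shows the positivity condition inside the indicator is precisely $\bar{Q} \in C(\bar{x}_l : l \in J \setminus \{j\})$. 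Fixing an auxiliary orientation $r_I$ on $H_I$ with induced orientation $\bar{r}$ on $\bar{V}$, a direct count of inversions in the shuffle rearranging $(1, \ldots, \widehat{j_s}, \ldots, n+1)$ into $(I, J \setminus \{j_s\})$ yields an overall sign $\delta_I \in \{\pm 1\}$ depending only on $I$ with
\[
\epsilon_{j_s} \;=\; \delta_I \cdot (-1)^{s-1}\, \bar{r}(\bar{x}_{J \setminus \{j_s\}}) \qquad (J = \{j_1 < \cdots < j_{|J|}\},\ 1 \leq s \leq |J|).
\]

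Substituting this, the RHS coefficient equals $\delta_I$ times the alternating sum $\sum_{s=1}^{|J|} (-1)^{s-1} \bar{r}(\bar{x}_{J \setminus \{j_s\}})\, \bm{1}_{C(\bar{x}_{J \setminus \{j_s\}})}(\bar{Q})$ in $\bar{V}$. Lemma \ref{lem:Hill} applied in $\bar{V}$ evaluates this sum to $+1$, $-1$, or $0$ according as its signs $(-1)^{s-1}\bar{r}(\bar{x}_{J \setminus \{j_s\}}) = \delta_I\,\epsilon_{j_s}$ are uniformly $+1$, uniformly $-1$, or mixed. A short case check (splitting on the sign of $\delta_I$ and using $\delta_I^2 = 1$) shows the resulting RHS value matches the LHS trichotomy in every case. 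The main obstacle is establishing the displayed sign identity: it reduces to an explicit count of shuffle permutation inversions, which is routine but easy to get wrong.
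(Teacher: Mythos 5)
Your proposal is correct and follows essentially the same route as the paper: expand into coefficients, pass to the quotient $V/\langle x_i : i\in I\rangle$, translate the positivity condition to cone membership of $\bar Q$, and conclude by Lemma \ref{lem:Hill}. The only stylistic difference is that you keep an explicit sign $\delta_I$ and finish with a case check, whereas the paper absorbs that sign into the choice of orientation $r'$ on the quotient; both proofs leave the underlying inversion-count consistency as a routine unstated computation.
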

\begin{proof}
Put $a=(x_{1},\dots,x_{n+1})$. We put $I=\{(i_{1},\dots,i_{k})\mid1\leq i_{1}<\cdots<i_{k}\leq n+1\}$.
From the definition of $\varphi^{Q}$ and $\partial a$, we have
\[
\varphi^{Q}(\partial a)=\sum_{(i_{1},\dots,i_{k})\in I}w(i_{1},\dots,i_{k})[i_{1},\dots,i_{k}]
\]
where $w(i_{1},\dots,i_{k})$ is an integer defined by 
\[
w(i_{1},\dots,i_{k}):=\sum_{m}(-1)^{m-1}r(x_{1},\dots,\widehat{x_{m}},\dots,x_{n+1})
\]
where $m$ runs all integers between $1$ and $n+1$ such that $m\notin\{i_{1},\dots,i_{k}\}$
and
\begin{equation}
\frac{r_{x_{j}\to Q}(x_{1},\dots,\widehat{x_{m}},\dots,x_{n+1})}{r(x_{1},\dots,\widehat{x_{m}},\dots,x_{n+1})}>0\label{eq:condPositive}
\end{equation}
for all $j\in\{1,\dots,n+1\}\setminus\{i_{1},\dots,i_{k},m\}$. Fix
$(i_{1},\dots,i_{k})\in I$. Put $V':=V/(x_{i_{1}}\mathbb{Q}+\cdots+x_{i_{k}}\mathbb{Q})$.
For $x\in V$, we denote by $\bar{x}$ the image of $x$ in $V'$.
Then the condition (\ref{eq:condPositive}) is equivalent to
\begin{equation}
\bar{Q}\in C((\bar{x}_{j})_{j\neq i_{1},\dots,i_{k},m}).\label{eq:condPositive2}
\end{equation}
Put 
\[
\{j_{1}\leq\cdots\leq j_{n+1-k}\}:=\{1,\dots,n+1\}\setminus\{i_{1},\dots,i_{k}\},
\]
$m=j_{c}$ and $y_{\ell}:=x_{j_{\ell}}$. Then (\ref{eq:condPositive2})
is equivalent to
\[
\bar{Q}\in C(y_{1},\dots,\widehat{y_{c}},\dots,y_{n+1-k}).
\]
Thus we have
\[
w(i_{1},\dots,i_{k})=\sum_{c=1}^{n+1-k}(-1)^{j_{c}-1}r(x_{1},\dots,\widehat{x_{j_{c}}},\dots,x_{n+1})\bm{1}_{C(y_{1},\dots,\widehat{y_{c}},\dots,y_{n+1-k})}(\bar{Q}).
\]
Take an orientation $r'$ of $V'$ such that
\[
(-1)^{c-1}r'(y_{1},\dots,\widehat{y_{c}},\dots,y_{n+1-k})=(-1)^{j_{c}-1}r(x_{1},x_{2},\dots,\widehat{x_{j_{c}}},\dots,x_{n},x_{n+1})\ \ \ (1\leq c\leq n+1-k).
\]
Then we have
\begin{align*}
w(i_{1},\dots,i_{k}) & =\sum_{c=1}^{n+1-k}(-1)^{c-1}r'(y_{1},\dots,\widehat{y_{c}},\dots,y_{n+1-k})\bm{1}_{C(y_{1},\dots,\hat{y}_{c},\dots,y_{n+1-k})}(\bar{Q}).
\end{align*}
Thus from Lemma \ref{lem:Hill}, we have
\begin{eqnarray*}
w(i_{1},\dots,i_{k}) & = & \begin{cases}
1 & (-1)^{c-1}r'(y_{1},\dots,\widehat{y_{c}},\dots,y_{n+1-k})=1\ \text{for all}\ c\\
-1 & (-1)^{c-1}r'(y_{1},\dots,\widehat{y_{c}},\dots,y_{n+1-k})=-1\ \text{for all}\ c\\
0 & {\rm otherwise}
\end{cases}\\
 & = & \begin{cases}
1 & (-1)^{j-1}r(x_{1},\dots,\hat{x}_{j},\dots,x_{n+1})=1\ \text{for all}\ j\notin\{i_{1},\dots,i_{k}\}\\
-1 & (-1)^{j-1}r(x_{1},\dots,\hat{x}_{j},\dots,x_{n+1})=-1\ \text{for all}\ j\notin\{i_{1},\dots,i_{k}\}\\
0 & {\rm otherwise}.
\end{cases}
\end{eqnarray*}
Thus the claim is proved.\end{proof}
\begin{prop}
\label{prop:LYUvanish}We have $Y(U)\subset\ker(X(U)\xrightarrow{\mathcal{L}_{\infty}}\mathcal{K}(U))$.\end{prop}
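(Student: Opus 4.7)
The plan is to show that for any generator $a = (x_1, \dots, x_{n+1}) \in C_{n+1}(U)$, the function $\mathcal{L}_\infty(\psi(a))$ equals $c \cdot \bm{1}_{V\setminus\{0\}}$ for the constant $c \in \{-1,0,+1\}$ supplied by Lemma \ref{lem:Hill}: namely, $c = +1$ if every $(-1)^{j-1} r(\hat{x}_j)$ is $+1$, $c = -1$ if every one is $-1$, and $c = 0$ otherwise.

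First I would handle $y \in V \setminus \{0\}$ in general position with $(x_1, \dots, x_{n+1})$ (so that $(x_1, \dots, x_{n+1}, y)$ is in general position). Then $y$ lies in no span of fewer than $n$ of the $x_j$, hence $\bm{1}_{C(x_{i_1}, \dots, x_{i_k})}(y) = 0$ for every tuple of size $k < n$ appearing in the expansion of $\psi(a)$. The surviving size-$n$ tuples are exactly $\{1, \dots, n+1\} \setminus \{m\}$, each with coefficient $(-1)^{m-1} r(\hat{x}_m)$, giving
\[
\mathcal{L}_\infty(\psi(a))(y) \;=\; \sum_{m=1}^{n+1} (-1)^{m-1} r(\hat{x}_m)\, \bm{1}_{C(x_1,\dots,\hat{x}_m,\dots,x_{n+1})}(y),
\]
which equals $c$ by Lemma \ref{lem:Hill}.

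Second, to extend to arbitrary $y \in V \setminus \{0\}$ I would exploit the identity $\psi(a) = \varphi^Q(\partial a)$ from Lemma \ref{lemPsiPhi} with an irrational $Q$. Expanding $\mathcal{L}_\infty(\varphi^Q(\partial a))(y)$ term by term and writing $y = \sum_{\ell \neq j} c_\ell^{(j)} x_\ell$ for each $j$, the value $\mathcal{L}_\infty(\psi(a))(y)$ becomes a combinatorial count of tuples whose complements are constrained by the signs of the $c_\ell^{(j)}$ together with those of $Q$ in the same bases. This converts the proposition into a purely combinatorial identity about sign patterns, which I plan to verify by case analysis on the partition $\{1, \dots, n+1\} = S_{+1} \sqcup S_{-1}$ induced by the sign vector of the unique linear dependence $\sum_j a_j x_j = 0$. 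In the monochromatic case ($S_{+1}$ or $S_{-1}$ empty), the open cones $C(x_{i_1}, \dots, x_{i_k})$ over nonempty proper subsets of $\{1, \dots, n+1\}$ form a complete simplicial fan partitioning $V \setminus \{0\}$, and the identity reduces to the observation that each $y \neq 0$ lies in exactly one such cone. In the mixed case, I would construct a bijection between tuples containing $S_{-1}$ and tuples containing $S_{+1}$ that respects membership of $y$, witnessing the cancellation that produces $c = 0$.

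The main obstacle is precisely this second step. The generic case is essentially immediate from Lemma \ref{lem:Hill}, but when $y$ sits on a lower-dimensional stratum several cones of differing dimensions can simultaneously contain $y$, and verifying that their signed contributions combine into the same constant $c$—in particular, the bijective cancellation in the mixed-sign case at non-generic $y$—forms the combinatorial heart of the proposition.
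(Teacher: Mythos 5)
Your step for generic $y$ is correct: for $y$ not in the span of fewer than $n$ of the $x_j$, only the $k=n$ terms of $\psi(a)$ survive and the sum collapses to the left side of Lemma \ref{lem:Hill}. But your proposal then stalls exactly where you say it does, and the combinatorial route you sketch for non-generic $y$ misses the observation that makes the paper's proof work in one line. The key fact you never isolate is that $\mathcal{L}_\infty$ applied to $\varphi^Q((x_1,\dots,x_n))$ is not some complicated signed sum to be unraveled by case analysis; it is precisely the ``perturbed indicator''
\[
\mathcal{L}_\infty\bigl(\varphi^Q((x_1,\dots,x_n))\bigr)(z) \;=\; r(x_1,\dots,x_n)\,\lim_{\epsilon\to 0^+}\bm{1}_{C(x_1,\dots,x_n)}(z+\epsilon Q)\qquad(z\neq 0).
\]
This is what the definition of $\varphi^Q$ is engineered to produce: the selected sub-tuples $\{i_1,\dots,i_k\}$ are exactly the faces of the closed simplicial cone that $z+\epsilon Q$ enters for small $\epsilon>0$. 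Once this is in hand, the non-generic case evaporates: for any $z\in V\setminus\{0\}$, the point $z+\epsilon Q$ is in general position with $x_1,\dots,x_{n+1}$ for all sufficiently small $\epsilon>0$ (because $Q$ is irrational), so
\[
\mathcal{L}_\infty\bigl(\varphi^Q(\partial(x_1,\dots,x_{n+1}))\bigr)(z)=\sum_{j=1}^{n+1}(-1)^{j-1}r(x_1,\dots,\widehat{x_j},\dots,x_{n+1})\,\bm{1}_{C(x_1,\dots,\widehat{x_j},\dots,x_{n+1})}(z+\epsilon Q)
\]
is the Lemma \ref{lem:Hill} sum evaluated at a generic point, hence the constant $c$, independent of $z$. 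In other words, the role of $\varphi^Q$ is to transfer the generic-point computation to every $z$ simultaneously via the perturbation $z\mapsto z+\epsilon Q$; there is no separate non-generic stratum to analyze. Your proposed sign-pattern case analysis on the unique linear dependence among the $x_j$ is not carried out, would be considerably harder to make airtight, and is unnecessary once the identity above is recognized. As it stands, the proposal leaves its main step as a gap.
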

\begin{proof}
From Lemma \ref{lemPsiPhi}, it is enough to prove that
\[
\mathcal{L}_{\infty}(\varphi^{Q}(\partial a))\in\bm{1}_{V\setminus\{0\}}\mathbb{Z}
\]
for all $a\in C_{n+1}(U)$. For $(x_{1},\dots,x_{n})\in C_{n}(U)$,
we have
\[
\mathcal{L}_{\infty}(\varphi^{Q}((x_{1},\dots,x_{n}))=r(x_{1},\dots,x_{n})\cdot f_{Q,(x_{1},\dots,x_{n})}
\]
where $f_{Q,(x_{1},\dots,x_{n})}:V\to\mathbb{Z}$ is a map defined
by
\[
f_{Q,(x_{1},\dots,x_{n})}(z):=\begin{cases}
\lim_{\epsilon\to+0}\bm{1}_{C(x_{1},\dots,x_{n})}(z+\epsilon Q) & z\neq0\\
0 & z=0.
\end{cases}
\]
Thus we have
\begin{align*}
\mathcal{L}_{\infty}(\varphi^{Q}(\partial(x_{1},\dots,x_{n+1})))(z) & =\sum_{j=1}^{n+1}(-1)^{j-1}r(x_{1},\dots,\widehat{x_{j}},\dots,x_{n+1})\bm{1}_{C(x_{1},\dots,\widehat{x_{j}},\dots,x_{n+1})}(z+\epsilon Q)\\
 & =(\text{constant function for }z\in V\setminus\{0\})
\end{align*}
from Lemma \ref{lem:Hill}. Thus the claim is proved.
\end{proof}

\begin{defn}
For $a\in\ker(C_{n}(U)\to C_{n-1}(U))$, we define $\varphi(a)\in X(U)$
by $\psi(b)$ where $b$ is any element of $C_{n+1}(U)$ such that
$\partial b=a$. This definition does not depend on the choice of
$b$.\end{defn}
\begin{lem}
The natural map $X_{{\rm low}}(U)\to Z(U)$ is injective.\end{lem}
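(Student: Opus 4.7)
The plan is to look at the top-degree component $X_n(U)$ and show that the vanishing of the $X_n$-part of an element of $Y(U)$ already forces the whole element to vanish. The key tool is Lemma~\ref{lemPsiPhi}, which rewrites $\psi$ as $\varphi^Q \circ \partial$ for any irrational $Q$.

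Suppose $y \in X_{\mathrm{low}}(U) \cap Y(U)$, so $y = \psi(a)$ for some $a \in C_{n+1}(U)$. Fix an irrational $Q \in V \otimes \mathbb{R}$ and apply Lemma~\ref{lemPsiPhi} to get $\psi(a) = \varphi^Q(\partial a)$. Now inspect the defining formula for $\varphi^Q((x_1,\dots,x_n))$: the only contribution to $X_n(U)$ comes from the summand with $k = n$ and $\{i_1,\dots,i_n\} = \{1,\dots,n\}$, since for this choice the positivity condition over $\{1,\dots,n\} \setminus \{i_1,\dots,i_k\}$ is vacuous; this summand contributes exactly $r(x_1,\dots,x_n)[x_1,\dots,x_n]$, while all others lie in $X_{\mathrm{low}}(U)$. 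Hence the composition of $\varphi^Q$ with the projection onto $X_n(U)$ is the map $\mu : C_n(U) \to X_n(U)$, $(x_1,\dots,x_n) \mapsto r(x_1,\dots,x_n)[x_1,\dots,x_n]$.

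For $n$-tuples in an $n$-dimensional space, being in general position is the same as being linearly independent, so $C_n(U)$ and $X_n(U)$ are free modules on the same indexing set of ordered tuples, and $\mu$ acts on each generator by multiplication by $\pm 1$; in particular $\mu$ is an isomorphism. Since $y \in X_{\mathrm{low}}(U)$ means the image of $y$ in $X_n(U)$ is zero, we get $\mu(\partial a) = 0$, hence $\partial a = 0$, and therefore $y = \varphi^Q(\partial a) = 0$. This proves injectivity.

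The main point to double-check is the interpretation of ``general position'' in the definition of $C_n(U)$; even if this turns out to be strictly stronger than linear independence, $\mu$ would merely become injective rather than bijective, but this is still enough to deduce $\partial a = 0$ from $\mu(\partial a) = 0$, so the argument is unaffected. Beyond this routine verification no real obstacle is expected, since the content of the statement is essentially the observation that Lemma~\ref{lemPsiPhi} localises the top-degree part of $\psi(a)$ to the simple identification $\partial a \leftrightarrow \mu(\partial a)$.
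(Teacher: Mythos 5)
Your proof is correct and follows essentially the same route as the paper: both reduce to showing $\psi(a)\in X_{\mathrm{low}}(U)\Rightarrow\partial_{n+1}(a)=0$ and then conclude via Lemma~\ref{lemPsiPhi} that $\psi(a)=\varphi^Q(0)=0$. The paper asserts the implication $\psi(a)\in X_{\mathrm{low}}(U)\Rightarrow\partial_{n+1}(a)=0$ without proof, and the only added content of your write-up is the (correct) verification of this step by identifying the $X_n(U)$-component of $\varphi^Q(\partial a)$ with $\mu(\partial a)$ and observing that $\mu$ is injective.
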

\begin{proof}
The claim is equivalent to $X_{{\rm low}}(U)\cap Y(U)=\{0\}$. Let
$a\in C_{n+1}(U)$ such that $\psi(a)\in X_{{\rm low}}(U)$. Since
$\psi(a)\in X_{{\rm low}}(U)$, we have $\partial_{n+1}(a)=0$. Let
$Q$ be any irrational vector. Then we have
\[
\psi(a)=\varphi^{Q}(\partial_{n+1}(a))=\varphi^{Q}(0)=0.
\]
Thus the claim is proved. 
\end{proof}
Define a homomorphism $\phi:X(U)\to C_{n}(U)$ by
\[
\phi([x_{1},\dots,x_{m}])=\begin{cases}
r(x_{1},\dots,x_{n})\cdot(x_{1},\dots,x_{n}) & m=n\\
0 & m<n.
\end{cases}
\]
Put $\bar{C}_{n}(U)=C_{n}(U)/\ker(\partial_{n})$. Then $\varphi$
induces an isomorphism from $Z(U)/X_{{\rm low}}(U)$ to $\bar{C}_{n}(U)$.
Thus we get the following proposition.
\begin{prop}
\label{prop:exactXZC}The following sequence is exact:
\[
0\to X_{{\rm low}}(U)\to Z(U)\xrightarrow{\phi}\bar{C}_{n}(U)\to0.
\]

\end{prop}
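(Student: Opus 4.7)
The plan is to establish three things: that $\phi$ descends to a well-defined map $\bar\phi\colon Z(U)\to\bar{C}_{n}(U)$, that this descended map is surjective, and that its kernel is exactly the image of $X_{{\rm low}}(U)$. Injectivity of $X_{{\rm low}}(U)\to Z(U)$ is already proved in the preceding lemma, so nothing more is needed there.

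The central computation driving everything will be the identity $\phi\circ\psi=\partial_{n+1}$ on $C_{n+1}(U)$. To verify it on a generator $a=(x_{1},\dots,x_{n+1})$, I first note that $\phi$ kills $X_{k}(U)$ for $k<n$, so only the $k=n$ terms of $\psi(a)$ contribute. By the definition of $\psi$, the symbol $[x_{1},\dots,\widehat{x_{j}},\dots,x_{n+1}]$ appears in $\psi(a)$ with coefficient $(-1)^{j-1}r(x_{1},\dots,\widehat{x_{j}},\dots,x_{n+1})$, since that is precisely the unique $u\in\{\pm1\}$ compatible with the single omitted index $j$. Applying $\phi$ multiplies this by another factor of $r(x_{1},\dots,\widehat{x_{j}},\dots,x_{n+1})$; because $r\in\{\pm1\}$ on generic tuples the squared factors collapse, and one recovers $\sum_{j=1}^{n+1}(-1)^{j-1}(x_{1},\dots,\widehat{x_{j}},\dots,x_{n+1})=\partial_{n+1}(a)$. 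Since $\partial_{n}\partial_{n+1}=0$, this immediately gives $\phi(Y(U))\subset\ker(\partial_{n})$, so $\phi$ descends to $\bar\phi\colon Z(U)\to\bar{C}_{n}(U)$.

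Surjectivity is essentially free: for any generator $(x_{1},\dots,x_{n})\in C_{n}(U)$ one has $\phi(r(x_{1},\dots,x_{n})[x_{1},\dots,x_{n}])=(x_{1},\dots,x_{n})$. For exactness in the middle, the inclusion $X_{{\rm low}}(U)\subset\ker\bar\phi$ is tautological, so the content is the reverse. Given a representative $x=x_{{\rm low}}+x_{n}\in X(U)$ of a class in $\ker\bar\phi$, with $x_{n}\in X_{n}(U)$, we have $\phi(x_{n})\in\ker(\partial_{n})$; by exactness of $C_{\bullet}(U)$ (which holds because $U$ is fat) pick $b\in C_{n+1}(U)$ with $\partial_{n+1}(b)=\phi(x_{n})$, so that the identity $\phi\circ\psi=\partial_{n+1}$ yields $\phi(\psi(b)_{n})=\phi(x_{n})$, where $\psi(b)_{n}$ denotes the top-degree component of $\psi(b)$. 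Since $\phi\big|_{X_{n}(U)}$ is manifestly injective on the standard basis (sending distinct generators to $\pm$ distinct generators of $C_{n}(U)$), this forces $x_{n}=\psi(b)_{n}$, hence $x-\psi(b)\in X_{{\rm low}}(U)$; as $\psi(b)\in Y(U)$, we conclude $x$ represents a class in $X_{{\rm low}}(U)$.

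The main obstacle is purely the sign bookkeeping in the identity $\phi\circ\psi=\partial_{n+1}$; once that is pinned down, the remaining verifications are formal. The only external inputs are exactness of the complex $C_{\bullet}(U)$ (already noted as a consequence of fatness) and the previously proved injectivity of $X_{{\rm low}}(U)\hookrightarrow Z(U)$.
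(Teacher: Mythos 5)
Your proof is correct and follows essentially the same approach as the paper's, which is extremely terse (it merely defines $\phi$ and asserts that it induces the required isomorphism $Z(U)/X_{\rm low}(U)\xrightarrow{\sim}\bar{C}_{n}(U)$). You supply the missing details: the identity $\phi\circ\psi=\partial_{n+1}$, descent to $\bar\phi$, surjectivity, and the use of exactness of $C_{\bullet}(U)$ together with injectivity of $\phi|_{X_{n}(U)}$ to establish exactness in the middle — all of which are correct and exactly what the paper is implicitly relying on.
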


\subsection{The group action on $V_{{\rm ad}}$}

Let $E$ be a group which acts linearly on $V$ and freely on $V\setminus\{0\}$.
Let $U\subset V$ be a fat subset closed under the action of $E$.
For $\epsilon\in E$, put ${\rm sgn}(\epsilon):={\rm sgn}(\det(\epsilon:V\to V)).$
The $\mathbb{Z}$-module $C_{m}(U)$ has a two kind of structure of
$\mathbb{Z}[E]$-module. The one is defined by
\[
[\epsilon](x_{1},\dots,x_{m})=(\epsilon x_{1},\dots,\epsilon x_{m})\ \ \ (\epsilon\in E,\ x_{1},\dots,x_{m}\in U),
\]
and the other is defined by
\[
[\epsilon](x_{1},\dots,x_{m})={\rm sgn}(\epsilon)(\epsilon x_{1},\dots,\epsilon x_{m})\ \ \ (\epsilon\in E,\ x_{1},\dots,x_{m}\in U).
\]
We write $C_{m}^{+}(U)$ for the first $\mathbb{Z}[E]$-module and
$C_{m}^{-}(U)$ for the second one. Let $\mathbb{Z}^{-}$ be the $\mathbb{Z}[E]$-module
whose underlying $\mathbb{Z}$-module is $\mathbb{Z}$, and define
the action of $E$ to $\mathbb{Z}^{-}$ by
\[
\epsilon n={\rm sgn}(\epsilon)n\ \ \ \ (\epsilon\in E,\ n\in\mathbb{Z}^{-}).
\]
Note that we have $C_{0}^{+}(V_{{\rm ad}})\simeq\mathbb{Z}$ and $C_{0}^{-}(V_{{\rm ad}})\simeq\mathbb{Z}^{-}.$
The purpose of this subsection is to define the homomorphism 
\[
\Omega:H_{n-1}(E,\mathbb{Z}^{-})\to H_{0}(E,Z(U))=Z(U)/I_{E}Z(U).
\]

\begin{defn}
For $\bm{x}=(x_{1},\dots,x_{k})\in U^{k}$, we denote by $A_{m}(\bm{x})$
(resp. $A_{m}'(\bm{x})$) the subgroup of $C_{m}(U)$ spanned by
\[
(y_{1},\dots,y_{m})\in C_{m}(U)
\]
where $y_{1},\dots,y_{n}$ are elements of $U$ such that $(x_{1},\dots,x_{k})$
is (resp. is not) a subsequence of $(y_{1},\dots,y_{m})$.
\end{defn}
From the definition, we have
\[
C_{m}(U)=A_{m}(\bm{x})\oplus A_{m}'(\bm{x}).
\]

\begin{defn}
For $\bm{x}=(x_{1},\dots,x_{k})\in U^{k}$ and $z\in X(U)$, we denote
by ${\rm coeff}(\bm{x},z)\in\mathbb{Z}$ the coefficient of $[x_{1},\dots,x_{k}]$
in $z$.
\end{defn}

\begin{defn}
For $\bm{x}\in U^{k}$ and $a\in C_{n}(U)$ such that $\partial a\in A_{n-1}'(\bm{x})$,
define ${\rm coeff}^{\#}(\bm{x},a)\in\mathbb{Z}$ by
\[
{\rm coeff}^{\#}(\bm{x},a):={\rm coeff}(\bm{x},\varphi(a-a'))
\]
where $a'$ is an element of $A_{n}'(\bm{x})$ such that $\partial(a-a')=0$.
Such an element $a'$ always exists and ${\rm coeff}^{\#}(\bm{x},a)$
does not depend on the choice of $a'$.
\end{defn}
In other words, ${\rm coeff}^{\#}(\bm{x},-)$ is a unique homomorphism
from $\partial^{-1}(A_{n-1}'(\bm{x}))$ to $\mathbb{Z}$ such that
\begin{align*}
{\rm coeff}^{\#}(\bm{x},a) & ={\rm coeff}(\bm{x},\varphi(a))\ \ \ \ (a\in\ker\partial)\\
{\rm coeff}^{\#}(\bm{x},a) & =0\ \ \ \ (a\in A_{n}'(\bm{x})).
\end{align*}

\begin{defn}
Let $\bm{x}\in U^{k}$ and $a\in\partial_{n}^{-1}(I_{E}C_{n-1}(U))$.
We define ${\rm coeff}_{E}^{\#}(\bm{x},a)\in\mathbb{Z}$ by
\[
{\rm coeff}_{E}^{\#}(\bm{x},a,E):=\lim_{\substack{Z\to E\\
\#Z<\infty
}
}{\rm coeff}^{\#}(\bm{x},\sum_{\epsilon\in Z}\epsilon a).
\]
Here the right hand side means ${\rm coeff}^{\#}(\bm{x},\sum_{\epsilon\in Z_{0}}\epsilon a)$
where $Z_{0}$ is an enough large finite subset of $E$ such that
$\sum_{\epsilon\in Z}\epsilon a\in\ker\partial$ and
\[
{\rm coeff}^{\#}(\bm{x},\sum_{\epsilon\in Z_{0}}\epsilon a)={\rm coeff}^{\#}(\bm{x},\sum_{\epsilon\in Z}\epsilon a)
\]
for all finite subset $Z$ of $E$ which contains $Z_{0}$.
\end{defn}

\begin{defn}
For $a\in\partial_{n}^{-1}(I_{E}C_{n-1}(U))/I_{E}C_{n}(U)$, we define
$\varphi_{E}(a)\in H_{0}(E,X(U))$ by
\[
\sum_{k=1}^{n}\sum_{(x_{1},\dots,x_{k})\in U^{k}/E}{\rm coeff}_{E}^{\#}((x_{1},\dots,x_{k}),a)\cdot[x_{1},\dots,x_{k}].
\]
\end{defn}
\begin{lem}
For all $a\in\ker\partial_{n}$, we have
\[
\varphi_{E}(a)\equiv\varphi(a)\pmod{I_{E}X(U)}.
\]
\end{lem}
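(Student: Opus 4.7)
The plan is to unwind the definition of $\varphi_{E}(a)$ using the hypothesis $a\in\ker\partial_{n}$. Since $a\in\ker\partial_{n}$ we automatically have $\sum_{\epsilon\in Z}\epsilon a\in\ker\partial_{n}$ for every finite $Z\subset E$, so the defining property of ${\rm coeff}^{\#}$ on cycles gives
\[
{\rm coeff}^{\#}\Bigl(\bm{x},\sum_{\epsilon\in Z}\epsilon a\Bigr)={\rm coeff}\Bigl(\bm{x},\sum_{\epsilon\in Z}\varphi(\epsilon a)\Bigr)=\sum_{\epsilon\in Z}{\rm coeff}(\bm{x},\varphi(\epsilon a)).
\]

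Next I would establish the equivariance $\varphi(\epsilon a)={\rm sgn}(\epsilon)\,\epsilon\cdot\varphi(a)$, where $\epsilon$ acts on $X(U)$ by the permutation $[x_{1},\dots,x_{k}]\mapsto[\epsilon x_{1},\dots,\epsilon x_{k}]$. Lifting $a$ to $b\in C_{n+1}(U)$ with $\partial b=a$, $\epsilon b$ lifts $\epsilon a$, and the orientation identity $r(\epsilon y_{1},\dots,\epsilon y_{n})={\rm sgn}(\epsilon)r(y_{1},\dots,y_{n})$ flips each sign $u$ appearing in the defining formula for $\psi$ by ${\rm sgn}(\epsilon)$, giving $\psi(\epsilon b)={\rm sgn}(\epsilon)\,\epsilon\psi(b)$ and therefore
\[
{\rm coeff}(\bm{x},\varphi(\epsilon a))={\rm sgn}(\epsilon)\,{\rm coeff}(\epsilon^{-1}\bm{x},\varphi(a)).
\]
Since $\varphi(a)$ has finite support and $E$ acts freely on $U^{k}$ (because it acts freely on $V\setminus\{0\}$), only finitely many $\epsilon\in E$ yield a nonzero term for a given $\bm{x}$. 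Hence the limit defining ${\rm coeff}^{\#}_{E}$ stabilizes and
\[
{\rm coeff}^{\#}_{E}(\bm{x},a)=\sum_{\epsilon\in E}{\rm coeff}(\bm{x},\varphi(\epsilon a))=\sum_{\delta\in E}{\rm sgn}(\delta)\,{\rm coeff}(\delta\bm{x},\varphi(a))
\]
after substituting $\delta=\epsilon^{-1}$ and using ${\rm sgn}(\delta^{-1})={\rm sgn}(\delta)$.

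Finally I assemble the pieces and reduce modulo $I_{E}X(U)$. Writing $\varphi(a)=\sum_{\bm{y}}c_{\bm{y}}[\bm{y}]$ with $\bm{y}$ ranging over $\bigsqcup_{k}U^{k}$,
\[
\varphi_{E}(a)=\sum_{k=1}^{n}\sum_{\bm{x}\in U^{k}/E}\Bigl(\sum_{\delta\in E}{\rm sgn}(\delta)\,c_{\delta\bm{x}}\Bigr)[\bm{x}].
\]
The $E$-action on $X(U)$ (forced by the equivariance of $\varphi$ together with the chosen $E$-module structure on $C_{n}(U)$) satisfies $\delta\cdot[\bm{x}]={\rm sgn}(\delta)[\delta\bm{x}]$ in $H_{0}(E,X(U))$, so ${\rm sgn}(\delta)[\bm{x}]\equiv[\delta\bm{x}]\pmod{I_{E}X(U)}$. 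Substituting this and using the freeness of the $E$-action on $U^{k}$ (so that as $\bm{x}$ runs over $U^{k}/E$ and $\delta$ over $E$, the tuple $\delta\bm{x}$ enumerates $U^{k}$ exactly once) gives $\varphi_{E}(a)\equiv\sum_{\bm{y}}c_{\bm{y}}[\bm{y}]=\varphi(a)\pmod{I_{E}X(U)}$. The main subtlety will be pinning down the precise sign convention for the $E$-action on $X(U)$ so that $\varphi$ is genuinely equivariant; once that is fixed the remainder is routine bookkeeping around the stabilization of ${\rm coeff}^{\#}_{E}$.
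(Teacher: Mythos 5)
Your argument is essentially the paper's: identify ${\rm coeff}^{\#}$ with ${\rm coeff}\circ\varphi$ on cycles, pass to the stabilizing $E$-sum defining ${\rm coeff}^{\#}_{E}$, use equivariance of $\varphi$, and reindex over $E$ using freeness of the action on $U^{k}$. The "subtlety" you flag about the sign convention is real, but your resolution of it is not the paper's. In the definition of ${\rm coeff}^{\#}_{E}$ and $\varphi_{E}$, the $E$-module structure on $C_{\bullet}(U)$ is the sign-twisted one $C^{-}_{\bullet}(U)$ with $[\epsilon](x_{1},\dots,x_{m})={\rm sgn}(\epsilon)(\epsilon x_{1},\dots,\epsilon x_{m})$ (this is forced, since $\varphi_{E}$ is composed with the identification of $H_{n-1}(E,\mathbb{Z}^{-})$ coming from the $C^{-}_{\bullet}(U)$-resolution to produce $\Omega_{E,U}$), while $X(U)$ and $Z(U)$ carry the \emph{untwisted} action $\epsilon\cdot[x_{1},\dots,x_{k}]=[\epsilon x_{1},\dots,\epsilon x_{k}]$ (needed so that $\mathcal{L}_{\infty}$ and the map $\phi:Z(U_{g})\to\bar{C}^{-}_{n}(U_{g})$ used later are $E$-maps). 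With this pair of conventions, $\psi$ and hence $\varphi$ are honestly $E$-equivariant with \emph{no} sign: $\psi([\epsilon]^{-}b)={\rm sgn}(\epsilon)\psi(\epsilon b)={\rm sgn}(\epsilon)^{2}\epsilon\psi(b)=\epsilon\psi(b)$. Then ${\rm coeff}^{\#}_{E}(\bm{x},a)=\sum_{\delta\in E}{\rm coeff}(\delta\bm{x},\varphi(a))$ with no ${\rm sgn}(\delta)$, and the congruence follows from $[\bm{x}]\equiv[\delta\bm{x}]\pmod{I_{E}X(U)}$. You instead kept the untwisted action on $C_{n}(U)$, obtained $\varphi(\epsilon a)={\rm sgn}(\epsilon)\epsilon\varphi(a)$, and compensated by positing the sign-twisted action on $X(U)$; these two changes cancel, so your final formula is correct, but the ideal $I_{E}X(U)$ you are working modulo is not the one the paper uses. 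The clean fix is to carry the $C^{-}_{\bullet}$ structure through from the start, after which all the ${\rm sgn}(\delta)$ factors disappear and your write-up becomes precisely the paper's proof.
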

\begin{proof}
For $\bm{x}=(x_{1},\dots,x_{k})\in U^{k}/E$ and $z\in H_{0}(E,X(U))$,
we put 
\[
{\rm coeff}_{E}(\bm{x},z)=\lim_{Z\to X}{\rm coeff}(\bm{x},\sum_{\epsilon\in Z}\epsilon z).
\]
Then it is enough to prove that
\[
{\rm coeff}_{E}(\bm{x},\varphi_{E}(a))={\rm coeff}_{E}(\bm{x},\varphi(a)).
\]
The left hand side is equal to
\begin{align*}
{\rm coeff}_{E}^{\#}((x_{1},\dots,x_{k}),a) & =\lim_{\substack{Z\to E\\
\#Z<\infty
}
}{\rm coeff}^{\#}(\bm{x},\sum_{\epsilon\in Z}\epsilon a)\\
 & =\lim_{\substack{Z\to E\\
\#Z<\infty
}
}{\rm coeff}(\bm{x},\sum_{\epsilon\in Z}\epsilon\varphi(a))\\
 & ={\rm coeff}_{E}(\bm{x},\varphi(a)).
\end{align*}
Thus the lemma is proved.
\end{proof}
Since
\[
\cdots\to C_{2}^{-}(U)\to C_{1}^{-}(U)\to C_{0}^{-}(U)\simeq\mathbb{Z}^{-}\to0
\]
is a free resolution of $\mathbb{Z}^{-}$ in the category of $\mathbb{Z}[E]$-module,
there exists a natural isomorphism
\[
H_{n-1}(E,\mathbb{Z}^{-})\simeq\partial_{n}^{-1}(I_{E}C_{n-1}^{-}(U))/(\ker\partial_{n}+I_{E}C_{n}^{-}(U)).
\]
By composing this isomorphism and $\varphi_{E}$, we get the homomorphism
\[
\Omega_{E,U}:H_{n-1}(E,\mathbb{Z}^{-})\to H_{0}(E,Z(U)).
\]
Note that the diagram
\[
\xymatrix{H_{n-1}(E_{1},\mathbb{Z}^{-})\ar[r]^{\Omega_{E_{1},U_{1}}}\ar[d] & H_{0}(E_{1},Z(U_{1}))\ar[d]\\
H_{n-1}(E_{2},\mathbb{Z}^{-})\ar[r]^{\Omega_{E_{2},U_{2}}} & H_{0}(E_{2},Z(U_{2}))
}
\]
commutes for $E_{1}\subset E_{2}$ and $U_{1}\subset U_{2}$ where
two vertical arrows in the diagram are natural maps.
\begin{lem}
\label{lem:QE}Let $Q\in F\otimes\mathbb{R}$ be an irrational vector
such that $\epsilon Q\in\mathbb{R}_{>0}Q$ for all $\epsilon\in E$.
For $a\in\partial_{n}^{-1}(I_{E}C_{n-1}(U))$, we have $\varphi_{E}(a)\equiv\varphi^{Q}(a)\pmod{I_{E}Z(U)}$.\end{lem}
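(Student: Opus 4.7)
The plan is to reduce to the case $a \in \ker \partial_n$, which is exactly the situation handled by the preceding lemma (combined with Lemma \ref{lemPsiPhi}). First, since $U$ is fat, the complex $(C_m(U), \partial_m)_{m \geq 0}$ is exact in positive degrees. Writing $\partial a = \sum_j (\epsilon_j - 1) c_j$ with $\epsilon_j \in E$ and $c_j \in C_{n-1}(U)$, I lift each $c_j$ to some $a_j \in C_n(U)$ with $\partial a_j = c_j$, and set $a' := a - \sum_j (\epsilon_j - 1) a_j$ using the relevant $C^-_n$-structure. This yields $\partial a' = 0$ and $a \equiv a' \pmod{I_E C^-_n(U)}$, hence $\varphi_E(a) = \varphi_E(a')$ since $\varphi_E$ is defined on the quotient by $I_E C^-_n(U)$.

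Next, I would verify that $\varphi^Q \colon C^-_n(U) \to X(U)$ is $E$-equivariant with respect to the $C^-$-action on the source and the natural $E$-action on the target. The hypothesis $\epsilon Q \in \mathbb{R}_{>0} Q$ is exactly what is needed: since $Q \in \mathbb{R}_{>0}\, \epsilon Q$, a direct computation using multilinearity of $r$ shows $r_{\epsilon x_j \to Q}(\epsilon x_1, \ldots, \epsilon x_n) = {\rm sgn}(\epsilon)\, r_{x_j \to Q}(x_1, \ldots, x_n)$, so the ratio $r_{x_j \to Q}/r$ that controls which sub-tuples appear in $\varphi^Q$ is invariant under $\epsilon$. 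The overall factor ${\rm sgn}(\epsilon)$ coming from $r(\epsilon x_1, \ldots, \epsilon x_n) = {\rm sgn}(\epsilon)\, r(x_1, \ldots, x_n)$ cancels against the sign built into the $C^-$-action. Consequently $\varphi^Q(I_E C^-_n(U)) \subset I_E X(U)$, so $\varphi^Q(a) \equiv \varphi^Q(a') \pmod{I_E Z(U)}$.

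For the reduced element $a' \in \ker \partial_n$, the preceding lemma supplies $\varphi_E(a') \equiv \varphi(a') \pmod{I_E X(U)}$. Picking $b \in C_{n+1}(U)$ with $\partial b = a'$ (possible by exactness in degree $n$), the definition gives $\varphi(a') = \psi(b)$, and Lemma \ref{lemPsiPhi} identifies $\psi(b) = \varphi^Q(\partial b) = \varphi^Q(a')$ in $X(U)$. Chaining these congruences yields $\varphi_E(a) = \varphi_E(a') \equiv \varphi(a') = \varphi^Q(a') \equiv \varphi^Q(a) \pmod{I_E Z(U)}$, as required.

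The principal obstacle, which the positivity hypothesis on $Q$ is tailored to address, is the careful tracking of the two $E$-module structures $C^\pm_n(U)$ on the same underlying $\mathbb{Z}$-module: the map $\varphi_E$ and the homomorphism $\Omega_{E,U}$ are constructed via the $C^-$-resolution of $\mathbb{Z}^-$, whereas $Z(U)$ carries the natural untwisted $E$-action, so $\varphi^Q$ must intertwine these two different conventions. For a generic irrational $Q$ the combinatorial selection rules defining $\varphi^Q$ would be scrambled by $E$ and the equivariance would break; it is precisely the condition $\epsilon Q \in \mathbb{R}_{>0} Q$ that keeps $\varphi^Q$ compatible with the passage to $E$-coinvariants.
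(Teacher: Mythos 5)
Your reduction step is not valid. You propose to write $\partial a=\sum_j(\epsilon_j-1)c_j$, lift each $c_j$ to $a_j\in C_n(U)$ with $\partial a_j=c_j$, and set $a'=a-\sum_j(\epsilon_j-1)a_j$ so that $a'\in\ker\partial_n$ and $a\equiv a'\pmod{I_E C_n^-(U)}$. But the exactness of $\cdots\to C_2(U)\to C_1(U)\to C_0(U)\to 0$ gives ${\rm im}\,\partial_n=\ker\partial_{n-1}$, not surjectivity of $\partial_n$ onto $C_{n-1}(U)$; an arbitrary $c_j\in C_{n-1}(U)$ is generally \emph{not} a boundary, so the lifts $a_j$ you invoke need not exist. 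More structurally, the existence of such an $a'$ for every $a\in\partial_n^{-1}(I_E C_{n-1}^-(U))$ would say that the natural map $\ker\partial_n\to\partial_n^{-1}(I_E C_{n-1}^-(U))/I_E C_n^-(U)$ is surjective, i.e.\ that
\[
\partial_n^{-1}(I_E C_{n-1}^-(U))\big/\bigl(\ker\partial_n+I_E C_n^-(U)\bigr)=0.
\]
But this quotient is precisely $H_{n-1}(E,\mathbb{Z}^-)$ (this is exactly the isomorphism the paper uses to define $\Omega_{E,U}$), and it is nonzero in all the interesting situations — indeed the point of the lemma is to handle elements $a$ representing nonzero classes there, since those are the ones fed into $\Omega_{E,U}$. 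So the case you reduce to is vacuously insufficient, and nothing in your argument addresses the actual content.

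The paper instead argues \emph{locally in $\bm{x}$}: fixing a symbol $\bm{x}=(x_1,\dots,x_k)$, it shows ${\rm coeff}_E(\bm{x},\varphi_E(a))={\rm coeff}_E(\bm{x},\varphi^Q(a))$ by observing that for a large enough finite $Z\subset E$ one can write $\sum_{\epsilon\in Z}\epsilon a-b_Z\in\ker\partial_n$ with $b_Z\in A_n'(\bm{x})$ (the correction term $b_Z$ is supported away from $\bm{x}$ and hence invisible to ${\rm coeff}(\bm{x},\cdot)$). This ``reduction to $\ker\partial_n$ up to terms away from $\bm{x}$'' is the legitimate replacement for the global reduction you attempted, and it genuinely uses the $E$-equivariance $\epsilon\varphi^Q(a)=\varphi^Q(\epsilon a)$ that you correctly identified as the role of the hypothesis $\epsilon Q\in\mathbb{R}_{>0}Q$. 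That part of your analysis is sound and matches the paper's computation on the right-hand side; the flaw is solely in trying to make the reduction global rather than coefficient-wise.

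Your final chain of congruences for $a'\in\ker\partial_n$ (using the preceding lemma, the definition $\varphi(a')=\psi(b)$ with $\partial b=a'$, and Lemma~\ref{lemPsiPhi}) is correct, but it only re-proves a special case that was already known and does not extend to general $a$.
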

\begin{proof}
Fix $\bm{x}=(x_{1},\dots,x_{k})\in U^{k}$ and $a\in\partial_{n}^{-1}(I_{E}C_{n-1}(U))$.
It is enough to prove that
\begin{equation}
{\rm coeff}_{E}(\bm{x},\varphi_{E}(a))={\rm coeff}_{E}(\bm{x},\varphi^{Q}(a)).\label{eq:eEQ}
\end{equation}
Since $a\in\partial_{n}^{-1}(I_{E}C_{n-1}(U))$, if $Z$ is large
enough, there exists $b_{Z}\in A_{n}'(\bm{x})$ such that
\[
\sum_{\epsilon\in Z}\epsilon a-b_{Z}\in\ker(\partial_{n}).
\]
The left hand side of (\ref{eq:eEQ}) is equal to
\begin{align*}
{\rm coeff}_{E}^{\#}(x,a) & =\lim_{\substack{Z\to E\\
\#Z<\infty
}
}{\rm coeff}^{\#}(\bm{x},\sum_{\epsilon\in Z}\epsilon a)\\
 & =\lim_{\substack{Z\to E\\
\#Z<\infty
}
}{\rm coeff}^{\#}(\bm{x},-b_{Z}+\sum_{\epsilon\in Z}\epsilon a)\\
 & =\lim_{\substack{Z\to E\\
\#Z<\infty
}
}{\rm coeff}(\bm{x},\varphi(-b_{Z}+\sum_{\epsilon\in Z}\epsilon a)).
\end{align*}
Note that we have $\epsilon\varphi^{Q}(a)=\varphi^{Q}(\epsilon a)$
for $\epsilon\in E$ since $\epsilon Q\in\mathbb{R}_{>0}Q$. The right
hand side of (\ref{eq:eEQ}) is equal to 
\begin{align*}
\lim_{\substack{Z\to E\\
\#Z<\infty
}
}{\rm coeff}(x,\sum_{\epsilon\in Z}\epsilon\varphi^{Q}(a)) & =\lim_{\substack{Z\to E\\
\#Z<\infty
}
}{\rm coeff}(x,\sum_{\epsilon\in Z}\varphi^{Q}(\epsilon a))\\
 & =\lim_{\substack{Z\to E\\
\#Z<\infty
}
}{\rm coeff}_{E}(x,\varphi^{Q}(-b_{Z}+\sum_{\epsilon\in Z}\epsilon a))\\
 & =\lim_{\substack{Z\to E\\
\#Z<\infty
}
}{\rm coeff}_{E}(x,\varphi(-b_{Z}+\sum_{\epsilon\in Z}\epsilon a)).
\end{align*}
Hence (\ref{eq:eEQ}) is proved.
\end{proof}
The next lemma follows from a simple calculation.
\begin{lem}
\label{lem:s1_V_ptimes}Let $p$ be a $\mathbb{Q}[E]$-automorphism
of $V$. Let $U_{1}$ and $U_{2}$ be fat subsets of $F^{\times}$
such that $pU_{1}\subset U_{2}$. Let 
\[
p':H_{0}(E,Z(U_{1}))\to H_{0}(E,Z(U_{2}))
\]
be a homomorphism induced by $p$. For $x\in H_{n-1}(E,\mathbb{Z}^{-})$,
we have
\[
p'\circ\Omega_{E,U_{1}}(x)={\rm sgn}(\det(p))\times\Omega_{E,U_{2}}(x).
\]

\end{lem}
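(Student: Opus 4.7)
The plan is to reduce the identity to a single sign computation at the level of the map $\psi$ and then propagate it through the chain-level construction of $\Omega_{E,U}$. First I would verify that $p_{\bullet}\colon C_{\bullet}^{-}(U_{1})\to C_{\bullet}^{-}(U_{2})$ defined by $(x_{1},\dots,x_{m})\mapsto(px_{1},\dots,px_{m})$ is an $E$-equivariant chain map: commutation with $\partial$ is immediate, and equivariance follows from $p\epsilon=\epsilon p$ together with the fact that the ${\rm sgn}$-twist in $C^{-}$ depends only on $\epsilon$ and not on $p$. Because $E$ acts freely on $V\setminus\{0\}$, both $C_{\bullet}^{-}(U_{1})$ and $C_{\bullet}^{-}(U_{2})$ are free $\mathbb{Z}[E]$-resolutions of $\mathbb{Z}^{-}$; moreover $p_{\bullet}$ induces the identity in degree zero (the empty tuple is fixed), so it induces the identity on $H_{n-1}(E,\mathbb{Z}^{-})$. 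Thus if $a\in\partial_{n}^{-1}(I_{E}C_{n-1}^{-}(U_{1}))$ represents $x$, then $p_{\bullet}a\in\partial_{n}^{-1}(I_{E}C_{n-1}^{-}(U_{2}))$ represents the same class, and $\Omega_{E,U_{i}}(x)$ is obtained as $\varphi_{E}$ of the respective representative.

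The heart of the argument is the sign rule
\[
\psi(p_{\bullet}b)\;=\;{\rm sgn}(\det p)\cdot p_{\bullet}\psi(b)\qquad(b\in C_{n+1}(U_{1})),
\]
where $p_{\bullet}$ on the right denotes the natural map $X(U_{1})\to X(U_{2})$, $[x_{1},\dots,x_{k}]\mapsto[px_{1},\dots,px_{k}]$. This will follow directly from $r(pz_{1},\dots,pz_{n})={\rm sgn}(\det p)\cdot r(z_{1},\dots,z_{n})$: when ${\rm sgn}(\det p)=-1$, the defining condition $(-1)^{j-1}r(x_{1},\dots,\widehat{x_{j}},\dots,x_{n+1})=u$ in the formula for $\psi$ becomes $(-1)^{j-1}r(x_{1},\dots,\widehat{x_{j}},\dots,x_{n+1})=-u$ after the substitution $x_{i}\mapsto px_{i}$, which reindexes the outer sum $\sum_{u\in\{\pm1\}}$ by $u\mapsto-u$ and hence negates the overall expression. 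Since $\varphi$ on $\ker\partial_{n}$ is defined by applying $\psi$ to a $\partial$-preimage, this yields $\varphi(p_{\bullet}a)={\rm sgn}(\det p)\cdot p_{\bullet}\varphi(a)$ for $a\in\ker\partial_{n}$.

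To propagate the sign rule to arbitrary $a\in\partial_{n}^{-1}(I_{E}C_{n-1}^{-}(U_{1}))$, I would use the characterization ${\rm coeff}^{\#}(\bm{x},c)={\rm coeff}(\bm{x},\varphi(c-c'))$ with $c'\in A_{n}'(\bm{x})$ chosen so that $c-c'\in\ker\partial$. Taking $p_{\bullet}c'$ as the complement on the $U_{2}$-side (which lies in $A_{n}'(p\bm{x})$ because $p$ is injective on tuples) and invoking the sign rule gives
\[
{\rm coeff}^{\#}(p\bm{x},p_{\bullet}c)\;=\;{\rm sgn}(\det p)\cdot{\rm coeff}^{\#}(\bm{x},c),
\]
and passing to the limit over finite $Z\subset E$ transfers the equality to ${\rm coeff}_{E}^{\#}$. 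Since the support of $p_{\bullet}a$ lies in $pU_{1}$ and $p$ is injective, re-indexing $\bm{y}=p\bm{x}$ in the defining sum of $\varphi_{E}$ produces $\varphi_{E}(p_{\bullet}a)={\rm sgn}(\det p)\cdot p_{\bullet}\varphi_{E}(a)$ in $H_{0}(E,X(U_{2}))$, and this descends to the required identity in $H_{0}(E,Z(U_{2}))$. The only delicate step is the sign computation for $\psi$; everything else is chain-complex bookkeeping.
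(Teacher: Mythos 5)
Your argument is correct, and it is worth noting that the paper itself offers no proof here: the text merely asserts that the lemma \emph{``follows from a simple calculation,''} so you are supplying the missing verification rather than taking an alternative route. The core of your argument is exactly the right observation, namely that $r(p z_1,\dots,p z_n)=\operatorname{sgn}(\det p)\,r(z_1,\dots,z_n)$ forces the sign rule $\psi(p_\bullet b)=\operatorname{sgn}(\det p)\cdot p_\bullet\psi(b)$ by relabeling the outer sum $u\mapsto -u$ in the definition of $\psi$ when $\det p<0$. Everything after that is indeed bookkeeping, but one point deserves to be spelled out a touch more: when you re-index the defining sum of $\varphi_E(p_\bullet a)$ by $\bm{y}=p\bm{x}$, you are implicitly using that ${\rm coeff}_E^\#(\bm{y},p_\bullet a)=0$ whenever $\bm{y}$ is not a subsequence of a tuple from $(pU_1)^n$. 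This does hold, because $p_\bullet a$ is then already in $A_n'(\bm{y})$ and ${\rm coeff}^\#$ is defined to vanish on $A_n'(\bm{y})$; and since $p$ commutes with $E$ and is injective, $p$ induces a bijection $U_1^k/E \to (pU_1)^k/E$, so the re-indexing is legitimate. You also correctly use that $p_\bullet$ is an $E$-equivariant chain map of free resolutions of $\mathbb{Z}^-$ lifting the identity in degree zero, so it induces the identity on $H_{n-1}(E,\mathbb{Z}^-)$, which justifies taking $p_\bullet a$ as the $U_2$-side representative. With the small support observation made explicit, the proof is complete.
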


\subsection{The module $\mathcal{Z}(U,W)$}

For $W\subset S_{\infty}$, put $X_{W}=\prod_{v\in S_{\infty}\setminus W}F_{v}^{\times}/\mathbb{R}_{>0}$.
We denote by $i_{W}$ the natural diagonal map from $F^{\times}$
to $X_{W}$. For groups $E_{1}\subset E_{2}$ and an $\mathbb{Z}[E_{2}]$-module
$M$, let $\alpha_{E_{1},E_{2}}:H_{0}(E_{1},M)\to H_{0}(E_{2},M)$
be a natural homomorphism.

Let $U$ be a subset of $F^{\times}$ such that $U\cap i_{\emptyset}^{-1}(g)$
is fat for all $g\in X_{\emptyset}$. For example, this condition
is satisfied if $U$ is dense in $F\otimes\mathbb{R}$. For $W\subset S_{\infty}$
and $g\in X_{W}$, we put $U_{g}:=U\cap i_{W}^{-1}(g)$. We define
the functors $\mathcal{Z}(U,-)$ from ${\bf Sub}(S_{\infty})$ to
${\bf Mod}(\mathbb{Z})$ by 
\[
\mathcal{Z}(U,W)=\bigoplus_{g\in X_{W}}Z(U_{g}).
\]

\begin{prop}
\label{prop:Hi_EZUW_vanish}Let $E$ be a subgroup of $\mathcal{O}_{F}^{\times}$.
Assume that $U$ is closed under the action of $E$. For $W\subset S_{\infty}$
and $i>0$, we have 
\[
H_{i}(E,\mathcal{Z}(U,W))=0
\]
if $W\subsetneq S_{\infty}$ or $-1\notin E$. \end{prop}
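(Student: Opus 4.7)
The plan is to decompose along the $E$-action on the index set and reduce to a single fiber. Since $E \subset \mathcal{O}_{F}^{\times} \subset F^{\times}$ acts on $X_{W}$ through multiplication by $i_{W}(\epsilon)$, this carries $U_{g}$ to $U_{i_{W}(\epsilon)g}$; hence the stabilizer $E' := \{\epsilon \in E : i_{W}(\epsilon) = 1\}$ is independent of $g$, and each $E$-orbit contributes an induced module. Writing
\[
\mathcal{Z}(U,W) \simeq \bigoplus_{[g]\in X_{W}/E} \mathrm{Ind}_{E'}^{E}\, Z(U_{g})
\]
and invoking Shapiro's lemma, it suffices to prove $H_{i}(E', Z(U_{g})) = 0$ for all $i > 0$ and all $g \in X_{W}$.

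Next I would apply Proposition \ref{prop:exactXZC} to obtain the short exact sequence of $\mathbb{Z}[E']$-modules
\[
0 \to X_{\mathrm{low}}(U_{g}) \to Z(U_{g}) \to \bar{C}_{n}(U_{g}) \to 0,
\]
where $\bar{C}_{n}(U_{g})$ carries the \emph{signed} ($C^{-}$) action, since that is the action which makes the connecting isomorphism coming from $\phi$ equivariant. The term $X_{\mathrm{low}}(U_{g})$ has a $\mathbb{Z}$-basis of symbols $[x_{1},\dots,x_{k}]$ ($k < n$) on which $E'$ acts by permutation; because $E' \subset F^{\times}$ acts freely on $F^{\times}$, this permutation is free, so $X_{\mathrm{low}}(U_{g})$ is free over $\mathbb{Z}[E']$ and contributes zero to higher homology.

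For the quotient $\bar{C}_{n}^{-}(U_{g}) \simeq \ker(\partial_{n-1})$, I would place it at the top of the partial free resolution
\[
0 \to \bar{C}_{n}^{-}(U_{g}) \to C_{n-1}^{-}(U_{g}) \to \cdots \to C_{1}^{-}(U_{g}) \to \mathbb{Z}^{-} \to 0,
\]
in which each $C_{k}^{-}(U_{g})$ is $\mathbb{Z}[E']$-free (signed permutation of a basis permuted freely by $E'$). Dimension shifting gives $H_{i}(E', \bar{C}_{n}^{-}(U_{g})) \simeq H_{i+n-1}(E', \mathbb{Z}^{-})$ for $i \geq 1$, reducing the task to showing $H_{k}(E', \mathbb{Z}^{-}) = 0$ for $k \geq n$. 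Since $F$ is totally real one has $\mu_{F} = \{\pm 1\}$, so it suffices to show $-1 \notin E'$: it will then follow from Dirichlet's unit theorem that $E'$ is free abelian of rank at most $r_{1} - 1 = n - 1$, and its cohomological dimension is at most $n - 1$. If $-1 \notin E$, this is immediate. If instead $W \subsetneq S_{\infty}$, pick any $v_{0} \in S_{\infty}\setminus W$ (automatically real); any $\epsilon \in E'$ satisfies $i_{W}(\epsilon) = 1$, which forces $\epsilon_{v_{0}} \in \mathbb{R}_{>0}$, ruling out $\epsilon = -1$. Splicing the two outer vanishings through the long exact sequence associated to the short exact sequence above finishes the argument.

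The main obstacle I anticipate is keeping the sign conventions straight: $Z(U_{g})$ and $X_{\mathrm{low}}(U_{g})$ inherit the unsigned $E'$-action, whereas $\bar{C}_{n}(U_{g})$ and the resolving modules $C_{k}^{-}$ carry the signed one. This mismatch is exactly what makes the relevant coefficient module $\mathbb{Z}^{-}$ rather than $\mathbb{Z}$, and in turn explains why excluding $-1$ from $E'$—the precise content of the hypothesis—is what one needs to drive the cohomological dimension below $n$.
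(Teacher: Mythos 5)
Your argument is correct, and the first half mirrors the paper exactly: the Shapiro reduction via $E' = E\cap i_W^{-1}(1)$ to $H_i(E',Z(U_g))$, the short exact sequence from Proposition \ref{prop:exactXZC}, and the observation that $X_{\mathrm{low}}(U_g)$ is $\mathbb{Z}[E']$-free. Where you diverge is in handling $\bar{C}_n^{-}(U_g)$. The paper at this point introduces $E'' := \ker(\mathrm{sgn}:E'\to\{\pm1\})$ and splits into two cases: if $E''=E'$ (so the sign character is trivial) it dimension-shifts through the \emph{unsigned} complex to $H_{i+n-1}(E',\mathbb{Z})$, and if $E''\neq E'$ it invokes an auxiliary short exact sequence
\[
0\to \bar{C}_n^{-}(U_g)\to \bar{C}_n^{+}(U_g)\otimes_{\mathbb{Z}[E'']}\mathbb{Z}[E']\to \bar{C}_n^{+}(U_g)\to 0,
\]
reducing to the vanishing of $H_{i+n-1}$ of $\mathbb{Z}$ over $E'$ and over $E''$. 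You instead resolve $\mathbb{Z}^{-}=C_0^{-}(U_g)$ directly by the signed complex $C_{n-1}^{-}\to\cdots\to C_1^{-}\to\mathbb{Z}^{-}$ (each $C_k^{-}$ for $k\geq1$ being $\mathbb{Z}[E']$-free because a character twist of a free permutation module on a free orbit is still free), so a single dimension shift yields $H_i(E',\bar{C}_n^{-}(U_g))\simeq H_{i+n-1}(E',\mathbb{Z}^{-})$, and the cohomological-dimension bound $\mathrm{cd}(E')\leq n-1$ kills it for $i\geq1$. This avoids the case split altogether; notably, the paper itself records in the subsection defining $\Omega$ that the signed $C_\bullet^{-}$ complex is a free resolution of $\mathbb{Z}^{-}$, so your argument uses a fact already present elsewhere in the text but not deployed at this point. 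The hypothesis $W\subsetneq S_\infty$ or $-1\notin E$ enters identically in both proofs, as the input guaranteeing $-1\notin E'$ and hence that $E'$ is torsion-free of rank at most $n-1$.
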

\begin{proof}
Fix $i>0$. Put $E'=E\cap i_{W}^{-1}(1)$. Then $X_{W}$ can be written
as
\[
X_{W}=\bigsqcup_{\epsilon\in E/E'}\epsilon C
\]
where $C$ is a subset of $X_{W}$. Put 
\[
Z_{C}=\bigoplus_{g\in C}Z(U_{g}).
\]
Then we have
\[
\mathcal{Z}(U,W)=Z_{C}\otimes_{\mathbb{Z}[E']}\mathbb{Z}[E].
\]
Thus we have
\begin{align*}
H_{i}(E,\mathcal{Z}(U,W)) & =H_{i}(E',Z_{C})\\
 & =\bigoplus_{g\in C}H_{i}(E',Z(U_{g})).
\end{align*}
Fix $g\in C$. From Proposition \ref{prop:exactXZC}, there exists
an exact sequence of $\mathbb{Z}[E']$-module:
\[
0\to X_{{\rm low}}(U_{g})\to Z(U_{g})\xrightarrow{\phi_{r}}\bar{C}_{n}^{-}(U_{g})\to0.
\]
Since $X_{{\rm low}}(U_{g})$ is a free $E'$-module, we have
\[
H_{i}(E',X_{{\rm low}}(U_{g}))=0\ \ \ (i>0).
\]
Thus it is enough to prove that $H_{i}(E',\bar{C}_{n}^{-}(U_{g}))=0$
for $i>0$. Put $E'':=\ker({\rm sgn}:E'\to\{\pm1\})$. Note that $E'$
and $E''$ are free abelian groups of rank $n-1$ from the condition
$(W\neq S_{\infty})\lor(-1\notin E)$. In the case $E''=E'$, we have
\[
H_{i}(E',\bar{C}_{n}^{-}(U_{g}))=H_{i+n-1}(E',\mathbb{Z})=0.
\]
In the case $E''\neq E'$, there exists an exact sequence
\[
0\to\bar{C}_{n}^{-}(U_{g})\to\bar{C}_{n}^{+}(U_{g})\otimes_{\mathbb{Z}[E'']}\mathbb{Z}[E']\to\bar{C}_{n}^{+}(U_{g})\to0.
\]
For $i>0$, we have 
\[
H_{i}(E',\bar{C}_{n}^{+}(U_{g}))=H_{i+n-1}(E',\mathbb{Z})=0
\]
and
\begin{align*}
H_{i}(E',\bar{C}_{n}^{+}(U_{g})\otimes_{\mathbb{Z}[E'']}\mathbb{Z}[E']) & =H_{i}(E'',\bar{C}_{n}^{+}(U_{g}))\\
 & =H_{i+n-1}(E'',\mathbb{Z})\\
 & =0.
\end{align*}
Thus the claim is proved.
\end{proof}
Let $\rho_{1},\dots,\rho_{n}$ be the distinct real embeddings of
$F$. Define an orientation $r:F^{n}\to\{0,1,-1\}$ by $r(x_{1},\dots,x_{n}):={\rm sgn}(\det(\rho_{i}(x_{j}))_{i,j=1}^{n})$.
From the definition, $\mathcal{L}_{\infty}$ induces a map
\[
\mathcal{Z}(U,W)\to\bigoplus_{g\in X_{W}}\mathcal{K}(U_{g}).
\]
If $W\neq S_{\infty}$ then $\bigoplus_{g\in X_{W}}\mathcal{K}(U_{g})$
can be viewed as a subgroup of ${\rm Map}(F^{\times},\mathbb{Z})$
by the obvious way. Therefore we obtain a natural map from $\mathcal{Z}(U,W)$
to ${\rm Map}(F^{\times},\mathbb{Z})$ for $W\neq S_{\infty}$. By
abuse of notation, we denote this map by $\mathcal{L}_{\infty}$.
We denote by $\mathcal{O}_{F,+}^{\times}$ the group of totally positive
units of $F$. Let $E$ be a finite index subgroup of $\mathcal{O}_{F,+}^{\times}$.
Then $H_{n-1}(E,\mathbb{Z})$ is isomorphic to $\mathbb{Z}$. The
canonical generator $\eta_{E}$ of $H_{n-1}(E,\mathbb{Z})$ is defined
by the following Lemma. 
\begin{lem}
\label{lem:tot_fundamental_domain}There exists a generator $\eta_{E}$
of $H_{n-1}(E,\mathbb{Z})$ such that
\[
\sum_{\epsilon\in E}\epsilon\mathcal{L}_{\infty}(a)={\rm sgn}(g)\bm{1}_{i_{\emptyset}^{-1}(g)}
\]
for any element $g$ of $X_{\emptyset}$ and lift $a\in Z(U_{g})$
of $\Omega_{E,U_{g}}(\eta_{E})\in H_{0}(E,Z(U_{g}))$.\end{lem}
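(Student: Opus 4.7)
The plan is to construct $\eta_E$ via a Shintani-style cone decomposition of a fundamental domain for $E$ acting on the totally positive chamber, and then to verify the displayed identity by a direct computation, transferring from this chamber to all others by multiplication in $F^\times$.

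First I would specialize to $g_0:=(1,\dots,1)\in X_\emptyset$, so that $i_\emptyset^{-1}(g_0)$ is the cone of totally positive elements. By Shintani's unit theorem there is a finite family of open simplicial cones $C_\alpha=C(x_1^{(\alpha)},\dots,x_n^{(\alpha)})\subset(F\otimes\mathbb{R})_{++}$, with generators in $F_+^\times$, whose essentially disjoint union is a strict fundamental domain for the $E$-action on the totally positive cone. Since $U_{g_0}$ is fat (hence in particular Zariski-dense in $F$), a small perturbation of the generators can be used to place them inside $U_{g_0}$ without changing the combinatorial structure of the decomposition. I would then set
\[
a_0:=\sum_\alpha r(x_1^{(\alpha)},\dots,x_n^{(\alpha)})\,(x_1^{(\alpha)},\dots,x_n^{(\alpha)})\in C_n^-(U_{g_0}),
\]
and check that interior-face contributions in $\partial a_0$ cancel between adjacent cones of the decomposition while outer-boundary faces pair up along $E$-orbits with faces of adjacent $E$-translates of the fundamental domain, so that $\partial a_0\in I_EC_{n-1}^-(U_{g_0})$. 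The resulting class in $H_{n-1}(E,\mathbb{Z}^-)=H_{n-1}(E,\mathbb{Z})$ is declared to be $\eta_E$, with the sign fixed by the verification below.

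To verify the identity for $g_0$, I would fix any irrational vector $Q\in F\otimes\mathbb{R}$ and let $\tilde a\in Z(U_{g_0})$ be the image of $\varphi^Q(a_0)\in X(U_{g_0})$; this is a lift of $\Omega_{E,U_{g_0}}(\eta_E)$. Using the explicit description of $\mathcal{L}_\infty\circ\varphi^Q$ obtained inside the proof of Proposition \ref{prop:LYUvanish}, one computes
\[
\sum_{\epsilon\in E}\epsilon\,\mathcal{L}_\infty(\tilde a)(z)=\lim_{\delta\to0^+}\sum_{\epsilon\in E,\,\alpha}\bm{1}_{\epsilon C_\alpha}(z+\delta\epsilon Q),
\]
which equals $1$ for $z\in i_\emptyset^{-1}(g_0)$, since the $\{\epsilon C_\alpha\}$ tile the totally positive chamber and the perturbation by $Q$ resolves any boundary ambiguity consistently, and equals $0$ for $z$ outside this chamber. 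Independence of the choice of lift follows by a telescoping argument: any element of $I_EZ(U_{g_0})$ contributes trivially to $\sum_{\epsilon\in E}\epsilon(\cdot)$.

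For a general $g\in X_\emptyset$, I would choose $t\in F^\times$ with $i_\emptyset(t)=g$. Multiplication by $t$ on $F$ is a $\mathbb{Q}[E]$-automorphism carrying $U_{g_0}$ into $U_g$, with $\det(t)=N_{F/\mathbb{Q}}(t)$, hence ${\rm sgn}(\det t)=\prod_i{\rm sgn}(\rho_i(t))={\rm sgn}(g)$. Lemma \ref{lem:s1_V_ptimes} then gives $\Omega_{E,U_g}(\eta_E)={\rm sgn}(g)\cdot t'(\Omega_{E,U_{g_0}}(\eta_E))$, so ${\rm sgn}(g)\cdot(t\tilde a)$ is a lift in $Z(U_g)$; applying $\mathcal{L}_\infty$ and summing over $E$ yields ${\rm sgn}(g)\bm{1}_{i_\emptyset^{-1}(g)}$ by the same tiling argument applied to the $t$-translated decomposition. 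The main obstacle is the second step, namely verifying that the boundary contributions of $a_0$ genuinely assemble into $I_EC_{n-1}^-$; this reduces to the standard combinatorial description of how boundary faces of a Shintani fundamental domain are glued by unit translates, but has to be carried out carefully to get a well-defined class in $H_{n-1}(E,\mathbb{Z}^-)$.
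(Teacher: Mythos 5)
Your route differs from the paper's, and the step you flag yourself is in fact a genuine gap. The paper does not use a Shintani cone decomposition at all: it picks a $\mathbb{Z}$-basis $\langle\epsilon_{1},\dots,\epsilon_{n-1}\rangle$ of $E$, forms the explicit simplicial cycle
\[
F(x)=\sum_{\sigma\in\mathfrak{S}_{n-1}}(xe_{0,\sigma},\dots,xe_{n-1,\sigma}),\qquad e_{j,\sigma}=\prod_{k\le j}\epsilon_{\sigma(k)},
\]
in the full chain group $D_{n,g}=\mathbb{Z}[(i_{\emptyset}^{-1}(g))^{n}]$ (not in $C_{n}(U_{g})$), lets $\eta_{E}$ be its class, and then \emph{cites} \cite{MR3198753}\cite{MR3351752} for both the tiling identity $\sum_{\epsilon\in E}\epsilon\,\mathcal{L}(\bar{\varphi}^{Q}(F(x)))={\rm sgn}(g)\bm{1}_{i_{\emptyset}^{-1}(g)}$ and the vanishing of $\bar{\varphi}^{Q}$ on $\ker(D_{n,g}\to D_{n-1,g})$, with $Q$ chosen so that $\rho_{j}(Q)=0$ for $j<n$. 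Only afterwards is a lift $z\in C_{n}(U_{g})$ of $\eta_{E}$ chosen, and the comparison $\bar{\varphi}^{Q}(F(x)-z)=0$ (using that $F(x)-z$ lies in $I_{E}D_{n,g}+\ker\partial$) transfers the identity to $z$. This means the paper never has to perturb any cycle into $U_{g}$: it works with the fundamental-class cycle where the cited results apply, and then descends to $U_{g}$ purely homologically.

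The difficulty this sidesteps is exactly the one you cannot dismiss. After you perturb the Shintani generators into $U_{g_{0}}$, there is no reason the perturbed open cones still tile the totally positive chamber without gaps or overlaps; "same combinatorial structure'' controls incidence, not coverage. So the identity $\sum_{\epsilon,\alpha}\bm{1}_{\epsilon C_{\alpha}}(z+\delta\epsilon Q)=1$ on $i_{\emptyset}^{-1}(g_{0})$ is not automatic for your perturbed family, and the claim $\partial a_{0}\in I_{E}C_{n-1}^{-}(U_{g_{0}})$ likewise needs a signed accounting of boundary faces that Shintani's unsigned decomposition does not provide. This is precisely the content of the Colmez/Charollois--Dasgupta--Greenberg/Espinoza--Friedman "signed fundamental domain'' construction that the paper invokes, and it is the part you would have to re-prove from scratch. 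A secondary point: you never verify that your class is actually a \emph{generator} of $H_{n-1}(E,\mathbb{Z})$. This does follow a posteriori from the tiling identity (if $\eta_{E}=m\eta$ with $m\ge 2$ then $\bm{1}_{i_{\emptyset}^{-1}(g_{0})}$ would be $m$-divisible in ${\rm Map}(F^{\times},\mathbb{Z})$, which it is not), but it must be said; the paper's cycle $F(x)$ is visibly the fundamental class since it is the shuffle image of $\epsilon_{1}\wedge\cdots\wedge\epsilon_{n-1}$. Your reduction to $g_{0}$ and the transfer to general $g$ via Lemma \ref{lem:s1_V_ptimes} are both correct.
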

\begin{proof}
We put $D_{k,g}:=\mathbb{Z}[(i_{\emptyset}^{-1}(g))^{k}]$. Then $C_{k}(U_{g})$
can be naturally embedded in $D_{k,g}$. Take a $\mathbb{Z}$-basis
$\left\langle \epsilon_{1},\dots,\epsilon_{n-1}\right\rangle $ of
$E$ such that $\det(\log(\rho_{i}(\epsilon_{j})))_{i,j=1}^{n-1}>0$.
For $\sigma\in\mathfrak{S}_{n-1}$ and $0\leq j\leq n-1$, we put
\[
e_{j,\sigma}:=\prod_{k=1}^{j}\epsilon_{\sigma(k)}.
\]
For $x\in i_{\emptyset}^{-1}(g)$, we put 
\[
F(x):=\sum_{\sigma\in\mathfrak{S}_{n-1}}(xe_{0,\sigma},\dots,xe_{n-1,\sigma})\in D_{n,g}.
\]
Then we have $F(x)\in\ker(D_{n,g}\to D_{n-1,g})$. Let $\eta_{E}\in H_{n-1}(E,\mathbb{Z})$
be the image of $F(x)$ under the composite map
\[
\ker(D_{n,g}\to D_{n-1,g})\to\ker(D_{n,g}\to D_{n-1,g})/{\rm im}(D_{n+1,g}\to D_{n,g})\simeq H_{n-1}(E,\mathbb{Z}).
\]
Then $\eta_{E}$ does not depend on the choice of $g$ and $x$. Fix
an irrational vector $Q\in F\otimes\mathbb{R}$ such that $\rho_{j}(Q)=0$
for $1\leq j\leq n-1$. We define $\bar{\varphi}^{Q}:D_{n,g}\to\mathcal{K}(F_{+})$
by
\[
\bar{\varphi}^{Q}((x_{1},\dots,x_{n})):=\begin{cases}
\varphi^{Q}((x_{1},\dots,x_{n})) & x_{1},\dots,x_{n}\ \text{are in general position}\\
0 & {\rm otherwise}.
\end{cases}
\]
Fix $x\in i_{\emptyset}^{-1}(g)$. Then it is known that 
\[
\sum_{\epsilon\in E}\epsilon\mathcal{L}(\bar{\varphi}^{Q}(F(x)))={\rm sgn}(g)\bm{1}_{i_{\emptyset}^{-1}(g)}.
\]
and $\bar{\varphi}^{Q}(a)=0$ for $a\in\ker(D_{n,g}\to D_{n-1,g})$
\cite{MR3198753}\cite{MR3351752}. Let $z\in C_{n}(U_{g})$ be a
lift of $\eta_{E}$. Since 
\[
F(x)-z\in I_{E}D_{n,g}+\ker(D_{n,g}\to D_{n-1,g}),
\]
we have
\[
\sum_{\epsilon\in E}\epsilon\mathcal{L}(\bar{\varphi}^{Q}(F(x)-z))=0.
\]
Thus we have
\[
\sum_{\epsilon\in E}\epsilon\mathcal{L}(\bar{\varphi}^{Q}(z))={\rm sgn}(g)\bm{1}_{i_{\emptyset}^{-1}(g)}.
\]
Since $\bar{\varphi}^{Q}(z)=\varphi^{Q}(z)=\varphi_{E}(z)$ from Lemma
\ref{lem:QE}, $\bar{\varphi}^{Q}(z)-a\in I_{E}Z(U_{g})$. Thus the
claim is proved.
\end{proof}
For a subgroup $E'$ of $E$ of finite index, we have
\[
\alpha_{E,E'}(\eta_{E'})=\#(E/E')\cdot\eta_{E}.
\]

\begin{defn}
Let $E$ be a subgroup of finite index of $\mathcal{O}_{F,+}^{\times}$.
Let $U$ be a subset of $F^{\times}$ such that $U\cap i_{\emptyset}^{-1}(g)$
is fat for all $g\in X_{\emptyset}$. Assume that $U$ is closed under
the action of $E$. For $g\in X_{\emptyset}$, we define $\vartheta_{\infty,g}(U,E)\in H_{0}(E,Z(U_{g}))$
by
\[
\vartheta_{\infty,g}(U,E)={\rm sgn}(g)\Omega_{E,U_{g}}(\eta_{E}).
\]

\end{defn}

\begin{defn}
\label{Def:s1_theta_inf}Let $E$ be a subgroup of finite index of
$\mathcal{O}_{F}^{\times}$. Let $U$ be a subset of $F^{\times}$
such that $U\cap i_{\emptyset}^{-1}(g)$ is fat for all $g\in X_{\emptyset}$.
Assume that $U$ is closed under the action of $E$. Put $E_{+}=E\cap\mathcal{O}_{F,+}^{\times}$.
Let $f_{g}:Z(U_{g})\to\mathcal{Z}(U,\emptyset)$ be the natural inclusion.
Put 
\[
a_{g}=\alpha_{E_{+},E}(f_{g}(\vartheta_{\infty,g}(U,E_{+})))\in H_{0}(E,\mathcal{Z}(U,W)).
\]
Note that $a_{\epsilon g}=a_{g}$ for all $g\in X_{\emptyset}$ and
$\epsilon\in E$. We define $\vartheta_{\infty}(U,E)\in H_{0}(E,\mathcal{Z}(U,W))$
by
\[
\vartheta_{\infty}(U,E)=\sum_{g\in X_{\emptyset}/(E/E_{+})}a_{g}.
\]
\end{defn}
\begin{prop}
\label{prop:D_is_signed_fd}Let $D\in\mathcal{B}_{\infty}(0)$ be
a lift of $\vartheta_{\infty}(U,E)$. Then we have
\[
\sum_{\epsilon\in E}\epsilon\mathcal{L}_{\infty}(D)=\bm{1}_{F^{\times}}.
\]
\end{prop}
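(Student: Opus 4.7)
The strategy is to unravel Definition~\ref{Def:s1_theta_inf} so that the claim reduces to Lemma~\ref{lem:tot_fundamental_domain} applied once for each $(E/E_+)$-orbit on $X_\emptyset$. Fix a set of representatives $R \subset X_\emptyset$ for $X_\emptyset/(E/E_+)$. By Definition~\ref{Def:s1_theta_inf}, any lift $D$ of $\vartheta_\infty(U,E)$ can be written inside $\mathcal{Z}(U,\emptyset)$ as
\[
D \;=\; \sum_{g \in R} D_g + z,
\]
where each $D_g \in Z(U_g) \subset \mathcal{Z}(U,\emptyset)$ is a lift of $\vartheta_{\infty,g}(U,E_+) \in H_0(E_+, Z(U_g))$ and $z \in I_E\,\mathcal{Z}(U,\emptyset)$.

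Next I plan to apply $\sum_{\epsilon \in E} \epsilon\,\mathcal{L}_\infty(-)$ pointwise. This converges because the support of $\mathcal{L}_\infty(\epsilon D_g)$ lies inside $\epsilon \cdot i_\emptyset^{-1}(g) = i_\emptyset^{-1}(\epsilon g)$, so at any fixed point of $F^\times$ only finitely many $\epsilon$ contribute. The $I_E$-boundary piece $z$ contributes zero by the standard reindexing $\epsilon \mapsto \epsilon\delta^{-1}$. For each $g \in R$, the equality $\vartheta_{\infty,g}(U,E_+) = \mathrm{sgn}(g)\,\Omega_{E_+, U_g}(\eta_{E_+})$ means that $\mathrm{sgn}(g) D_g$ is a lift of $\Omega_{E_+, U_g}(\eta_{E_+})$, so Lemma~\ref{lem:tot_fundamental_domain} yields
\[
\sum_{\epsilon \in E_+} \epsilon\,\mathcal{L}_\infty(D_g) \;=\; \mathrm{sgn}(g)\cdot \mathrm{sgn}(g)\,\bm{1}_{i_\emptyset^{-1}(g)} \;=\; \bm{1}_{i_\emptyset^{-1}(g)}.
\]
Since $i_\emptyset$ is a homomorphism and $E$ acts on $F^\times$ by multiplication, $\delta\cdot\bm{1}_{i_\emptyset^{-1}(g)} = \bm{1}_{i_\emptyset^{-1}(\delta g)}$. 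Promoting the $E_+$-sum to an $E$-sum via a choice of coset representatives gives
\[
\sum_{\epsilon \in E} \epsilon\,\mathcal{L}_\infty(D_g) \;=\; \sum_{\delta \in E/E_+} \bm{1}_{i_\emptyset^{-1}(\delta g)},
\]
and summing over $g \in R$ covers every element of $X_\emptyset$ exactly once, producing $\sum_{g' \in X_\emptyset} \bm{1}_{i_\emptyset^{-1}(g')} = \bm{1}_{F^\times}$ as required.

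The substantive content has already been carried in Lemma~\ref{lem:tot_fundamental_domain}, which provides the signed fundamental-domain identity for a single $E_+$-orbit; the present proposition is essentially aggregation bookkeeping. The only points requiring care will be to justify that the pointwise sum over $E$ is well-defined and commutes with the decomposition $D = \sum_g D_g + z$, and to ensure that the sign conventions in $\vartheta_{\infty,g}$ (with the factor $\mathrm{sgn}(g)$) exactly cancel the sign in Lemma~\ref{lem:tot_fundamental_domain} so that the characteristic functions combine without extra signs. No genuinely difficult step arises.
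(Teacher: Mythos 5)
Your proof is correct and takes the same approach the paper intends: the paper's proof is simply the (truncated) pointer ``It follows from Lemma \ref{lem:tot_fundamental_domain}'', and you have filled in exactly the bookkeeping that reference leaves implicit, namely decomposing $D$ according to Definition \ref{Def:s1_theta_inf}, applying Lemma \ref{lem:tot_fundamental_domain} over $E_{+}$ to each $D_{g}$ so that the two $\mathrm{sgn}(g)$ factors cancel, discarding the $I_{E}$-boundary term, and reassembling the characteristic functions over the free $(E/E_{+})$-action on $X_{\emptyset}$.
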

\begin{proof}
It follows from Lemma 
\end{proof}

\begin{prop}
\label{prop:theta_inf_vanish}Fix $v\in S_{\infty}$. Let $h:H_{0}(E,\mathcal{Z}(U,\emptyset))\to H_{0}(E,\mathcal{Z}(U,\{v\}))$
be the map induced from the natural map$\mathcal{Z}(U,\emptyset)\to\mathcal{Z}(U,\{v\})$.
Then we have
\[
h(\vartheta_{\infty}(U,E))=0.
\]
\end{prop}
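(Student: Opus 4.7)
My plan is to decompose $h(\vartheta_\infty(U,E))$ via Shapiro's lemma into pieces indexed by the $E$-orbits of $X_{\{v\}}$, and then to show that the contributions from orbits above a given $[\bar g]\in X_{\{v\}}/E$ vanish either by pairwise cancellation or by the naturality of $\Omega$.

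Let $\pi\colon X_\emptyset\to X_{\{v\}}$ be the natural projection, and let $E_{\bar g}\subset E$ denote the stabilizer of $\bar g$; note $E_+\subset E_{\bar g}\subset E$. Shapiro's lemma yields
\[
H_0(E,\mathcal{Z}(U,\{v\})) \;=\; \bigoplus_{[\bar g]\in X_{\{v\}}/E} H_0(E_{\bar g},Z(U_{\bar g})),
\]
and unpacking Definition \ref{Def:s1_theta_inf}, for any orbit $[g]\in X_\emptyset/E$ with $\pi(g)=\bar g$ the image of $a_g$ in the $[\bar g]$-component equals $\mathrm{sgn}(g)\cdot \alpha_{E_+,E_{\bar g}}(\Omega_{E_+,U_{\bar g}}(\eta_{E_+}))$.

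Let $e_v\in\{\pm1\}^{S_\infty}$ be the sign vector with $-1$ at $v$ and $+1$ elsewhere, so $\pi^{-1}(\bar g)=\{g,e_vg\}$ and $\mathrm{sgn}(e_vg)=-\mathrm{sgn}(g)$. I then distinguish two cases. \emph{Case B:} the orbits $[g]$ and $[e_vg]$ in $X_\emptyset/E$ are distinct. Since $U_{\overline{e_vg}}=U_{\bar g}$ and $E_{\overline{e_vg}}=E_{\bar g}$, their contributions to the $[\bar g]$-component have opposite signs and cancel. \emph{Case A:} the two orbits coincide, equivalently, there exists $\epsilon\in E$ whose sign pattern equals $e_v$; then $\epsilon\in E_{\bar g}$ with $\mathrm{sgn}(\det\epsilon)=-1$, and only one summand survives.

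The main obstacle will be showing the Case A contribution vanishes. Applying the naturality square for $\Omega_{E,U}$ to the inclusions $E_+\subset E_{\bar g}$ and $U_{\bar g}\subset U_{\bar g}$, the surviving term equals $\mathrm{sgn}(g)\,\Omega_{E_{\bar g},U_{\bar g}}(\alpha'(\eta_{E_+}))$, where $\alpha'\colon H_{n-1}(E_+,\mathbb{Z}^-)\to H_{n-1}(E_{\bar g},\mathbb{Z}^-)$ is the natural map. It then suffices to show $H_{n-1}(E_{\bar g},\mathbb{Z}^-)=0$. Since $E_+$ and $E$ both have rank $n-1$, so does $E_{\bar g}$; choosing a $\mathbb{Z}$-basis $\epsilon_1=\epsilon,\epsilon_2,\dots,\epsilon_{n-1}$ and computing with the Koszul resolution of $\mathbb{Z}$ over $\mathbb{Z}[E_{\bar g}]\cong\mathbb{Z}[x_1^{\pm1},\dots,x_{n-1}^{\pm1}]$, the top differential sends $e_1\wedge\cdots\wedge e_{n-1}$ to a chain whose $(e_2\wedge\cdots\wedge e_{n-1})$-coefficient equals $\mathrm{sgn}(\det\epsilon)-1=-2\neq 0$. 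This forces the top-degree kernel to vanish, yielding $H_{n-1}(E_{\bar g},\mathbb{Z}^-)=0$ and hence $h(\vartheta_\infty(U,E))=0$.
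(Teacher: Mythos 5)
Your argument is correct and follows essentially the same two-case strategy as the paper: when no unit has sign pattern $e_v$ the contributions from $g$ and $e_v g$ cancel pairwise, and when such a unit $\epsilon$ exists the surviving term dies because $H_{n-1}(E',\mathbb{Z}^-)=0$ (where $E'=\ker(i_{\{v\}}|_E)$ is what you call $E_{\bar g}$). The one imprecision is in "choosing a $\mathbb{Z}$-basis $\epsilon_1=\epsilon,\epsilon_2,\dots,\epsilon_{n-1}$": there is no guarantee $\epsilon$ is primitive in $E'$, so it need not extend to a basis; but the Koszul computation works with \emph{any} basis, since $\mathrm{sgn}|_{E'}$ is nontrivial (as $\mathrm{sgn}(\epsilon)=-1$) and hence some basis element has sign $-1$, giving an injective top differential.
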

\begin{proof}
The case $(F,E)=\{1\}$ is obvious from the definition. Therefore
we assume that $F\neq Q$. Put $E_{+}=E\cap\mathcal{O}_{F,+}^{\times}$
and $E'=E\cap i_{\{v\}}^{-1}(1)$. For $g\in X_{\emptyset}$, consider
the commutative diagram
\[
\xymatrix{H_{n-1}(E_{+},\mathbb{Z}^{-})\ar[r]^{\Omega_{E_{+},U_{g}}}\ar[rd]_{\Omega_{E_{+},U_{q(g)}}} & H_{0}(E_{+},Z(U_{g}))\ar[r]^{f_{g}}\ar[d] & H_{0}(E_{+},\mathcal{Z}(U,\emptyset))\ar[r]^{\alpha_{E_{+},E}}\ar[d] & H_{0}(E,\mathcal{Z}(U,\emptyset))\ar[d]^{h}\\
 & H_{0}(E_{+},Z(U_{q(g)}))\ar[r]^{f_{q(g)}} & H_{0}(E_{+},\mathcal{Z}(U,\{v\}))\ar[r]^{\alpha_{E_{+},E}} & H_{0}(E,\mathcal{Z}(U,\{v\}))
}
\]
where $q(g)\in X_{\{v\}}$ is the image of $g$, and $f_{g},f_{q(g)}$
are natural maps. From the definition, we have
\[
\vartheta_{\infty}(U,E)=\sum_{g\in X_{\infty}/(E/E_{+})}a_{g}
\]
where we put
\[
a_{g}:={\rm sgn}(g)\times\alpha_{E_{+},E}\circ f_{g}\circ\Omega_{E+,U_{g}}(\eta_{E_{+}}).
\]
We have 
\begin{equation}
h(a_{g})={\rm sgn}(g)\times\alpha_{E_{+},E}\circ f_{q(g)}\circ\Omega_{E_{+},U_{q(g)}}(\eta_{E_{+}}).\label{eq:e1}
\end{equation}

(The case $E_{+}=E'$). Let $e$ be a unique element of $i_{\{v\}}^{-1}(1)\setminus\{1\}$.
Then we have
\[
\vartheta_{\infty}(U,E)=\sum_{g\in X_{\infty}/(E/E_{+})/\{1,e\}}(a_{g}+a_{ge}).
\]
 Since $q(ge)=q(g)$ and ${\rm sgn}(ge)=-{\rm sgn}(g)$, we have
\[
h(a_{g}+a_{ge})=0\ \ \ (g\in X_{\emptyset}).
\]
Thus we have
\[
h(\vartheta_{\infty}(U,E))=0.
\]

(The case $E_{+}=E'$). Let us consider the commutative diagram
\[
\xymatrix{H_{n-1}(E_{+},\mathbb{Z}^{-})\ar[r]_{\Omega_{E_{+},U_{q(g)}}}\ar[d]_{\alpha_{E_{+},E}} & H_{0}(E_{+},Z(U_{q(g)}))\ar[r]^{f_{q(g)}}\ar[d]^{\alpha_{E_{+},E}} & H_{0}(E_{+},\mathcal{Z}(U,\{v\}))\ar[d]_{\alpha_{E_{+},E'}}\ar[rd]^{\alpha_{E_{+},E}}\\
H_{n-1}(E',\mathbb{Z}^{-})\ar[r]_{\Omega_{E',U_{q(g)}}} & H_{0}(E',Z(U_{q(g)}))\ar[r] & H_{0}(E',\mathcal{Z}(U,\{v\}))\ar[r]_{\alpha_{E',E}} & H_{0}(E,\mathcal{Z}(U,\{v\})).
}
\]
We have $H_{n-1}(E',\mathbb{Z}^{-})=0$ from the assumption $F\neq\mathbb{Q}$.
Thus we have 
\[
h(a_{g})=\alpha_{E_{+},E}\circ f_{q(g)}\circ\Omega_{E_{+},U_{q(g)}}(\eta_{E_{+}})=0
\]
for $g\in X_{\emptyset}$. Hence we have $h(\vartheta_{\infty}(U,E))=0$.\end{proof}
\begin{prop}
\label{prop:theta_inf_mul_p}Fix $p\in F^{\times}$. Assume that $pU\subset U$.
Then we have
\[
[p]\vartheta_{\infty}(U,E)=\vartheta_{\infty}(U,E).
\]
\end{prop}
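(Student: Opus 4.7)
The plan is to unfold the definition of $\vartheta_\infty(U,E)$, track how multiplication by $p$ acts on each summand, and show that $[p]$ simply permutes the terms indexed by $X_\emptyset/(E/E_+)$.

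First I would recall that, by Definition \ref{Def:s1_theta_inf},
\[
\vartheta_\infty(U,E)=\sum_{g\in X_\emptyset/(E/E_+)}a_g,\qquad a_g={\rm sgn}(g)\,\alpha_{E_+,E}\bigl(f_g(\Omega_{E_+,U_g}(\eta_{E_+}))\bigr),
\]
where $E_+=E\cap\mathcal{O}_{F,+}^\times$. Multiplication by $p$ on $F^\times$ sends $U_g$ into $U_{pg}$ (here $pg$ denotes the image of $p\cdot g$ in $X_\emptyset$ under the action of $F^\times$ on $X_\emptyset$ via $i_\emptyset$), so $[p]$ induces a map $Z(U_g)\to Z(U_{pg})$ commuting with the $E_+$-action, and hence on the level of $\mathcal{Z}(U,\emptyset)$ it satisfies $[p]\circ f_g=f_{pg}\circ[p]$.

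Next I would apply Lemma \ref{lem:s1_V_ptimes} with $V=F$ (as a $\mathbb{Q}$-vector space), with $E_+$ acting by multiplication, and with the $\mathbb{Q}[E_+]$-automorphism given by multiplication by $p$: this gives
\[
[p]_*\,\Omega_{E_+,U_g}(\eta_{E_+})={\rm sgn}(\det_{\mathbb{Q}}(p))\,\Omega_{E_+,U_{pg}}(\eta_{E_+})={\rm sgn}(N_{F/\mathbb{Q}}(p))\,\Omega_{E_+,U_{pg}}(\eta_{E_+}),
\]
since the determinant of the multiplication-by-$p$ map on $F/\mathbb{Q}$ is the norm $N_{F/\mathbb{Q}}(p)$. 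Combining this with the compatibilities above yields
\[
[p]\,a_g={\rm sgn}(g)\,{\rm sgn}(N_{F/\mathbb{Q}}(p))\,\alpha_{E_+,E}\bigl(f_{pg}(\Omega_{E_+,U_{pg}}(\eta_{E_+}))\bigr).
\]
On the other hand, since $F$ is totally real one has ${\rm sgn}(pg)={\rm sgn}(g)\cdot\prod_{v\in S_\infty}{\rm sgn}(\rho_v(p))={\rm sgn}(g)\cdot{\rm sgn}(N_{F/\mathbb{Q}}(p))$, whence $[p]\,a_g=a_{pg}$.

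Finally, multiplication by $p$ on $X_\emptyset$ is a bijection that commutes with the translation action of $E/E_+\subset X_\emptyset$, so it permutes the set of orbits $X_\emptyset/(E/E_+)$. Because the relation $a_{\epsilon g}=a_g$ for $\epsilon\in E$ noted in Definition \ref{Def:s1_theta_inf} makes the sum $\sum_g a_g$ independent of the choice of orbit representatives, we conclude
\[
[p]\,\vartheta_\infty(U,E)=\sum_{g\in X_\emptyset/(E/E_+)}a_{pg}=\sum_{h\in X_\emptyset/(E/E_+)}a_h=\vartheta_\infty(U,E).
\]
The only delicate point is the sign bookkeeping in the second step, where one must check that the determinantal sign coming from Lemma \ref{lem:s1_V_ptimes} exactly cancels the change in ${\rm sgn}(g)$ when $g$ is replaced by $pg$; this is precisely the content of the identity ${\rm sgn}(N_{F/\mathbb{Q}}(p))=\prod_{v\in S_\infty}{\rm sgn}(\rho_v(p))$, which holds because $F$ is totally real.
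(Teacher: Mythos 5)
Your proof is correct and follows essentially the same route as the paper: both reduce to the identity $[p]a_g=a_{pg}$ by invoking Lemma \ref{lem:s1_V_ptimes} for the map induced by multiplication by $p$, and both cancel the determinantal sign ${\rm sgn}(\det(p))={\rm sgn}(N_{F/\mathbb{Q}}(p))$ against the change from ${\rm sgn}(g)$ to ${\rm sgn}(pg)$. The paper organizes this via a commutative diagram and then sums over $g$; you make explicit the (implicit in the paper) point that multiplication by $p$ permutes the cosets $X_{\emptyset}/(E/E_{+})$, but the content is identical.
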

\begin{proof}
Let $(a_{g})_{g\in X_{\emptyset}}$ be as in Definition \ref{Def:s1_theta_inf}.
Take $g\in X_{\emptyset}$. Let us consider the diagram
\[
\xymatrix{H_{n-1}(E_{+},\mathbb{Z}^{-})\ar[r]^{\Omega_{E_{+},U_{g}}}\ar[rd]_{\Omega_{E_{+},U_{pg}}} & H_{0}(E_{+},Z(U_{g}))\ar[r]^{f_{g}}\ar[d]^{\times{\rm sgn}(p)[p]} & H_{0}(E_{+},\mathcal{Z}(U,\emptyset))\ar[r]^{\alpha_{E_{+},E}}\ar[d]^{\times{\rm sgn}(p)[p]} & H_{0}(E,\mathcal{Z}(U,\emptyset))\ar[d]^{\times{\rm sgn}(p)[p]}\\
 & H_{0}(E_{+},Z(U_{pg}))\ar[r]_{f_{pg}} & H_{0}(E_{+},\mathcal{Z}(U,\emptyset))\ar[r]_{\alpha_{E_{+},E}} & H_{0}(E,\mathcal{Z}(U,\emptyset))
}
\]
where $f_{g}$ and $f_{pg}$ are natural maps. From Lemma \ref{lem:s1_V_ptimes},
we have 
\[
\Omega_{E_{+},U_{pg}}=(\times{\rm sgn}(p)[p])\circ\Omega_{E_{+},U_{pg}}.
\]
Thus this diagram commutes. From the definition, we have
\begin{eqnarray*}
a_{g} & = & {\rm sgn}(g)\times\alpha_{E_{+},E}\circ f_{g}\circ\Omega_{E_{+},U_{g}}(\eta_{E})\\
a_{pg} & = & {\rm sgn}(pg)\times\alpha_{E_{+},E}\circ f_{pg}\circ\Omega_{E_{+},U_{pg}}(\eta_{E}).
\end{eqnarray*}
Since the diagram commutes, we have
\[
[p]a_{g}=a_{pg}.
\]
Hence we have
\[
[p]\vartheta_{\infty}(U,E)=[p]\sum_{g}a_{g}=\sum_{g}a_{pg}=\vartheta_{\infty}(U,E).
\]

\end{proof}

\section{Shintani zeta functions\label{sec:ShintaniZeta}}

In this section, we view some basic definitions and results concerning
to the Shintani zeta functions. This section does not contain any
new result. We denote by $\mathbb{A}_{{\rm finite}}$ the finite adele
ring of $\mathbb{Q}$.

\subsection{The Shintani zeta functions and Solomon-Hu parings}

Let $V$ be an $n$-dimensional vector space over $\mathbb{Q}$ and
$\mathcal{U}$ a subset of $V$. We say that $\Phi\in\mathcal{S}(V\otimes\mathbb{A}_{{\rm finite}})$
is $\mathcal{U}$-smooth if
\[
\int_{\mathbb{A}_{{\rm finite}}}\Phi(x+tu)dt=0\ \ \ \ \forall(x,u)\in V\times\mathcal{U}.
\]
We define the action of $\mathbb{Z}[V]$ to ${\rm Map}(V,\mathbb{Z})$
by $([y]f)(x)=f(x-y)$. Let $\epsilon:\mathbb{Z}[V]\to\mathbb{Z}$
be the augmentation map. We view $\mathbb{Q}$ as a $\mathbb{Z}[V]$-module
by the action
\[
\Delta m=\epsilon(\Delta)m\ \ \ \ (\Delta\in\mathbb{Z}[V],m\in\mathbb{Q}).
\]
We denote by $\delta\in{\rm Map}(V,\mathbb{Z})$ the Kronecker delta
function, i.e., $\delta(0)=1$ and $\delta(x)=0$ for $x\neq0$. We
denote by $A'(V)$ the set of maps from $V$ to $\mathbb{Z}$ which
has a finite support. We put
\[
A(V)=\{f:V\to\mathbb{Z}\mid\exists\Delta\in X(V),\ \epsilon(\Delta)\neq0\ {\rm and}\ \Delta f\in A'(V)\}.
\]

\begin{lem}
For $f\in\mathcal{K}'(\mathcal{U})$ and $\mathcal{U}$-smooth $\Phi$,
we have $f\cdot\Phi\in A(V)$\end{lem}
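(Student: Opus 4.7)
The plan is to reduce $f$ to a single cone indicator, then construct an explicit $\Delta\in\mathbb{Z}[V]$ with $\epsilon(\Delta)\neq 0$ whose action cancels the periodic part of $\Phi$ against the cone. The key ingredient is unfolding the adelic $\mathcal{U}$-smoothness into a finite identity on $V$.

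First, by $\mathbb{Z}$-linearity in $f$ (and the observation that $A(V)$ is closed under $\mathbb{Z}$-linear combinations, using a common multiple of the witnesses), it suffices to treat $f=\mathbf{1}_{C(u_1,\dots,u_k)}$ for linearly independent $u_1,\dots,u_k\in\mathcal{U}$. Since $\Phi$ is Schwartz-Bruhat, pick full-rank lattices $L_0\subset L_1$ of $V$ such that $\Phi$ is $(L_0\otimes\widehat{\mathbb{Z}})$-invariant and supported in $L_1\otimes\widehat{\mathbb{Z}}$; replacing each $u_i$ by a positive integer multiple (which does not change the cone $C(u_1,\dots,u_k)$) we may assume $u_i\in L_0$. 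Then $t\mapsto\Phi(y+tu_i)$ is $\widehat{\mathbb{Z}}$-invariant in $t\in\mathbb{A}_{\mathrm{finite}}$, and the $\mathcal{U}$-smoothness condition at $u_i$ unfolds as
\[
0=\int_{\mathbb{A}_{\mathrm{finite}}}\Phi(y+tu_i)\,dt=\sum_{q\in\mathbb{Q}/\mathbb{Z}}\Phi(y+qu_i).
\]
Set $J_i:=\{q\in\mathbb{Q}:qu_i\in L_1\}$. Because $L_1\cap\mathbb{Q}u_i$ is a rank-one discrete subgroup of $\mathbb{Q}u_i$ containing $\mathbb{Z}u_i$, we have $J_i=\tfrac{1}{d_i}\mathbb{Z}$ for a positive integer $d_i$, and for $y\in L_1$ the above sum collapses to $\sum_{q\in J_i/\mathbb{Z}}\Phi(y+qu_i)=0$.

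Now take $T_i=\{0,\tfrac{1}{d_i},\dots,\tfrac{d_i-1}{d_i}\}$ as reps of $J_i/\mathbb{Z}$, and set $\Delta_i:=\sum_{q\in T_i}[qu_i]$, $\Delta:=\Delta_1\cdots\Delta_k\in\mathbb{Z}[V]$; then $\epsilon(\Delta)=d_1\cdots d_k\neq 0$. Expanding gives
\[
\Delta(f\Phi)(x)=\sum_{(q_1,\dots,q_k)\in T_1\times\cdots\times T_k}f\bigl(x-\textstyle\sum q_iu_i\bigr)\,\Phi\bigl(x-\textstyle\sum q_iu_i\bigr).
\]
Since $\sum q_iu_i\in L_1$ and the cone lies inside $\mathrm{span}_{\mathbb{Q}}(u_1,\dots,u_k)$, any summand vanishes unless $x\in L_1\cap\mathrm{span}(u_i)$. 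Writing $x=\sum s_iu_i$ there: if some $s_i\leq 0$ the cone indicator is identically zero on $q_i\in T_i\subset[0,1)$; if some $s_i\geq 1$ then $\mathbf{1}_{s_i>q_i}=1$ for all $q_i\in T_i$, so $f(x-\sum q_ju_j)$ is independent of $q_i$, and the inner sum over $q_i\in T_i$ of $\Phi(x-\sum q_ju_j)$ vanishes by the identity of the previous paragraph. Hence only $0<s_i<1$ for every $i$ can contribute, confining $x$ to a bounded subset of the rank-$k$ lattice $L_1\cap\mathrm{span}(u_i)$, which is finite. Therefore $\Delta(f\Phi)\in A'(V)$, proving $f\Phi\in A(V)$.

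The main obstacle is the passage from the adelic integral to the finite identity $\sum_{q\in J_i/\mathbb{Z}}\Phi(y+qu_i)=0$ with a correct, explicit choice of representatives $T_i$; once this identity is in place the support analysis is the usual Shintani-style decomposition into translates of a fundamental parallelepiped inside the cone.
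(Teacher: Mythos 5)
Your proof is correct and follows the same route as the paper's (which is only a three-line sketch): reduce to a single cone indicator $\mathbf{1}_{C(u_1,\dots,u_k)}$, then exhibit an explicit $\Delta$ built from representatives of the period lattice so that $\Delta(f\Phi)$ has finite support. The paper uses a single uniform $k$ with $\Delta_j=\sum_{c=0}^{k^2-1}[cu_j/k]$ whereas you rescale each $u_i$ into $L_0$ and use per-direction counts $d_i$; this is the same idea with a different parameterization, and you make explicit the unfolding of the adelic $\mathcal{U}$-smoothness into a finite vanishing sum, which the paper leaves implicit. One small point worth flagging: after replacing $u_i$ by a positive integer multiple $n_i u_i$ it may no longer lie in $\mathcal{U}$, so you are implicitly using that $\int_{\mathbb{A}_{\mathrm{finite}}}\Phi(y+t\,n_i u_i)\,dt=0$ follows from the smoothness condition at $u_i$; this is immediate by a change of variables in the Haar measure, but should be stated.
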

\begin{proof}
It is enough to only consider the case $f=\bm{1}_{C(u_{1},\dots,u_{m})}$
where $u_{1},\dots,u_{m}\in\mathcal{U}$. There exists a positive
integer $k$ such that
\begin{eqnarray*}
{\rm supp}(\Phi)\cap(\mathbb{Q}u_{1}+\cdots+\mathbb{Q}u_{m}) & \subset & \frac{1}{k}\mathbb{Z}u_{1}+\cdots+\frac{1}{k}\mathbb{Z}u_{m}\\
\Phi(x+ku_{j}) & = & \Phi(x)\ \ \ \ (x\in V,\ j=1,\dots,m).
\end{eqnarray*}
If we put $\Delta=\prod_{j=1}^{m}\sum_{c=0}^{k^{2}-1}[cu_{j}/k]\in X(V)$
then $\epsilon(\Delta)=k^{2m}\neq0$ and $\Delta(f\cdot\Phi)\in A'(V)$.
\end{proof}
Let $\varpi:A(V)\to\mathbb{Q}$ be the unique $\mathbb{Z}[V]$-homomorphism
such that $\varpi(\delta)=1$. For $f\in\mathcal{K}(\mathcal{U})$
and $\mathcal{U}$-smooth $\Phi$, we define a (constant term of)
Solomon-Hu pairing by $\langle\!\langle f,\Phi\rangle\!\rangle=\varpi(f\cdot\Phi)$.
The importance of this pairing comes from the following Lemma:
\begin{lem}
Let $m$ be a positive integer and $\rho_{1},\dots,\rho_{m}$ homomorphisms
from $V$ to $\mathbb{R}$ such that $\rho_{j}(u)>0$ for all $u\in\mathcal{U}$
and $j=1,\dots,m$. Take $f\in\mathcal{K}(\mathcal{U})$ and $\mathcal{U}$-smooth
$\Phi$. Then the series
\[
\zeta((s_{1},\dots,s_{m}),\Phi,f):=\sum_{f(x)\neq0}f(x)\Phi(x)\rho_{1}(x)^{-s_{1}}\cdots\rho_{m}(x)^{-s_{m}}
\]
converges absolutely if the real part of $s_{1}+\cdots+s_{m}$ are
larger than $n$. Moreover $\zeta((s_{1},\dots,s_{m}),\Phi,f)$ is
analytically continued to the whole $\mathbb{C}^{m}$ and satisfies
\[
\zeta((0,\dots,0),\Phi,f)=\langle\!\langle f,\Phi\rangle\!\rangle.
\]
\end{lem}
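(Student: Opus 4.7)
My plan is to split the statement into the three claims --- absolute convergence in the half-space $\{\mathrm{Re}(s_1+\cdots+s_m)>n\}$, meromorphic continuation to all of $\mathbb{C}^m$, and the value at the origin --- and dispose of them in turn. As a preliminary reduction, $\mathcal{K}'(\mathcal{U})$ is by construction spanned by characteristic functions of open cones $\bm{1}_{C(u_1,\dots,u_k)}$ with $u_i\in\mathcal{U}$ linearly independent, and since the series defining $\zeta$ is $\mathbb{Z}$-linear in $f$, I may replace $f$ by $\bm{1}_{C}$ for a single such cone $C=C(u_1,\dots,u_k)$ throughout (the ambiguity of a lift of $f\in\mathcal{K}(\mathcal{U})$ by a multiple of $\bm{1}_{V\setminus\{0\}}$ is handled by choosing a lift vanishing at $0$, which makes $\zeta$ well-defined). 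On such a cone, parametrizing $x=\sum_i c_i u_i$ with $c_i>0$ and using that $\rho_j(x)=\sum_i c_i\rho_j(u_i)$ is bounded below by a positive multiple of $\sum_i c_i$ thanks to $\rho_j(u_i)>0$, absolute convergence in $\mathrm{Re}(s_1+\cdots+s_m)>n$ follows from the rapid decay of $\Phi\in\mathcal{S}(V\otimes\mathbb{A}_{\mathrm{finite}})$ by a standard multivariable Dirichlet series estimate.

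For the meromorphic continuation I would invoke the classical Shintani/Solomon--Hu machinery already cited in \cite{MR3198753,MR3351752}: passing through the Mellin transform
\[
\prod_j\Gamma(s_j)\cdot\zeta(s,\Phi,\bm{1}_C)=\int_{(0,\infty)^m}\prod_j t_j^{s_j-1}\,\Theta(t)\,dt,\qquad \Theta(t):=\sum_{x\in C}\Phi(x)\exp\!\left(-\sum_j t_j\rho_j(x)\right),
\]
the $\mathcal{U}$-smoothness hypothesis on $\Phi$ produces enough cancellation in the small-$t$ expansion of $\Theta$ along the directions $u_1,\dots,u_k$ to force meromorphic continuation of the integral to all of $\mathbb{C}^m$, and (after dividing by $\prod_j\Gamma(s_j)$) regularity at $(0,\dots,0)$.

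The heart of the lemma is the identity $\zeta((0,\dots,0),\Phi,\bm{1}_C)=\varpi(\bm{1}_C\cdot\Phi)$, and I would attack it using the universal property of $\varpi$. Choose $\Delta=\sum_i a_i[y_i]\in\mathbb{Z}[V]$ with $\epsilon(\Delta)\neq 0$ and $g:=\Delta(\bm{1}_C\cdot\Phi)\in A'(V)$. Since $\varpi$ is the unique $\mathbb{Z}[V]$-equivariant map to $\mathbb{Q}$ with $\varpi(\delta)=1$ (in particular $\varpi(\delta_y)=1$ for every $y\in V$), one has
\[
\epsilon(\Delta)\,\varpi(\bm{1}_C\cdot\Phi)=\varpi(g)=\sum_{x\in V}g(x),
\]
so it suffices to establish the matching identity $\epsilon(\Delta)\,\zeta((0,\dots,0),\Phi,\bm{1}_C)=\sum_{x}g(x)$. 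I would expand this by writing $g(x)=\sum_i a_i\bm{1}_{C}(x-y_i)\Phi(x-y_i)$ and reshuffling into a telescoping sum of shifted cone terms. The hard step --- and what I expect to be the real obstacle --- is to identify the difference between $\bm{1}_C(x-y_i)\Phi(x-y_i)$ and $\bm{1}_C(x)\Phi(x-y_i)$ as a characteristic function of a boundary slab of $C$, and to show that its analytically continued multivariable zeta value at the origin vanishes; this would be proved by induction on $n$, invoking the lemma itself for lower-dimensional fat subsets and the corresponding Solomon--Hu pairings on the faces of $C$.
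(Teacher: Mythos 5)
There is a genuine gap. Your first two steps (convergence, meromorphic continuation via the Mellin transform) are sketchy but roughly in the direction of the paper, which also passes through
\[
\prod_j\Gamma(s_j)\,\zeta(s,\Phi,f)=\int_{(0,\infty)^m}h(t)\,t^{s-1}\,dt,\qquad h(t)=\sum_x f(x)\Phi(x)\exp\!\Bigl(-\sum_j t_j\rho_j(x)\Bigr),
\]
and then replaces $\int_0^\infty$ by a Hankel contour. But for the third and central claim, $\zeta(0,\dots,0)=\langle\!\langle f,\Phi\rangle\!\rangle$, you propose a telescoping argument involving ``boundary slabs'' and an induction on dimension that you do not carry out, and which you yourself flag as ``the real obstacle.'' That is exactly where the content of the lemma lies, so the proposal does not close.

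The mechanism you are missing is the following algebraic repackaging, which the paper uses to get the continuation and the value at the origin simultaneously. Because $f\cdot\Phi\in A(V)$, there is $\Delta=\sum_i a_i[y_i]\in\mathbb{Z}[V]$ with $\epsilon(\Delta)\neq 0$ and $\Delta(f\cdot\Phi)\in A'(V)$. Let $M$ be the field of meromorphic functions on $\mathbb{C}^m$ with the $\mathbb{Z}[V]$-action $[y]\cdot g=\exp(-\sum_j t_j\rho_j(y))\,g$, and let $\Omega:A(V)\to M$ be the unique $\mathbb{Z}[V]$-map with $\Omega(\delta)=1$. Then $h=\Omega(f\cdot\Phi)$ agrees with the theta series above where it converges, and the module relation $\Delta\cdot h=\Omega(\Delta(f\cdot\Phi))$ exhibits $h$ as a quotient of two finite exponential sums,
\[
h(t)=\frac{\sum_x(\Delta(f\cdot\Phi))(x)\,\exp(-\sum_j t_j\rho_j(x))}{\sum_i a_i\,\exp(-\sum_j t_j\rho_j(y_i))}.
\]
Hence $h$ is globally meromorphic, the denominator is $\epsilon(\Delta)\neq 0$ at $t=0$, and
\[
h(0,\dots,0)=\frac{\sum_x(\Delta(f\cdot\Phi))(x)}{\epsilon(\Delta)}=\varpi(f\cdot\Phi)=\langle\!\langle f,\Phi\rangle\!\rangle.
\]
The Hankel-contour representation then gives $\zeta(0,\dots,0)=h(0,\dots,0)$ by a residue computation. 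In other words, the identity you want follows from the uniqueness of the $\mathbb{Z}[V]$-equivariant map $\varpi$ once one notices that $t\mapsto 0$ is compatible with $\Omega$; no cone-by-cone telescoping is needed.

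Two smaller points. First, your convergence argument appeals to ``rapid decay of $\Phi$,'' but $\Phi$ is a Schwartz--Bruhat function on $V\otimes\mathbb{A}_{\mathrm{finite}}$, locally constant with compact support; it does not decay in the archimedean direction. Convergence comes entirely from the $\rho_j(x)^{-s_j}$ factors against a lattice-point count, which is why the abscissa is $\mathrm{Re}(s_1+\cdots+s_m)=n$. Second, your remark about ``choosing a lift vanishing at $0$'' does not actually resolve the ambiguity: every lift in $\mathcal{K}'(\mathcal{U})$ vanishes at $0$, and $\varpi(\bm{1}_{V\setminus\{0\}}\cdot\Phi)=-\Phi(0)$, so well-definedness of the pairing on the quotient $\mathcal{K}(\mathcal{U})$ requires $\Phi(0)=0$; the paper checks this where the pairing is used.
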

\begin{proof}
The absolute convergence is obvious. Let $M$ the field of meromorphic
functions on $\mathbb{C}^{m}$. We view $M$ as a $\mathbb{Z}[V]$-module
by the action
\[
([y]f)(t_{1},\dots,t_{m})=\exp(-\rho_{1}(y)t_{1}-\cdots-\rho_{m}(y)t_{m})\cdot f(t_{1},\dots,t_{m})\ \ \ \ (f\in M).
\]
Let $\Omega:A(V)\to M$ be the unique $\mathbb{Z}[V]$-homomorphism
such that $\Omega(\delta)=1$. Put $h=\Omega(f\cdot\Phi)\in M$. Then
we have
\begin{multline*}
\zeta((s_{1},\dots,s_{m}),\Phi,f)\\
=\prod_{j=1}^{m}\frac{1}{\Gamma(s_{j})}\int_{0}^{\infty}\cdots\int_{0}^{\infty}h(t_{1},\dots,t_{m})t_{1}^{s_{1}-1}\cdots t_{m}^{s_{m}-1}dt_{1}\cdots dt_{m}
\end{multline*}
if the real parts of $s_{1},\dots,s_{m}$ are large enough. Thus
\[
\prod_{j=1}^{m}\frac{1}{\Gamma(s_{j})({\rm e}^{2\pi is_{j}}-1)}\int_{C}\cdots\int_{C}h(t_{1},\dots,t_{m})t_{1}^{s_{1}-1}\cdots t_{m}^{s_{m}-1}dt_{1}\cdots dt_{m}
\]
gives an analytic continuation of $\zeta((s_{1},\dots,s_{m}),\Phi,f)$
where $C$ is the Hankel contour. From the simple residue calculus,
we obtain
\[
\zeta((0,\dots,0),\Phi,f)=h(0,\dots,0)=\langle\!\langle f,\Phi\rangle\!\rangle.
\]

\end{proof}

\subsection{Some integrality results concerning Solomon-Hu parings}

Let $q$ be a rational prime. Fix two $\mathbb{Z}_{q}$-lattices $\Gamma'\subset\Gamma\subset V\otimes\mathbb{Q}_{q}$.
In this section, we consider the case $\mathcal{U}=(\Gamma\setminus\Gamma')\cap V$.
For $x\in\Gamma/\Gamma'$, define $h_{x}\in\mathcal{S}(V\otimes\mathbb{Q}_{q})$
by $h_{x}=\bm{1}_{\pi^{-1}(x)}-\bm{1}_{\pi^{-1}(0)}$ where $\pi:\Gamma\to\Gamma/\Gamma'$
is the projection. We write $S_{0}$ for the submodule of $\mathcal{S}(V\otimes\mathbb{Q}_{q})$
spanned by $\{h_{x}\mid x\in\Gamma/\Gamma'\}$, and $S_{1}$ for the
submodule of $\mathcal{S}(V\otimes\mathbb{Q}_{q})$ spanned by $\sum_{x\in\Gamma/\Gamma'}h_{x}$. 
\begin{lem}
\label{lem:weakIntegrality}Let $\Phi\in\mathcal{S}(V\otimes\prod_{p\neq q}'\mathbb{Q}_{p})\otimes S_{0}\subset\mathcal{S}(V\otimes\mathbb{A}_{{\rm finite}})$
and $f\in\mathcal{K}(\mathcal{U})$. Then $f\cdot\Phi\in A(V)$ and
\[
\langle\!\langle f,\Phi\rangle\!\rangle\in\frac{1}{q^{n}}\mathbb{Z}.
\]
\end{lem}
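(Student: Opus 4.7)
First, by $\mathbb{Z}$-bilinearity of $\langle\!\langle\cdot,\cdot\rangle\!\rangle$ in its second argument, I would reduce to the case $\Phi=\Phi^{(q)}\otimes h_{x}$ with $\Phi^{(q)}$ the characteristic function of a compact open coset of a lattice in $V\otimes\prod_{p\neq q}'\mathbb{Q}_{p}$, and $f=\bm{1}_{C(u_{1},\dots,u_{k})}$ for linearly independent $u_{1},\dots,u_{k}\in\mathcal{U}$. That $f\cdot\Phi\in A(V)$ then follows from the preceding lemma: inspection of its proof shows that the $\mathcal{U}$-smoothness hypothesis is never actually used for this part, only the existence of a period $k u_{j}$ for $\Phi$, which here is provided by $\Gamma'$-periodicity of $h_{x}$ at $q$ combined with the Schwartz-Bruhat periodicity of $\Phi^{(q)}$ at the other places.

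For the integrality, the plan is to invoke the zeta-function representation from the preceding lemma, writing
\[
\langle\!\langle f,\Phi\rangle\!\rangle=\zeta((0,\dots,0),\Phi,f)
\]
for suitable linear forms $\rho_{j}$ positive on each $u_{j}$. Expanding via the Mellin-transform and Hankel-contour computation from that proof, the value at the origin becomes a finite $\mathbb{Z}$-linear combination of products $\prod_{j}B_{1}(\{\alpha_{j}\})$ of first Bernoulli polynomials at rational fractions $\alpha_{j}$, whose denominators are controlled by the $q$-adic period lattice $\Gamma'$. In particular the denominator of $\langle\!\langle f,\Phi\rangle\!\rangle$ is a $q$-power.

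The hardest step will be extracting the sharp bound $q^{n}$: a naive estimate using the exponent of $\Gamma/\Gamma'$ as an abelian group produces a weaker denominator, so one must exploit the signed structure of $h_{x}=\bm{1}_{\pi^{-1}(x)}-\bm{1}_{\pi^{-1}(0)}$. I would formalize this by exhibiting an operator $\Delta\in\mathbb{Z}[V]$ with $\Delta\cdot(f\Phi)\in A'(V)$ integer-valued and $v_{q}(\epsilon(\Delta))\leq n$; then $\varpi(f\Phi)=\varpi(\Delta(f\Phi))/\epsilon(\Delta)$ automatically lies in $\frac{1}{q^{n}}\mathbb{Z}$. Concretely, $\Delta$ should combine $\Gamma'$-averaging along the $u_{j}$-directions with the cancellation inherent in the difference $\bm{1}_{\pi^{-1}(x)}-\bm{1}_{\pi^{-1}(0)}$, which, viewed $q$-adically, reduces the $q$-valuation of the normalising factor from $v_{q}([\Gamma:\Gamma'])$ down to the ambient dimension $n$.
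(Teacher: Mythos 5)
Your final plan — exhibit $\Delta\in\mathbb{Z}[V]$ with $\Delta(f\Phi)\in A'(V)$ and $\epsilon(\Delta)$ a small power of $q$, then divide — is exactly the mechanism the paper uses, so you have located the right key step. But the proposal leaves that step unexecuted and surrounds it with claims that do not hold up.

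First, the assertion that ``the $\mathcal{U}$-smoothness hypothesis is never actually used'' in the preceding lemma's proof is wrong. Periodicity of $\Phi$ along $ku_{j}$ alone does not make $\Delta(f\Phi)$ finitely supported: deep inside the cone, applying $\Delta=\prod_{j}\sum_{c}[cu_{j}/k]$ produces the sum of $\Phi$ over a full period along each $u_{j}$-direction, and it is precisely the $\mathcal{U}$-smoothness condition $\int\Phi(x+tu_{j})\,dt=0$ that forces this period sum to vanish. Without it the shifted cone indicator never cancels the tail. In the present lemma the needed cancellation is supplied by the signed structure of $h_{x}$, which is what makes the $q$-component $\mathcal{U}$-smooth; so the hypothesis is used, just automatically satisfied.

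Second, the Bernoulli/Mellin detour is unnecessary and itself incomplete: expanding in products $\prod_{j}B_{1}(\{\alpha_{j}\})$ produces denominators coming from \emph{all} primes dividing the relevant lattice indices, and showing that the non-$q$ part is actually integral requires precisely the kind of $\Delta$-argument you then build anyway. The paper skips this route entirely.

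Third — and this is the real gap — you never construct $\Delta$. Your description (``$\Gamma'$-averaging along the $u_{j}$-directions'' and ``reducing the $q$-valuation from $v_{q}([\Gamma:\Gamma'])$ down to $n$'') does not correspond to the operator that actually works, and the numerology is off: the paper does \emph{not} start from $[\Gamma:\Gamma']$ and then improve it. It simply takes $\Delta=\prod_{j=1}^{m}\sum_{c=0}^{q-1}[cku_{j}]$, where $k$ is an integer prime to $q$ chosen so that the away-from-$q$ part $\Phi'$ is periodic under $\mathbb{Z}ku_{j}$ for each $j$. Then $\epsilon(\Delta)=q^{m}$ exactly (a pure $q$-power, $m\le n$), the factor $k$ absorbs the away-from-$q$ periods, and the range $c\in\{0,\dots,q-1\}$ together with the cancellation in $h_{x}$ gives $\Delta h\in A'(V)$. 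Once $\epsilon(\Delta)=q^{m}$ the conclusion $\varpi(h)\in\frac{1}{q^{n}}\mathbb{Z}$ is immediate without any auxiliary control on non-$q$ denominators; your weaker requirement $v_{q}(\epsilon(\Delta))\le n$ would still need the separate $\mathbb{Z}[1/q]$-bound from the Bernoulli step, so choosing $\epsilon(\Delta)$ to be a pure $q$-power is not merely cosmetic.

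So: right idea, but the actual averaging operator (with the prime-to-$q$ scaling by $k$ and the sum over $q$ shifts per direction) is the entire content of the proof, and the proposal never produces it; the surrounding justifications (unused smoothness, Bernoulli denominators, ``exponent of $\Gamma/\Gamma'$'') are either incorrect or beside the point.
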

\begin{proof}
It is enough to prove that $\varpi(\bm{1}_{C(u_{1},\dots,u_{m})}\cdot(\Phi'\otimes\Phi''))\in\frac{1}{q^{n}}\mathbb{Z}$
for all $u_{1},\dots,u_{m}\in\mathcal{U}$, $\Phi'\in\mathcal{S}(V\otimes\prod_{p\neq q}'\mathbb{Q}_{p})$
and $\Phi''\in S_{0}$. Put $h=\bm{1}_{C(u_{1},\dots,u_{m})}\cdot(\Phi'\otimes\Phi'')$.
Take a positive integer $k\in\mathbb{Z}\setminus q\mathbb{Z}$ such
that $\Phi'$ is periodic with respect to $\sum_{i=1}^{m}\mathbb{Z}ku_{m}$.
Put $\Delta=\prod_{j=1}^{m}\sum_{c=0}^{q-1}[cku_{j}]\in\mathbb{Z}[V]$.
Then $\Delta h\in A'(V)$. Since $\epsilon(\Delta)=q^{m}$, the lemma
is proved.
\end{proof}
The following lemma was proved in \cite[Proposition 4.1]{DasguptaSpiess}.
\begin{lem}
\label{lem:strongIntegrality}Let $\Phi\in\mathcal{S}(V\otimes\prod_{p\neq q}'\mathbb{Q}_{p})\otimes S_{1}\subset\mathcal{S}(V\otimes\mathbb{A}_{{\rm finite}})$
and $f\in\mathcal{K}(\mathcal{U})$. If $q\geq n+2$ then 
\[
\langle\!\langle f,\Phi\rangle\!\rangle\in\mathbb{Z}.
\]
\end{lem}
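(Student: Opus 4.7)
The plan is to follow the argument of Dasgupta--Spiess, which reduces the pairing to an explicit sum of products of Bernoulli-polynomial values and then invokes the von Staudt--Clausen theorem, with the hypothesis $q\ge n+2$ used precisely to exclude $q$ from the denominator. The improvement over Lemma \ref{lem:weakIntegrality}, which already gives $\langle\!\langle f,\Phi\rangle\!\rangle\in \frac{1}{q^n}\mathbb{Z}$, comes from the extra algebraic structure of $S_1$: its generator
\[
\sum_{x\in\Gamma/\Gamma'}h_x \;=\; \bm{1}_\Gamma - [\Gamma:\Gamma']\,\bm{1}_{\Gamma'}
\]
is precisely the sort of $q$-smoothing factor that appears in the Cassou-Nogu\`es / Deligne--Ribet integrality theorem.

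Concretely, I would first reduce by linearity to the case $\Phi = \Phi^{(q)}\otimes \Phi_q$ with $\Phi_q = \bm{1}_\Gamma - [\Gamma:\Gamma']\,\bm{1}_{\Gamma'}$, and to $f = \bm{1}_{C(u_1,\ldots,u_n)}$ for $u_1,\ldots,u_n\in\mathcal{U}$ linearly independent. Decomposing the cone $C(u_1,\ldots,u_n)$ into translates of the fundamental parallelotope $P=\{\sum_{i}t_iu_i:0<t_i\le 1\}$ and using the Mellin representation established in Section \ref{sec:ShintaniZeta}, the value at $(s_1,\ldots,s_n)=(0,\ldots,0)$ of $\zeta((s_1,\ldots,s_n),\Phi,f)$ takes the shape
\[
\langle\!\langle f,\Phi\rangle\!\rangle \;=\; \sum_{v}\Phi(v)\,\mathcal{P}\bigl(c_1(v),\ldots,c_n(v)\bigr),
\]
where $v$ ranges over rational representatives in $P$, $c_j(v)$ are their coordinates along $u_j$, and $\mathcal{P}$ is built from products of Bernoulli-polynomial values of total weight $n$. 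The sum is finite thanks to the compact support of $\Phi$.

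The $q$-adic denominator analysis is then straightforward. Any prime $p$ appearing in the denominator of a product of factors $B_k/k$ evaluated at rational arguments satisfies either $p-1\mid k$ (by von Staudt--Clausen, through $B_k/k$) or $p$ divides the denominator of the rational argument. The first possibility, with $k\le n$, forces $p\le n+1<q$, so cannot yield $p=q$. The second would arise from points $v$ whose $u_j$-coordinates have denominators divisible by $q$; such points enter because of the refinement $\Gamma\supset\Gamma'$, and the contribution from the $\bm{1}_\Gamma$-piece of $\Phi_q$ is cancelled, modulo $\mathbb{Z}_{(q)}$, by the contribution from the $\bm{1}_{\Gamma'}$-piece weighted by $[\Gamma:\Gamma']$. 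This cancellation is the essential use of $S_1$ rather than $S_0$, and combined with $\langle\!\langle f,\Phi\rangle\!\rangle\in\frac{1}{q^n}\mathbb{Z}$ from Lemma \ref{lem:weakIntegrality} it lands the pairing in $\mathbb{Z}$.

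The main obstacle is precisely this last cancellation: one must show in careful bookkeeping that the factor $[\Gamma:\Gamma']$ in front of $\bm{1}_{\Gamma'}$ matches exactly the $q$-power denominator introduced by refining from $\Gamma$ to $\Gamma'$. This is the content of \cite[Proposition 4.1]{DasguptaSpiess}, and the cleanest implementation would invoke that result directly after verifying the notational translation.
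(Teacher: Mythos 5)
The paper's proof of this lemma is a short, self-contained character-sum argument and looks nothing like what you sketch. It writes the $S_1$-generator $\Phi''=q\bm{1}_{\Gamma'}-\bm{1}_{\Gamma}$ as $\sum_{\varphi\in M}\varphi$, where $M$ is the set of non-trivial $\mathbb{Z}[\zeta_q]^\times$-valued characters of $\Gamma/\Gamma'$, puts $h_\varphi=\bm{1}_{C(u_1,\dots,u_m)}\cdot(\Phi'\otimes\varphi)$, and estimates each $\varpi(h_\varphi)$ directly. Taking $w_j=ku_j$ with $k$ prime to $q$, the operator $\Delta_\varphi=\prod_{j=1}^m(1-\varphi(w_j)[w_j])$ carries $h_\varphi$ into $A'(V)\otimes\mathbb{Z}[\zeta_q]$; its augmentation is $\prod_j(1-\varphi(w_j))$, and since each $u_j\in\mathcal{U}\subset\Gamma\setminus\Gamma'$ each factor $1-\varphi(w_j)$ is a unit times $1-\zeta_q$. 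Hence $\varpi(h_\varphi)\in(1-\zeta_q)^{-m}\mathbb{Z}[\zeta_q]$, and summing over $\varphi$ gives a rational number lying in $(1-\zeta_q)^{-m}\mathbb{Z}[\zeta_q]$. The hypothesis $q\ge n+2$ enters only through the elementary valuation estimate $(1-\zeta_q)^{-m}\mathbb{Z}[\zeta_q]\cap\mathbb{Q}=\mathbb{Z}$, valid because $m\le n<q-1$. No Shintani formula, no Bernoulli polynomials, no von Staudt--Clausen.

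Your route, while plausible in outline, has a genuine gap precisely where you flag one. You reduce the lemma to a $q$-adic cancellation between the $\bm{1}_\Gamma$- and $[\Gamma:\Gamma']\bm{1}_{\Gamma'}$-contributions, call it the main obstacle, and then propose to close it by invoking \cite[Proposition 4.1]{DasguptaSpiess}. But that proposition \emph{is} the lemma under discussion (the paper cites it as the source of the statement), so the closing step of your sketch is a citation of what is to be proved rather than an argument. The von Staudt--Clausen observation that $p-1\mid k$ with $k\le n$ forces $p\le n+1<q$ is correct as far as it goes, but it only handles half the denominator; the other half is the cancellation you leave open. To make the Bernoulli-polynomial route stand as a proof you would need to carry out the fundamental-parallelotope decomposition and the denominator bookkeeping explicitly and then establish the $\Gamma$ versus $\Gamma'$ cancellation by hand. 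The paper sidesteps all of that: expanding $\Phi''$ into non-trivial characters collapses the denominator question to the norm of $1-\zeta_q$, which $q\ge n+2$ controls on its own.
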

\begin{proof}
It is enough to prove that $\varpi(\bm{1}_{C(u_{1},\dots,u_{m})}\cdot(\Phi'\otimes\Phi''))\in\frac{1}{q^{n}}\mathbb{Z}$
for all $u_{1},\dots,u_{m}\in\mathcal{U}$, $\Phi'\in\mathcal{S}(V\otimes\prod_{p\neq q}'\mathbb{Q}_{p})$
and $\Phi''=q\bm{1}_{\Gamma'}-\bm{1}_{\Gamma}$. Put $h=\bm{1}_{C(u_{1},\dots,u_{m})}\cdot(\Phi'\otimes\Phi'')$.
Take a positive integer $k\in\mathbb{Z}\setminus q\mathbb{Z}$ such
that $\Phi'$ is periodic with respect to $\sum_{j=1}^{m}\mathbb{Z}ku_{j}$.
Put $w_{j}=ku_{j}$ for $j=1,\dots,m$. Let $\zeta_{q}$ be a primitive
$q$-th root of unity. We denote by $M$ the set of non-trivial group
homomorphism from $\Gamma/\Gamma'$ to $\mathbb{Z}[\zeta_{q}]^{\times}$.
For $\varphi\in M$, we put
\[
h_{\varphi}:=\sum_{\varphi\in M}\bm{1}_{C(u_{1},\dots,u_{m})}\cdot(\Phi'\otimes\varphi).
\]
Then we have $h=\sum_{\varphi\in M}h_{\varphi}$. For $\varphi\in M$,
we have
\[
\varpi(h_{\varphi})\in(1-\zeta_{q})^{-m}\mathbb{Z}[\zeta_{q}]
\]
since $\Delta_{\varphi}h_{\varphi}\in A'(V)\otimes\mathbb{Z}[\zeta_{q}]$
and $1/\epsilon(\Delta_{\varphi})\in(1-\zeta_{q})^{-m}\mathbb{Z}[\zeta_{q}]$
where
\[
\Delta_{\varphi}:=\prod_{j=1}^{m}(1-\varphi(w_{j})[w_{j}]).
\]
 Hence, from the condition $q\geq n+2$, we have 
\[
\varpi(h)=\sum_{\varphi\in M}\varpi(h_{\varphi})\in(1-\zeta_{q})^{-m}\mathbb{Z}[\zeta_{q}]\cap\mathbb{Q}=\mathbb{Z}.
\]

\end{proof}

\section{Construction of the Shintani data ${\rm Sh}^{v_{0}}$, ${\rm Sh}^{\diamond}$,
${\rm Sh}^{v_{0},\diamond}$\label{sec:constSh}}

For $W\subset S$, let $\mathbb{R}[[(s_{v})_{v\in S\setminus W}]]$
be the formal power series ring in $\#(S\setminus W)$ variables.
We regard $\mathbb{R}[[(s_{v})_{v\in S\setminus W}]]$ as a $\mathbb{Z}[F^{\times}]$-module
by the action 
\[
[x]p=p\prod_{v\in S\setminus W}\left|x\right|_{v}^{-s_{v}}\ \ \ \ \ (x\in F^{\times},p\in\mathbb{R}[[(s_{v})_{v\in S\setminus W}]]).
\]
We denote by $\mathbb{P}_{W}$ the submodule of $\mathbb{R}[[(s_{v})_{v\in S\setminus W}]]$
consisting of formal power series whose constant terms are in $\mathbb{Z}$.
Let $\mathcal{R}$ be the functor from ${\bf Sub}(S)$ to ${\bf Mod}(\mathbb{Z}[F^{\times}])$
defined by $\mathcal{R}(W):=\mathbb{P}_{W}\otimes\mathbb{Z}[N_{F}/\prod_{v\in W}N_{v}]$.
Put $\Upsilon=\mathbb{R}[[s]]\otimes\mathbb{Z}[G]$. We define $\lambda:\mathcal{R}(\emptyset)\to\Upsilon$
by 
\[
\lambda(P(s_{v_{0}},\cdots,s_{v_{r}})\otimes[u])=P(s,\dots,s)\prod_{v\notin S}\left|u_{v}\right|_{v}^{-s}\otimes[{\rm rec}(u)].
\]
Put $\theta=\Theta_{S,\{\mathfrak{q}\},K}(s)\in\Upsilon$.

Let us fix a primed ideal $\mathfrak{q}\notin S$. Put $q={\rm ch}(\mathfrak{q})$.
We put $\mathbb{F}_{q}=\mathbb{Z}/q\mathbb{Z}\subset\kappa_{\mathfrak{q}}$.
The purpose of this section is to construct three Shintani data ${\rm Sh}^{v_{0}},{\rm Sh}^{\diamond},{\rm Sh}^{v_{0},\diamond}$
for $(\mathcal{R},\Upsilon,\lambda,\theta)$. First, we define a triple
$(\mathcal{B}_{R,M},\mathcal{L}_{R,M},\vartheta_{R,M})$ for a subset
$R$ of $S_{f}$ and a subgroup $M$ of $\mathbb{F}_{q}^{\times}$.
We define ${\rm Sh}^{v_{0}},{\rm Sh}^{\diamond},{\rm Sh}^{v_{0},\diamond}$
by using $(\mathcal{B}_{R,M},\mathcal{L}_{R,M},\vartheta_{R,M})$
for $(R,M)=(S_{f}\cap\{v_{0}\},\mathbb{F}_{q}^{\times}),(\emptyset,1),(S_{f}\cap\{v_{0}\},1)$
respectively.

\subsection{Definition of $\mathcal{A}_{R,G}(W)$}

Fix a subset $R$ of $S_{f}$ and a subgroup $M$ of $\mathbb{F}_{q}^{\times}$.
We put $\tilde{M}:=\ker(O_{\mathfrak{q}}^{\times}\to\kappa_{\mathfrak{q}}^{\times}\to\kappa_{\mathfrak{q}}^{\times}/M)$.
\begin{defn}
\label{Def:hx}For $x\in\mathbb{F}_{q}\subset\kappa_{\mathfrak{q}}$,
define $h_{x}\in\mathcal{S}(F_{\mathfrak{q}})$ by $h_{x}=\bm{1}_{\pi^{-1}(x)}-\bm{1}_{\pi^{-1}(0)}$
where $\pi:O_{\mathfrak{q}}\to O_{\mathfrak{q}}/\mathfrak{q}O_{\mathfrak{q}}=\kappa_{\mathfrak{q}}$
is the natural composite map.
\end{defn}
For a prime ideal $\mathfrak{p}$, we put $V_{\mathfrak{p}}=1+\mathfrak{p}^{m}O_{\mathfrak{p}}$
where $m$ is a least positive integer such that $-1\notin1+\mathfrak{p}^{m}O_{\mathfrak{p}}$.

\begin{defn}
For $W\subset S_{f}\setminus R$, define $f_{R,M,W}\in\mathcal{S}(\mathbb{A}_{F}^{S_{\infty}})$
by
\[
f_{R,M,W}(x):=\prod_{\mathfrak{p}\notin S_{\infty}}f_{\mathfrak{p}}(x_{\mathfrak{p}})
\]
where $f_{\mathfrak{p}}\in\mathcal{S}(F_{\mathfrak{p}})$ is define
by
\[
f_{\mathfrak{p}}=\begin{cases}
\sum_{x\in M}h_{x} & \mathfrak{p}=\mathfrak{q}\\
\bm{1}_{V_{\mathfrak{p}}} & \mathfrak{p}\in R\\
\bm{1}_{O_{\mathfrak{p}}^{\times}} & \mathfrak{p}\in S_{f}\setminus(W\cup R)\\
\bm{1}_{\mathfrak{p}O_{\mathfrak{p}}} & \mathfrak{p}\in W\\
\bm{1}_{O_{\mathfrak{p}}} & \mathfrak{p}\notin S_{f}\cup\{\mathfrak{q}\}.
\end{cases}
\]

\end{defn}
For $x\in\mathbb{A}_{F}^{S_{\infty},\times}$ and $f\in\mathcal{S}(\mathbb{A}_{F}^{S_{\infty}})$,
define $xf\in\mathcal{S}(\mathbb{A}_{F}^{S_{\infty}})$ by $(xf)(xy)=f(y)$.
The function $f_{R,M,W}$ satisfies the following properties.
\begin{itemize}
\item $f_{R,M,W}$ is invariant under the action of $\prod_{\mathfrak{p}\notin R\cup\{\mathfrak{q}\}}O_{\mathfrak{p}}^{\times}\times\prod_{\mathfrak{p}\in R}V_{\mathfrak{p}}\times\tilde{M}$.
\item For $\mathfrak{p}\in S_{f}\setminus(W\cup R)$, 
\begin{equation}
\sum_{x\in O_{\mathfrak{p}}^{\times}/V_{\mathfrak{p}}}xf_{\{\mathfrak{p}\}\cup R,M,W}=f_{R,M,W}\label{eq:fRMW_Rchange}
\end{equation}

\item For $M_{1}\subset M_{2}$, 
\begin{equation}
\sum_{x\in\tilde{M}_{2}/\tilde{M}_{1}}xf_{R,M_{1},W}=f_{R,M_{2},W}.\label{eq:fRMW_Mchange}
\end{equation}

\item For $\mathfrak{p}\in S_{f}/(R\cup W)$,
\begin{equation}
(\pi_{\mathfrak{p}}^{-1}-1)f_{R,M,W\cup\{\mathfrak{p}\}}=f_{R,M,W}\label{eq:fRMW_Wchange}
\end{equation}

\end{itemize}

\begin{defn}
For $W\subset S_{f}\setminus R$, we denote by $\mathcal{A}_{R,M}(W)$
the subgroup of $\mathcal{S}(\mathbb{A}_{F}^{S_{\infty}})$ spanned
by $\{xf_{R,M,W}\mid x\in\mathbb{A}_{F}^{S_{\infty}\cup S_{f}\cup S_{q},\times}\}$. 
\end{defn}

\begin{defn}
We define subgroups $F_{R,M},\mathcal{E}_{R,M}$ and a submonoid $\mathcal{E}_{R,M}^{\circ}$
of $F^{\times}$ by
\begin{align*}
F_{R,M}:= & F^{\times}\cap(\tilde{M}\times\prod_{\mathfrak{p}\in R}V_{\mathfrak{p}})\\
\mathcal{E}_{R,M}:= & F_{R,M}^{\times}\cap\left(\bigcap_{\mathfrak{p}\in S_{f}\cup S_{q}}\mathcal{O}_{\mathfrak{p}}^{\times}\right)\\
\mathcal{E}_{R,M}^{\circ}:= & F_{R,M}^{\times}\cap\left(\bigcap_{\mathfrak{p}\in S_{q}}\mathcal{O}_{\mathfrak{p}}\right).
\end{align*}
We have $\mathcal{E}_{R,M}\subset\mathcal{E}_{R,M}^{\circ}\subset F_{R,M}\subset F^{\times}$.
Note that $\mathcal{A}_{R,M}(W)$ is isomorphic to a direct sum of
copies of $\mathbb{Z}[\mathcal{E}_{R,M}/(\mathcal{E}_{R,M}\cap\mathcal{O}_{F}^{\times})]$.
\end{defn}

\subsection{Definition of the functor $\mathcal{B}_{R,M}$}

Fix a subset $R$ of $S_{f}$ and a subgroup $M$ of $\mathbb{F}_{q}^{\times}$.
In this section, we define a functor $\mathcal{B}_{R,M}$ from ${\bf Sub}(S\setminus R)$
to ${\bf Sub}(\mathbb{Z}[F^{\times}])$. We denote by $\mathcal{U}$
the set of $x\in F^{\times}$ which satisfy the following conditions.
\begin{itemize}
\item For all $\mathfrak{p}\in S_{q}\setminus\{\mathfrak{q}\}$, ${\rm ord}_{\mathfrak{p}}(x)\geq\min\{m\in\mathbb{Z}_{\geq1}\mid1+\pi_{\mathfrak{p}}^{m}O_{\mathfrak{p}}\subset J_{\mathfrak{p}}\}$.
\item $x_{\mathfrak{q}}\in\pi^{-1}(\mathbb{F}_{q}^{\times})$ where $\pi:O_{\mathfrak{q}}\to\kappa_{\mathfrak{q}}$
is the natural projection.
\end{itemize}
Note that $\mathcal{U}$ is closed under the action of $\mathcal{E}_{R,M}^{\circ}$.
For $W\subset S_{\infty}$, we put
\[
\mathcal{Q}(W):=\mathcal{S}(\mathbb{A}_{F}^{S_{\infty}})\otimes\mathcal{Z}(F^{\times},W)\otimes\mathbb{Z}[N^{S}].
\]

\begin{defn}
For $W\subset S\setminus R$, we define a $\mathbb{Z}[\mathcal{E}_{R,M}]$-submodule
$\mathcal{B}_{R,M}^{*}(W)$ of $\mathcal{Q}(W\cap S_{\infty})$ by 

\[
\mathcal{B}_{R,M}^{*}(W):=\mathcal{A}_{R,M}(W\cap S_{f})\otimes\mathcal{Z}(\mathcal{U},W\cap S_{\infty})\otimes\mathbb{Z}[N^{S}].
\]

\end{defn}

\begin{defn}
For $W\subset S\setminus R$, we define a $\mathbb{Z}[F^{\times}]$-module
$\mathcal{B}_{R,M}(W)$ by 
\end{defn}
\[
\mathcal{B}_{R,M}(W):=\mathcal{B}_{R,M}^{*}(W)\otimes_{\mathbb{Z}[\mathcal{E}_{R,M}]}\mathbb{Z}[F^{\times}].
\]
 Note that the natural map 
\[
\mathcal{B}_{R,M}^{*}(W)\otimes_{\mathbb{Z}[\mathcal{E}_{R,M}]}\mathbb{Z}[F^{\times}]\to\mathcal{Q}(W\cap S_{\infty})
\]
is injective since 
\[
\mathcal{A}_{R,M}(W)\otimes_{\mathbb{Z}[\mathcal{E}_{R,M}]}\mathbb{Z}[F^{\times}]\to\mathcal{S}(\mathbb{A}_{F}^{S_{\infty}})
\]
is injective. From now we regard $\mathcal{B}_{R,M}(W)$ as a $\mathbb{Z}[F^{\times}]$-submodule
of $\mathcal{Q}(W\cap S_{\infty})$.
\begin{prop}
\label{Prop:Bw1Bw2}Let $W_{1}\subset W_{2}\subset S\setminus R$.
Let $j:\mathcal{Q}(W_{1}\cap S_{\infty})\to\mathcal{Q}(W_{2}\cap S_{\infty})$
be the natural map. Then we have
\[
j(\mathcal{B}_{R,M}(W_{1}))\subset\mathcal{B}_{R,M}(W_{2}).
\]
\end{prop}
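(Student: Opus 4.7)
The plan is to factor the inclusion $W_1 \subset W_2$ through the intermediate set $W_1 \cup (W_2 \cap S_f)$, which reduces the claim to two separate cases: one where $W_2 \setminus W_1 \subset S_\infty$ and one where $W_2 \setminus W_1 \subset S_f$. The general statement then follows by composing the two, since $j$ factors correspondingly on $\mathcal{Q}$.

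In the first case the finite part $\mathcal{A}_{R,M}(W_1 \cap S_f) = \mathcal{A}_{R,M}(W_2 \cap S_f)$ is unchanged, and $j$ is induced by the natural $\mathcal{E}_{R,M}$-equivariant functorial map $\mathcal{Z}(\mathcal{U}, W_1 \cap S_\infty) \to \mathcal{Z}(\mathcal{U}, W_2 \cap S_\infty)$, which is compatible with the inclusion $\mathcal{Z}(\mathcal{U}, -) \hookrightarrow \mathcal{Z}(F^\times, -)$. Tensoring the resulting map on $\mathcal{B}_{R,M}^{*}$ by $\mathbb{Z}[F^\times]$ over $\mathcal{E}_{R,M}$ yields the required inclusion $j(\mathcal{B}_{R,M}(W_1)) \subset \mathcal{B}_{R,M}(W_2)$ inside $\mathcal{Q}(W_2 \cap S_\infty)$.

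In the second case $j$ is the identity on $\mathcal{Q}$, and by induction on $|W_2 \setminus W_1|$ I reduce to the situation $W_2 = W_1 \cup \{\mathfrak{p}\}$ for a single $\mathfrak{p} \in S_f \setminus (R \cup W_1)$. Applying relation~(\ref{eq:fRMW_Wchange}) then gives
\[
f_{R,M,W_1 \cap S_f} \;=\; \pi_{\mathfrak{p}}^{-1} \cdot f_{R,M,W_2 \cap S_f} \;-\; f_{R,M,W_2 \cap S_f},
\]
where $\pi_{\mathfrak{p}}^{-1}$ is regarded as the adele equal to $\pi_{\mathfrak{p}}^{-1}$ at $\mathfrak{p}$ and $1$ at every other place. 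The second term lies in $\mathcal{A}_{R,M}(W_2 \cap S_f)$ directly; the technical heart is to show that $\pi_{\mathfrak{p}}^{-1} \cdot f_{R,M,W_2 \cap S_f}$ lies in $F^\times \cdot \mathcal{A}_{R,M}(W_2 \cap S_f) \subset \mathcal{S}(\mathbb{A}_F^{S_\infty})$. For this I invoke weak approximation to produce $y \in F^\times$ satisfying the simultaneous open conditions $v_{\mathfrak{p}}(y) = -1$, $v_{\mathfrak{p}'}(y) = 0$ for all $\mathfrak{p}' \in (S \cup S_q) \setminus \{\mathfrak{p}\}$, $y \in V_{\mathfrak{p}'}$ for each $\mathfrak{p}' \in R$, and $y \in \tilde{M}$ at $\mathfrak{q}$. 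A place-by-place check then shows that $y^{-1} \cdot \pi_{\mathfrak{p}}^{-1}$ (with $y$ embedded diagonally) is a product of an element of $\mathbb{A}_F^{S \cup S_q, \times}$ with an element of the stabilizer of $f_{R,M,W_2 \cap S_f}$; hence $\pi_{\mathfrak{p}}^{-1} \cdot f_{R,M,W_2 \cap S_f} = y \cdot z \cdot f_{R,M,W_2 \cap S_f}$ for some $z \in \mathbb{A}_F^{S \cup S_q, \times}$, placing it in $F^\times \cdot \mathcal{A}_{R,M}(W_2 \cap S_f)$. A routine extension handles an arbitrary generator $x f_{R,M,W_1 \cap S_f}$ of $\mathcal{A}_{R,M}(W_1 \cap S_f)$ by noting that $xz \in \mathbb{A}_F^{S \cup S_q, \times}$ whenever $x, z$ are.

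The main obstacle is the weak-approximation step: verifying that the local conditions on $y$ are simultaneously satisfiable as open subsets of $\prod_{\mathfrak{p}'} F_{\mathfrak{p}'}^{\times}$, together with the place-by-place bookkeeping that $y^{-1} \cdot \pi_{\mathfrak{p}}^{-1}$ indeed decomposes as an element of $\mathbb{A}_F^{S \cup S_q, \times}$ times a stabilizer element. The remaining tensor-product manipulations and the functoriality of $\mathcal{Z}(\mathcal{U}, -)$ are essentially formal.
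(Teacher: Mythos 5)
Your proposal is correct and follows essentially the same route as the paper: reduce to adjoining a single place, handle the archimedean case by functoriality of $\mathcal{Z}(\mathcal{U},-)$, and for a finite place $\mathfrak{p}$ use $f_{R,M,W_1\cap S_f}=(\pi_{\mathfrak{p}}^{-1}-1)f_{R,M,W_2\cap S_f}$ together with a diagonally-embedded $y\in F^\times$ chosen so that $y^{-1}\pi_{\mathfrak{p}}^{-1}$ lands in $\mathbb{A}_F^{S\cup S_q,\times}$ times the stabilizer of $f_{R,M,W_2\cap S_f}$; your $y$ is exactly the $p^{-1}$ of the paper's proof. One small caveat: the phrase ``the technical heart is to show $\pi_{\mathfrak{p}}^{-1}f_{R,M,W_2\cap S_f}\in F^\times\cdot\mathcal{A}_{R,M}(W_2\cap S_f)$'' understates what is required, since $\mathcal{B}_{R,M}(W_2)$ is the image of $\mathcal{B}_{R,M}^*(W_2)\otimes_{\mathbb{Z}[\mathcal{E}_{R,M}]}\mathbb{Z}[F^\times]$ and to slide $[y]$ across the elementary tensor you must also check $y^{-1}b\in\mathcal{Z}(\mathcal{U},W_2\cap S_\infty)$, i.e.\ that $y^{-1}\in\mathcal{E}_{R,M}^\circ$ so that $y^{-1}\mathcal{U}\subset\mathcal{U}$; your list of local conditions on $y$ does in fact force this, but it is a genuine (non-formal) point rather than a ``routine tensor-product manipulation.''
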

\begin{proof}
It is enough to consider the case $W_{1}=W_{2}\setminus\{v\}$. If
$v$ is infinite then the claim is obvious. We assume that $v$ is
finite. It is enough to prove that $xf_{R,M,W_{1}}\otimes b\otimes c\subset\mathcal{B}_{R,M}(W_{2})$
for $x\in\mathbb{A}_{F}^{S_{\infty}\cup S_{f}\cup S_{q},\times}$,
$b\in\mathcal{Z}(\mathcal{U},W_{1}\cap S_{\infty})$, and $c\in\mathbb{Z}[N^{S}]$.
Let $p$ be an any element of $F_{R,M}\cap(\pi_{v}O_{v}^{\times})\cap\mathcal{O}_{S_{\mathfrak{q}}\cup S_{f}\setminus\{v\}}^{\times}\neq0$.
Then we have $pb\in b\in\mathcal{Z}(\mathcal{U},W_{1}\cap S_{\infty})$
since $p\in\mathcal{E}_{R,M}^{\circ}$. Thus we have 
\begin{align*}
xf_{R,M,W_{1}}\otimes b\otimes c & =x(\pi_{v}^{-1}-1)f_{R,M,W_{2}}\otimes b\otimes c\\
 & =[p^{-1}]\left(xp\pi_{v}^{-1}f_{R,M,W_{2}}\otimes pb\otimes pc\right)-xf_{R,M,W_{2}}\otimes b\otimes1.
\end{align*}
Since $\mathcal{A}_{R,M,W_{2}}$ is invariant under the action of
$xp\pi_{v}^{-1}$, we have $xp\pi_{v}^{-1}f_{R,M,W_{2}}\in\mathcal{A}_{R,M}(W_{2})$.
Thus the claim is proved.
\end{proof}
From Proposition \ref{Prop:Bw1Bw2}, we can regard $\mathcal{B}_{R,M}$
as a functor from ${\bf Sub}(S\setminus R)$ to ${\bf Mod}(\mathbb{Z}[F^{\times}])$.
\begin{lem}
\label{lem:homology_forget}Let $A$ be an abelian group, $A'$ a
subgroup of $A$, and $X$ a $\mathbb{Z}[A]$-module. Then we have
\[
H_{i}(A,X\otimes\mathbb{Z}[A/A'])=H_{i}(A',X).
\]
\end{lem}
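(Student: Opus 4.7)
The plan is to reduce the statement directly to Shapiro's Lemma as recalled at the end of Section~\ref{sec:intro}. Shapiro's Lemma in the form used by the paper says that for $A' \subset A$ and a $\mathbb{Z}[A']$-module $M$,
\[
H_i(A, M \otimes_{\mathbb{Z}[A']} \mathbb{Z}[A]) \;=\; H_i(A', M).
\]
Applying this to $M = X$ (viewed as a $\mathbb{Z}[A']$-module by restriction), it suffices to produce a $\mathbb{Z}[A]$-linear isomorphism
\[
\Phi \colon X \otimes_{\mathbb{Z}[A']} \mathbb{Z}[A] \;\xrightarrow{\sim}\; X \otimes \mathbb{Z}[A/A'],
\]
where the target carries the diagonal $A$-action (this is the ``standard way'' of making a tensor product of two $\mathbb{Z}[A]$-modules into a $\mathbb{Z}[A]$-module, as prescribed in the notation subsection).

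The natural candidate, exploiting that $A$ is abelian and $X$ carries a full $\mathbb{Z}[A]$-action, is
\[
\Phi(x \otimes [a]) \;:=\; ([a]x) \otimes [aA'],
\]
with inverse $\Psi(x \otimes [aA']) := ([a^{-1}]x) \otimes [a]$ (for any choice of representative $a$ of the coset $aA'$). First I would check that $\Phi$ is well-defined over $\mathbb{Z}[A']$: for $a' \in A'$, we have $\Phi(xa' \otimes [a]) = ([a][a']x) \otimes [aA'] = \Phi(x \otimes [a'a])$, using commutativity of $A$ and the fact that the $\mathbb{Z}[A']$-action on $X$ comes from its $\mathbb{Z}[A]$-action. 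Next I would verify $A$-equivariance (a one-line check: both sides transform by the factor $[b]$ when we act by $b \in A$, because the $a$-factor in $\Phi(x \otimes [a])$ multiplies on both the $X$-part and the coset $[aA']$). Then I would check that $\Psi$ is well-defined in $a$: replacing $a$ by $aa'$ gives $([a'^{-1}a^{-1}]x) \otimes [aa'] = ([a^{-1}]x)[a'^{-1}] \otimes [a][a'] = ([a^{-1}]x) \otimes [a]$ in the tensor over $\mathbb{Z}[A']$. Finally, $\Phi \circ \Psi$ and $\Psi \circ \Phi$ are both identities by direct substitution.

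Having constructed the isomorphism, the lemma follows by invoking Shapiro's Lemma on the right-hand side. The only mildly subtle point — which I regard as the single place one must be careful — is that the construction of $\Phi$ genuinely uses the full $\mathbb{Z}[A]$-action on $X$ (not merely the restricted $\mathbb{Z}[A']$-action), since the formula contains $[a]x$ for arbitrary $a \in A$; this is also where the commutativity of $A$ enters, to ensure well-definedness over $\mathbb{Z}[A']$. Otherwise everything is a formal verification.
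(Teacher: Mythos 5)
Your proof is correct and is essentially identical to the paper's own argument: the paper also constructs the $\mathbb{Z}[A]$-isomorphism $X \otimes_{\mathbb{Z}[A']} \mathbb{Z}[A] \to X \otimes \mathbb{Z}[A/A']$, $x \otimes a \mapsto ax \otimes \bar a$, and then invokes Shapiro's Lemma. You simply spell out the well-definedness and equivariance checks in more detail.
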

\begin{proof}
Put $Y:={\rm Fl}(X)\otimes_{\mathbb{Z}[A']}\mathbb{Z}[A]$ where ${\rm Fl}$
is the forgetful functor from ${\bf Sub}(\mathbb{Z}[A])$ to ${\bf Sub}(\mathbb{Z}[A'])$.
Define $f\in{\rm Hom}_{\mathbb{Z}[A]}(Y,X\otimes\mathbb{Z}[A/A'])$
by
\[
f(x\otimes a)=ax\otimes a\ \ \ \ (x\in{\rm Fl}(X),a\in\mathbb{Z}[A]).
\]
Then $f$ is an isomorphism. Thus we have
\begin{align*}
H_{i}(A,X\otimes\mathbb{Z}[A/A']) & =H_{i}(A,Y)\\
 & =H_{i}(A',{\rm Fl}(X)).
\end{align*}
Thus the lemma is proved.\end{proof}
\begin{prop}
\label{prop:homology_vanish}Let $E$ be a subgroup of $F^{\times}$,
and $W$ a subset of $S\setminus R$. If $-1\notin F_{R,M}\cap E$
or $S_{\infty}\neq W\cap S_{\infty}$, we have 
\[
H_{i}(E,\mathcal{B}_{R,M}(W))=0
\]
for $i>0$.\end{prop}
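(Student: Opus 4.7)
The plan is to reduce the homology computation in two Shapiro-style steps, ultimately arriving at Proposition \ref{prop:Hi_EZUW_vanish}. First I replace $\mathcal{B}_{R,M}(W)=\mathcal{B}_{R,M}^{*}(W)\otimes_{\mathbb{Z}[\mathcal{E}_{R,M}]}\mathbb{Z}[F^{\times}]$ with its starred version. Set $E_{0}:=E\cap\mathcal{E}_{R,M}$. Since $F^{\times}$ is abelian, the double-coset decomposition $F^{\times}=\bigsqcup_{i}\mathcal{E}_{R,M}\,g_{i}\,E$ identifies each bimodule piece $\mathbb{Z}[\mathcal{E}_{R,M}g_{i}E]$ with $\mathbb{Z}[\mathcal{E}_{R,M}]\otimes_{\mathbb{Z}[E_{0}]}\mathbb{Z}[E]$, because the would-be conjugation twist $g_{i}^{-1}\mathcal{E}_{R,M}g_{i}=\mathcal{E}_{R,M}$ is trivial. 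Consequently
\[
\mathcal{B}_{R,M}(W)\;\cong\;\bigoplus_{i}\mathcal{B}_{R,M}^{*}(W)\otimes_{\mathbb{Z}[E_{0}]}\mathbb{Z}[E]
\]
as $\mathbb{Z}[E]$-modules, and Shapiro's lemma yields $H_{i}(E,\mathcal{B}_{R,M}(W))\cong\bigoplus_{i}H_{i}(E_{0},\mathcal{B}_{R,M}^{*}(W))$.

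Next, I use that $\mathcal{A}_{R,M}(W\cap S_{f})$ is a direct sum of copies of $\mathbb{Z}[\mathcal{E}_{R,M}/(\mathcal{E}_{R,M}\cap\mathcal{O}_{F}^{\times})]$. The same abelian double-coset trick applied inside $\mathcal{E}_{R,M}$ rewrites this, as a $\mathbb{Z}[E_{0}]$-module, as a direct sum of copies of $\mathbb{Z}[E_{0}/E_{1}]$, where $E_{1}:=E_{0}\cap\mathcal{O}_{F}^{\times}$. Combining with the remaining tensor factors and invoking the shearing isomorphism $\mathbb{Z}[E_{0}/E_{1}]\otimes N\cong N\otimes_{\mathbb{Z}[E_{1}]}\mathbb{Z}[E_{0}]$ together with Shapiro's lemma (in the form of Lemma \ref{lem:homology_forget}), I reduce to
\[
H_{i}(E_{0},\mathcal{B}_{R,M}^{*}(W))\;\cong\;\bigoplus H_{i}\bigl(E_{1},\,\mathcal{Z}(\mathcal{U},W\cap S_{\infty})\otimes\mathbb{Z}[N^{S}]\bigr).
\]
Now any $u\in\mathcal{O}_{F}^{\times}$ satisfies $u_{v}\in O_{v}^{\times}=J_{v}$ for every finite $v\notin S$, so $u$ acts trivially on $N^{S}$ and hence on $\mathbb{Z}[N^{S}]$. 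Since $\mathbb{Z}[N^{S}]$ is $\mathbb{Z}$-torsion-free, the K\"unneth formula splits it off as $H_{i}(E_{1},\mathcal{Z}(\mathcal{U},W\cap S_{\infty}))\otimes\mathbb{Z}[N^{S}]$. Because $E_{1}\subset\mathcal{E}_{R,M}^{\circ}$, the set $\mathcal{U}$ is closed under the action of $E_{1}$, and $-1\in E_{1}$ would force $-1\in F_{R,M}\cap E$. Hence either branch of the hypothesis supplies the condition needed by Proposition \ref{prop:Hi_EZUW_vanish}, which yields the vanishing.

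The main obstacle is the first step: $E$ and $\mathcal{E}_{R,M}$ are not comparable subgroups of $F^{\times}$, so one cannot directly apply Shapiro's Lemma in its standard form. The abelianness of $F^{\times}$ is precisely what collapses the Mackey decomposition into a clean direct sum; without it one would need to carry conjugated modules through the remaining steps, and also track whether $g_{i}^{-1}E_{1}g_{i}\cap(\text{units})$ still misses $-1$.
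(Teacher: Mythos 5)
Your proof is correct and takes essentially the same approach as the paper: a Shapiro-style reduction first to $H_i(E\cap\mathcal{E}_{R,M},\mathcal{B}^*_{R,M}(W))$, then peeling off the $\mathcal{A}_{R,M}$ and $\mathbb{Z}[N^S]$ tensor factors to land on $H_i(E\cap\mathcal{E}_{R,M}\cap\mathcal{O}_F^\times,\mathcal{Z}(\mathcal{U},W\cap S_\infty))$, which vanishes by Proposition \ref{prop:Hi_EZUW_vanish}. The only reorganization is that you dispose of $\mathcal{A}_{R,M}$ before $\mathbb{Z}[N^S]$ (handling the latter by flatness, since $\mathcal{O}_F^\times$ acts trivially on $N^S$), whereas the paper passes through the intermediate subgroup $E\cap\mathcal{E}_{R,M}\cap\mathcal{O}_S^\times$ and applies Lemma \ref{lem:homology_forget} twice.
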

\begin{proof}
Since 
\begin{align*}
\mathcal{B}_{R,M}(W) & =\mathcal{B}_{R,M}^{*}(W)\otimes_{\mathbb{Z}[\mathcal{E}_{R,M}]}\mathbb{Z}[F^{\times}]\\
 & =(\mathcal{B}_{R,M}^{*}(W)\otimes_{\mathbb{Z}[\mathcal{E}_{R,M}]}\mathbb{Z}[\mathcal{E}_{R,M}\cdot E])\otimes_{\mathbb{Z}[\mathcal{E}_{R,M}\cdot E]}\mathbb{Z}[F^{\times}]\\
 & =(\mathcal{B}_{R,M}^{*}(W)\otimes_{\mathbb{Z}[\mathcal{E}_{R,M}\cap E]}\mathbb{Z}[E])\otimes_{\mathbb{Z}[\mathcal{E}_{R,M}\cdot E]}\mathbb{Z}[F^{\times}]
\end{align*}
 is isomorphic to a direct sum of copies of $\mathcal{B}_{R,M}^{*}(W)\otimes_{\mathbb{Z}[\mathcal{E}_{R,M}\cap E]}\mathbb{Z}[E]$
as a $\mathbb{Z}[E]$-module, it is enough to prove that
\[
H_{i}(\mathcal{E}_{R,M}\cap E,\mathcal{B}_{R,M}^{*}(W))=0.
\]
Put $U_{S}=\mathcal{E}_{R,M}\cap E\cap\mathcal{O}_{S}^{\times}$ and
$U_{F}=\mathcal{E}_{R,M}\cap E\cap\mathcal{O}_{S}^{\times}$. Since
$\mathbb{Z}[N^{S}]$ is isomorphic to a direct sum of copies of $\mathbb{Z}[(\mathcal{E}_{R,M}\cap E)/U_{S}]$
as a $\mathbb{Z}[\mathcal{E}_{R,M}\cap E]$-module, it is enough to
prove that
\[
H_{i}(\mathcal{E}_{R,M}\cap E,\mathcal{A}_{R,M}(W\cap S_{f})\otimes\mathcal{Z}(\mathcal{U},W\cap S_{\infty})\otimes\mathbb{Z}[(\mathcal{E}_{R,M}\cap E)/U_{S}])=0.
\]
From Lemma \ref{lem:homology_forget}, we have
\begin{multline*}
H_{i}(\mathcal{E}_{R,M}\cap E,\mathcal{A}_{R,M}(W\cap S_{f})\otimes\mathcal{Z}(\mathcal{U},W\cap S_{\infty})\otimes\mathbb{Z}[(\mathcal{E}_{R,M}\cap E)/U_{S}])\\
=H_{i}(U_{S},\mathcal{A}_{R,M}(W\cap S_{f})\otimes\mathcal{Z}(\mathcal{U},W\cap S_{\infty})).
\end{multline*}
Since $\mathcal{A}_{R,M}(W\cap S_{f})$ is isomorphic to a direct
sum of copies of $\mathbb{Z}[U_{S}/U_{F}]$ as a $\mathbb{Z}[U_{S}]$-module,
it is enough to prove that
\[
H_{i}(U_{S},\mathbb{Z}[U_{S}/U_{F}]\otimes\mathcal{Z}(\mathcal{U},W\cap S_{\infty}))=0.
\]
From Lemma \ref{lem:homology_forget}, we have
\[
H_{i}(U_{S},\mathbb{Z}[U_{S}/U_{F}]\otimes\mathcal{Z}(\mathcal{U},W\cap S_{\infty}))=H_{i}(U_{F},\mathcal{Z}(\mathcal{U},W\cap S_{\infty})).
\]
From Proposition \ref{prop:Hi_EZUW_vanish}, we have 
\[
H_{i}(U_{F},\mathcal{Z}(\mathcal{U},W\cap S_{\infty}))=0.
\]
Thus the claim is proved.\end{proof}
\begin{lem}
For $R_{1}\subset R_{2}\subset S_{f}$, $M_{2}\subset M_{1}\subset\mathbb{F}_{q}^{\times}$
and $W\subset S\setminus R_{2}$, we have
\[
\mathcal{B}_{R_{1},M_{1}}(W)\subset\mathcal{B}_{R_{2},M_{2}}(W).
\]
\end{lem}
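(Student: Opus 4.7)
The plan is to prove $\mathcal{B}_{R_1,M_1}(W)\subset\mathcal{B}_{R_2,M_2}(W)$ by adapting the argument used in the proof of Proposition \ref{Prop:Bw1Bw2}. Since both sides are $F^\times$-submodules of $\mathcal{Q}(W\cap S_\infty)$ and the left-hand side is the $F^\times$-submodule generated by $\mathcal{B}_{R_1,M_1}^*(W)$, it suffices to show that every pure tensor
\[
xf_{R_1,M_1,W\cap S_f}\otimes b\otimes c\in\mathcal{B}_{R_1,M_1}^*(W)
\]
(with $x\in\mathbb{A}_F^{S_\infty\cup S_f\cup S_q,\times}$, $b\in\mathcal{Z}(\mathcal{U},W\cap S_\infty)$ and $c\in\mathbb{Z}[N^S]$) lies in $\mathcal{B}_{R_2,M_2}(W)$.

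First, iterating (\ref{eq:fRMW_Rchange}) over the primes of $R_2\setminus R_1$ (each lies in $S_f\setminus(W\cup R_1)$ because $W\cap R_2=\emptyset$) and then applying (\ref{eq:fRMW_Mchange}) in the reverse direction---legal since $M_2\subset M_1$ gives $\tilde{M}_2\subset\tilde{M}_1$---I would obtain the finite expansion
\[
f_{R_1,M_1,W\cap S_f}=\sum_{(y,z)}yz\cdot f_{R_2,M_2,W\cap S_f},
\]
where $y$ runs over a system of coset representatives of $\prod_{\mathfrak{p}\in R_2\setminus R_1}O_\mathfrak{p}^\times/V_\mathfrak{p}$ viewed as ideles supported on $R_2\setminus R_1$, and $z$ over representatives of $\tilde{M}_1/\tilde{M}_2$ viewed as ideles supported on $\mathfrak{q}$. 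It then suffices to show that each summand $xyz\cdot f_{R_2,M_2,W\cap S_f}\otimes b\otimes c$ lies in $\mathcal{B}_{R_2,M_2}(W)$.

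For each fixed $(y,z)$ I would invoke weak approximation to produce $p\in F^\times$ with $p_\mathfrak{p}\in V_\mathfrak{p}$ for $\mathfrak{p}\in R_1$, $p_\mathfrak{p}\in y_\mathfrak{p}^{-1}V_\mathfrak{p}$ for $\mathfrak{p}\in R_2\setminus R_1$, $p_\mathfrak{q}\in z^{-1}\tilde{M}_2$ and $p_\mathfrak{p}\in O_\mathfrak{p}^\times$ at the remaining places of $S_f\cup S_q$; each constraint is a non-empty open condition at one of finitely many finite places, so such $p$ exists. The identity
\[
xyz\cdot f_{R_2,M_2,W\cap S_f}\otimes b\otimes c=[p^{-1}]\bigl(pxyz\cdot f_{R_2,M_2,W\cap S_f}\otimes pb\otimes pc\bigr)
\]
then exhibits the summand as $[p^{-1}]$ applied to an element of $\mathcal{B}_{R_2,M_2}^*(W)$. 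Indeed, by design, at every place of $S_f\cup S_q$ the local component of $pxyz$ lies in the invariance subgroup of $f_{R_2,M_2,W\cap S_f}$ (in $V_\mathfrak{p}$ on $R_2$, in $\tilde{M}_2$ at $\mathfrak{q}$, and in $O_\mathfrak{p}^\times$ elsewhere), so $pxyz\cdot f_{R_2,M_2,W\cap S_f}\in\mathcal{A}_{R_2,M_2}(W\cap S_f)$; the conditions $p_\mathfrak{p}\in O_\mathfrak{p}^\times$ for $\mathfrak{p}\in S_q\setminus\{\mathfrak{q}\}$ together with $\bar{p}_\mathfrak{q}\in\mathbb{F}_q^\times$ (forced by $p_\mathfrak{q}\in z^{-1}\tilde{M}_2\subset\tilde{\mathbb{F}_q^\times}$) imply $p\mathcal{U}\subset\mathcal{U}$, hence $pb\in\mathcal{Z}(\mathcal{U},W\cap S_\infty)$; finally $pc\in\mathbb{Z}[N^S]$ is automatic.

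The main obstacle is the careful place-by-place bookkeeping: one must check that all the local conditions on $p$ can be satisfied simultaneously and that the idele $pxyz$ indeed stabilizes $f_{R_2,M_2,W\cap S_f}$ at each place of $S_f\cup S_q$ in the required sense, without any conflict between the $R$-direction constraints and the $M$-direction constraint at $\mathfrak{q}$. Once this verification is in place, summing the decomposed summands over $(y,z)$ delivers the desired inclusion.
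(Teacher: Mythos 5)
Your proof is correct and follows essentially the same route as the paper's: the same reduction to pure tensors, the same decomposition $f_{R_1,M_1,W\cap S_f}=\sum yz\cdot f_{R_2,M_2,W\cap S_f}$ via the relations (\ref{eq:fRMW_Rchange}) and (\ref{eq:fRMW_Mchange}), and then pushing each summand into $\mathcal{B}_{R_2,M_2}^*(W)$ by a global multiplier $p\in F^{\times}$. The only presentational difference is that where the paper invokes, without elaboration, the surjectivity of the natural map $\mathcal{E}_{R_1,M_1}\to(\tilde{M}_1/\tilde{M}_2)\times\prod_{\mathfrak{p}\in R_2\setminus R_1}(O_{\mathfrak{p}}^{\times}/V_{\mathfrak{p}})$ to obtain a lift $p$, you make that surjectivity explicit by a weak-approximation argument (and use the inverse convention on $p$, writing $[p^{-1}](\cdot)$ where the paper writes $(\cdot)\otimes[p]$); these are equivalent. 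Incidentally, the paper's displayed proof contains a typo--after the decomposition, the summands should be $yxf_{R_2,M_2,W\cap S_f}$ rather than $yxf_{R_1,M_1,W\cap S_f}$--which your proposal silently repairs.
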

\begin{proof}
It is enough to prove that $xf_{R_{1},M_{1},W\cap S_{f}}\otimes b\otimes c\subset\mathcal{B}_{R_{2},M_{2}}(W)$
for $x\in\mathbb{A}_{F}^{S_{\infty}\cup S_{f}\cup S_{q},\times}$,
$b\in\mathcal{Z}(\mathcal{U},W_{1}\cap S_{\infty})$, and $c\in\mathbb{Z}[N^{S}]$.
Put
\[
X:=(\tilde{M}_{1}/\tilde{M}_{2})\times\prod_{\mathfrak{p}\in R_{2}\setminus R_{1}}(O_{\mathfrak{p}}^{\times}/V_{\mathfrak{p}}).
\]
From (\ref{eq:fRMW_Rchange}) and (\ref{eq:fRMW_Mchange}), we have
\[
f_{R_{1},M_{1},W\cap S_{f}}=\sum_{y\in X}yf_{R_{2},M_{2},W\cap S_{f}}.
\]
Thus it is enough to prove that $yxf_{R_{1},M_{1},W\cap S_{f}}\otimes b\otimes c\subset\mathcal{B}_{R_{2},M_{2}}(W)$
for $y\in X$. Since the natural map $\mathcal{E}_{R_{1},M_{1}}\to X$
is surjective, there exists lift $p\in\mathcal{E}_{R_{1},M_{1}}$
of $y\in X$. Then we have
\[
yxf_{R_{1},M_{1},W\cap S_{f}}\otimes b\otimes c:=\left(p^{-1}yxf_{R_{1},M_{1},W\cap S_{f}}\otimes p^{-1}b\otimes p^{-1}c\right)\otimes[p].
\]
Since $p^{-1}yxf_{R_{1},M_{1}W\cap S_{f}}\otimes p^{-1}b\otimes p^{-1}c\in\mathcal{B}_{R_{2},M_{2}}^{*}(W)$,
the lemma is proved.\end{proof}
\begin{defn}
For $R_{1}\subset R_{2}\subset S_{f}$ and $M_{2}\subset M_{1}\subset\mathbb{F}_{q}^{\times}$,
we define a natural transformation $i_{R_{2},M_{2}}^{R_{1},M_{1}}$
from $\mathcal{B}_{R_{1},M_{1}}\big|_{{\bf Sub}(S\setminus R_{2})}$
to $\mathcal{B}_{R_{2},M_{2}}$ by the natural inclusion map.
\end{defn}

\subsection{Definition and properties of $\vartheta_{R,M}\in H_{0}(F^{\times},\mathcal{B}_{R,M}(\emptyset))$}

Fix a subset $R$ of $S_{f}$ and a subgroup $M$ of $\mathbb{F}_{q}^{\times}$.
We put ${\rm Cl}_{R,M}:=N^{S\cup S_{q}}/\mathcal{E}_{R,M}$. Note
that ${\rm Cl}_{R,M}$ is a finite group.
\begin{defn}
For $c\in{\rm Cl}_{R,M}$, we define $\vartheta_{R,M,c}\in H_{0}(F^{\times},\mathcal{B}_{R,M}(\emptyset))$
as follows. Let $u\in N^{S\cup S_{q}}$ be a lift of $c$ and $z\in\mathcal{Z}(\mathcal{U},\emptyset)$
a lift of $\vartheta_{\infty}(\mathcal{U},F_{R,M}\cap\mathcal{O}_{F}^{\times})\in H_{0}(F_{R,M}\cap\mathcal{O}_{F}^{\times},\mathcal{Z}(\mathcal{U},\emptyset))$.
Let $\vartheta_{R,M,c}$ be the image of $uf_{R,M,\emptyset}\otimes z\otimes[u]\in\mathcal{B}_{R,M}(\emptyset)$.
This definition does not depend on the choice of $u$ and $z$.
\end{defn}

\begin{defn}
Define $\vartheta_{R,M}\in H_{0}(F^{\times},\mathcal{B}_{R,M}(\emptyset))$
by 
\[
\vartheta_{R,M}:=\sum_{c\in{\rm Cl}_{R,M}}\vartheta_{R,M,c}.
\]
\end{defn}
\begin{prop}
\label{prop:exist_lift}There exists a lift $\alpha\in H_{0}(F_{R,M}^{\times}\cap\mathcal{O}_{S}^{\times},\mathcal{B}_{R,M}(\emptyset))$
of $\vartheta_{R,M}$ such that ${\rm r}_{\{v\}}^{\emptyset}(\alpha)\in H_{0}(F_{R,M}^{\times}\cap\mathcal{O}_{S}^{\times},\mathcal{B}_{R,M}(\{v\}))$
vanishes for all $v\in S\setminus R$.\end{prop}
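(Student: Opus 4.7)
The plan is to take the same formula that defines $\vartheta_{R,M}$ and reinterpret it in $H_0(E,\mathcal{B}_{R,M}(\emptyset))$ where $E=F_{R,M}^{\times}\cap\mathcal{O}_{S}^{\times}$, then check the vanishing of the restriction maps by splitting into $v\in S_{\infty}$ and $v\in S_{f}\setminus R$. Concretely, set
\[
\alpha:=\sum_{c\in{\rm Cl}_{R,M}}uf_{R,M,\emptyset}\otimes z\otimes[u]\ \in H_{0}(E,\mathcal{B}_{R,M}(\emptyset)),
\]
with $u\in N^{S\cup S_{q}}$ a lift of $c$ and $z\in\mathcal{Z}(\mathcal{U},\emptyset)$ a lift of $\vartheta_{\infty}(\mathcal{U},E_{0})$ for $E_{0}:=F_{R,M}\cap\mathcal{O}_{F}^{\times}\subset E$. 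Independence of the choice of $z$ modulo $I_{E}$ is immediate because two lifts differ by an element of $I_{E_{0}}\mathcal{Z}(\mathcal{U},\emptyset)\subset I_{E}\mathcal{Z}(\mathcal{U},\emptyset)$. Independence of the choice of $u$ (which varies by $\mathcal{E}_{R,M}$) follows from Proposition \ref{prop:theta_inf_mul_p}, once one notes that every $p\in\mathcal{E}_{R,M}$ satisfies $p\mathcal{U}\subset\mathcal{U}$ (because $p_{\mathfrak{q}}\in\tilde{M}$ maps to $M\subset\mathbb{F}_{q}^{\times}$ and $p$ is integral at $S_{q}$), combined with the balanced tensor identity $(ex)\otimes[g]=x\otimes[eg]$ in $\mathcal{B}_{R,M}(\emptyset)=\mathcal{B}_{R,M}^{*}(\emptyset)\otimes_{\mathcal{E}_{R,M}}\mathbb{Z}[F^{\times}]$. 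By construction $\alpha$ lifts $\vartheta_{R,M}$.

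\paragraph{Vanishing at $v\in S_{\infty}$.}
Since $\{v\}\cap S_{f}=\emptyset$, the function $f_{R,M,\emptyset}$ is unchanged, and $r_{\{v\}}^{\emptyset}$ acts only through the natural map $\mathcal{Z}(\mathcal{U},\emptyset)\to\mathcal{Z}(\mathcal{U},\{v\})$. Proposition \ref{prop:theta_inf_vanish} gives $h(\vartheta_{\infty}(\mathcal{U},E_{0}))=0$, so the image of $z$ in $\mathcal{Z}(\mathcal{U},\{v\})$ lies in $I_{E_{0}}\mathcal{Z}(\mathcal{U},\{v\})\subset I_{E}\mathcal{Z}(\mathcal{U},\{v\})$, and hence $r_{\{v\}}^{\emptyset}(\alpha)=0$ in $H_{0}(E,\mathcal{B}_{R,M}(\{v\}))$.

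\paragraph{Vanishing at $v\in S_{f}\setminus R$.}
Choose $p\in E$ satisfying ${\rm ord}_{v}(p)=1$, $p_{w}\in O_{w}^{\times}$ for every finite $w\neq v$ outside $R\cup\{\mathfrak{q}\}$, $p_{w}\in V_{w}$ for $w\in R$, and $p_{\mathfrak{q}}\in\tilde{M}$; such a $p$ exists by strong approximation when $\mathfrak{p}_{v}$ is principal modulo the relevant ray-class conditions, and otherwise one replaces $\pi_{v}^{-1}$ throughout by $\pi_{v}^{-h_{v}}$ (using the identity $(\pi_{v}^{-1}-1)\sum_{k=0}^{h_{v}-1}\pi_{v}^{-k}=\pi_{v}^{-h_{v}}-1$) with $h_{v}$ the order of $[\mathfrak{p}_{v}]$ in the appropriate class group. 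Applying the identity from the proof of Proposition \ref{Prop:Bw1Bw2},
\[
uf_{R,M,\emptyset}\otimes z\otimes[u]=[p^{-1}]\bigl(up\pi_{v}^{-1}f_{R,M,\{v\}}\otimes pz\otimes p[u]\bigr)-uf_{R,M,\{v\}}\otimes z\otimes[u]
\]
in $\mathcal{B}_{R,M}(\{v\})$. Working modulo $I_{E}$: $[p^{-1}]\equiv 1$; $pz\equiv z$ by Proposition \ref{prop:theta_inf_mul_p}; and $p[u]=[u]$ in $\mathbb{Z}[N^{S}]$ because $p$ is a unit at every finite place outside $S$ and so has trivial image in $N^{S}$. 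The idele $up\pi_{v}^{-1}$ lies in the invariance subgroup of $f_{R,M,\{v\}}$ at every place ($O_{v}^{\times}$ at $v$, the required congruence subgroups at $R\cup\{\mathfrak{q}\}$, and units elsewhere), so $up\pi_{v}^{-1}f_{R,M,\{v\}}=uf_{R,M,\{v\}}$. The two terms thus cancel, giving $r_{\{v\}}^{\emptyset}(\alpha)\equiv 0\pmod{I_{E}\mathcal{B}_{R,M}(\{v\})}$.

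\paragraph{Main obstacle.} The most delicate ingredient is producing the global element $p\in E$ with the prescribed local behavior at $v\in S_{f}\setminus R$: this is a principality-in-a-ray-class-group question, and when $\mathfrak{p}_{v}$ is not principal modulo the congruence conditions defining $F_{R,M}$, the telescoping modification must be implemented while preserving all of the invariance calculations above. Verifying that the reindexing remains well-defined modulo $I_{E}$ after the $\pi_{v}^{-k}$ substitutions, and that the action of the modified $p$ on $\mathcal{U}$, on $\vartheta_{\infty}$, and on $[u]$ all still behave trivially modulo $I_{E}$, is where the argument is most technical.
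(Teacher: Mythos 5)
Your plan of taking the "same formula" for $\vartheta_{R,M}$ and reinterpreting it in $H_{0}(E,\mathcal{B}_{R,M}(\emptyset))$ with $E=F_{R,M}^{\times}\cap\mathcal{O}_{S}^{\times}$ runs into two genuine problems, the second of which is fatal to the argument as written.

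First, your claim that the class of $\sum_{c}uf_{R,M,\emptyset}\otimes z\otimes[u]$ in $H_{0}(E,\mathcal{B}_{R,M}(\emptyset))$ is independent of the lifts $u$ of $c\in{\rm Cl}_{R,M}$ is false: changing $u$ to $eu$ with $e\in\mathcal{E}_{R,M}$ changes the class by $[e]$ via the balanced tensor, and $\mathcal{E}_{R,M}$ is \emph{not} contained in $E=F_{R,M}^{\times}\cap\mathcal{O}_{S}^{\times}$ (an element of $\mathcal{E}_{R,M}$ is required to be a unit at $S_{f}\cup S_{q}$, but may have non-zero valuation at primes outside $S\cup S_{q}$). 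So you have not actually produced a canonical $\alpha$ — you have made a non-canonical choice, which is fine for an existence statement, but it means your subsequent cancellation must work for a \emph{specific} set of choices, not "modulo the reindexing being well-defined."

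Second, and more seriously, your cancellation argument at a finite place $v\in S_{f}\setminus R$ rests on producing $p\in E$ with ${\rm ord}_{v}(p)=1$, $p$ a unit at all other finite places (so that $p$ acts trivially on $\mathbb{Z}[N^{S}]$), and $p$ satisfying the ray conditions at $R\cup\{\mathfrak{q}\}$. Such a $p$ exists only if $\mathfrak{p}_{v}$ is principal in the relevant narrow ray class group, which you correctly flag may fail. Your proposed fallback — "replace $\pi_{v}^{-1}$ by $\pi_{v}^{-h_{v}}$" — does not produce a working telescoping: the restriction map ${\rm r}_{\{v\}}^{\emptyset}$ is given by the local relation $f_{R,M,\emptyset}=(\pi_{v}^{-1}-1)f_{R,M,\{v\}}$, and one cannot replace $(\pi_{v}^{-1}-1)$ by $(\pi_{v}^{-h_{v}}-1)$ inside the restriction map; writing $(\pi_{v}^{-h_{v}}-1)f_{R,M,\{v\}}=\sum_{k=0}^{h_{v}-1}\pi_{v}^{-k}f_{R,M,\emptyset}$ introduces $h_{v}$ distinct translates of $f_{R,M,\emptyset}$ that your single term $uf_{R,M,\emptyset}\otimes z\otimes[u]$ does not contain, and summing them introduces new ideles $\pi_{v}^{-k}u$ that cannot be absorbed term-by-term. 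The paper resolves this by constructing a \emph{different} lift: it defines $\alpha$ as a double sum over ${\rm coker}(X\to Y/F_{R,M}^{\times})$ (choice of representatives $u_{C}$) and ${\rm im}(X\to Y/F_{R,M}^{\times})$ (summing over an explicit finite group), and then the key identity $h_{\emptyset}(C,u)=h_{\{v\}}(\pi_{v}C,u)-h_{\{v\}}(C,u)$ telescopes because multiplication by $\pi_{v}$ \emph{permutes} the finite group ${\rm im}(X\to Y/F_{R,M}^{\times})$. That group-theoretic telescoping — rather than any principality of $\mathfrak{p}_{v}$ — is what makes the cancellation unconditional, and this is the idea your proposal is missing.
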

\begin{proof}
Put $E_{S}=F_{R,M}^{\times}\cap\mathcal{O}_{S}^{\times}$, $E:=F_{R,M}^{\times}\cap\mathcal{O}_{F}^{\times}$,
$X:=\prod_{v\in S_{f}\setminus R}(F_{v}^{\times}/O_{v}^{\times})$
and $Y:=\prod'_{v\notin S_{\infty}\cup R\cup\{\mathfrak{q}\}}(F_{v}^{\times}/O_{v}^{\times})$.
For $W\subset S_{f}\setminus R$ and $u\in N^{S\cup S_{q}}$, we put
\[
g_{W}(u):=uf_{R,M,W}\otimes z\otimes u\in H_{0}(E,\mathcal{B}_{R,M}^{*}(W))
\]
where $z$ is a lift of $ $$\vartheta_{\infty}(\mathcal{U},E)$.
For each $v\in S_{f}\setminus R$, fix an element $a(v)$ of 
\[
F_{R,M}^{\times}\cap(\pi_{v}\mathcal{O}_{v}^{\times})\cap\left(\bigcap_{\mathfrak{p}\in(S_{q}\cup S_{f})\setminus\{v\}}\mathcal{O}_{\mathfrak{p}}^{\times}\right).
\]
From (\ref{eq:fRMW_Wchange}) and Proposition \ref{prop:theta_inf_mul_p},
for $v\in S_{f}\setminus R$, we have
\begin{align}
{\rm r}_{\{v\}}^{\emptyset}(g_{\emptyset}(u)) & =g_{\{v\}}(w)\otimes[a(v)^{-1}]-g_{\{v\}}(u)\otimes[1]\label{eq:gv}
\end{align}
where $w\in N^{S\cup S_{q}}$ is the image of $a(v)u$. From Proposition
\ref{prop:theta_inf_vanish}, for $v\in S_{\infty}$, we have 
\begin{equation}
{\rm r}_{\{v\}}^{\emptyset}(g_{\emptyset}(u))=0\in H_{0}(E,\mathcal{B}_{R,M}(\{v\})).\label{eq:vanish_at_inf}
\end{equation}
For $u\in N^{S\cup S_{q}}$ and $C\in{\rm im}(X\to Y/F_{R,M}^{\times}),$
we define $h_{W}(C,u)\in H_{0}(E_{S},\mathcal{B}_{R,M}(W))$ as the
image of
\[
g_{W}(tu)\otimes[b]\in H_{0}(E,\mathcal{B}_{R,M}(W))
\]
where $t\in N^{S\cup S_{q}}\subset Y$ is a lift of $C$ and $b$
is an element of $F_{R,M}^{\times}$ such that $bt\in{\rm im}(X\to Y)$.
This definition does not depend on the choice of $t$ and $b$. From
(\ref{eq:gv}), for $v\in S_{f}\setminus R$, we have
\begin{align}
h_{\emptyset}(C,u) & =g_{\emptyset}(tu)\otimes[b]\nonumber \\
 & =g_{\{v\}}(a(v)tu)\otimes[ba(v)^{-1}]-g_{\{v\}}(tu)\otimes[b]\nonumber \\
 & =h_{\{v\}}(\pi_{v}C,u)-h_{\{v\}}(C,u).\label{eq:vanish_at_v}
\end{align}
For each $C\in{\rm coker}(X\to Y/F_{R,M}^{\times})$, fix a lift $u_{C}\in N^{S\cup S_{q}}$.
Put 
\begin{eqnarray*}
h(u) & = & \sum_{C\in{\rm im}(X\to Y/F_{R,M}^{\times})}h_{\emptyset}(C,u)\\
\alpha & = & \sum_{C\in{\rm coker}(X\to Y/F_{R,M}^{\times})}h(u_{C}).
\end{eqnarray*}
Then $\alpha$ is a lift of $\vartheta_{R,M}$. For all $v\in S\setminus R$,
we have ${\rm r}_{\{v\}}^{\emptyset}(\alpha)=0$ since ${\rm r}_{\{v\}}^{\emptyset}(h(u))=0$
from (\ref{eq:vanish_at_inf}) and (\ref{eq:vanish_at_v}). Thus the
claim is proved.
\end{proof}

\begin{prop}
\label{prop:vanish_of_vartheta}For all $v\in S\setminus R$, we have
\[
\vartheta_{R,M}\in\ker({\rm r}_{\{v\}}^{\emptyset}:H_{0}(F^{\times},\mathcal{B}_{R,M}(\emptyset))\to H_{0}(F^{\times},\mathcal{B}_{R,M}(\{v\}))).
\]
\end{prop}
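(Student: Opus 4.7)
The plan is to deduce this proposition as an essentially immediate corollary of Proposition \ref{prop:exist_lift}, which has already done all the substantive work: it constructs an $E_S$-coinvariant lift of $\vartheta_{R,M}$ (where $E_S := F_{R,M}^\times \cap \mathcal{O}_S^\times$) on which each restriction map ${\rm r}_{\{v\}}^\emptyset$ already vanishes.

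Concretely, I would argue as follows. The natural transformation ${\rm r}_{\{v\}}^\emptyset : \mathcal{B}_{R,M}(\emptyset) \to \mathcal{B}_{R,M}(\{v\})$ is a map of $\mathbb{Z}[F^\times]$-modules, so it induces horizontal arrows making the square
\[
\xymatrix{
H_0(E_S, \mathcal{B}_{R,M}(\emptyset)) \ar[r]^{{\rm r}_{\{v\}}^\emptyset} \ar[d]_{\alpha_{E_S, F^\times}} & H_0(E_S, \mathcal{B}_{R,M}(\{v\})) \ar[d]^{\alpha_{E_S, F^\times}} \\
H_0(F^\times, \mathcal{B}_{R,M}(\emptyset)) \ar[r]^{{\rm r}_{\{v\}}^\emptyset} & H_0(F^\times, \mathcal{B}_{R,M}(\{v\}))
}
\]
commute, where the vertical arrows are the canonical maps from $E_S$-coinvariants to $F^\times$-coinvariants.

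By Proposition \ref{prop:exist_lift}, there exists $\alpha \in H_0(E_S, \mathcal{B}_{R,M}(\emptyset))$ with $\alpha_{E_S,F^\times}(\alpha) = \vartheta_{R,M}$ and ${\rm r}_{\{v\}}^\emptyset(\alpha) = 0$ in $H_0(E_S, \mathcal{B}_{R,M}(\{v\}))$ for every $v \in S \setminus R$. Chasing $\alpha$ around the square in either direction, we obtain
\[
{\rm r}_{\{v\}}^\emptyset(\vartheta_{R,M}) = {\rm r}_{\{v\}}^\emptyset \circ \alpha_{E_S,F^\times}(\alpha) = \alpha_{E_S,F^\times} \circ {\rm r}_{\{v\}}^\emptyset(\alpha) = \alpha_{E_S,F^\times}(0) = 0
\]
in $H_0(F^\times, \mathcal{B}_{R,M}(\{v\}))$, which is exactly the desired conclusion.

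There is no real obstacle at this stage; the content is entirely packaged into Proposition \ref{prop:exist_lift}, where the delicate bookkeeping with the idelic presentation of ${\rm Cl}_{R,M}$, the telescoping identity (\ref{eq:gv}), and the vanishing (\ref{eq:vanish_at_inf}) obtained from Proposition \ref{prop:theta_inf_vanish} together produce a lift whose restriction components cancel pairwise. The only thing to verify here is the functoriality of ${\rm r}_{\{v\}}^\emptyset$ under the change of group from $E_S$ to $F^\times$, which is automatic since ${\rm r}_{\{v\}}^\emptyset$ comes from a morphism in the category of $\mathbb{Z}[F^\times]$-modules.
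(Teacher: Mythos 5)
Your argument is correct and is exactly the argument the paper has in mind: the paper's proof of this proposition is simply the one-line remark that the claim follows from Proposition \ref{prop:exist_lift}, and your diagram chase makes explicit the (standard and harmless) functoriality step that the paper leaves implicit.
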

\begin{proof}
The claim follows from Proposition \ref{prop:exist_lift}.\end{proof}
\begin{prop}
\label{prop:i_rmrm_theta}We have $i_{R_{2},M_{2}}^{R_{1},M_{1}}(\vartheta{}_{R_{1},M_{1}})=\vartheta{}_{R_{2},M_{2}}$.\end{prop}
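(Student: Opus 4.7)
The plan is to use equations \ref{eq:fRMW_Rchange} and \ref{eq:fRMW_Mchange} to write $f_{R_1,M_1,\emptyset}=\sum_{y\in X}yf_{R_2,M_2,\emptyset}$, where $X=(\tilde{M}_1/\tilde{M}_2)\times\prod_{\mathfrak{p}\in R_2\setminus R_1}(O_\mathfrak{p}^\times/V_\mathfrak{p})$, and match the two sides via the natural parametrization of the fibers of $\mathrm{Cl}_{R_2,M_2}\to \mathrm{Cl}_{R_1,M_1}$. A first key observation is that $X$ is canonically isomorphic to $\mathcal{E}_{R_1,M_1}/\mathcal{E}_{R_2,M_2}$: the reduction map $\mathcal{E}_{R_1,M_1}\to X$ is surjective (by the argument in the preceding lemma showing $\mathcal{B}_{R_1,M_1}(W)\subset\mathcal{B}_{R_2,M_2}(W)$), and its kernel consists of elements of $\mathcal{E}_{R_1,M_1}$ satisfying the additional $\tilde{M}_2$ and $V_\mathfrak{p}$ conditions, which together amount to membership in $\mathcal{E}_{R_2,M_2}$.

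Fix $c_1\in\mathrm{Cl}_{R_1,M_1}$ with a lift $u\in N^{S\cup S_q}$, and for each $y\in X$ choose $p_y\in\mathcal{E}_{R_1,M_1}$ lifting $y$. The fiber of $\mathrm{Cl}_{R_2,M_2}\to\mathrm{Cl}_{R_1,M_1}$ over $c_1$ is then $\{c_1\cdot\overline{p_y}\mid y\in X\}$ with representative lifts $u\overline{p_y}$. By definition,
\[
\vartheta_{R_2,M_2,c_1\cdot\overline{p_y}}=\text{image of }(u\overline{p_y})f_{R_2,M_2,\emptyset}\otimes z_2\otimes[u\overline{p_y}],
\]
and acting by $p_y\in F^\times$ (trivial in $H_0(F^\times,\cdot)$) reduces this to the image of $uf_{R_2,M_2,\emptyset}\otimes(p_y^{-1}z_2)\otimes[u]$, using $p_y^{-1}\cdot\overline{p_y}=1$ in $N^S$. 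Summing over $y$ yields $\sum_{c_2\mapsto c_1}\vartheta_{R_2,M_2,c_2}$ as the image of $uf_{R_2,M_2,\emptyset}\otimes\bigl(\sum_y p_y^{-1}z_2\bigr)\otimes[u]$. On the other hand, $i_{R_2,M_2}^{R_1,M_1}(\vartheta_{R_1,M_1,c_1})$ is the image of $uf_{R_1,M_1,\emptyset}\otimes z_1\otimes[u]=u\sum_y yf_{R_2,M_2,\emptyset}\otimes z_1\otimes[u]$ in $H_0(F^\times,\mathcal{B}_{R_2,M_2}(\emptyset))$.

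The hard part is equating these two expressions, which differ in the choice of $\vartheta_\infty$-lift ($z_1$ versus $z_2$) and in how the shifts by $y$ and $p_y^{-1}$ are distributed between the Schwartz factor and the $\mathcal{Z}$-factor. The plan is to pass to Shapiro's isomorphism $H_0(F^\times,\mathcal{B}_{R_2,M_2}(\emptyset))\cong H_0(\mathcal{E}_{R_2,M_2},\mathcal{B}_{R_2,M_2}^*(\emptyset))$, exploit the well-definedness of $\vartheta_{R_2,M_2,c_2}$ in the choice of $z_2$ to pick a representative compatible with $z_1$, and use the characterization $\sum_{\epsilon\in E}\epsilon\mathcal{L}_\infty(D)=\bm{1}_{F^\times}$ from Proposition \ref{prop:D_is_signed_fd}, together with the change-of-group compatibility of $\Omega_{E,U}$ (Lemma \ref{lem:s1_V_ptimes}), to identify $\sum_y p_y^{-1}z_2$ with $z_1$ in the appropriate quotient. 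A careful bookkeeping of the shifts in the three tensor factors $\mathcal{A}_{R_2,M_2}$, $\mathcal{Z}(\mathcal{U},\emptyset)$ and $\mathbb{Z}[N^S]$ then finishes the identification.
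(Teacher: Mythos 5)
Your opening moves are the same as the paper's: the decomposition $f_{R_1,M_1,\emptyset}=\sum_{y\in X}yf_{R_2,M_2,\emptyset}$, the lifts $p_y\in\mathcal{E}_{R_1,M_1}$, and the observation that $X\cong\mathcal{E}_{R_1,M_1}/\mathcal{E}_{R_2,M_2}$ (the paper works directly with $E_1/E_2:=\mathcal{E}_{R_1,M_1}/\mathcal{E}_{R_2,M_2}$ in place of $X$). But the second half of your submission is a program, not a proof, and the step you explicitly defer is the entire content of the proposition.

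Concretely, two things are wrong or missing. First, the parametrization of the fiber of ${\rm Cl}_{R_2,M_2}\to{\rm Cl}_{R_1,M_1}$ over $c_1$ by $\{c_1\overline{p_y}:y\in X\}$ is a surjection but usually not a bijection. Writing $E_i=\mathcal{E}_{R_i,M_i}$, $E_i'=E_i\cap\mathcal{O}_F^\times$ and $E_i''=j(E_i)\subset N^{S\cup S_q}$, the fiber has $|E_1''/E_2''|$ elements while $|X|=|E_1/E_2|=|E_1'/E_2'|\cdot|E_1''/E_2''|$. So the step ``Summing over $y$ yields $\sum_{c_2\mapsto c_1}\vartheta_{R_2,M_2,c_2}$'' silently identifies a sum over the multiset $\{c_1\overline{p_y}:y\in X\}$ with a sum over the distinct $c_2$; when $E_1'\neq E_2'$ these differ. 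The paper makes the bookkeeping explicit by recording the exact sequence $1\to E_1'/E_2'\to E_1/E_2\xrightarrow{j}E_1''/E_2''$ and grouping the $g\in E_1/E_2$ into fibers over $E_1''/E_2''$.

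Second, the "hard part" you name — identifying the $\mathcal{Z}$-factor contributions — is precisely where the proof lives, and the tools you propose are not the right ones. Proposition \ref{prop:D_is_signed_fd} only pins down the image under $\mathcal{L}_\infty$, whose kernel on $H_0(E,\mathcal{Z}(\mathcal{U},\emptyset))$ is not controlled anywhere in the paper, so it cannot be used to identify two elements of $H_0$. Lemma \ref{lem:s1_V_ptimes} is a same-group statement about the automorphism $p$ and does not address the passage from the acting group $E_1'$ to $E_2'$. What the paper actually needs (and uses, in the collapse $\sum_{g\in E_1/E_2,\ j(g)=g'}a(g'u,gy)\equiv a(g'u,\vartheta_\infty(\mathcal{U},E_2))$) is that for a lift $y\in H_0(E_2,\mathcal{Z}(\mathcal{U},\emptyset))$ of $\vartheta_\infty(\mathcal{U},E_1')$, summing $gy$ over the coset $E_1'/E_2'$ produces $\vartheta_\infty(\mathcal{U},E_2')$; this rests on the compatibility of $\Omega_{E,U}$ with the change of group and on $\alpha_{E,E'}(\eta_{E'})=\#(E/E')\eta_{E}$, together with Proposition \ref{prop:theta_inf_mul_p} for the non-unit shifts. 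That collapse is exactly what absorbs the multiplicity $|E_1'/E_2'|$ from the first point. Until it is proved, the argument does not close.
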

\begin{proof}
Let $j:F^{\times}\to N^{S\cup S_{q}}$ be a natural projection. Put
$E_{i}=\mathcal{E}_{R_{i},M_{i}}$, $E_{i}'=E_{i}\cap\mathcal{O}_{F}^{\times}$,
and $E_{i}''=j(E_{i})$ for $i=1,2$. Then the natural sequence
\[
1\to E_{1}'/E_{2}'\to E_{1}/E_{2}\xrightarrow{j}E_{1}''/E_{2}''
\]
is exact. For $u\in N^{S\cup S_{q}}/E_{2}''$ and $z\in H_{0}(E_{2},\mathcal{Z}(\mathcal{U},\emptyset))$,
we put
\[
a(u,z):=uf_{R_{2},M_{2}}\otimes z\otimes[u]\in H_{0}(F^{\times},\mathcal{B}_{R_{2},M_{2}}(\emptyset)).
\]
Fix a lift $y\in H_{0}(E_{2},\mathcal{Z}(\mathcal{U},\emptyset))$
of $ $$\vartheta_{\infty}(\mathcal{U},E_{1})$. Since
\[
f_{R_{1},M_{1}}=\sum_{g\in E_{1}/E_{2}}g^{-1}j(g)f_{R_{2},M_{2}},
\]
for $u\in N^{S\cup S_{q}}$ we have 
\begin{align*}
uf_{R_{1},M_{1}}\otimes y\otimes[u] & =\sum_{g\in E_{1}/E_{2}}\left(j(g)uf_{R_{2},M_{2}}\otimes gy\otimes[gu]\right)\otimes[g^{-1}]\\
 & \equiv\sum_{g\in E_{1}/E_{2}}a(j(g)u,gy)\\
 & \equiv\sum_{g'\in E_{1}''/E_{2}''}\sum_{\substack{g\in E_{1}/E_{2}\\
j(g)=g'
}
}a(g'u,gy)\\
 & \equiv\sum_{g'\in E_{1}''/E_{2}''}a(g'u,\vartheta_{\infty}(\mathcal{U},E_{2}))\pmod{I_{F^{\times}}\mathcal{B}_{R_{2},M_{2}}(\emptyset)}.
\end{align*}
Thus we have
\begin{align*}
i_{R_{2},M_{2}}^{R_{1},M_{1}}(\vartheta{}_{R_{1},M_{1}}) & \equiv\sum_{u\in N^{S\cup S_{q}}/E_{1}''}uf_{R_{1},M_{1}}\otimes y\otimes[u]\\
 & \equiv\sum_{u\in N^{S\cup S_{q}}/E_{2}''}a(u,\vartheta_{\infty}(\mathcal{U},E_{2}))\\
 & \equiv\vartheta_{R_{2},M_{2}}\pmod{I_{F^{\times}}\mathcal{B}_{R_{2},M_{2}}(\emptyset)}.
\end{align*}
Thus the claim is proved.
\end{proof}

\subsection{Evaluation map}

We denote by $\mathbb{Q}\otimes\mathcal{R}$ the functor from ${\bf Sub}(S)$
to ${\bf Mod}(\mathbb{Z}[F^{\times}])$ defined by $(\mathbb{Q}\otimes\mathcal{R})(W)=\mathbb{Q}\otimes(\mathcal{R}(W))$.
Fix a subset $R$ of $S_{f}$ and a subgroup $M$ of $\mathbb{F}_{q}^{\times}$.
Let $W$ be a subset of $S\setminus R$. In this section, we define
a natural transformation $\mathcal{L}_{R,M}|_{{\bf Sub}(S\setminus R)\setminus\{S\}}$
from $\mathcal{B}_{R,M}$ to $(\mathbb{Q}\otimes\mathcal{R})|_{{\bf Sub}(S\setminus R)\setminus\{S\}}$.
Put $W_{\infty}:=W\cap S_{\infty}$ and $W_{f}:=W\cap S_{f}$. For
$a\in\mathcal{A}_{R,M}(W_{f})$ and $e\in N_{S_{f}\setminus W_{f}}$,
define $\Phi_{a,e}\in\mathcal{S}(\mathbb{A}_{F}^{S_{\infty}})$ by
\[
\Phi_{a,e}(x):=\begin{cases}
a(x) & y\in j^{-1}(e)\\
0 & y\notin j^{-1}(e)
\end{cases}
\]
where $j:\mathbb{A}_{F}^{S_{\infty}}\to N_{S_{f}\setminus W_{f}}$
is the projection map.
\begin{lem}
\label{lem:inv}For all $\mathfrak{p}\in S_{q}\setminus\{\mathfrak{q}\}$
and $u\in\mathcal{U}$, $\Phi_{a,e}$ is invariant under the action
of $uO_{\mathfrak{p}}$.\end{lem}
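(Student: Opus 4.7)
I will reduce the claim to a local verification at the place $\mathfrak{p}$. By $\mathbb{Z}$-linearity of $\Phi_{a,e}$ in $a$, it suffices to treat $a=xf_{R,M,W_{f}}$ for a single $x\in\mathbb{A}_{F}^{S_{\infty}\cup S_{f}\cup S_{q},\times}$. Since $x$ has no component at $\mathfrak{p}\in S_{q}$ and both $f_{R,M,W_{f}}$ and the projection $j:\mathbb{A}_{F}^{S_{\infty}}\to N_{S_{f}\setminus W_{f}}$ factor over finite places, the function $\Phi_{a,e}$ factors as $\prod_{v}\tilde{g}_{v}$ with $\tilde{g}_{v}\in\mathcal{S}(F_{v})$ for each finite place $v$. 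Because translation by an element of $uO_{\mathfrak{p}}$ only alters the $\mathfrak{p}$-component, it suffices to show that $\tilde{g}_{\mathfrak{p}}$ is invariant under additive translation by $uO_{\mathfrak{p}}$.

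Next, I will identify $\tilde{g}_{\mathfrak{p}}$ by cases: if $\mathfrak{p}\notin S_{f}$ then $\tilde{g}_{\mathfrak{p}}=\bm{1}_{O_{\mathfrak{p}}}$; if $\mathfrak{p}\in W_{f}$ then $\tilde{g}_{\mathfrak{p}}=\bm{1}_{\mathfrak{p}O_{\mathfrak{p}}}$; if $\mathfrak{p}\in R$ then $\tilde{g}_{\mathfrak{p}}=\bm{1}_{V_{\mathfrak{p}}\cap e_{\mathfrak{p}}J_{\mathfrak{p}}}$; and if $\mathfrak{p}\in S_{f}\setminus(R\cup W_{f})$ then $\tilde{g}_{\mathfrak{p}}=\bm{1}_{O_{\mathfrak{p}}^{\times}\cap e_{\mathfrak{p}}J_{\mathfrak{p}}}$. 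Two key facts drive the argument. First, since $\mathfrak{p}\in S_{q}$ has odd residue characteristic $q$, the definition of $V_{\mathfrak{p}}$ gives $V_{\mathfrak{p}}=1+\mathfrak{p}O_{\mathfrak{p}}$. Second, writing $m_{1}:=\min\{m\geq1\mid1+\pi_{\mathfrak{p}}^{m}O_{\mathfrak{p}}\subset J_{\mathfrak{p}}\}$, the defining condition of $\mathcal{U}$ forces ${\rm ord}_{\mathfrak{p}}(u)\geq m_{1}\geq1$, whence $uO_{\mathfrak{p}}\subset\mathfrak{p}^{m_{1}}O_{\mathfrak{p}}\subset\mathfrak{p}O_{\mathfrak{p}}$.

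The first two cases are immediate since $O_{\mathfrak{p}}$ and $\mathfrak{p}O_{\mathfrak{p}}$ are additive subgroups of $F_{\mathfrak{p}}$ containing $uO_{\mathfrak{p}}$. For the remaining two cases, one may assume $e_{\mathfrak{p}}\in O_{\mathfrak{p}}^{\times}$ (otherwise the support is empty). Given $z=e_{\mathfrak{p}}j$ with $j\in J_{\mathfrak{p}}$ and $h\in uO_{\mathfrak{p}}$, I compute
\[
z+h=e_{\mathfrak{p}}j\bigl(1+h/(e_{\mathfrak{p}}j)\bigr).
\]
Since $e_{\mathfrak{p}}j\in O_{\mathfrak{p}}^{\times}$, one has $h/(e_{\mathfrak{p}}j)\in\mathfrak{p}^{m_{1}}O_{\mathfrak{p}}$, so $1+h/(e_{\mathfrak{p}}j)\in 1+\mathfrak{p}^{m_{1}}O_{\mathfrak{p}}\subset J_{\mathfrak{p}}$, and therefore $z+h\in e_{\mathfrak{p}}J_{\mathfrak{p}}$. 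Combined with the observation that both $V_{\mathfrak{p}}=1+\mathfrak{p}O_{\mathfrak{p}}$ and $O_{\mathfrak{p}}^{\times}=O_{\mathfrak{p}}\setminus\mathfrak{p}O_{\mathfrak{p}}$ are closed under translation by $\mathfrak{p}O_{\mathfrak{p}}\supset uO_{\mathfrak{p}}$, this yields the required invariance.

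The only mild subtlety I anticipate is the identification $V_{\mathfrak{p}}=1+\mathfrak{p}O_{\mathfrak{p}}$, which rests on the assumption $q\neq2$ and ensures that the same power $\mathfrak{p}^{m_{1}}$ which controls $J_{\mathfrak{p}}$ also controls $V_{\mathfrak{p}}$; everything else is a direct unwinding of the definitions.
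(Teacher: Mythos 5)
Your proof is correct and takes essentially the same route as the paper's: separate variables over places (the paper does this implicitly by observing $a$ is invariant under $\mathfrak{p}O_\mathfrak{p}$ while $j^{-1}(e)$ is invariant under a second subgroup, then intersecting), and then check invariance of the local factor at $\mathfrak{p}$. Your case decomposition $\mathfrak{p}\notin S_{f}$, $\mathfrak{p}\in W_{f}$, $\mathfrak{p}\in R$, $\mathfrak{p}\in S_{f}\setminus(R\cup W_{f})$ is a refinement of the paper's two-way split $\mathfrak{p}\notin S_{f}\setminus W$ versus $\mathfrak{p}\in S_{f}\setminus W$, and the core computation $z+h=e_{\mathfrak{p}}j(1+h/(e_{\mathfrak{p}}j))\in e_{\mathfrak{p}}J_{\mathfrak{p}}$ matches the paper's assertion that $j^{-1}(e)$ is invariant under $uO_{\mathfrak{p}}$. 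One genuinely useful thing you add is to make explicit that $V_{\mathfrak{p}}=1+\mathfrak{p}O_{\mathfrak{p}}$, which rests on the residue characteristic $q$ being odd; the paper simply asserts that $a$ is invariant under $\mathfrak{p}O_{\mathfrak{p}}$ in the case $\mathfrak{p}\in S_{f}\setminus W$, and when $\mathfrak{p}\in R$ that assertion implicitly uses $V_{\mathfrak{p}}=1+\mathfrak{p}O_{\mathfrak{p}}$ (i.e.\ $q\neq2$). Since all Shintani data constructed in this section have $q\neq2$, this is harmless, but you are right to flag it as the one point where the argument uses the standing hypothesis on $q$.
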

\begin{proof}
Assume that $\mathfrak{p}\notin S_{f}\setminus W$. Then $a\in\mathcal{S}(\mathbb{A}_{F}^{S_{\infty}})$
is invariant under the action of $\mathfrak{p}O_{\mathfrak{p}}$,
and $j^{-1}(e)$ is invariant under the action of $F_{\mathfrak{p}}$.
Thus $\Phi_{a,e}$ is invariant under the action of $\mathfrak{p}O_{\mathfrak{p}}\supset uO_{\mathfrak{p}}$.
Assume that $\mathfrak{p}\in S_{f}\setminus W$. Then $a\in\mathcal{S}(\mathbb{A}_{F}^{S_{\infty}})$
is invariant under the action of $\mathfrak{p}O_{\mathfrak{p}}$.
If ${\rm ord}_{\mathfrak{p}}(x)\neq0$ then $\Phi_{a,e}=0$. If ${\rm ord}_{\mathfrak{p}}(e)\neq0$
then $j^{-1}(e)$ is invariant under the action of $uO_{\mathfrak{p}}$.
Thus $\Phi_{a,e}$ is invariant under the action of $\mathfrak{p}O_{\mathfrak{p}}\cap uO_{\mathfrak{p}}=uO_{\mathfrak{p}}$.
Thus the lemma is proved.\end{proof}
\begin{lem}
$\Phi_{a,e}$ is $\mathcal{U}$-smooth for all $a$ and $e$.\end{lem}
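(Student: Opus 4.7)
The plan is to factor the integral $\int_{\mathbb{A}_{{\rm finite}}}\Phi_{a,e}(x+tu)\,dt$ into a product of local integrals $I_p$ indexed by rational primes $p$, and to show that the single factor at $p=q$ vanishes (the others are finite by the Schwartz--Bruhat nature of $\Phi_{a,e}$). Since $\Phi_{a,e}$ is a product of local factors $\prod_{\mathfrak p}\Phi_{\mathfrak p}$ and a scalar $t\in\mathbb{A}_{{\rm finite}}$ acts on $u\in F$ by $(tu)_{\mathfrak p}=t_p u_{\mathfrak p}$ for $\mathfrak p\mid p$, Fubini gives
$$
\int_{\mathbb{A}_{{\rm finite}}}\Phi_{a,e}(x+tu)\,dt = \prod_p I_p,\qquad I_p := \int_{\mathbb{Q}_p}\prod_{\mathfrak p\mid p}\Phi_{\mathfrak p}(x_{\mathfrak p}+t_p u_{\mathfrak p})\,dt_p,
$$
so it suffices to show $I_q=0$ for every $x\in F$ and $u\in\mathcal{U}$.

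To evaluate $I_q$, first note that $\mathfrak q\notin S_f$, so the restriction $\bm{1}_{j^{-1}(e)}$ imposes no condition at $\mathfrak q$ and the $\mathfrak q$-local factor of $\Phi_{a,e}$ is simply $f_{\mathfrak q}=\sum_{y_0\in M}(\bm{1}_{\pi^{-1}(y_0)}-\bm{1}_{\pi^{-1}(0)})$. Write $I_q=\int_{\mathbb{Q}_q}f_{\mathfrak q}(x_{\mathfrak q}+tu_{\mathfrak q})\,G(t)\,dt$ with $G(t):=\prod_{\mathfrak p\mid q,\ \mathfrak p\neq\mathfrak q}\Phi_{\mathfrak p}(x_{\mathfrak p}+tu_{\mathfrak p})$. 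By Lemma~\ref{lem:inv}, each $\Phi_{\mathfrak p}$ with $\mathfrak p\in S_q\setminus\{\mathfrak q\}$ is invariant under translation by $u_{\mathfrak p}O_{\mathfrak p}$, and since $\mathbb{Z}_q\subseteq O_{\mathfrak p}$ this forces $G$ to be constant on each coset of $\mathbb{Z}_q$ in $\mathbb{Q}_q$. Decomposing $\mathbb{Q}_q$ into $\mathbb{Z}_q$-cosets then yields
$$
I_q = \sum_{c\in\mathbb{Q}_q/\mathbb{Z}_q}G(c)\int_{\mathbb{Z}_q} f_{\mathfrak q}\bigl((x_{\mathfrak q}+cu_{\mathfrak q})+s u_{\mathfrak q}\bigr)\,ds,
$$
so the problem reduces to showing $\int_{\mathbb{Z}_q}f_{\mathfrak q}(y+su_{\mathfrak q})\,ds=0$ for every $y\in F_{\mathfrak q}$.

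This last identity is a direct computation. The defining conditions of $\mathcal{U}$ force $u_{\mathfrak q}\in O_{\mathfrak q}^{\times}$ with $\overline{u_{\mathfrak q}}\in\mathbb{F}_q^{\times}\subseteq\kappa_{\mathfrak q}$, so $su_{\mathfrak q}\in O_{\mathfrak q}$ for every $s\in\mathbb{Z}_q$ and $\overline{s}\,\overline{u_{\mathfrak q}}\in\mathbb{F}_q\subseteq\kappa_{\mathfrak q}$. If $y\notin O_{\mathfrak q}$, or if $y\in O_{\mathfrak q}$ with $\overline y\notin\mathbb{F}_q$, then $y+su_{\mathfrak q}$ never meets $\pi^{-1}(y_0)\cup\pi^{-1}(0)$ for any $y_0\in M$ and the integrand vanishes identically. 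Otherwise $\overline y\in\mathbb{F}_q$, and for each $y_0\in M$ the equations $\overline y+\overline s\,\overline{u_{\mathfrak q}}=y_0$ and $\overline y+\overline s\,\overline{u_{\mathfrak q}}=0$ each determine a single residue class in $\mathbb{Z}_q/q\mathbb{Z}_q$ of Haar measure $1/q$, so the contribution of $h_{y_0}$ integrates to $+1/q-1/q=0$. The only delicate point in the argument is the Fubini factorization together with tracking the correct level of $\mathbb{Z}_q$-invariance supplied by Lemma~\ref{lem:inv}; once this bookkeeping is in place, the vanishing at $\mathfrak q$ is a short finite computation.
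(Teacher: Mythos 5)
Your argument follows the same route as the paper's: reduce by Fubini and a coset decomposition to the local integral at $\mathfrak{q}$ over $\mathbb{Z}_q$, use Lemma~\ref{lem:inv} to freeze the other factors at places above $q$, and then observe that each $h_{y_0}=\bm{1}_{\pi^{-1}(y_0)}-\bm{1}_{\pi^{-1}(0)}$ integrates to zero against translation by $u_{\mathfrak{q}}\in\pi^{-1}(\mathbb{F}_q^\times)$. The paper states these reductions very tersely (it simply asserts that it suffices to check the $\mathbb{Z}_q$-integral, invokes $\Phi_{a,e}\in\mathcal{S}(\mathbb{A}_F^{S_\infty\cup\{\mathfrak{q}\}})\otimes A$, and leaves the final cancellation as a one-line remark); you have filled in exactly the Fubini factorization, the $\mathbb{Z}_q$-coset bookkeeping, and the explicit residue-class count, so your write-up is a correct and more detailed rendering of the same proof.
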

\begin{proof}
Let $u\in\mathcal{U}$ and $x\in\mathbb{A}_{F}^{S_{\infty}}$. It
is enough to prove that
\[
\int_{\mathbb{Z}_{q}}\Phi_{a,e}(x+tu)dt=0.
\]
From Lemma \ref{lem:inv}, we have
\[
\int_{\mathbb{Z}_{q}}\Phi_{a,e}(x+tu)dt=\int_{\mathbb{Z}_{q}}\Phi_{a,e}(x+tu_{\mathfrak{q}})dt.
\]
We denote by $A$ the subgroup of $\mathcal{S}(F_{\mathfrak{q}})$
generated by $\{h_{x}\mid x\in\mathbb{F}_{q}\subset\kappa_{\mathfrak{q}}\}$
where $h_{x}=\bm{1}_{\pi^{-1}(x)}-\bm{1}_{\pi^{-1}(0)}$ and $\pi:O_{\mathfrak{q}}\to O_{\mathfrak{q}}/\mathfrak{q}O_{\mathfrak{q}}=\kappa_{\mathfrak{q}}$
is a natural projection. From the definition, $\Phi_{a,e}$ is in
$\mathcal{S}(\mathbb{A}_{F}^{S_{\infty}\cup\{\mathfrak{q}\}})\otimes A$.
Thus it is enough to prove that
\[
\int_{\mathbb{Z}_{q}}\Phi(x'+tu_{\mathfrak{p}})dt=0
\]
for all $\Phi\in A$ and $x'\in F_{\mathfrak{q}}$. This follows from
the fact that $u_{\mathfrak{p}}\in\pi^{-1}(\mathbb{F}_{q}^{\times})$.\end{proof}
\begin{defn}
We define a natural transformation $\mathcal{L}_{R,M}$ from $\mathcal{B}_{R,M}|_{{\bf Sub}(S\setminus R)\setminus\{S\}}$
to $(\mathbb{Q}\otimes\mathcal{R})|_{{\bf Sub}(S\setminus R)\setminus\{S\}}$
as follows. Fix $W\subset S\setminus R$ such that $W\neq S$.

(The case $W_{\infty}\neq S_{\infty}$ ). For $a\in\mathcal{A}_{R,M}(W_{f})$,
$b=(b_{g})_{g\in N_{S\infty\setminus W_{\infty}}}\in\mathcal{Z}(\mathcal{U},W_{\infty})$,
$g\in N_{S_{\infty}\setminus W_{\infty}}$ and $x\in N_{S_{f}\setminus W_{f}}$,
we put
\[
\zeta_{a,b,g,x}(\bm{s})=\sum_{g\in N_{S_{\infty}\setminus W_{\infty}}}\sum_{x\in N_{S_{f}\setminus W_{f}}}\sum_{y\in F^{\times}}\mathcal{L}_{\infty}(b_{g})(y)\cdot\Phi_{a,x}(y)\cdot\prod_{v\in S\setminus W}|y|_{v}^{-s_{v}}\ \ \ \ \ (\bm{s}=(s_{v})_{v\in S\setminus R}).
\]
Then $\zeta_{a,b,g,x}(\bm{s})$ is analytically continued to the whole
$\mathbb{C}^{\#(S\setminus W)}$. Let $Z_{a,b}\in\mathbb{R}[(s_{v})_{v\in S\setminus W}]$
be a taylor series at $\bm{s}=0$. We put

\[
\mathcal{L}_{R,M}(W)(a\otimes b\otimes c)=c\sum_{g\in N_{S_{\infty}\setminus W_{\infty}}}\sum_{x\in N_{S_{f}\setminus W_{f}}}Z_{a,b,g,x}\otimes[gx].
\]

(The case $W_{\infty}=S_{\infty}$). We put 
\[
\mathcal{L}_{R,M}(W)(a\otimes b\otimes c):=c\sum_{x\in N_{S_{f}\setminus W_{f}}}\langle\!\langle\widetilde{\mathcal{L}_{\infty}(b)},\Phi_{a,x}\rangle\!\rangle\prod_{v\in S_{f}\setminus W_{f}}\left|x\right|_{v}^{-s_{v}}\otimes[x]\ \ \ \ 
\]
where $a\in\mathcal{A}_{R,M}(W\cap S_{f})$, $b\in Z(\mathcal{U})=\mathcal{Z}(\mathcal{U},S_{\infty})$,
$c\in\mathbb{Z}[N^{S}]$ and $\widetilde{\mathcal{L}_{\infty}(b)}\in\mathcal{K}'(\mathcal{U})$
is a lift of $\mathcal{L}_{\infty}(b)\in\mathcal{K}(\mathcal{U})$.
This definition does not depend on the choice of $\widetilde{\mathcal{L}_{\infty}(b)}$
because $\langle\!\langle\bm{1}_{F^{\times}},\Phi_{a,x}\rangle\!\rangle=0$
from $W_{f}\neq S_{f}$ and $\Phi_{a,x}(0)=0$.
\end{defn}
From the definition, we have 
\begin{equation}
\mathcal{L}_{R_{2},M_{2}}\circ i_{R_{2},M_{2}}^{R_{1},M_{1}}=\mathcal{L}_{R_{1},M_{1}}.\label{eq:i_rmrm_L}
\end{equation}

\begin{prop}
\label{prop:L_and_theta}We have $\lambda(\mathcal{L}_{R,M}(\vartheta_{R,M}))=\Theta_{S,\{\mathfrak{q}\},K}(s)$.\end{prop}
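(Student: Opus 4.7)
The plan is to unfold the definitions on an explicit representative of $\vartheta_{R,M}$ and identify the resulting power series with the Dirichlet expansion of $\Theta_{S,\{\mathfrak{q}\},K}(s)$.

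First, I would expand $\vartheta_{R,M}=\sum_{c\in{\rm Cl}_{R,M}}\vartheta_{R,M,c}$ via the explicit representatives $u_{c}f_{R,M,\emptyset}\otimes z\otimes[u_{c}]$, then apply $\mathcal{L}_{R,M}(\emptyset)$ and $\lambda$ to each summand. Two global identities simplify the result: the product formula $\prod_{v}|y|_{v}=1$ for $y\in F^{\times}$, which converts $\prod_{v\in S}|y|_{v}^{-s}$ appearing inside the Shintani zeta function into $N(\mathfrak{a}_{y})^{-s}$ where $\mathfrak{a}_{y}:=\prod_{v\notin S}\mathfrak{p}_{v}^{{\rm ord}_{v}(y)}$; and global reciprocity, which gives that the $S$-part of the idele $(y,y,\dots)$ maps under ${\rm rec}$ to the inverse of the Frobenius at its $S$-complement. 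For $\Re(s)$ large this yields the absolutely convergent expression
\[
\lambda\mathcal{L}_{R,M}(\vartheta_{R,M,c})=\sum_{y\in F^{\times}}\mathcal{L}_{\infty}(z)(y)\,(u_{c}f_{R,M,\emptyset})(y)\,N(\mathfrak{A}_{c,y})^{-s}\,\sigma_{\mathfrak{A}_{c,y}}^{-1},
\]
where $\mathfrak{A}_{c,y}:=\mathfrak{a}_{y}(u_{c})^{-1}$, with $(u_{c})$ the ideal attached to $u_{c}\in N^{S\cup S_{q}}$. The support conditions imposed by $f_{R,M,\emptyset}$ force $\mathfrak{A}_{c,y}$ to be an integral ideal of $\mathcal{O}_{F}$ prime to $S$ whenever the summand is nonzero.

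Next, the summand is invariant under the diagonal action of $E:=F_{R,M}\cap\mathcal{O}_{F}^{\times}$, because $E$ lies in the invariance group of $f_{R,M,\emptyset}$ and $\mathfrak{a}_{y}$ is unchanged under units. Proposition \ref{prop:D_is_signed_fd}, which gives $\sum_{\epsilon\in E}\epsilon\mathcal{L}_{\infty}(z)=\bm{1}_{F^{\times}}$, then collapses the sum over $F^{\times}$ to a sum over $F^{\times}/E$. Summing over $c\in{\rm Cl}_{R,M}$ and reindexing by $\mathfrak{A}$ produces
\[
\lambda\mathcal{L}_{R,M}(\vartheta_{R,M})=\sum_{\mathfrak{A}}w(\mathfrak{A})\,\sigma_{\mathfrak{A}}^{-1}N(\mathfrak{A})^{-s},
\]
where the sum is over integral ideals $\mathfrak{A}$ of $\mathcal{O}_{F}$ prime to $S$ and the weight $w(\mathfrak{A})$ is dictated by the local $\mathfrak{q}$-factor $f_{\mathfrak{q}}=\sum_{x\in M}h_{x}$: for $\mathfrak{q}\nmid\mathfrak{A}$ only the unique local class with $\pi(y_{\mathfrak{q}})\in M$ contributes and gives $w(\mathfrak{A})=1$, while for $\mathfrak{q}\mid\mathfrak{A}$ the $(N(\mathfrak{q})-1)/|M|$ classes of $y_{\mathfrak{q}}/\pi_{\mathfrak{q}}^{n}\in O_{\mathfrak{q}}^{\times}/\tilde{M}$ each carry weight $-|M|$ from $f_{\mathfrak{q}}$, summing to $w(\mathfrak{A})=1-N(\mathfrak{q})$. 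These are precisely the coefficients in the Dirichlet expansion of $\Theta_{S,K}(s)(1-\sigma_{\mathfrak{q}}^{-1}N(\mathfrak{q})^{1-s})=\Theta_{S,\{\mathfrak{q}\},K}(s)$, so the identity follows after analytic continuation and comparison of Taylor expansions at $s=0$.

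The hardest step is the reindexing: one must verify the bijection between pairs $(c,\bar{y})\in{\rm Cl}_{R,M}\times(F^{\times}/E)$ compatible with the local integrality imposed by $u_{c}f_{R,M,\emptyset}$ and pairs consisting of an integral ideal $\mathfrak{A}$ prime to $S$ together with a residue class of $y_{\mathfrak{q}}$. This requires combining the identification of $N^{S\cup S_{q}}$ with the group of fractional ideals prime to $S\cup S_{q}$, the ray-class description of ${\rm Cl}_{R,M}=N^{S\cup S_{q}}/\mathcal{E}_{R,M}$, and a strong-approximation argument giving uniqueness of $y$ modulo $E$ for each choice of ideal data. Once this bookkeeping is in place, the $\mathfrak{q}$-local weight computation is elementary.
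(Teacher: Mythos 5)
Your proof is essentially the same as the paper's: both unfold $\lambda\circ\mathcal{L}_{R,M}$ on an explicit representative of $\vartheta_{R,M}$, collapse the sum over $F^{\times}$ using Proposition \ref{prop:D_is_signed_fd}, and identify the resulting ideal-indexed Dirichlet series (with local $\mathfrak{q}$-weight $1$ or $1-N(\mathfrak{q})$) with $\Theta_{S,\{\mathfrak{q}\},K}(s)$. The paper sidesteps the reindexing bookkeeping you flag as hardest by introducing the $F^{\times}$-set $A=J\times F^{\times}$ (with $J$ the group of fractional $\mathcal{O}_{S}$-ideals) and an $F^{\times}$-equivariant map $\mathrm{ev}:\mathcal{B}_{R,M}(\emptyset)\to\mathrm{Map}(A,\mathbb{Z})$, so the collapse reduces to the single identity $\sum_{p\in F^{\times}}[p]\,\mathrm{ev}(t)=D$ for an explicit $D$ descending to $A/F^{\times}\simeq J$, after which the comparison with $\Theta_{S,\{\mathfrak{q}\},K}(s)$ is immediate.
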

\begin{proof}
We write $J$ for the group of fractional ideals of $\mathcal{O}_{S}$.
Let $\lambda$ be a natural isomorphism between $N^{S}$ and $J$.
For $\mathfrak{a}\in J$, we write $\sigma_{\mathfrak{a}}:={\rm rec}(\lambda^{-1}(\mathfrak{a}))\in{\rm Gal}(K/F)$.
We put $A:=\{(x,\mathfrak{a})\mid\mathfrak{a}\in J,x\in F^{\times}\}=J\times F^{\times}$.
We define the action of $F^{\times}$ to $A$ by $p(x,\mathfrak{a})=(px,p\mathfrak{a})$.
We define ${\rm ev}\in{\rm Hom}_{\mathbb{Z}[F^{\times}]}(\mathcal{B}_{R,M}(\emptyset)\to{\rm Map}(A,\mathbb{Z}))$
by
\[
({\rm ev}(a\otimes b\otimes c))(x,\mathfrak{a}):=\begin{cases}
a(x)\times\mathcal{L}_{\infty}(b)(x) & \mathfrak{a}=\lambda(c)\\
0 & \mathfrak{a}\neq\lambda(c),
\end{cases}
\]
where $a\in\mathcal{S}(A_{F}^{S_{\infty}})$, $b\in\mathcal{Z}(F^{\times},\emptyset)$,
$c\in\mathbb{Z}[N^{S}]$, and $(x,\mathfrak{a})\in A$. Then we have
\[
\mathcal{L}(h)=\sum_{(x,\mathfrak{a})\in A}{\rm ev}(h)(x,\mathfrak{a})\left(\prod_{v\in S}\left|x\right|_{v}^{-s_{v}}\right)\otimes[\lambda^{-1}(\mathfrak{a})\prod_{v\in S}x_{v}].
\]
and
\[
\lambda(\mathcal{L}(h))=\sum_{(x,\mathfrak{a})\in A}{\rm ev}(h)(x,\mathfrak{a})\left(\prod_{v\in S}\left|x\right|_{v}^{-s}\right)N(\mathfrak{a})^{s}\otimes[\sigma_{x^{-1}\mathfrak{a}}].
\]
Define $D\in{\rm Map}(A,\mathbb{Z})$ by
\[
D((x,\mathfrak{a}))=\begin{cases}
1 & x\in\mathfrak{a}\setminus\mathfrak{aq}\\
N(\mathfrak{q})-1 & x\in\mathfrak{aq}\\
0 & x\notin\mathfrak{a},
\end{cases}
\]
where $\mathfrak{a}$ and $\mathfrak{ap}$ are fractional ideals of
$\mathcal{O}_{S}$. Let $t\in\mathcal{B}_{R,M}(\emptyset)$ be a lift
of $\vartheta$. Then from Proposition \ref{prop:D_is_signed_fd},
we have
\[
\sum_{p\in F^{\times}}[p]{\rm ev}(t)=D.
\]
Thus we have
\begin{align*}
\lambda(\mathcal{L}(t)) & =\sum_{(x,\mathfrak{a})\in A/F^{\times}}D(x,\mathfrak{a})\left(\prod_{v\in S}\left|x\right|_{v}^{-s}\right)N(\mathfrak{a})^{s}\otimes[\sigma_{x^{-1}\mathfrak{a}}]\\
 & =\sum_{\mathfrak{a}\in J}D(1,\mathfrak{a})N(\mathfrak{a})^{s}\otimes[\sigma_{\mathfrak{a}}]\\
 & =\Theta_{S,\{\mathfrak{q}\},K}(s).
\end{align*}

\end{proof}

\subsection{Definition of ${\rm Sh}^{v_{0}}$, ${\rm Sh}^{\diamond}$ and ${\rm Sh}^{v_{0},\diamond}$}
\begin{lem}
Let $m$ be a positive integer defined by
\[
m:=\begin{cases}
1 & M=\mathbb{F}_{q}^{\times}\ {\rm and}\ {\rm ch}(\mathfrak{q})\geq[F:\mathbb{Q}]+2\\
{\rm ch}(\mathfrak{q})^{[F:\mathbb{Q}]} & {\rm otherwise}.
\end{cases}
\]
Let $V$ be a subset of $S\setminus R$. If $M=1$, $S_{\infty}\cap V\neq S_{\infty}$
or $R\neq\emptyset$ then ${\rm Sh}_{R,M,V}:=(\mathcal{B}_{R,M}|_{{\bf Sub}(V)\setminus\{S\}},m\mathcal{L}_{R,M}|_{{\bf Sub}(V)\setminus\{S\}},\vartheta_{R,M},m)$
is a Shintani datum on $V$ for $(\mathcal{R},\Upsilon,\lambda,\theta)$.\end{lem}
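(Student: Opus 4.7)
My plan is to verify, in turn, the three defining properties of a Shintani datum. Each one will follow as a direct consequence of the infrastructure built up in the preceding subsections, combined with the integrality results of Section~\ref{sec:ShintaniZeta}.

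For the homology-vanishing condition $H_i(F^\times, \mathcal{B}_{R,M}(W)) = 0$ for $i > 0$ and $W \in {\bf Sub}(V) \setminus \{S\}$, I apply Proposition~\ref{prop:homology_vanish} with $E = F^\times$; that proposition requires either $-1 \notin F_{R,M}$ or $W \cap S_\infty \neq S_\infty$. Each of the three disjuncts in the hypothesis supplies one of these alternatives. If $V \cap S_\infty \neq S_\infty$ then a fortiori $W \cap S_\infty \subseteq V \cap S_\infty \neq S_\infty$. If $R \neq \emptyset$, then choosing any $\mathfrak{p} \in R$, the defining condition $-1 \notin V_\mathfrak{p}$ forces $-1 \notin F_{R,M}$. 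Finally, if $M = 1$ then $\tilde{M} = 1 + \mathfrak{q} O_\mathfrak{q}$, which does not contain $-1$ because ${\rm ch}(\mathfrak{q}) \neq 2$, so again $-1 \notin F_{R,M}$. The second defining condition, vanishing of $\bar{r}_{\{v\}}^\emptyset(\vartheta_{R,M})$ for $v \in V$, is immediate from Proposition~\ref{prop:vanish_of_vartheta}, since $V \subseteq S \setminus R$.

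The identity $\lambda(m\mathcal{L}_{R,M}(\vartheta_{R,M})) = m\theta$ follows from Proposition~\ref{prop:L_and_theta} on multiplying by $m$. The substantive point concealed in this third condition is that $m\mathcal{L}_{R,M}$ must actually land in $\mathcal{R}$ rather than merely in $\mathbb{Q} \otimes \mathcal{R}$---equivalently, the constant terms of the power series produced by $m\mathcal{L}_{R,M}$ must be integers. Inspecting the definition of $\mathcal{L}_{R,M}$, these constant terms are Solomon-Hu pairings $\langle\!\langle \mathcal{L}_\infty(b), \Phi_{a,x} \rangle\!\rangle$, whose integrality is governed by the local structure at $\mathfrak{q}$. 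By construction the component of $\Phi_{a,x}$ at $\mathfrak{q}$ is a translate of $\sum_{x \in M} h_x$: when $M = \mathbb{F}_q^\times$ this sum lies in $S_1$, so Lemma~\ref{lem:strongIntegrality} gives full integrality once $q \geq [F:\mathbb{Q}] + 2$, justifying the value $m = 1$ in that case; otherwise one only has membership in $S_0$, and Lemma~\ref{lem:weakIntegrality} yields values in $\frac{1}{q^{[F:\mathbb{Q}]}}\mathbb{Z}$, for which $m = {\rm ch}(\mathfrak{q})^{[F:\mathbb{Q}]}$ clears denominators.

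The proof is essentially a collation and presents no real obstacle---every ingredient has already been developed. The only care needed is in matching each clause of the hypothesis to the right alternative in Proposition~\ref{prop:homology_vanish}, and in tracking which of the two integrality lemmas justifies each of the two values of $m$.
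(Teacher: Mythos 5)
Your argument is correct and mirrors the paper's own proof: both establish the integrality $m\mathcal{L}_{R,M}(W)(\mathcal{B}_{R,M}(W))\subset\mathcal{R}(W)$ by reducing to integrality of the constant terms, invoking Lemma~\ref{lem:strongIntegrality} for the case $m=1$ and Lemma~\ref{lem:weakIntegrality} for $m={\rm ch}(\mathfrak{q})^{[F:\mathbb{Q}]}$, and then cite Propositions~\ref{prop:homology_vanish}, \ref{prop:vanish_of_vartheta} and~\ref{prop:L_and_theta} for the three defining conditions of a Shintani datum. The paper states the appeal to Proposition~\ref{prop:homology_vanish} without comment, while you spell out how each of the three hypothesis clauses ($M=1$, $S_\infty\cap V\neq S_\infty$, or $R\neq\emptyset$) supplies one of that proposition's alternatives, but the underlying argument is the same.
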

\begin{proof}
First, we need to prove that
\begin{equation}
m\mathcal{L}_{R,M}(W)(\mathcal{B}_{R,M}(W))\subset\mathcal{R}(W)\ \ \ (W\subset V,\ W\neq S).\label{eq:f1}
\end{equation}
Let $a\in\mathcal{A}_{R,M}(W\cap S_{f})$, $b=(b_{g})_{g}\in\prod_{g\in N_{S_{\infty}\setminus W_{\infty}}}Z(\mathcal{U}_{g})$
and $c\in\mathbb{Z}[N^{S}]$. Since the constant term of $m\mathcal{L}_{R,M}(W)(a\otimes b\otimes c)$
is given by
\[
c\sum_{g\in N_{S_{\infty}\setminus W_{\infty}}}\sum_{x\in N_{S_{f}\setminus W_{f}}}m\langle\!\langle\mathcal{L}_{\infty}(b_{g}),\Phi_{a,x}\rangle\!\rangle[x],
\]
it is enough to prove that
\begin{equation}
m\langle\!\langle\mathcal{L}_{\infty}(b_{g}),\Phi_{a,x}\rangle\!\rangle\in\mathbb{Z}.\label{eq:f2}
\end{equation}
The claim (\ref{eq:f2}) follows from Lemma \ref{lem:strongIntegrality}
($m=1$) and Lemma \ref{lem:weakIntegrality} ($m\neq1$). Hence (\ref{eq:f1})
is proved. Therefore $m\mathcal{L}_{R,M}:\mathcal{B}_{R,M}\to\mathcal{R}$
is well-defined. The other conditions are followed from Proposition
\ref{prop:homology_vanish}, \ref{prop:vanish_of_vartheta} and \ref{prop:L_and_theta}.\end{proof}
\begin{defn}
For $\mathfrak{q}\notin S$ such that ${\rm ch}(\mathfrak{q})\geq[F:\mathbb{Q}]+2$,
we define Shintani data ${\rm Sh}^{v_{0}}$ and ${\rm Sh}^{v_{0},\diamond}$
on $S\setminus\{v_{0}\}$ by 
\begin{eqnarray*}
{\rm Sh}^{v_{0}} & = & {\rm Sh}_{S_{f}\cap\{v_{0}\},\mathbb{F}_{q}^{\times},S\setminus\{v_{0}\}}\\
{\rm Sh}^{v_{0},\diamond} & = & {\rm Sh}_{S_{f}\cap\{v_{0}\},\{1\},S\setminus\{v_{0}\}.}
\end{eqnarray*}
For $\mathfrak{q}\notin S$ such that ${\rm ch}(\mathfrak{q})\neq2$,
we define a Shintani datum ${\rm Sh}^{\diamond}$ on $S$ by
\[
{\rm Sh}^{\diamond}={\rm Sh}_{\emptyset,1,S}.
\]

\end{defn}
From Proposition \ref{prop:i_rmrm_theta} and relation (\ref{eq:i_rmrm_L}),
the natural transformations $i_{S_{f}\cap\{v_{0}\},1}^{S_{f}\cap\{v_{0}\},\mathbb{F}_{q}^{\times}}$
and $i_{S_{f}\cap\{v_{0}\},1}^{\emptyset,1}$ give morphisms ${\rm Sh}^{v_{0}}\to{\rm Sh}^{v_{0},\diamond}$
and ${\rm Sh}^{\diamond}\to{\rm Sh}^{v_{0},\diamond}$ respectively.

\section{Proof of the main theorems\label{sec:proof}}

We put
\begin{align*}
X & :=\{\mathfrak{q}\notin S,{\rm ch}(\mathfrak{q})\geq[F:\mathbb{Q}]+2\}\\
Y & :=\{\mathfrak{q}\notin S,{\rm ch}(\mathfrak{q})\neq2\}.
\end{align*}
In Section \ref{sec:constSh}, we constructed a Shintani datum ${\rm Sh}^{v_{0}}$
for $\mathfrak{q}\in X$ and Shintani data ${\rm Sh}^{\diamond}$,
${\rm Sh}^{v_{0},\diamond}$ for $\mathfrak{q}\in Y$. To avoid confusion,
we write ${\rm Sh}_{(\mathfrak{q})}^{v_{0}}$, ${\rm Sh}_{(\mathfrak{q})}^{\diamond}$
and ${\rm Sh}_{(\mathfrak{q})}^{v_{0},\diamond}$ for these Shintani
data. For $v\in S$, put $J_{v}:=\ker(\mathcal{R}(\emptyset)\to\mathcal{R}(\{v\}))$.
\begin{defn}
For a Shintani datum on $W\subsetneq S$, we define 
\[
Q^{N}({\rm Sh})\in(\prod_{v\in W}I_{v})/(I_{F^{\times}}\prod_{v\in W}I_{v})
\]
 as follows. The homomorphism
\[
\mathcal{R}(\emptyset)\to\mathbb{Z}[N_{F}]\ ;\ P(s_{v_{0}},\dots,s_{v_{r}})\otimes[c]\mapsto P(0,\dots,0)[c]
\]
naturally induces the homomorphism
\[
(\prod_{v\in W}J_{v})/(I_{F^{\times}}\prod_{v\in W}J_{v})\to(\prod_{v\in W}I_{v})/(I_{F^{\times}}\prod_{v\in W}I_{v}).
\]
Then $Q^{N}({\rm Sh})$ the image of $Q({\rm Sh})$ under this homomorphism.
\end{defn}
In this section, we put
\[
V:=S\setminus\{v_{0}\}.
\]

\begin{defn}
We define $\hat{\Theta}_{S,\mathfrak{q},K}$ by
\[
\hat{\Theta}_{S,\mathfrak{q},K}:=Q^{N}({\rm Sh}_{(\mathfrak{q})}^{v_{0}}).
\]
\end{defn}
\begin{lem}
\label{lem:equiv_conditions_T}Let $T$ be a finite set of places
of $F$ such that $S\cap T=\emptyset$. The conditions 
\[
\ker(\mu_{K}\to\prod_{\mathfrak{p}\in T_{_{K}}}(O_{K}/\mathfrak{p})^{\times})=\{1\}
\]
and
\[
\delta_{T}\in{\rm Ann}_{\mathbb{Z}[G]}(\mu_{K})
\]
are equivalent.\end{lem}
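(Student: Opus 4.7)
The plan is to show that both conditions reduce to the same combinatorial criterion on $T$: for every rational prime $p$ dividing $|\mu_K|$, the set $T$ contains some $\mathfrak{p}$ with $\mathrm{ch}(\mathfrak{p})\neq p$. For the kernel side, I would observe that for $\mathfrak{P}\in T_K$ of residue characteristic $p$, the map $\mu_K\to(\mathcal{O}_K/\mathfrak{P})^\times$ is injective on roots of unity of order prime to $p$ (since $X^n-1$ has distinct reductions when $(n,p)=1$) and kills all of $\mu_{K,p}$ (since $X^{p^k}-1=(X-1)^{p^k}$ in characteristic $p$). Its kernel is therefore exactly $\mu_{K,p}$, so the total kernel is $\bigcap_{\mathfrak{p}\in T}\mu_{K,\mathrm{ch}(\mathfrak{p})}$; using $\mu_{K,p}\cap\mu_{K,p'}=\{1\}$ for $p\neq p'$ and handling $T=\emptyset$ separately, this vanishes if and only if the criterion holds.

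For the annihilator side, I would first compute $\delta_\mathfrak{p}$ on each $p$-primary component of $\mu_K$. Using the Frobenius congruence $\sigma_\mathfrak{p}(x)\equiv x^{N(\mathfrak{p})}\pmod{\mathfrak{P}}$, valid since $\mathfrak{p}\notin S$ is unramified in $K/F$, together with the injectivity of reduction on prime-to-$p$ torsion, one gets $\sigma_\mathfrak{p}(\zeta)=\zeta^{N(\mathfrak{p})}$ exactly for $\zeta$ of order prime to $p=\mathrm{ch}(\mathfrak{p})$, whence $\delta_\mathfrak{p}(\zeta)=\zeta\cdot\sigma_\mathfrak{p}^{-1}(\zeta)^{-N(\mathfrak{p})}=1$. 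For $\zeta\in\mu_{K,p}$, the classical fact that every non-trivial subextension of $F_\mathfrak{p}(\mu_{p^\infty})/F_\mathfrak{p}$ is totally ramified, combined with $F_\mathfrak{p}(\zeta)\subseteq K_\mathfrak{P}$ being unramified over $F_\mathfrak{p}$, forces $\zeta\in F_\mathfrak{p}$; hence $\sigma_\mathfrak{p}(\zeta)=\zeta$ and $\delta_\mathfrak{p}(\zeta)=\zeta^{1-N(\mathfrak{p})}$.

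Finally, using commutativity of $\mathbb{Z}[G]$, I would apply the $\delta_\mathfrak{p}$'s componentwise: on $\mu_{K,p}$, the operator $\delta_T$ acts trivially as soon as some $\mathfrak{p}\in T$ has $\mathrm{ch}(\mathfrak{p})\neq p$, and otherwise raises to the exponent $\prod_{\mathfrak{p}\in T}(1-N(\mathfrak{p}))$. In the latter case this exponent is $\equiv 1\pmod p$ since $p\mid N(\mathfrak{p})$ for each factor, hence coprime to the order of any element of $\mu_{K,p}$, so $\delta_T$ acts as an automorphism on $\mu_{K,p}$. Summing over primary components, $\delta_T\in\mathrm{Ann}_{\mathbb{Z}[G]}(\mu_K)$ precisely when the combinatorial criterion holds, matching the kernel condition and establishing the equivalence. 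The main obstacle is the local ramification argument placing $\zeta\in F_\mathfrak{p}$ for $p$-power $\zeta$; everything else is a direct computation.
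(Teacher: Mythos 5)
Your reduction of both conditions to the same combinatorial criterion --- for every prime $p$ dividing $\#\mu_K$, some $\mathfrak{p}\in T$ has ${\rm ch}(\mathfrak{p})\neq p$ --- is precisely the reduction the paper makes, and your analysis of the kernel side is correct. However, the annihilator side contains a genuine error. The asserted ``classical fact'' that every nontrivial subextension of $F_\mathfrak{p}(\mu_{p^\infty})/F_\mathfrak{p}$ is totally ramified holds for $F_\mathfrak{p}=\mathbb{Q}_p$ but is false for a general local field of residue characteristic $p$. For instance, take $p=3$ and $F_\mathfrak{p}=\mathbb{Q}_3(\sqrt{3})$: then $F_\mathfrak{p}(\mu_3)=F_\mathfrak{p}(\sqrt{-3})=F_\mathfrak{p}(\sqrt{-1})$ is the \emph{unramified} quadratic extension of $F_\mathfrak{p}$, yet $\mu_3\not\subset F_\mathfrak{p}$ (since $-1$ is not a square in the residue field $\mathbb{F}_3$). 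This arises globally within the paper's hypotheses from $F=\mathbb{Q}(\sqrt{3})$, $K=F(\sqrt{-1})$, $\mathfrak{p}=(\sqrt{3})$: here $K/F$ is abelian and unramified at $\mathfrak{p}$, so one may take $\mathfrak{p}\notin S$, and the Frobenius at $\mathfrak{p}$ sends $\zeta_3\mapsto\zeta_3^{-1}\neq\zeta_3$. Thus your intermediate claim $\sigma_\mathfrak{p}(\zeta)=\zeta$ for $\zeta\in\mu_{K,p}$, and the resulting exponent $\prod_{\mathfrak{p}\in T}(1-N(\mathfrak{p}))$, are both wrong.

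Fortunately the error is harmless and the step you flagged as the ``main obstacle'' is not actually needed. For $\mathfrak{p}$ with ${\rm ch}(\mathfrak{p})=p$, write the $G$-action on the cyclic group $\mu_{K,p}\cong\mathbb{Z}/p^m\mathbb{Z}$ through a character $\chi\colon G\to(\mathbb{Z}/p^m\mathbb{Z})^\times$. Then $\delta_\mathfrak{p}=1-N(\mathfrak{p})\sigma_\mathfrak{p}^{-1}$ acts on $\mu_{K,p}$ by raising to the exponent $1-N(\mathfrak{p})\chi(\sigma_\mathfrak{p})^{-1}$. Since $p\mid N(\mathfrak{p})$, this integer is $\equiv 1\pmod{p}$ and hence a unit in $\mathbb{Z}/p^m\mathbb{Z}$, whatever $\chi(\sigma_\mathfrak{p})$ may be; so $\delta_\mathfrak{p}$ is already an automorphism of $\mu_{K,p}$, with no local-ramification input at all. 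With this repair in place your argument goes through and spells out exactly what the paper's one-line proof leaves implicit.
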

\begin{proof}
Put $m=\#\mu_{K}$. The both conditions are equivalent to the condition
\[
\forall p\mid m,\exists\mathfrak{p}\in T,{\rm ch}(\mathfrak{p})\neq p.
\]
\end{proof}
\begin{lem}
\label{lem:JAnn}Let $J\subset\mathbb{Z}[G]$ be the ideal spanned
by 
\[
\{1-N(\mathfrak{q})\sigma_{\mathfrak{q}}^{-1}\mid\mathfrak{q}\in X\}.
\]
Then we have $J={\rm Ann}_{\mathbb{Z}[G]}(\mu_{K})$.\end{lem}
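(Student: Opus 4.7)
The plan is to establish both inclusions of $J = {\rm Ann}_{\mathbb{Z}[G]}(\mu_K)$. Set $n := \#\mu_K$. As a finite subgroup of $K^{\times}$, the group $\mu_K$ is cyclic, so the $G$-action corresponds to a character $\chi : G \to (\mathbb{Z}/n\mathbb{Z})^{\times}$ with $\chi(1)=1$; hence the image of $\mathbb{Z}[G]$ in ${\rm End}(\mu_K) = \mathbb{Z}/n\mathbb{Z}$ contains $1$ and so equals $\mathbb{Z}/n\mathbb{Z}$, giving $\mathbb{Z}[G]/{\rm Ann}_{\mathbb{Z}[G]}(\mu_K) \cong \mathbb{Z}/n\mathbb{Z}$.

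For $J \subseteq {\rm Ann}_{\mathbb{Z}[G]}(\mu_K)$, I first show that for every $\mathfrak{q} \in X$ the residue characteristic $p = {\rm ch}(\mathfrak{q})$ does not divide $n$. If on the contrary $\zeta_p \in \mu_K$, then $F(\zeta_p) \subseteq K$ is unramified at $\mathfrak{q}$, so $F_{\mathfrak{q}}(\zeta_p)/F_{\mathfrak{q}}$ is unramified; but since $\mathbb{Q}_p(\zeta_p)/\mathbb{Q}_p$ is totally ramified of degree $p-1$, this local extension is also totally ramified, so one would need $\zeta_p \in F_{\mathfrak{q}}$, forcing $p-1 \le e(F_{\mathfrak{q}}/\mathbb{Q}_p) \le [F:\mathbb{Q}]$, contradicting $p \ge [F:\mathbb{Q}]+2$. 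With $p \nmid n$ in hand, the standard relation $\sigma_{\mathfrak{q}}(\zeta) = \zeta^{N(\mathfrak{q})}$ on $\mu_K$ yields $(1 - N(\mathfrak{q})\sigma_{\mathfrak{q}}^{-1}) \cdot \zeta = 1$ for all $\zeta \in \mu_K$.

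For the reverse inclusion, I analyse the quotient $\mathbb{Z}[G]/J$. Chebotarev's density theorem, applied to $K/F$ away from $S$ and the finitely many primes of residue characteristic $\le [F:\mathbb{Q}]+1$, realises every $g \in G$ as $\sigma_{\mathfrak{q}}$ for some $\mathfrak{q} \in X$; the relation $\sigma_{\mathfrak{q}} \equiv N(\mathfrak{q}) \pmod{J}$ then forces every element of $G$ to be congruent to an integer modulo $J$, so $\mathbb{Z}[G]/J \cong \mathbb{Z}/m\mathbb{Z}$ for some $m \ge 0$. To see $n \in J$, fix a prime $p$ and set $k := v_p(n)$, so $\zeta_{p^k} \in K$ while $\zeta_{p^{k+1}} \notin K$. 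Since ${\rm Gal}(K(\zeta_{p^{k+1}})/K)$ is therefore nontrivial, Chebotarev supplies $\mathfrak{q}_p \in X$ with $\sigma_{\mathfrak{q}_p}|_K = 1$ and $\sigma_{\mathfrak{q}_p}$ nontrivial on $\zeta_{p^{k+1}}$; this gives $N(\mathfrak{q}_p) \equiv 1 \pmod{p^k}$ but $N(\mathfrak{q}_p) \not\equiv 1 \pmod{p^{k+1}}$, so $v_p(N(\mathfrak{q}_p)-1) = v_p(n)$, and $1 - N(\mathfrak{q}_p) \in J \cap \mathbb{Z}$. The gcd $d$ of the integers $N(\mathfrak{q}_p) - 1$ as $p$ varies lies in $J \cap \mathbb{Z}$ and satisfies $v_p(d) \le v_p(n)$ for every $p$, so $d \mid n$ and $n \in d\mathbb{Z} \subseteq J$, giving $m \mid n$.

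Combining, the canonical surjection $\mathbb{Z}[G]/J \cong \mathbb{Z}/m\mathbb{Z} \twoheadrightarrow \mathbb{Z}[G]/{\rm Ann}_{\mathbb{Z}[G]}(\mu_K) \cong \mathbb{Z}/n\mathbb{Z}$ together with $m \mid n$ forces $m = n$, and hence $J = {\rm Ann}_{\mathbb{Z}[G]}(\mu_K)$. The main obstacle is the local ramification argument forcing $p \nmid n$ for $\mathfrak{q} \in X$; once that is in hand, the Chebotarev computations in the second half are a routine bookkeeping exercise with the auxiliary extensions $K(\zeta_{p^{k+1}})/F$.
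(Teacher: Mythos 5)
Your proof is correct and follows essentially the same strategy as the paper: apply Chebotarev to the auxiliary extensions $K(\zeta_\bullet)/F$ to pin down the precise power of each prime appearing in $J\cap\mathbb{Z}$, then identify $\mathbb{Z}[G]/J$ with $\mathbb{Z}/n\mathbb{Z}$ via the relations $\sigma_\mathfrak{q}\equiv N(\mathfrak{q})\bmod J$. The paper packages the second step as a single contradiction: it sets $n'=\gcd\{1-N(\mathfrak{q}):\mathfrak{q}\in X,\ \sigma_\mathfrak{q}|_K=1\}$, assumes $n'>\#\mu_K$, and applies Chebotarev once to $K(\zeta_{n'})/F$ to produce a prime violating the gcd; you instead work prime by prime with $K(\zeta_{p^{k+1}})/F$ and assemble the gcd afterwards. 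These are mirror images of the same idea, with no real difference in substance. One thing your write-up does better is to make explicit the local ramification argument showing $p={\rm ch}(\mathfrak{q})\nmid\#\mu_K$ for $\mathfrak{q}\in X$; the paper tacitly uses this both in asserting ``it is obvious that $m\mid n$'' and in treating $\sigma_{\mathfrak{q}}$ as acting on $\mu_K$ by $\zeta\mapsto\zeta^{N(\mathfrak{q})}$, but never spells it out. Including that step is a worthwhile improvement in rigor, even though it does not change the route.
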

\begin{proof}
Put $m=\#\mu_{K}$ and $n=\gcd\{1-N(\mathfrak{q})\mid\mathfrak{q}\notin X,\sigma_{\mathfrak{q}}|_{K}=1\}$.
Let us prove that $m=n$.$ $ It is obvious that $m\mid n$. Assume
that $n>m$. Then $K(\zeta_{n})/K$ is not a trivial extension. Let
$\sigma\in{\rm Gal}(K(\zeta_{n})/K)\subset{\rm Gal}(K(\zeta_{n})/F)$
be a non trivial element. From Chevotarev's density theorem, there
exists $\mathfrak{q}\in X$ such that
\[
\sigma_{\mathfrak{q}}|_{{\rm Gal}(K(\zeta_{n})/F)}=\sigma.
\]
Then $\sigma_{\mathfrak{q}}|_{K}=\sigma|_{K}={\rm id}$. From the
norm functoriality of the reciprocity map, we have
\[
\sigma_{N(\mathfrak{q})}\neq1\in{\rm Gal}(\mathbb{Q}(\zeta_{n})/\mathbb{Q}).
\]
Therefore, $n\nmid1-N(\mathfrak{q})$. This contradicts to the assumption
that $n=\gcd\{1-N(\mathfrak{q})\mid\mathfrak{q}\in X,\sigma_{\mathfrak{q}}=1\}$.
Therefore the assumption $n>m$ must be false. Thus we have $n=m$
and $m\in J$. From Chevotarev's density theorem, for all $\sigma\in G$,
there exists $\mathfrak{q}\in X$ such that $\sigma_{\mathfrak{q}}=\sigma$.
Thus we have $J={\rm Ann}_{\mathbb{Z}[G]}(\mu_{K})$.\end{proof}
\begin{lem}
\label{lem:vanishq}For $\mathfrak{q}\in X$, we have
\[
(1-N(\mathfrak{q})\sigma_{\mathfrak{q}}^{-1})\Theta_{S,K}\in\prod_{v\in V}I_{G_{v}}.
\]
\end{lem}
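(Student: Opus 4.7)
The plan is to rewrite $(1-N(\mathfrak{q})\sigma_\mathfrak{q}^{-1})\Theta_{S,K}$ as $\Theta_{S,\{\mathfrak{q}\},K}$ (using $\delta_{\{\mathfrak{q}\}}=1-N(\mathfrak{q})\sigma_\mathfrak{q}^{-1}$ and the factorisation at the end of Section~\ref{sec:pre}), and then realise the right-hand side as the reciprocity-image of the Shintani invariant $Q^N({\rm Sh}^{v_0}_{(\mathfrak{q})})$ constructed in Section~\ref{sec:constSh}.

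The crucial input I will invoke is that the Shintani datum ${\rm Sh}^{v_0}_{(\mathfrak{q})}={\rm Sh}_{S_f\cap\{v_0\},\mathbb{F}_q^\times,V}$ has integrality $m=1$: since $M=\mathbb{F}_q^\times$ and $\mathfrak{q}\in X$ forces ${\rm ch}(\mathfrak{q})\geq [F:\mathbb{Q}]+2$, the stronger Solomon-Hu integrality of Lemma~\ref{lem:strongIntegrality} (rather than merely Lemma~\ref{lem:weakIntegrality}) applies, and the lemma at the start of Subsection~6.6 concludes that $m=1$. Hence $Q^N({\rm Sh}^{v_0}_{(\mathfrak{q})})$ is a well-defined class in $(\prod_{v\in V}I_v)/(I_{F^\times}\prod_{v\in V}I_v)$, and property~(i) of the lemma immediately following Definition~\ref{Def:Q} (specialised to $m=1$) yields $\lambda(Q({\rm Sh}^{v_0}_{(\mathfrak{q})}))=\Theta_{S,\{\mathfrak{q}\},K}(s)$.

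I will then push this class through the reciprocity map ${\rm rec}\colon \mathbb{Z}[N_F]\to\mathbb{Z}[G]$. Since ${\rm rec}$ carries $N_v$ into $G_v$ (hence $I_v$ into $I_{G_v}$) and kills $I_{F^\times}$ because the composite $F^\times\to N_F\xrightarrow{{\rm rec}}G$ is trivial by global reciprocity, it induces a well-defined homomorphism $(\prod_{v\in V}I_v)/(I_{F^\times}\prod_{v\in V}I_v)\to\prod_{v\in V}I_{G_v}$. Comparing the two equivalent ways of specialising $\lambda(Q({\rm Sh}^{v_0}_{(\mathfrak{q})}))$ at the origin---namely, setting each $s_v=0$ on the $\mathcal{R}(\emptyset)$ side and then applying ${\rm rec}$, versus applying $\lambda$ and then specialising $s\to 0$, an identification enabled by the product formula---gives ${\rm rec}(Q^N({\rm Sh}^{v_0}_{(\mathfrak{q})}))=\Theta_{S,\{\mathfrak{q}\},K}$ as an element of $\prod_{v\in V}I_{G_v}$, which is the desired statement.

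The main obstacle has in fact already been cleared elsewhere in the paper: the nontrivial work lies in the construction of ${\rm Sh}^{v_0}_{(\mathfrak{q})}$ in Section~\ref{sec:constSh}, in the $\mathbb{Z}$-valued integrality of Lemma~\ref{lem:strongIntegrality} that permits $m=1$, and in the formal machinery of $Q$ and $\lambda$ developed in Section~\ref{sec:Shintani-datum}. Against this background the lemma is essentially a bookkeeping statement---the first structural payoff of having an integrality-$1$ Shintani datum on $V$ at one's disposal.
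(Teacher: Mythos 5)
Your proposal is correct and takes essentially the same route as the paper: the paper's proof of Lemma~\ref{lem:vanishq} is the one-line observation that $\hat{\Theta}_{S,\mathfrak{q},K}:=Q^N({\rm Sh}^{v_0}_{(\mathfrak{q})})$ lies in $(\prod_{v\in V}I_v)/(I_{F^\times}\prod_{v\in V}I_v)$ and maps under ${\rm rec}$ to $(1-N(\mathfrak{q})\sigma_\mathfrak{q}^{-1})\Theta_{S,K}$, which is exactly what you spell out. Your additional detail (why $m=1$ via Lemma~\ref{lem:strongIntegrality}, why ${\rm rec}$ kills $I_{F^\times}$ and carries $I_v$ into $I_{G_v}$) merely makes explicit the content the paper leaves implicit.
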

\begin{proof}
It follows from
\[
{\rm rec}(\hat{\Theta}_{S,\mathfrak{q},K})=(1-N(\mathfrak{q})\sigma_{\mathfrak{q}}^{-1})\Theta_{S,K}
\]
and
\[
\hat{\Theta}_{S,\mathfrak{q},K}\in\prod_{v\in V}I_{v}.
\]
\end{proof}
\begin{lem}
[Vanishing order part of Gross conjecture]\label{lem:vanishT}Let
$T$ be a finite set of places of $F$ such that $S\cap T=\emptyset$.
If $\ker(\mu_{K}\to\prod_{\mathfrak{p}\in T_{_{K}}}(O_{K}/\mathfrak{p})^{\times})=\{1\}$
then we have
\[
\Theta_{S,T,K}\in\prod_{v\in V}I_{G_{v}}.
\]
\end{lem}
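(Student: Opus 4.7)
The plan is to combine the factorization $\Theta_{S,T,K}=\delta_T\Theta_{S,K}$ with a decomposition of $\delta_T$ into a $\mathbb{Z}[G]$-linear combination of the elementary annihilators $1-N(\mathfrak{q})\sigma_{\mathfrak{q}}^{-1}$ for $\mathfrak{q}\in X$, and then to apply Lemma \ref{lem:vanishq} termwise. All the real analytic and Shintani-theoretic content has already been absorbed into Lemma \ref{lem:vanishq}, so what remains is purely a formal chaining of the three preceding lemmas.

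Concretely, I first invoke Lemma \ref{lem:equiv_conditions_T} to restate the hypothesis on $T$ as the algebraic condition $\delta_T\in\mathrm{Ann}_{\mathbb{Z}[G]}(\mu_K)$. Next, Lemma \ref{lem:JAnn} identifies this annihilator with the ideal $J$ generated by $\{1-N(\mathfrak{q})\sigma_{\mathfrak{q}}^{-1}\mid\mathfrak{q}\in X\}$, so there exist finitely many $\mathfrak{q}_1,\dots,\mathfrak{q}_N\in X$ and elements $a_1,\dots,a_N\in\mathbb{Z}[G]$ with
$$\delta_T=\sum_{i=1}^N a_i\bigl(1-N(\mathfrak{q}_i)\sigma_{\mathfrak{q}_i}^{-1}\bigr).$$
Multiplying by $\Theta_{S,K}$ gives
$$\Theta_{S,T,K}=\delta_T\Theta_{S,K}=\sum_{i=1}^N a_i\bigl(1-N(\mathfrak{q}_i)\sigma_{\mathfrak{q}_i}^{-1}\bigr)\Theta_{S,K}.$$

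By Lemma \ref{lem:vanishq}, each factor $\bigl(1-N(\mathfrak{q}_i)\sigma_{\mathfrak{q}_i}^{-1}\bigr)\Theta_{S,K}$ belongs to $\prod_{v\in V}I_{G_v}$. Since $\prod_{v\in V}I_{G_v}$ is an ideal of the commutative ring $\mathbb{Z}[G]$, it is stable under multiplication by each $a_i$, so the full sum still lies in $\prod_{v\in V}I_{G_v}$, which is the desired conclusion. No step is a genuine obstacle; the only subtlety worth double-checking is that the factorization of $\delta_T$ above can indeed be lifted to an equality in $\mathbb{Z}[G]$ rather than after tensoring with $\mathbb{Q}$, but this is immediate from Lemma \ref{lem:JAnn}, which is stated as an equality of ideals in $\mathbb{Z}[G]$ itself.
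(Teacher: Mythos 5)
Your proof is correct and follows exactly the same route as the paper: invoke Lemma \ref{lem:equiv_conditions_T} to get $\delta_T\in\mathrm{Ann}_{\mathbb{Z}[G]}(\mu_K)$, use Lemma \ref{lem:JAnn} to write $\delta_T$ in terms of the generators $1-N(\mathfrak{q})\sigma_{\mathfrak{q}}^{-1}$, and apply Lemma \ref{lem:vanishq} termwise together with the fact that $\prod_{v\in V}I_{G_v}$ is an ideal. You have merely unpacked the paper's one-line statement $J\Theta_{S,K}\subset\prod_{v\in V}I_{G_v}$ into an explicit $\mathbb{Z}[G]$-linear combination, which is a harmless expansion of the same argument.
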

\begin{proof}
From Lemma \ref{lem:JAnn} and \ref{lem:vanishq}, we have
\[
J\Theta_{S,K}\subset\prod_{v\in V}I_{G_{v}}.
\]
Since $\delta_{T}\in J$ from Lemma \ref{lem:equiv_conditions_T},
we have
\[
\Theta_{S,T,K}=\delta_{T}\Theta_{S,K}\in\prod_{v\in V}I_{G_{v}}.
\]
\end{proof}
\begin{lem}
\label{lem:2multiply}We have 
\begin{eqnarray*}
2\prod_{v\in V}I_{v} & \subset & (\sum_{v\in V}I_{v})\prod_{v\in V}I_{v}\\
2\prod_{v\in V}I_{G_{v}} & \subset & (\sum_{v\in V}I_{G_{v}})\prod_{v\in V}I_{G_{v}}.
\end{eqnarray*}
\end{lem}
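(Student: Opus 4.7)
The plan is to exploit the standing hypothesis $F \neq \mathbb{Q}$ to find a real place inside $V$, and then use the $\mathbb{Z}/2$-torsion there to absorb the factor of $2$.

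First I would observe that since $F \neq \mathbb{Q}$ is totally real, $[F:\mathbb{Q}] \geq 2$, so $S$ contains at least two real places. In particular $V = S \setminus \{v_{0}\}$ contains at least one real place $v_{\infty}$. For such a place $F_{v_{\infty}} \cong \mathbb{R}$ and $J_{v_{\infty}} = \mathbb{R}_{>0}$, so $N_{v_{\infty}} = F_{v_{\infty}}^{\times}/J_{v_{\infty}} \cong \mathbb{Z}/2\mathbb{Z}$, generated by the class $\sigma$ of $-1$. The natural map $N_{v_{\infty}} \to N_{F}$ is injective (a local factor can only map to the identity if it is in $J_{v_{\infty}}$), so $\sigma$ still has order $2$ in $N_{F}$, and the ideal $I_{v_{\infty}} \subset \mathbb{Z}[N_{F}]$ is principal, generated by $[\sigma]-1$. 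The identity
\[
([\sigma]-1)^{2} \;=\; [\sigma]^{2} - 2[\sigma] + 1 \;=\; -2([\sigma]-1)
\]
then yields $I_{v_{\infty}}^{2} = 2\,I_{v_{\infty}}$ as ideals of $\mathbb{Z}[N_{F}]$.

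From here the first inclusion is immediate: multiplying both sides by $\prod_{v \in V \setminus \{v_{\infty}\}} I_{v}$ gives
\[
2\prod_{v \in V} I_{v} \;=\; I_{v_{\infty}}^{2} \prod_{v \in V \setminus \{v_{\infty}\}} I_{v} \;\subset\; I_{v_{\infty}}\prod_{v \in V} I_{v} \;\subset\; \Bigl(\sum_{v \in V} I_{v}\Bigr) \prod_{v \in V} I_{v}.
\]

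For the second inclusion I would argue identically, using the decomposition group $G_{v_{\infty}}$ at the real place $v_{\infty}$, which has order $1$ or $2$. If $|G_{v_{\infty}}| = 1$ then $I_{G_{v_{\infty}}} = 0$ and both sides of the claim vanish. If $|G_{v_{\infty}}| = 2$, say $G_{v_{\infty}} = \langle \tau\rangle$ with $\tau^{2}=1$, then the same algebraic identity $(\tau-1)^{2} = -2(\tau-1)$ in $\mathbb{Z}[G]$ yields $I_{G_{v_{\infty}}}^{2} = 2\,I_{G_{v_{\infty}}}$, and the same multiplication trick finishes the argument. There is no real obstacle in the proof; the only substantive point is recognizing that the standing assumption $F \neq \mathbb{Q}$ forces a real place into $V$ whose local norm group $N_{v_{\infty}}$ (resp.\ decomposition group $G_{v_{\infty}}$) contributes the needed $\mathbb{Z}/2$-torsion.
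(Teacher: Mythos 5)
Your proof is correct and takes essentially the same route as the paper: use $F \neq \mathbb{Q}$ to find a real place $v'\in V$, note that $N_{v'}$ (resp.\ $G_{v'}$) has order at most $2$, and absorb the factor of $2$ via the identity $([x]-1)^2 = -2([x]-1)$ for an involution $x$, whence $2I_{v'}\subset I_{v'}^2$. The only cosmetic difference is that the paper phrases the identity as $2([1]-[x])=([1]-[x])^2$ and does not separate the cases $|G_{v'}|=1,2$; your version spells those out, which is harmless.
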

\begin{proof}
Since $F\neq\mathbb{Q}$, $S_{\infty}\setminus\{v_{0}\}\neq\emptyset$.
Let $v'$ be any place in $S_{\infty}\setminus\{v_{0}\}$. Then we
have $2I_{v'}\subset I_{v'}^{2}$ (resp. $2I_{G_{v'}}\subset I_{G_{v'}}^{2}$)
since 
\[
2([1]-[x])=([1]-[x])^{2}
\]
 for all $x\in N_{v'}$ (resp. $x\in G_{v'}$). Thus the lemma is
proved.
\end{proof}

\begin{lem}
\label{lem:vanish_2T}Let $T$ be a finite set of places of $F$ such
that $S\cap T=\emptyset$. If $\ker(\mu_{K}\to\prod_{\mathfrak{p}\in T_{_{K}}}(O_{K}/\mathfrak{p})^{\times})=\{1\}$
then we have
\[
2\Theta_{S,T,K}\in I_{G_{H}}\prod_{v\in V}I_{G_{v}}.
\]
\end{lem}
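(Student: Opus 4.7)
The plan is to chain together the three ingredients that precede this lemma in the text: Lemma~\ref{lem:vanishT}, which already gives $\Theta_{S,T,K}\in\prod_{v\in V}I_{G_v}$ under the present hypothesis on $T$; Lemma~\ref{lem:2multiply}, which converts the factor $2$ into an extra augmentation ideal $\sum_{v\in V}I_{G_v}$; and the defining property of $H$, which absorbs that extra factor into $I_{G_H}$.

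First, by Lemma~\ref{lem:vanishT} the hypothesis $\ker\bigl(\mu_K\to\prod_{\mathfrak{p}\in T_K}(O_K/\mathfrak{p})^\times\bigr)=\{1\}$ yields
\[
\Theta_{S,T,K}\in\prod_{v\in V}I_{G_v}.
\]
Applying the second inclusion of Lemma~\ref{lem:2multiply}, we obtain
\[
2\Theta_{S,T,K}\in 2\prod_{v\in V}I_{G_v}\subset\Bigl(\sum_{v\in V}I_{G_v}\Bigr)\prod_{v\in V}I_{G_v}.
\]

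The only remaining step is to verify that $\sum_{v\in V}I_{G_v}\subset I_{G_H}$. By the very definition of $H$, every place $v\in S$ splits completely in $H/F$, so the decomposition group of $v$ in $\mathrm{Gal}(H/F)$ is trivial; equivalently, $G_v\subset\mathrm{Gal}(K/H)=G_H$ for all $v\in S$. Since $I_{G_v}$ is the $\mathbb{Z}[G]$-ideal generated by $\{[1]-[g]\mid g\in G_v\}$ and $I_{G_H}$ is generated by $\{[1]-[g]\mid g\in G_H\}$, the inclusion $G_v\subset G_H$ gives $I_{G_v}\subset I_{G_H}$, and hence $\sum_{v\in V}I_{G_v}\subset I_{G_H}$. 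Combining,
\[
2\Theta_{S,T,K}\in I_{G_H}\prod_{v\in V}I_{G_v},
\]
which is the claim. There is no genuine obstacle here: the lemma is a formal bookkeeping consequence of the two preceding lemmas together with the splitting property built into the definition of $H$.
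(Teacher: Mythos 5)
Your proof is correct and takes exactly the route the paper does: Lemma~\ref{lem:vanishT} gives the membership $\Theta_{S,T,K}\in\prod_{v\in V}I_{G_v}$, Lemma~\ref{lem:2multiply} converts the factor $2$ into an extra factor in $\sum_{v\in V}I_{G_v}$, and the inclusion $I_{G_v}\subset I_{G_H}$ (from every $v\in S$ splitting completely in $H/F$) finishes. The paper leaves this last inclusion implicit, so your spelling it out is the only (cosmetic) difference.
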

\begin{proof}
The claim follows from Lemma \ref{lem:vanishT} and Lemma \ref{lem:2multiply}.
\end{proof}
Recall that $\mathcal{R}(\emptyset)=\mathcal{P}_{\emptyset}\otimes\mathbb{Z}[N_{F}]$.
For $v\in S$, define $g_{v}:F^{\times}\to\mathcal{R}(\emptyset)^{\times}$
by
\[
g_{v}(x)=\left|x\right|_{v}^{-s_{v}}\otimes[f_{v}(x)].
\]
For $v\in S$, put $J_{v}:=\ker(\mathcal{R}(\emptyset)\to\mathcal{R}(\{v\}))$.
\begin{lem}
\label{lem:detred}For $x_{1},\dots,x_{r}\in F^{\times}$, we have
\[
{\rm det}(-1+\prod_{s=j}^{r}g_{v_{s}}(x_{i}))_{i,j=1}^{r}\equiv{\rm det}(-1+g_{v_{j}}(x_{i}))_{i,j=1}^{r}\pmod{(J_{v_{1}}+\cdots+J_{v_{r}})J_{v_{1}}\cdots J_{v_{r}}}.
\]
\end{lem}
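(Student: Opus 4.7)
The plan is to reduce the LHS determinant to the RHS via a single column operation, and then control the resulting error by expanding the determinant via multilinearity.

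For the setup, I will write $G_{i,j} := g_{v_j}(x_i)$, $A_{i,j} := G_{i,j}-1$, and $H_{i,j} := \prod_{s=j}^r G_{i,s}$ (with $H_{i,r+1}:=1$). A preliminary observation is that $A_{i,j}\in J_{v_j}$: the ring map $\mathcal{R}(\emptyset)\to\mathcal{R}(\{v_j\})$ defining $J_{v_j}$ kills $s_{v_j}$ and projects $N_F\to N_F/N_{v_j}$, sending $|x_i|_{v_j}^{-s_{v_j}}\otimes[f_{v_j}(x_i)]$ to $1\otimes 1 = 1$ because $f_{v_j}(x_i)\in N_{v_j}$. In this notation the LHS entry is $B_{i,j}:=H_{i,j}-1$ and the RHS entry is $A_{i,j}$.

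Next I apply the column operations $C_j \leftarrow C_j - C_{j+1}$ for $j=1,\dots,r-1$ (performed in this order so that each $C_{j+1}$ is still original when used); this does not change the determinant. Using the telescoping identity $H_{i,j}-H_{i,j+1}=(G_{i,j}-1)H_{i,j+1}=A_{i,j}H_{i,j+1}$, the modified matrix $B'$ has entries $B'_{i,j}=A_{i,j}H_{i,j+1}$ for all $j$. Splitting $H_{i,j+1}=1+B_{i,j+1}$, I write $B'_{i,j}=A_{i,j}+E_{i,j}$ where $E_{i,j}:=A_{i,j}B_{i,j+1}$. Then $E_{i,r}=0$, and for $j<r$ we have $E_{i,j}\in J_{v_j}\cdot(J_{v_{j+1}}+\cdots+J_{v_r})$, since $B_{i,j+1}\in\sum_{k>j}J_{v_k}$.

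Finally, multilinearity gives $\det(B')=\det(A+E)=\det(A)+\sum_{\emptyset\neq I\subset\{1,\dots,r\}}\det(M_I)$, where $M_I$ is the matrix whose $i$-th row equals $E_{i,\cdot}$ for $i\in I$ and $A_{i,\cdot}$ for $i\notin I$. For each nonempty $I$ and each permutation $\sigma$, the summand $\prod_{i\in I}E_{i,\sigma(i)}\prod_{i\notin I}A_{i,\sigma(i)}$ lies in $\prod_{k=1}^r J_{v_k}\cdot\prod_{i\in I}(\sum_{k>\sigma(i)}J_{v_k})$: one factor of $J_{v_{\sigma(i)}}$ comes from each row (together forming $\prod_k J_{v_k}$ since $\sigma$ is a bijection), and each $E$-row contributes one additional factor from $\sum_k J_{v_k}$. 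Because $I$ is nonempty, this product lies in $(\sum_k J_{v_k})\prod_k J_{v_k}$, which gives the claimed congruence.

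The main obstacle I anticipate is that a direct expansion $B_{i,j}=\prod_{s\geq j}(1+A_{i,s})-1$ only places $B_{i,j}$ in $\sum_{k\geq j}J_{v_k}$, giving the baseline membership $\det(B)\in\prod_k J_{v_k}$ but supplying no obvious extra factor from $\sum_k J_{v_k}$. The column-operation step is the crucial device: it rewrites each entry as $A_{i,j}H_{i,j+1}$, exhibiting the deviation $B'_{i,j}-A_{i,j}$ as a product of two ideal-valued quantities, and this extra ideal factor is precisely what is needed to land inside $(\sum_k J_{v_k})\prod_k J_{v_k}$.
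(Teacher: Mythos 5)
Your proof is correct. Let me compare it with the paper's argument, which is genuinely different in structure.

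The paper proves the statement by induction on the number of places $k$. At the inductive step it expands the $k\times k$ determinant $a_k(y_1,\dots,y_k)$ along the last column: after a column operation $C_j\leftarrow C_j-C_k$ (for $j<k$) and factoring $g_{v_k}(y_{i'})$ out of the rows of the minor, the minor is recognized as $\prod_{i'\neq i}g_{v_k}(y_{i'})\cdot a_{k-1}(\dots)$, yielding
\[
a_k(y_1,\dots,y_k)=\sum_{i=1}^k(\pm)\, a_{k-1}(y_1,\dots,\hat{y}_i,\dots,y_k)\,(-1+g_{v_k}(y_i))\prod_{i'\neq i}g_{v_k}(y_{i'}).
\]
(The sign written in the paper, $(-1)^{i-1}$, should really be $(-1)^{i+k}$, but this has no effect modulo an ideal.) The inductive hypothesis replaces each $a_{k-1}$ by $b_{k-1}$ up to $(J_{v_1}+\cdots+J_{v_{k-1}})J_{v_1}\cdots J_{v_{k-1}}$, and the extra factors $-1+g_{v_k}(y_i)\in J_{v_k}$ and $1-\prod_{i'\neq i}g_{v_k}(y_{i'})\in J_{v_k}$ are used to push the error into the target ideal.

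Your approach avoids the induction entirely. You perform the telescoping column operations $C_j\leftarrow C_j-C_{j+1}$ once (in increasing $j$), exposing every modified entry as $A_{i,j}H_{i,j+1}=A_{i,j}+A_{i,j}B_{i,j+1}$, and then use multilinearity of the determinant in the rows to isolate the leading term $\det(A_{i,j})$ and show that every correction term carries the required extra factor in $\sum_k J_{v_k}$ because every $E_{i,j}=A_{i,j}B_{i,j+1}$ already sits in $J_{v_j}(\sum_{k>j}J_{v_k})$. All the supporting membership claims ($A_{i,j}\in J_{v_j}$, $B_{i,j+1}\in\sum_{k>j}J_{v_k}$, $E_{i,r}=0$) check out. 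The two proofs share the same underlying observation — that the deviation between the two kinds of entries factors as a product of two ideal elements — but your one-shot column operation followed by a global multilinearity expansion is a cleaner and more self-contained route than the paper's row/column/minor bookkeeping plus induction.
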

\begin{proof}
For $k\in\{0,1,\dots,r\}$ and $y_{1},\dots,y_{k}\in F^{\times}$,
we put
\begin{align*}
a_{k}(y_{1},\dots,y_{k}) & :=\det(-1+\prod_{s=j}^{k}g_{v_{s}}(y_{i}))_{i,j=1}^{k}\\
b_{k}(y_{1},\dots,y_{k}) & :=\det(-1+g_{v_{j}}(y_{i}))_{i,j=1}^{k}.
\end{align*}
Let ${\rm P}(k)$ be the following statement:

For any $y_{1},\dots,y_{k}\in F^{\times}$, we have
\[
a_{k}(y_{1},\dots,y_{k})\equiv b_{k}(y_{1},\dots,y_{k})\pmod{(J_{v_{1}}+\cdots+J_{v_{k}})J_{v_{1}}\cdots J_{v_{k}}}.
\]

Note that ${\rm P(k)}$ implies $a_{k}(y_{1},\dots,y_{k})\in J_{v_{1}}\cdots J_{v_{k}}$
since $b_{k}(y_{1},\dots,y_{k})\in J_{v_{1}}\cdots J_{v_{k}}$. We
prove ${\rm P}(k)$ for $k=0,\dots,r$ by the induction on $k$. Assume
that ${\rm P}(k-1)$ holds. We have
\[
a_{k}(y_{1},\dots,y_{k})=\sum_{i=1}^{k}(-1)^{i-1}a_{k-1}(y_{1},\dots,\hat{y}_{i},\dots,y_{k})(-1+g_{v_{k}}(y_{i}))\prod_{i'\neq i}g_{v_{k}}(y_{i'}).
\]
Since $a_{k-1}(y_{1},\dots,\hat{y}_{i},\dots,y_{k})\in J_{v_{1}}\cdots J_{v_{k-1}}$
and $1-\prod_{i'\neq i}g_{v_{k}}(y_{i'})\in J_{v_{k}}$, we have
\begin{equation}
a_{k}(y_{1},\dots,y_{k})\equiv\sum_{i=1}^{k}(-1)^{i-1}a_{k-1}(y_{1},\dots,\hat{y}_{i},\dots,y_{k})(-1+g_{v_{k}}(y_{i}))\pmod{J_{v_{1}}\cdots J_{v_{k-1}}J_{v_{k}}^{2}}.\label{eq:epc1}
\end{equation}
Since $-1+g_{v_{k}}(y_{i})\in J_{v_{k}}$ and 
\[
a_{k-1}(y_{1},\dots,\hat{y}_{i},\dots,y_{k})\equiv b_{k-1}(y_{1},\dots,\hat{y}_{i},\dots,y_{k})\pmod{(J_{v_{1}}+\cdots J_{v_{k-1}})J_{v_{1}}\cdots J_{v_{k-1}}},
\]
we have
\begin{multline}
a_{k-1}(y_{1},\dots,\hat{y}_{i},\dots,y_{k})(-1+g_{v_{k}}(y_{i}))\equiv b_{k-1}(y_{1},\dots,\hat{y}_{i},\dots,y_{k})(-1+g_{v_{k}}(y_{i}))\\
\pmod{(J_{v_{1}}+\cdots J_{v_{k-1}})J_{v_{1}}\cdots J_{v_{k}}}.\label{eq:epc2}
\end{multline}
From (\ref{eq:epc1}) and (\ref{eq:epc2}), we have
\begin{align*}
a_{k}(y_{1},\dots,y_{k}) & \equiv\sum_{i=1}^{k}(-1)^{i-1}b_{k-1}(y_{1},\dots,\hat{y}_{i},\dots,y_{k})(-1+g_{v_{k}}(y_{i}))\\
 & \equiv b_{k}(y_{1},\dots,y_{k})\pmod{(J_{v_{1}}+\cdots+J_{v_{k}})J_{v_{1}}\cdots J_{v_{k}}}.
\end{align*}
Hence ${\rm P}(k)$ holds. Thus ${\rm P}(0),\dots,{\rm P}(r)$ are
proved by the induction. The lemma is equivalent to ${\rm P}(r)$.\end{proof}
\begin{lem}
\label{lem:classical_cnf}Let $T$ be a finite set of places of $F$
such that $S\cap T=\emptyset$ and $Y\cap T\neq\emptyset$. Let $\left\langle u_{1},\dots,u_{r}\right\rangle $
be a $\mathbb{Z}$-basis of $\mathcal{O}_{S,T}^{\times}$ such that
$(-1)^{\#T}\det(-\log\left|u_{i}\right|_{v_{j}})_{1\leq i,j\leq r}>0$.
Then we have
\[
\lim_{s\to0}s^{-r}\Theta_{S,T,H}(s)=n_{S,T}\det(-\log\left|u_{j}\right|_{v_{i}})_{i,j=1}^{r}\sum_{\sigma\in{\rm Gal}(H/F)}[\sigma].
\]
\end{lem}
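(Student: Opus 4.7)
The plan is to decompose $\Theta_{S,T,H}(s)$ along the characters of $\mathrm{Gal}(H/F)$, identify each component with a standard Artin $L$-function, and use Tate's order-of-vanishing formula at $s=0$ to kill every non-trivial character contribution. For each character $\chi$ of $\mathrm{Gal}(H/F)$, applying $\chi$ to the Euler product defining $\Theta_{S,T,H}(s)$ identifies $\chi(\Theta_{S,T,H}(s))$ with the $(S,T)$-modified Artin $L$-function $L_{F,S,T}(\bar\chi,s)$. Using orthogonality of characters, I would then write
\[
\Theta_{S,T,H}(s)=\frac{1}{\#\mathrm{Gal}(H/F)}\sum_{\sigma\in\mathrm{Gal}(H/F)}\Bigl(\sum_\chi\chi(\sigma^{-1})L_{F,S,T}(\bar\chi,s)\Bigr)[\sigma].
\]

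For $\chi=\chi_0$ trivial, the $L$-value equals $\zeta_{F,S,T}(s)$, whose leading behavior at $s=0$ is supplied by the $(S,T)$-version of Dedekind's class number formula recalled in Section \ref{sub:introGC}: $\lim_{s\to0}s^{-r}\zeta_{F,S,T}(s)=-h_{S,T}\det(-\log|u_i|_{v_j})$. For a non-trivial $\chi$ of $\mathrm{Gal}(H/F)$, each $T$-twist factor $(1-\bar\chi(\sigma_\mathfrak{p})N(\mathfrak{p})^{1-s})|_{s=0}=1-\bar\chi(\sigma_\mathfrak{p})N(\mathfrak{p})$ is nonzero (since $|\bar\chi(\sigma_\mathfrak{p})|=1$ and $N(\mathfrak{p})>1$), so $\mathrm{ord}_{s=0}L_{F,S,T}(\bar\chi,s)=\mathrm{ord}_{s=0}L_{F,S}(\bar\chi,s)$. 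Tate's formula then gives $\mathrm{ord}_{s=0}L_{F,S}(\bar\chi,s)=\#\{v\in S:\bar\chi(G_v)=1\}$. Since every $v\in S$ splits completely in $H/F$ by the definition of $H$, the image of $G_v$ in $\mathrm{Gal}(H/F)$ is trivial, hence $\bar\chi|_{G_v}=1$ for every $v\in S$, and the order of vanishing equals $\#S=r+1>r$. So non-trivial characters contribute nothing in the limit.

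Passing to the limit in the displayed decomposition, only the $\chi=\chi_0$ term survives:
\begin{align*}
\lim_{s\to0}s^{-r}\Theta_{S,T,H}(s)&=\frac{-h_{S,T}\det(-\log|u_i|_{v_j})}{\#\mathrm{Gal}(H/F)}\sum_{\sigma\in\mathrm{Gal}(H/F)}[\sigma]\\
&=n_{S,T}\det(-\log|u_j|_{v_i})\sum_{\sigma\in\mathrm{Gal}(H/F)}[\sigma],
\end{align*}
using $n_{S,T}=-h_{S,T}/\#\mathrm{Gal}(H/F)$ and the invariance of the determinant under transposition. The argument is essentially a classical assembly of Tate's formula with the quoted class number formula, and I do not expect any real obstacle; the one thing worth double-checking is that the sign convention $(-1)^{\#T}\det(-\log|u_i|_{v_j})>0$ matches that of Section \ref{sub:introGC}, which it does.
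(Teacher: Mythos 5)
Your argument is correct. The character decomposition of $\Theta_{S,T,H}(s)$, the observation that every non-trivial character of $\mathrm{Gal}(H/F)$ has all of $S$ in its set of split places (so Tate's formula gives vanishing order $\geq\#S=r+1>r$), and the identification of the trivial-character component with $\zeta_{F,S,T}(s)$ whose $s^r$-coefficient is $-h_{S,T}\det(-\log|u_i|_{v_j})$ together yield the claim; dividing by $\#\mathrm{Gal}(H/F)$ produces exactly $n_{S,T}$. The sign convention on the basis also matches the one in Section \ref{sub:introGC}, as you note. The paper's proof is a one-line appeal to the functional equation of $\Theta_{S,T,H}(s)$; your two ingredients (Tate's order-of-vanishing formula and the $(S,T)$-Dedekind class number formula at $s=0$) are themselves standard consequences of that functional equation, so your write-up is essentially an unpacking of the same mechanism rather than a genuinely different route.
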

\begin{proof}
The claim follows from the functional equation of $\Theta_{S,T,H}(s)$.
\end{proof}

\begin{lem}
\label{lem:calc_QNSH_reg}For $\mathfrak{q}\in Y$, we have
\[
Q^{N}({\rm Sh}^{\diamond}|_{V})\equiv{\rm ch}(\mathfrak{q})^{[F:\mathbb{Q}]}\hat{R}_{\mathfrak{q}}\pmod{I_{H}I_{v_{1}}\cdots I_{v_{r}}}.
\]
\end{lem}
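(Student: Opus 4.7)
The plan is to apply Proposition~\ref{prop:calc_of_QSH} to ${\rm Sh}^{\diamond}={\rm Sh}_{\emptyset,1,S}$, which yields
\[
Q({\rm Sh}^{\diamond}|_{V})\;\equiv\;\eta_{2}(\eta_{1}(\vartheta_{\emptyset,1}))\pmod{I_{F^{\times}}\mathcal{R}^{(V)}(-1)+\mathcal{R}^{(S)}(-1)},
\]
and then to identify $\eta_{2}(\eta_{1}(\vartheta_{\emptyset,1}))$ with $m\hat{R}_{\mathfrak{q}}$, where $m={\rm ch}(\mathfrak{q})^{[F:\mathbb{Q}]}$. The first routine check is that under the constant-term projection $\mathcal{R}(\emptyset)\to\mathbb{Z}[N_{F}]$ defining $Q^{N}$, the summand $I_{F^{\times}}\mathcal{R}^{(V)}(-1)$ lands in $I_{F^{\times}}\prod_{v\in V}I_{v}\subset I_{H}\prod_{v\in V}I_{v}$ (using $I_{F^{\times}}\subset I_{H}$ by global Artin reciprocity), while $\mathcal{R}^{(S)}(-1)$ carries an additional $v_{0}$-factor which lies in $I_{H}$ because $v_{0}$ splits completely in $H/F$.

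The first nontrivial step is to compute $\eta_{1}(\vartheta_{\emptyset,1})\in H_{r}(F^{\times},\mathcal{R}(S))$. Applying Proposition~\ref{prop:exist_lift} with $R=\emptyset$ and $M=\{1\}$ produces a lift $\alpha\in H_{0}(E_{S},\mathcal{B}_{\emptyset,1}(\emptyset))$ of $\vartheta_{\emptyset,1}$, where $E_{S}:=\mathcal{O}_{S,\{\mathfrak{q}\}}^{\times}$, such that ${\rm r}_{\{v\}}^{\emptyset}(\alpha)=0$ for every $v\in S$ (the hypothesis $H_{i}(E_{S},\mathcal{B}_{\emptyset,1}(W))=0$ follows from Proposition~\ref{prop:homology_vanish} because $-1\notin F_{\emptyset,1}$ when ${\rm ch}(\mathfrak{q})\neq 2$). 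Lemma~\ref{lem:eta1_is_in} then forces $\eta_{1}(\vartheta_{\emptyset,1})$ into the image of $H_{r}(E_{S},\mathcal{R}(S))$, and a concrete chain-level representative arises from applying $\mathcal{L}_{\emptyset,1}$ to $\alpha$, which is assembled from the Schwartz function $f_{\emptyset,1,\emptyset}$, ideal-class representatives in $\mathbb{Z}[N^{S\cup S_{q}}]$, and the signed fundamental cycle $\vartheta_{\infty}(\mathcal{U},E_{S})$ of Definition~\ref{Def:s1_theta_inf}. The functional equation (Lemma~\ref{lem:classical_cnf}) combined with the sign normalization of Lemma~\ref{lem:tot_fundamental_domain} identifies the top $s^{r}$-coefficient of $\mathcal{L}_{\emptyset,1}(\alpha)$ as $m\cdot n_{S,\{\mathfrak{q}\}}\det(-\log|u_{j}|_{v_{i}})_{i,j=1}^{r}\sum_{\sigma\in{\rm Gal}(H/F)}[\sigma]$.

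Next I apply $\eta_{2}$. Unwinding Definition~\ref{Def:eta2}, the class is lifted through $d_{1}:\mathcal{R}^{(V)}[1]\to\mathcal{R}^{(V)}[0]$, and, combined with summation over $c\in N^{S}/F^{\times}$ inherited from the $\mathbb{Z}[N^{S}]$-component of $\mathcal{B}_{\emptyset,1}^{*}$, the Leibniz expansion of the regulator turns into a Koszul-type staggered determinant
\[
n_{S,\{\mathfrak{q}\}}\sum_{c\in N^{S}/F^{\times}}[c]\det\Bigl(-1+\prod_{s\geq j}g_{v_{s}}(u_{i})\Bigr)_{i,j=1}^{r},\qquad g_{v}(x)=|x|_{v}^{-s_{v}}\otimes[f_{v}(x)].
\]
By Lemma~\ref{lem:detred}, this is congruent to $n_{S,\{\mathfrak{q}\}}\sum_{c}[c]\det(-1+g_{v_{j}}(u_{i}))_{i,j=1}^{r}$ modulo $(J_{v_{1}}+\cdots+J_{v_{r}})J_{v_{1}}\cdots J_{v_{r}}$, and this error projects into $(\sum_{j}I_{v_{j}})I_{v_{1}}\cdots I_{v_{r}}\subset I_{H}I_{v_{1}}\cdots I_{v_{r}}$. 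Setting all $s_{v}=0$ sends $g_{v}(u)-1\mapsto f_{v}(u)-1$, so the surviving main term is exactly $m\hat{R}_{\mathfrak{q}}$.

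The main obstacle is the explicit chain-level computation in the second paragraph: one must fix a combinatorial model of $\eta_{E_{S}}$ inside the total complex of $\mathcal{R}_{\bullet,\bullet}^{(S)}$ and check that the $s^{r}$-coefficient of $\mathcal{L}_{\emptyset,1}(\alpha)$ reproduces the classical regulator with the correct sign, in agreement with the convention $(-1)^{\#T}\det(-\log|u_{i}|_{v_{j}})>0$ (here $\#T=1$). Lemma~\ref{lem:detred} then serves as the essential algebraic bridge between the Koszul-type staggered determinant produced by $\eta_{2}$ and the determinant $\det(f_{v_{i}}(u_{j})-1)$ appearing in the definition of $\hat{R}_{\mathfrak{q}}$.
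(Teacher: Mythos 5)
Your proposal is correct and follows essentially the same route as the paper's proof: apply Proposition~\ref{prop:calc_of_QSH}, use Proposition~\ref{prop:exist_lift}, Proposition~\ref{prop:homology_vanish} and Lemma~\ref{lem:eta1_is_in} to localize $\eta_{1}(\vartheta^{\diamond})$ in $H_{r}(\mathcal{O}_{S,\{\mathfrak{q}\}}^{\times},\mathbb{Z}[N^{S}])$, unwind $\eta_{2}$ to obtain the Koszul-type staggered determinant, reduce it via Lemma~\ref{lem:detred}, and pin down the scalar factor $\bar{A}=n_{S,\{\mathfrak{q}\}}\sum_{c}[c]$ using Lemma~\ref{lem:classical_cnf}. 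The step you flag as the "main obstacle" (the explicit chain-level identification with $A\otimes\sum_{\sigma}{\rm sgn}(\sigma)[1,u_{\sigma(1)},\dots]$) is exactly what the paper carries out in detail, so your outline is complete in structure even if that computation is compressed.
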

\begin{proof}
Fix $\mathfrak{q}\in Y$ and put ${\rm Sh}^{\diamond}:=(\mathcal{B}^{\diamond},\mathcal{L}^{\diamond},\vartheta^{\diamond},{\rm ch}(\mathfrak{q})^{[F:\mathbb{Q}]})$.
From Lemma \ref{prop:calc_of_QSH}, we have
\[
Q({\rm Sh}^{\diamond}|_{V})=\eta_{2}(\eta_{1}(\vartheta^{\diamond})).
\]
Put $E=\mathcal{O}_{S,\mathfrak{\{q}\}}^{\times}$. Take a $\mathbb{Z}$-basis
$\left\langle u_{1},\dots,u_{r}\right\rangle $ of $E$ such that
$-\det(-\log\left|u_{i}\right|_{v_{j}})_{1\leq i,j\leq r}>0$. Note
that $\mathcal{R}(S)=\mathbb{Z}[N^{S}]$. From Lemma \ref{lem:eta1_is_in},
Proposition \ref{prop:homology_vanish} and Proposition \ref{prop:exist_lift},
we have
\[
\eta_{1}(\vartheta^{\diamond})\in{\rm im}(H_{r}(E,\mathbb{Z}[N^{S}])\to H_{r}(F^{\times},\mathbb{Z}[N^{S}])).
\]
Since the action of $E$ to $N^{S}$ is trivial, we have
\[
H_{r}(E,\mathbb{Z}[N^{S}])\simeq H_{r}(E,\mathbb{Z})\otimes\mathbb{Z}[N^{S}].
\]
Therefore there exists $A\in\mathbb{Z}[N^{S}]$ such that $\eta_{1}(\vartheta^{\diamond})$
is the image of 
\[
A\otimes\sum_{\sigma\in S_{r}}{\rm sgn}(\sigma)[1,u_{\sigma(1)},u_{\sigma(1)}u_{\sigma(2)},\dots,u_{\sigma(1)}\cdots u_{\sigma(r)}]\in\mathbb{Z}[N^{S}]\otimes_{\mathbb{Z}[F^{\times}]}\mathcal{I}_{r}.
\]
Let $\omega:H_{r}(F^{\times},N^{S})\simeq H(\mathcal{R}^{(S)}[-1]\to\mathcal{R}^{(S)}[0]\to\mathcal{R}^{(S)}[1])$
be a natural isomorphism. Then we have
\[
\omega(\eta_{1}(\vartheta^{\diamond}))=y
\]
where
\[
y=(y_{0},\dots,y_{r})\in\prod_{i=0}^{r}\mathcal{R}_{i,i}^{(S)}=\mathcal{R}^{(S)}[0]
\]
and 
\begin{align*}
y_{k} & =A\sum_{\sigma\in\mathfrak{S}_{r}}{\rm sgn}(\sigma)\prod_{i=k+1}^{r}\left(-1+\prod_{s=i}^{r}g_{v_{s}}(x_{\sigma(i)})\right)\otimes[1,x_{\sigma(1)},x_{\sigma(1)}x_{\sigma(2)},\dots,x_{\sigma(1)}\cdots x_{\sigma(k)}].\\
 & \in\mathcal{R}(\{v_{0},\dots,v_{k-1}\})\otimes\mathcal{I}_{k}.
\end{align*}
Let $f$ be a natural homomorphism from $\mathcal{R}^{(S)}[0]\to\mathcal{R}^{(V)}[0]$.
Put
\[
b_{0}=A\det(-1+\prod_{s=j}^{r}g_{v_{s}}(x_{\sigma(i)}))_{i,j=1}^{r}\in\mathcal{R}_{-1,0}^{(V)}
\]
and 
\[
b:=(b_{0},0,\dots,0)\in\prod_{i=0}^{r+1}\mathcal{R}_{i-1,i}^{(V)}=\mathcal{R}^{(V)}[1].
\]
Then $f(y_{k})=d(b)$. Therefore we have 
\[
\eta_{2}(\eta_{1}(\vartheta^{\diamond}))=b_{0}
\]
from the definition. By using Lemma \ref{lem:detred}, we have
\[
b_{0}\equiv A\det(g_{v_{i}}(u_{j})-1)_{1\leq i,j\leq r}\pmod{(I_{F^{\times}}\mathcal{R}(\emptyset)+J_{v_{0}}+\cdots+J_{v_{r}})\prod_{v\in V}J_{v}}.
\]
Thus we have
\begin{equation}
Q({\rm Sh}^{\diamond}|_{V})\equiv A\det(g_{v_{i}}(u_{j})-1)_{1\leq i,j\leq r}\pmod{(I_{F^{\times}}\mathcal{R}(\emptyset)+J_{v_{0}}+\cdots+J_{v_{r}})\prod_{v\in V}J_{v}}.\label{eq:Qdiamond_formula}
\end{equation}
Let $\bar{A}\in\mathbb{Z}[N^{S}/F^{\times}]=\mathbb{Z}[{\rm Gal}(H/F)]$
be the image of $A$. From (\ref{eq:Qdiamond_formula}), we have
\[
{\rm ch}(\mathfrak{q})^{[F:\mathbb{Q}]}\Theta_{S,\{\mathfrak{q}\},H}(s)\equiv s^{r}\det(-\log\left|u_{j}\right|_{v_{i}})_{i,j=1}^{r}\bar{A}\pmod{s^{r+1}}.
\]
Therefore, from Lemma \ref{lem:classical_cnf}, we have 
\[
\bar{A}=n_{S,T}\sum_{c\in N^{S}/F^{\times}}[c]
\]
Hence, from (\ref{eq:Qdiamond_formula}), we have 
\begin{align*}
Q^{N}({\rm Sh}^{\diamond}|_{V}) & \equiv{\rm ch}(\mathfrak{q})^{[F:\mathbb{Q}]}n_{S,T}\sum_{c\in N^{S}/F^{\times}}[c]\det(f_{v_{i}}(u_{j})-1)_{1\leq i,j\leq r}\pmod{I_{H}I_{v_{1}}\cdots I_{v_{r}}}\\
 & ={\rm ch}(\mathfrak{q})^{[F:\mathbb{Q}]}\hat{R}_{\mathfrak{q}}.
\end{align*}
\end{proof}
\begin{lem}
\label{lem:st_7}For $\mathfrak{q}\in Y$, we have
\[
{\rm ch}(\mathfrak{q})^{[F:\mathbb{Q}]}\Theta_{S,\{\mathfrak{q}\},K}\equiv{\rm ch}(\mathfrak{q})^{[F:\mathbb{Q}]}R_{G,S,\{\mathfrak{q}\}}\pmod{I_{G_{H}}\prod_{v\in V}I_{G_{v}}}.
\]
\end{lem}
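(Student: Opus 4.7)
The plan is to push the congruence of Lemma~\ref{lem:calc_QNSH_reg} forward along the reciprocity map ${\rm rec}\colon\mathbb{Z}[N_{F}]\to\mathbb{Z}[G]$; everything will then translate from the $N_{F}$-side to the $G$-side.

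First I would record how ${\rm rec}$ treats the modulus. By global class field theory, ${\rm rec}$ kills $F^{\times}\subset N_{F}$, sends $N_{v}$ onto $G_{v}$ for each $v\in S$, and factors the projection $N_{F}\to{\rm Gal}(H/F)$ through $G\twoheadrightarrow{\rm Gal}(H/F)$. Consequently ${\rm rec}$ carries $I_{v}$ into $I_{G_{v}}$ and $I_{H}$ into $I_{G_{H}}$, so the modulus $I_{H}I_{v_{1}}\cdots I_{v_{r}}$ is sent into $I_{G_{H}}\prod_{v\in V}I_{G_{v}}$.

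Next I would identify the images of the two sides of Lemma~\ref{lem:calc_QNSH_reg}. On the right, ${\rm rec}(\hat{R}_{\mathfrak{q}})=R_{G,S,\{\mathfrak{q}\}}$ is built into the construction of $\hat{R}_{\mathfrak{q}}$ in Subsection~\ref{sub:introEGC}. On the left, the key identity to establish is
\[
{\rm rec}(Q^{N}({\rm Sh}^{\diamond}|_{V}))={\rm ch}(\mathfrak{q})^{[F:\mathbb{Q}]}\,\Theta_{S,\{\mathfrak{q}\},K},
\]
which a fortiori implies ${\rm ch}(\mathfrak{q})^{[F:\mathbb{Q}]}\Theta_{S,\{\mathfrak{q}\},K}\in\mathbb{Z}[G]$. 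This is property~(1) of $Q^{N}$ from the outline of Subsection~1.3; its verification is a formal diagram chase. Proposition~\ref{prop:L_and_theta} applied to ${\rm Sh}^{\diamond}$ shows that any lift of $Q({\rm Sh}^{\diamond}|_{V})$ is sent by $\lambda$ to ${\rm ch}(\mathfrak{q})^{[F:\mathbb{Q}]}\Theta_{S,\{\mathfrak{q}\},K}(s)$, and specialising the formal variables at $s=0$ together with the factorisation of $\lambda|_{s=0}$ through ${\rm rec}$ yields the claim, since $Q^{N}$ was defined to be precisely this specialisation of $Q$.

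Combining these three inputs, applying ${\rm rec}$ to Lemma~\ref{lem:calc_QNSH_reg} produces the stated congruence. I do not foresee any serious obstruction; the only step meriting care is the compatibility ${\rm rec}\circ Q^{N}=(\text{integrality})\cdot\Theta_{S,\{\mathfrak{q}\},K}$, which is a purely formal consequence of the definitions of $Q$, $Q^{N}$, $\mathcal{L}^{\diamond}$, and $\lambda$ once one unwinds the specialisation at $s=0$.
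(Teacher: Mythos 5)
Your proof is correct and follows the same route as the paper: the paper's proof is the one-liner ``apply ${\rm rec}$ to both sides of Lemma~\ref{lem:calc_QNSH_reg},'' and your three bullet points (modulus goes to modulus, ${\rm rec}(\hat R_{\mathfrak q})=R_{G,S,\{\mathfrak q\}}$ by construction, ${\rm rec}(Q^N({\rm Sh}^\diamond|_V))={\rm ch}(\mathfrak q)^{[F:\mathbb Q]}\Theta_{S,\{\mathfrak q\},K}$ via Proposition~\ref{prop:L_and_theta} and the definitions of $\lambda$ and $Q^N$) are exactly the background facts the paper is implicitly invoking. Nothing is missing.
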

\begin{proof}
By applying ${\rm rec}$ to the both hand sides of Lemma \ref{lem:calc_QNSH_reg},
we obtain the claim.\end{proof}
\begin{lem}
\label{lem:st_8}Let $\mathfrak{q}$ be an element of $T$ such that
${\rm ch}(\mathfrak{q})\neq2$. Then we have
\[
{\rm ch}(\mathfrak{q})^{[F:\mathbb{Q}]}\Theta_{S,T,K}\equiv{\rm ch}(\mathfrak{q})^{[F:\mathbb{Q}]}R_{G,S,T}\pmod{I_{G_{H}}\prod_{v\in V}I_{G_{v}}}.
\]
\end{lem}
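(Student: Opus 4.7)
The strategy is to derive Lemma \ref{lem:st_8} formally from Lemma \ref{lem:st_7} by multiplying through by $\delta_{T\setminus\{\mathfrak{q}\}}\in\mathbb{Z}[G]$ and then matching the two regulator expressions. Since $\mathfrak{q}\in T$, factoring $\delta_T = (1-N(\mathfrak{q})\sigma_\mathfrak{q}^{-1})\delta_{T\setminus\{\mathfrak{q}\}}$ gives $\Theta_{S,T,K} = \delta_{T\setminus\{\mathfrak{q}\}}\Theta_{S,\{\mathfrak{q}\},K}$, and multiplication by $\delta_{T\setminus\{\mathfrak{q}\}}$ preserves the ideal $I_{G_H}\prod_{v\in V}I_{G_v}$. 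So after multiplying both sides of Lemma \ref{lem:st_7} by $\delta_{T\setminus\{\mathfrak{q}\}}$, what remains is the purely regulator-side identity
\[
\delta_{T\setminus\{\mathfrak{q}\}}\,R_{G,S,\{\mathfrak{q}\}}\equiv R_{G,S,T}\pmod{I_{G_H}\prod_{v\in V}I_{G_v}}.
\]

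To prove this identity I would first handle the change of basis. Let $\langle u_j'\rangle$ and $\langle u_j\rangle$ be the chosen $\mathbb{Z}$-bases of $\mathcal{O}_{S,\{\mathfrak{q}\}}^\times$ and $\mathcal{O}_{S,T}^\times$, and let $M=(m_{jk})\in M_r(\mathbb{Z})$ satisfy $u_j=\prod_k(u_k')^{m_{jk}}$. Writing $A_{ij}:={\rm rec}_{v_i}(u_j)-1$ and $A_{ij}':={\rm rec}_{v_i}(u_j')-1$, the standard group-ring identities $gh-1\equiv(g-1)+(h-1)\pmod{I_{G_{v_i}}^2}$ and $g^m-1\equiv m(g-1)\pmod{I_{G_{v_i}}^2}$ give $A_{ij}\equiv\sum_k m_{jk}A_{ik}'\pmod{I_{G_{v_i}}^2}$. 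Because every $v\in S$ splits completely in $H$, we have $G_v\subset\ker(G\to{\rm Gal}(H/F))$ and hence $I_{G_v}\subset I_{G_H}$; so when the cross terms involving an error in row $i_0$ appear in the expansion of $\det(A)$, they sit in $I_{G_{v_{i_0}}}^2\prod_{i\neq i_0}I_{G_{v_i}}\subset I_{G_H}\prod_{v\in V}I_{G_v}$. Therefore
\[
\det(A)\equiv\det(M)\det(A')\pmod{I_{G_H}\prod_{v\in V}I_{G_v}}.
\]
On the $\delta$-side, any lift $\sum_c[c]\in\mathbb{Z}[G]$ of the norm element of $\mathbb{Z}[{\rm Gal}(H/F)]$ satisfies $\delta_{T\setminus\{\mathfrak{q}\}}\sum_c[c]\equiv\prod_{\mathfrak{p}\in T\setminus\{\mathfrak{q}\}}(1-N(\mathfrak{p}))\sum_c[c]\pmod{I_{G_H}}$, since the image of $\delta_{T\setminus\{\mathfrak{q}\}}$ in $\mathbb{Z}[{\rm Gal}(H/F)]$ is absorbed by the norm through its augmentation. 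Multiplying by $\det(A')\in\prod_{v\in V}I_{G_v}$ keeps the discrepancy inside $I_{G_H}\prod_{v\in V}I_{G_v}$.

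To close the loop I would compare the two scalar prefactors using the classical class number formula (Lemma \ref{lem:classical_cnf}) applied to both $\Theta_{S,T,H}(s)$ and $\Theta_{S,\{\mathfrak{q}\},H}(s)$. The relation $\Theta_{S,T,H}(s)=\delta_{T\setminus\{\mathfrak{q}\}}(s)|_H\,\Theta_{S,\{\mathfrak{q}\},H}(s)$, combined with $[\tau]\sum_c[c]=\sum_c[c]$, yields at $s=0$ the identity
\[
n_{S,T}\det(-\log|u_j|_{v_i})=\Bigl(\prod_{\mathfrak{p}\in T\setminus\{\mathfrak{q}\}}(1-N(\mathfrak{p}))\Bigr)\,n_{S,\{\mathfrak{q}\}}\det(-\log|u_j'|_{v_i}),
\]
which together with $\det(-\log|u_j|_{v_i})=\det(M)\det(-\log|u_j'|_{v_i})$ gives the key scalar equality $n_{S,T}\det(M)=\prod_{\mathfrak{p}\in T\setminus\{\mathfrak{q}\}}(1-N(\mathfrak{p}))\,n_{S,\{\mathfrak{q}\}}$. (The sign conventions in the two positivity conditions for the bases are consistent with this; they differ by a factor of $(-1)^{\#T-1}={\rm sgn}(\det M)$.) Plugging these into the congruence for $\delta_{T\setminus\{\mathfrak{q}\}}R_{G,S,\{\mathfrak{q}\}}$ reproduces $R_{G,S,T}$ modulo the target ideal, completing the proof. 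There is no serious obstacle; the only delicate bookkeeping is checking that each error of type $I_{G_{v_i}}^2$ is absorbed by $I_{G_H}$ via another factor, which is precisely what the splitting hypothesis on $H$ buys us.
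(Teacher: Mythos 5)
Your proposal is correct and takes essentially the same route as the paper: multiply Lemma~\ref{lem:st_7} by $\delta_{T\setminus\{\mathfrak{q}\}}$ and then identify $\delta_{T\setminus\{\mathfrak{q}\}}R_{G,S,\{\mathfrak{q}\}}$ with $R_{G,S,T}$ modulo $I_{G_{H}}\prod_{v\in V}I_{G_{v}}$. The paper asserts $R_{G,S,T}=\prod_{\mathfrak{p}\in T\setminus\{\mathfrak{q}\}}(1-N(\mathfrak{p}))\,R_{G,S,\{\mathfrak{q}\}}$ as immediate from the definitions and then replaces the scalar by $\delta_{T\setminus\{\mathfrak{q}\}}$ using $(1-[\sigma])R_{G,S,\{\mathfrak{q}\}}=0$; your change-of-basis computation together with the scalar comparison via Lemma~\ref{lem:classical_cnf} is the correct justification of that implicit step, with the absorption of $I_{G_{v_i}}^{2}$ errors through $I_{G_{v}}\subset I_{G_{H}}$ being exactly the mechanism that makes it work.
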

\begin{proof}
From the definition, we have 
\[
\Theta_{S,T,K}=\Theta_{S,\{\mathfrak{q}\},K}\prod_{\mathfrak{p}\in T\setminus\{\mathfrak{q}\}}(1-N(\mathfrak{q})\sigma_{\mathfrak{q}}^{-1})
\]
and
\begin{align*}
R_{G,S,T} & =R_{G,S,\{\mathfrak{q}\}}\prod_{\mathfrak{p}\in T\setminus\{\mathfrak{q}\}}(1-N(\mathfrak{q})).\\
 & =R_{G,S,\{\mathfrak{q}\}}\prod_{\mathfrak{p}\in T\setminus\{\mathfrak{q}\}}(1-N(\mathfrak{q})\sigma_{\mathfrak{q}}^{-1})\ \ \ \ (\text{since }\forall\sigma\in G,(1-[\sigma])R_{G,S,\{\mathfrak{q}\}}=0).
\end{align*}
Thus the claim follows from Lemma \ref{lem:st_7}.
\end{proof}
Let us prove Theorem \ref{Thm:main2}. Assume that $T$ satisfies
the condition 
\[
\ker(\mu_{K}\to\prod_{\mathfrak{p}\in T_{_{K}}}(O_{K}/\mathfrak{p})^{\times})=\{1\}.
\]
Lemma \ref{lem:st_8} implies that
\[
{\rm ch}(\mathfrak{q})^{[F:\mathbb{Q}]}\Theta_{S,T,K}\equiv{\rm ch}(\mathfrak{q})^{[F:\mathbb{Q}]}R_{G,S,T}\pmod{I_{G_{H}}\prod_{v\in V}I_{G_{v}}}.
\]
From Lemma \ref{lem:vanish_2T} we have $2\Theta_{S,T,K}\in I_{G_{H}}\prod_{v\in V}I_{G_{v}}$.
From Lemma \ref{lem:2multiply} we have $2R_{G,S,T}\in I_{H}\prod_{v\in V}I_{G_{v}}$.
Thus we have
\[
\Theta_{S,T,K}\equiv R_{G,S,T}\pmod{I_{G_{H}}\prod_{v\in V}I_{G_{v}}},
\]
which completes the proof of Theorem \ref{Thm:main2}.

Let $\mathfrak{q}$ be prime ideal of $F$ such that ${\rm ch}(\mathfrak{q})\geq n+2$.
Since there exists a morphism from ${\rm Sh}^{v_{0}}$ to ${\rm Sh}^{v_{0},\diamond}$,
we have
\[
{\rm ch}(\mathfrak{q})^{n}Q({\rm Sh}^{v_{0}})\equiv Q({\rm Sh}^{v_{0},\diamond})
\]
Since there exists a morphism from ${\rm Sh}^{\diamond}$ to ${\rm Sh}^{v_{0},\diamond}$,
we have
\begin{align*}
Q({\rm Sh}^{v_{0},\diamond}) & \equiv Q({\rm Sh}^{\diamond})\\
 & \equiv{\rm ch}(\mathfrak{q})^{n}\hat{R}_{\mathfrak{q}}.
\end{align*}
Thus we have
\[
{\rm ch}(\mathfrak{q})^{n}Q({\rm Sh}^{v_{0}})\equiv{\rm ch}(\mathfrak{q})^{n}\hat{R}_{\mathfrak{q}}\pmod{I_{H}\prod_{v\in V}I_{v}}.
\]
Since $2Q({\rm Sh}^{v_{0}})$ and $2\hat{R}_{\mathfrak{q}}$ are in
$I_{H}\prod_{v\in V}I_{v}$ from Lemma \ref{lem:2multiply}, we have
\[
\hat{\Theta}_{S,\mathfrak{q},V}\equiv\hat{R}_{\mathfrak{q}},
\]
which completes the proof of Theorem \ref{thm:main_hat}.

\section{Some example}

In this section, we present a certain example of $\hat{\Theta}_{S,\mathfrak{q},S\setminus\{v_{0}\}}$
and Theorem \ref{thm:main_hat}. The reader who is not interested
in such an example can skip this section. 

Let us consider the case $F=\mathbb{Q}(\sqrt{5})$, $S=S_{\infty}$
and $\mathfrak{q}=(\sqrt{5})$. Let $v_{0}$ and $v_{1}$ be the infinite
places corresponding to the real embeddings $a+b\sqrt{5}\mapsto a+b\sqrt{5}$
and $a+b\sqrt{5}\mapsto a-b\sqrt{5}$ respectively. Then we have ${\rm Sh}^{v_{0}}=(\mathcal{B},\mathcal{L},\vartheta,1)$
where we put
\begin{eqnarray*}
\mathcal{B} & := & \mathcal{B}_{\emptyset,\mathbb{F}_{5}^{\times}}\mid_{{\bf Sub}(S\setminus\{v_{0})}\\
\mathcal{L} & := & \mathcal{L}_{\emptyset,\mathbb{F}_{5}^{\times}}\\
\vartheta & := & \vartheta_{\emptyset,\mathbb{F}_{5}^{\times}.}
\end{eqnarray*}
Note that $\mathcal{B}(W)=\mathcal{B}_{\emptyset,\mathbb{F}_{5}^{\times}}(W)$
is the certain $\mathbb{Z}[F^{\times}]$ submodule of
\[
\mathcal{S}(\mathbb{A}_{F}^{S_{\infty}})\otimes\mathcal{Z}(F^{\times},W)\otimes\mathbb{Z}[N^{S}]
\]
defined in Section \ref{sec:constSh}. We define $f\in\mathcal{S}(\mathbb{A}_{F}^{S_{\infty}})$
by $f(x):=\prod_{v}f_{v}(x_{v})$ where $f_{v}=\bm{1}_{O_{v}}$ for
$v\neq\mathfrak{q}$ and 
\[
f_{\mathfrak{q}}(x_{\mathfrak{q}})=\begin{cases}
1 & x_{\mathfrak{q}}\in O_{\mathfrak{q}}^{\times}\\
-4 & x_{\mathfrak{q}}\in\mathfrak{q}O_{\mathfrak{q}}\\
0 & x_{\mathfrak{q}}\notin\mathfrak{q}O_{\mathfrak{q}}.
\end{cases}
\]
We put $\epsilon=\frac{1+\sqrt{5}}{2}$ and 
\begin{eqnarray*}
D_{0} & := & [1]+[1,\epsilon^{2}]\in\mathcal{Z}(F^{\times},\emptyset)\\
D_{1} & := & [1,\epsilon]\in\mathcal{Z}(F^{\times},\{v_{1}\}).
\end{eqnarray*}
Then we have $f\otimes D_{0}\otimes[1]\in\mathcal{B}(\emptyset)$,
and $\vartheta\in H_{0}(F^{\times},\mathcal{B}(\emptyset))$ is the
image of $f\otimes D_{0}\otimes[1]$. Define $a=(a_{0},a_{1})\in\mathcal{B}_{0,0}\oplus\mathcal{B}_{1,1}=\mathcal{B}[0]$
by
\begin{eqnarray*}
a_{0} & := & f\otimes D_{0}\otimes[1]\in\mathcal{B}(\emptyset)\\
a_{1} & := & [1,\epsilon]\otimes\left(f\otimes D_{1}\otimes[1]\right)\in\mathcal{I}_{1}\otimes\mathcal{B}(\{v_{1}\}).
\end{eqnarray*}
Then we have
\[
d_{1}(a)=(\vartheta,0)\in\mathcal{B}_{0,-1}\oplus\mathcal{B}_{1,0}.
\]

For $W\subset S_{\infty}$, we denote the element $(x_{v})_{v\in S\setminus W}\in N_{S\setminus W}$
by
\[
(x_{0},x_{1})\ \ \ \ (x_{j}\in N_{v_{j}}\sqcup\{*\})
\]
where $x_{i}=x_{v_{i}}$ for $v_{i}\in S\setminus W$ and $x_{i}=*$
for $v_{i}\in W$. We have 
\begin{eqnarray*}
\mathcal{L}(a_{0}) & = & Z_{0}(s_{0},s_{1})\otimes[(+1,+1)]\\
\mathcal{L}(a_{1}) & = & [1,\epsilon]\otimes\left(Z_{1}(s_{0})\otimes[(+1,*)]\right)
\end{eqnarray*}
where $Z_{0}(s_{0},s_{1})$ and $Z_{1}(s_{0})$ are Maclaurin series
of
\[
\sum_{x\in C(1,\epsilon^{2})\cup C(\epsilon^{2})}f(x)|x|_{v_{0}}^{-s_{0}}|x|_{v_{1}}^{-s_{1}}
\]
and
\[
\sum_{x\in C(1,\epsilon)}f(x)|x|_{v_{0}}^{-s_{0}}
\]
respectively. Define $b=(b_{0},b_{1})\in\mathcal{B}_{-1,0}\oplus\mathcal{B}_{0,1}=\mathcal{B}[1]$
by
\begin{eqnarray*}
b_{1} & = & [1,\epsilon]\otimes\left(Z_{1}(s_{0})\otimes[(+1,+1)]\right)\\
b_{0} & = & \left(Z_{0}(s_{0},s_{1})-Z_{1}(s_{0})\right)\otimes[(+1,+1)]+|\epsilon|_{v_{0}}^{-s_{0}}|\epsilon|_{v_{1}}^{-s_{1}}Z_{1}(s_{0})\otimes[(+1,-1)].
\end{eqnarray*}
Since $d(b)=a$, we have 
\[
Q({\rm Sh}^{v_{0}})=b_{0}.
\]
Since the constant term of $Z_{0}(s_{0},s_{1})$ (resp. $Z_{1}(s_{0})$)
is equal to $0$ (resp. $-1$), we have
\[
\hat{\Theta}_{S,\mathfrak{q},\{v_{1}\}}=Q^{N}({\rm Sh}^{v_{0}})=[(+1,+1)]-[(+1,-1)]\in I_{v_{1}}/(I_{F^{\times}}I_{v_{1}}).
\]
Note that we have $H=F$ and $n_{S,T}=-1$. Since $u=-\epsilon^{-2}$
is a generator of $\mathcal{O}_{S,\{\mathfrak{q}\}}^{\times}$ such
that $-(-\log|u|_{v_{1}})>0$, we have
\begin{align*}
\hat{R}_{\mathfrak{q}} & =-([1,u]-[1,1])\\
 & =[(+1,+1)]-[(+1,-1)]\in I_{v_{1}}/I_{H}I_{v_{1}}.
\end{align*}
Thus we have
\[
\hat{\Theta}_{S,\mathfrak{q},\{v_{1}\}}\equiv\hat{R}_{\mathfrak{q}}\pmod{I_{H}I_{v_{1}}}.
\]
Theorem \ref{thm:main_hat} says that such congruences holds in more
general settings. Note that Theorem \ref{Thm:main2} says nothing
in this case because there exists no non-trivial extension $K$ of
$F$ unramified outside $S_{\infty}$. 

\bibliographystyle{plain}
\bibliography{GrossConjecture}

\end{document}